\newcommand\widecheck[1]{%
\savestack{\tmpbox}{\stretchto{%
  \scaleto{%
    \scalerel*[\widthof{\ensuremath{#1}}]{\kern-.6pt\bigwedge\kern-.6pt}%
    {\rule[-\textheight/2]{1ex}{\textheight}}
  }{\textheight}%
}{0.5ex}}%
\stackon[1pt]{#1}{\scalebox{-1}{\tmpbox}}%
}
\newtheorem{theorem}{Theorem}[section]
\newtheorem{proposition}[theorem]{Proposition}
\newtheorem{corollary}[theorem]{Corollary}
\newtheorem{lemma}[theorem]{Lemma}
\newtheorem*{problem*}{Problem}
\newtheorem*{theorem*}{Theorem}
\newtheorem*{proposition*}{Proposition}
\newtheorem*{corollary*}{Corollary}
\newtheorem*{lemma*}{Lemma}
\newtheorem*{claim*}{Claim}
\newtheorem*{solution*}{Solution}
\theoremstyle{definition}
\newtheorem{example}[theorem]{Example}
\newtheorem{notation}[theorem]{Notation}
\newtheorem{definition}[theorem]{Definition}
\newtheorem{remark}[theorem]{Remark}
\newtheorem*{remark*}{Remark}
\newtheorem*{summary*}{Summary}
\newtheorem*{example*}{Example}
\newtheorem{claim}[theorem]{Claim}
\newcommand{\I}{{\mathbb{I}}}
\newcommand{\N}{{\mathbb{N}}}
\newcommand{\R}{{\mathbb{R}}}
\renewcommand{\P}{{\mathbb{P}}}
\newcommand{\Z}{{\mathbb{Z}}}
\newcommand{\C}{{\mathbb{C}}}
\newcommand{\Q}{{\mathbb{Q}}}
\newcommand{\T}{{\mathbb{T}}}
\newcommand{\Oc}{{\mathbb O}}
\newcommand{\D}{{\mathbb D}}
\newcommand{\SU}{\text{SU}}
\newcommand{\SO}{\text{SO}}
\newcommand{\U}{\text{U}}
\newcommand{\Sp}{\text{Sp}}
\newcommand{\FS}{\text{FS}}
\newcommand{\CZ}{\text{CZ}}
\newcommand{\fp}{\mathfrak{p}}
\newcommand{\fP}{\mathfrak{P}}
\newcommand{\xddots}{%
  \raise 4pt \hbox {.}
  \mkern 6mu
  \raise 1pt \hbox {.}
  \mkern 6mu
  \raise -2pt \hbox {.}
}
\begin{document}
\pagenumbering{arabic} \setcounter{page}{2}

\renewcommand{\title}{\hspace{.3cm}Cylindrical contact homology of links of simple singularities}
\renewcommand{\author}{Leonardo Digiosia \footnote{This work was supported by NSF grants \href{https://www.nsf.gov/awardsearch/showAward?AWD_ID=1745670&HistoricalAwards=false}{DMS-1745670}, \href{https://www.nsf.gov/awardsearch/showAward?AWD_ID=1840723&HistoricalAwards=false}{DMS-1840723}, and \href{https://www.nsf.gov/awardsearch/showAward?AWD_ID=2104411&HistoricalAwards=false}{DMS-2104411}.}}


 \thispagestyle{empty}
  {\centering \Large
\title \par}
{\centering by \par}  {\centering \large \author \par}

 {\centering \section*{Abstract}}
 \addcontentsline{toc}{section}{Abstract}
A contact structure on a manifold is a maximally non-integrable hyperplane field in the tangent bundle. Contact manifolds are generalizations of constant energy hypersurfaces in Hamiltonian phase space, and have many applications to classical mechanics and dynamical systems. Cylindrical contact homology is a homology theory associated to a contact manifold, whose chain complex is generated by periodic trajectories of a Reeb vector field, and whose differential counts pseudoholomorphic cylinders between generating orbits. We compute the cylindrical contact homology of the links of the simple singularities. These 3-manifolds are contactomorphic to $S^3/G$ for finite subgroups $G\subset\SU(2)$. We perturb the degenerate contact form on $S^3/G$ with a Morse function, invariant under the corresponding $H\subset\SO(3)$ symmetries of $S^2$, to achieve nondegeneracy up to an action threshold. The cylindrical contact homology is recovered by taking a direct limit of the action filtered homology groups. The ranks of this homology are given in terms of the number of irreducible representations of $G$, namely $|\text{Conj}(G)|$, demonstrating a dynamical form of the McKay correspondence. Furthermore, the homology encodes information regrading the orbifold base $S^2/H$ over which $S^3/G$ fibers.

\newpage

\thispagestyle{empty} {\centering \section*{Acknowledgments} }
 \addcontentsline{toc}{section}{Acknowledgments}
An elder graduate student once suggested that a Cantor function acts as an appropriate model of progress in a PhD program; advancement feels stalled almost all of the time, save a handful of bursts of mathematical insight. Peers, mentors, and loved ones make the uneventful periods bearable, and without their support, the dissertation cannot arrive. Thus, some acknowledgments are in order:

First, to the person I owe everything -- thank you Mom. You have given me not only the ultimate gift of life, but one that is both full and stable. You trained me to be a responsible and hardworking man. Second, to my sister Natalia: thank you for the perspective that comes from sharing a childhood. That my best friend is also my sibling is not something to take for granted.

To Tatia Totorica, my first calculus teacher, thank you. The impact you made on me as a role model during an impressionable time of life eclipses the value of the powerful mathematical tools you gave me. You are my guiding example of an effective instructor when I teach calculus classes of my own.

I owe the majority of my development as a mature mathematician to my first mathematical parents, Iva Stavrov and Paul Allen. I am part of a flock of students that can call themselves empathetic and curious thinkers after working under your wings. Both of you will find influences of your teaching strewn throughout this work, including my stubborn unwillingness to denote the gradient of $f$ by $\nabla f$. 

To Frank Morgan, a mathematical parent and chosen family member, thank you. You are a father to many. I will always consider your guidance when making the big kinds of decisions. I also give gratitude for my mathematical and chosen sister, Lea Kenigsberg. If it were not for you, I would not have a strong case for the notion of fate. I do know deeply that our paths were meant to cross, and I think brightly on the future of our friendship and where it will take us.

To all of the graduate students in the basement of Herman Brown Hall, where would I be without your support? Firstly, to my algebraic geometer office mates Austen James and Zac Spaulding -- thank you for your patience in my perpetual demands that we take the ground field to be $\C$ in our discussions. Thank you Giorgio Young for looking over my Sobolev spaces, and for the laughs. Thank you Alex Nolte for your willingness to chat about math and more. To those unnamed, thank you.

The most important thanks goes to my advisor, Jo Nelson. Thank you for teaching me the most beautiful branch of mathematics that I have encountered. I hold a deep appreciation for Floer theory and how it fits into the field of mathematics as a whole. I am proud to have contributed to this branch of math and to have made it more  accessible for students to come - certainly, I could not have made this contribution without your patience and direction.

Finally, I must thank Tina Fey and include my favorite quote from the 2004 film \emph{Mean Girls}, which has guided me in a surprising number and variety of situations -  \begin{center}
``All you can do in life is try to solve the problem in front of you." 
\end{center} \begin{center}
- Cady Heron
\end{center}

\newpage
\tableofcontents

\newpage

\section{Introduction}\label{section: introduction}

The scope of this project  combines a wide palate of distinct mathematical fields. In this dissertation, our studies range from surface singularities and varieties in algebraic geometry, to Fredholm theory and Sobolev spaces in analysis; from trees and diagrams in combinatorics, to flows and critical point theory in differential geometry; from characteristic classes and covering spaces in algebraic topology, to representations  and characters in finite group theory; from classical mechanics and Hamiltonian systems in dynamics, to the groups and their associated algebras in Lie theory.

Each of these topics and objects plays a role in either contextualizing or proving Theorem \ref{theorem: main}; given any \emph{link of a simple singularity}, $L$, our result produces the \emph{cylindrical contact homology of $L$}, a graded $\Q$-vector space associated to $L$, denoted $CH_*(L)$. This infinite dimensional space is the homology of a chain complex whose generators are periodic trajectories of a Hamiltonian system, and whose differential counts cylindrical solutions to a geometric partial differential equation.

This cylindrical contact homology falls under the wide umbrella of  \emph{Floer theory}. Loosely speaking, Floer theories are (co)-homology theories whose chain complexes are generated by solutions to differential equations (critical points of functions, fixed points of diffeomorphisms, or periodic trajectories of a geometric flow, for example), and the differentials count notions of flow lines between these generating objects. Examples of Floer theories include Hamiltonian-Floer homology (\cite{F1}, \cite{F2}, \cite{F3}, \cite{S}), embedded contact homology (\cite{H}), symplectic homology (\cite{BO}, \cite{FH}, \cite{FHW}, \cite{Si}, \cite{V}), and symplectic field theory (\cite{EGH}, \cite{BEHWZ},  \cite{W3}). These theories have been used to prove existence results in dynamics - for example, by establishing lower bounds on the number of fixed points of a Hamiltonian symplectomorphism, or the number of closed embedded Reeb orbits in contact manifolds.

In Section \ref{section: introduction}, we first review some algebraic geometry necessary to explore the geometric objects of study in this paper, and to see how the result fits into a classical story via the McKay correspondence. The rest of the section is devoted to outlining the general construction of cylindrical contact homology, and the proof of Theorem \ref{theorem: main}. Section \ref{section: background: orbifold Morse homology and Conley-Zehnder indices} establishes background for the homology computed by Theorem \ref{theorem: main}, and is meant to provide the reader with a more complete picture of cylindrical contact homology and its motivations. Then, Sections \ref{section: geometric setup}, \ref{section: computation of filtered contact homology}, and \ref{section: direct limits of filtered homology} fill in the technical details of the proof of the main theorem.

\subsection{McKay correspondence and links of simple singularities}

One major way in which algebraic geometry diverges from its differential counterpart is that the study permits its geometric objects to exhibit singular points. In the study of 2-dimensional complex varieties, there is a notion of a \emph{simple singularity}. A simple singularity is locally modeled by the isolated singular point $[(0,0)]$ of the variety $\C^2/G$, for a finite nontrivial subgroup $G\subset\SU(2)$. The natural action of $G$ on $\C[u,v]$ has an invariant subring, generated by three monomials, $m_i(u,v)$ for $i=1,2,3$. These $m_i$ satisfy a minimal polynomial relation, \[f_G(m_1(u,v),m_2(u,v),m_3(u,v))=0,\] 
for some nonzero $f_G\in\C[z_1,z_2,z_3]$, whose complex differential has an isolated 0 at the origin. The polynomial $f_G$ provides an alternative perspective of the simple singularity as a hypersurface in $\C^3$.  Specifically, the map \[\C^2/G\to V:=f^{-1}_G(0),\,\,\,\,\,[(u,v)]\mapsto(m_1(u,v),m_2(u,v),m_3(u,v))\] defines an isomorphism of complex varieties, $\C^2/G\simeq V$. In this manner, we produce a hypersurface singularity given any finite nontrivial $G\subset \SU(2)$.

Conversely, we can recover the isomorphism class of $G$ from $V$ by studying the \emph{Dynkin diagram} associated to the minimal resolution of a simple singularity, using the \emph{McKay correspondence}. We detail this process now. Suppose a variety $V$ exhibits a simple singularity at the point $p\in V$. There exists a \emph{minimal resolution}, denoted $(\widetilde{V}, \pi, Z)$, of $(V, p)$. Explicitly, this means that $\widetilde{V}$ is a complex 2-dimensional variety, $\pi:\widetilde{V}\to V$ is birational,  $\pi^{-1}(p)=Z\subset\widetilde{V}$, $\widetilde{V}$ is smooth near $Z$, and $\pi$ away from $Z$ provides an isomorphism of varieties;\[\pi:\widetilde{V}\setminus Z\xrightarrow{\sim} V\setminus\{p\}.\] The resolution $( \widetilde{V},\pi, Z)$ is unique (or \emph{minimal}) in the sense that any other smoothing of $(V,p)$ factors through $\pi$.

We call $Z$ the \emph{exceptional divisor} of the resolution; $Z$ is the transverse intersection of a finite number of copies of $\C P^1\subset\widetilde{V}$, we enumerate these as $Z_1, Z_2, \dots, Z_n$. The \emph{Dynkin diagram} associated to $(V,p)$ is the finite graph whose vertex $v_i$ is labeled by the sphere $Z_i$, and $v_i$ is adjacent to $v_j$ if and only if $Z_i$ transversely intersects with $Z_j$. In this way, we we associate to any simple singularity $(V,p)$ a graph, $\Gamma(V,p)$. It is a classical fact that $\Gamma(V,p)$ is isomorphic to one of the $A_n$, $D_n$, or the $E_6$, $E_7$, or $E_8$ graphs (see \cite[\S 6]{Sl}), depicted in Figure \ref{figure: dynkin diagrams}. This provides the $A_n$, $D_n$, $E_6$, $E_7$, and $E_8$ singularities, alternatively known as the \emph{Klein}, \emph{Du Val}, or $\emph{ADE}$ singularities.

 \begin{figure}[h]
    \centering
    \captionsetup{justification=centering}
    \includegraphics[width=0.65\textwidth]{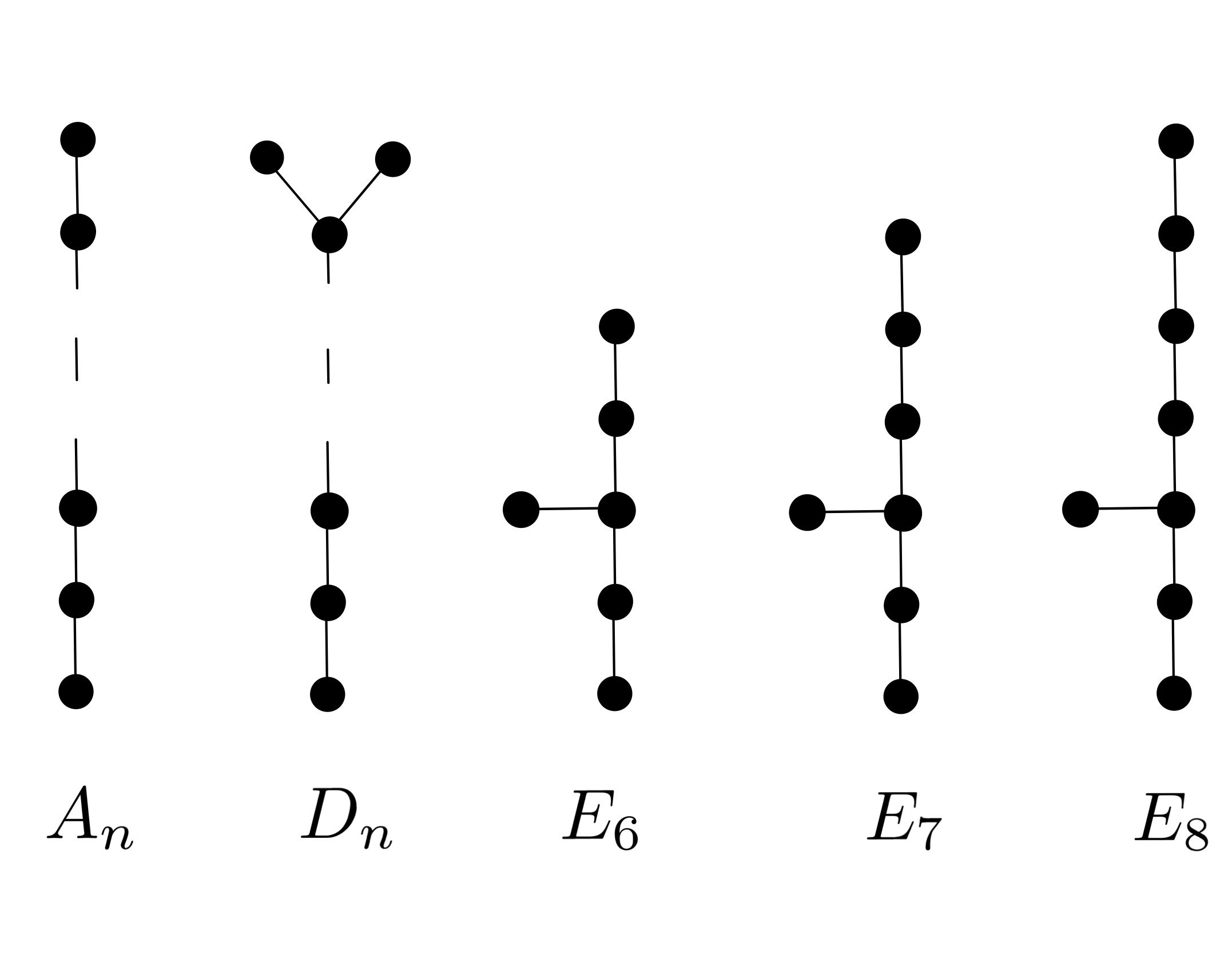}
    \caption{The Dykin diagrams. The diagrams $A_n$ and $D_n$ both feature $n$ nodes}
    \label{figure: dynkin diagrams}
\end{figure}

\textbf{Types of finite subgroups of $\SU(2)$:} These graphs simultaneously classify the types of conjugacy classes of finite subgroups $G$ of $\SU(2)$. To illuminate our discussion of these conjugacy classes, it is helpful to recall the double cover of Lie groups $P:\SU(2)\to\SO(3)$ (described in coordinates in \eqref{equation: P in coordinates}).
\begin{enumerate}
    \item A finite subgroup $G$ could be \emph{cyclic} of order $n$, written $G\cong\Z_n$. If $G$ is cyclic, then its image $H:=P(G)$ under $P$ is also cyclic. If $|G|$ is even, then $2|H|=|G|$, otherwise $|H|=|G|$. The natural action of $H$ on $S^2\subset\R^3$ is by rotations about some axis. The Dynkin diagram that will be associated to $G\cong\Z_n$ is $A_{n-1}$. 
    \item A finite subgroup $G$ could be conjugate to the \emph{binary dihedral group}, denoted $\D^*_{2n}\subset\SU(2)$, for some integer $n>1$. The binary dihedral group satisfies $|\D_{2n}^*|=4n$ and has $n+3$ conjugacy classes. We provide explicit matrix generators of $\D_{2n}^*$ in Section \ref{subsection: dihedral}, and descriptions of the $n+3$ conjugacy classes in Table \ref{table: Dihedral homotopy classes of Reeb orbits}. The group $\D_{2n}:=P(\D^*_{2n})\subset\SO(3)$ is the \emph{dihedral group}, and has $2n$ elements. Explicit matrix generators of $\D_{2n}$ are also provided in Section \ref{subsection: dihedral}. The natural $\D_{2n}$-action on $S^2\subset \R^3$ is given by symmetries of a regular $2n$-gon whose vertices are contained in a great circle of $S^2$. The Dynkin diagram that will be associated to $G\cong\D_{2n}^*$ is $D_{n+2}$.
    \item Finally a finite subgroup $G$ is \emph{binary polyhedral} if the image group $P(G)\subset\SO(3)$ can be identified with the symmetry group of a regular polyhedron, inscribed in $S^2$. This polyhedron is either a tetrahedron, an octahedron, or an icosahedron (the cube and dodecahedron can be omitted by duality), in which case the symmetry group $P(G)\subset\SO(3)$ is denoted $\T$, $\Oc$, or $\I$, and the group order of $P(G)$ is 12, 24, or 60\footnote{These facts may easily be checked by considering an geometric orbit-stabilizer argument.}. In each respective case, we write $G=\T^*$, $\Oc^*$, or $\I^*$; $|G|$  is 24, 48, or 120, and $G$ has 7, 8 , or 9 conjugacy classes. The Dynkin diagram that will be associated to $G\cong\T^*$, $\Oc^*$, or $\I^*$ is $E_6$, $E_7$, or $E_8$.
\end{enumerate}
It is a classical fact that any finite subgroup $G\subset\SU(2)$ must be either cyclic, conjugate to $\D_{2n}^*$, or is a binary polyhedral group (\cite[\S 1.6]{Z}). Associated to each  type of finite subgroup $G\subset\SU(2)$ is a finite graph, $\Gamma(G)$. The  vertices of $\Gamma(G)$ are in correspondence with the nontrivial irreducible representations of $G$ (of which there are $|\text{Conj}(G)|-1$, where $\text{Conj}(G)$ denotes the set of conjugacy classes of a group $G$), and adjacency is determined by coefficients of tensor products of representations. For more details regarding this construction of $\Gamma(G)$, consult \cite{St}. Again, $\Gamma(G)$ is isomorphic to one of the $A_n$, $D_n$, or the $E_6$, $E_7$, or $E_8$ graphs. Table \ref{table: diagrams of graphs} lists $\Gamma(G)$.

\begin{table}[h!]
\centering
 \begin{tabular}{||m{2cm} | m{2.5cm} | m{1.5cm} ||} 
 \hline
$G\subset\SU(2)$  & $|\text{Conj}(G)|-1$ & $\Gamma(G)$  \\ [0.5ex] 
 \hline\hline
 $\Z_n$ & $n-1$ & $A_{n-1}$  \\ 
 \hline
 $\D_{2n}^*$ & $n+2$ & $D_{n+2}$\\
 \hline
  $\T^*$ & $6$ & $E_6$\\
 \hline
   $\Oc^*$ & $7$ & $E_7$\\
 \hline
    $\I^*$ & $8$ & $E_8$\\
 \hline
\end{tabular}
\caption{Dykin diagrams associated to finite subgroups $G\subset\SU(2)$. See Section \ref{subsection: free homotopy classes represented by reeb orbits}} for more discussion regarding $\text{Conj}(G)$, the set of conjugacy classes of $G$.
\label{table: diagrams of graphs}
\end{table}

The McKay correspondence tells us that, given any variety $V$ exhibiting a simple singularity $p\in V$, and given any finite subgroup $G$ of $\SU(2)$, then $\Gamma(V,p)=\Gamma(G)$ if and only if a neighborhood of $p$ in $V$ is isomorphic to a neighborhood of $[(0,0)]$ in $\C^2/G$.

The \emph{link} of the singularity $V=f^{-1}_G(0)\subset\C^3$ is the 3-dimensional contact manifold $L:=S^5_{\epsilon}(0)\cap V$, with contact structure $\xi_L:=TL\cap J_{\C^3}(TL)$, where $J_{\C^3}$ is the standard integrable almost complex structure on $\C^3$, and $\epsilon>0$ is small. Analogously to the isomorphism of varieties $\C^2/G\cong V$, we have a contactomorphism $(S^3/G,\xi_G)\simeq(L,\xi_L)$, where $\xi_G$ on $S^3/G$ is the descent of the standard contact structure $\xi$ on $S^3$ to the quotient by the $G$-action. 

We adapt a method of computing the cylindrical contact homology of $(S^3/G,\xi_G)$ as a direct limit of filtered homology groups, described by Nelson in \cite{N2}. This process uses a Morse function, invariant under a symmetry group, to perturb a degenerate contact form. We prove a McKay correspondence result, that the ranks of the cylindrical contact homology of these links are given in terms of the number of conjugacy classes of the group $G$. Our computations agree with McLean and Ritter's work, which provides the positive $S^1$-equivariant symplectic cohomology of the crepant resolution of $\C^n/G$ similarly in terms of the number of conjugacy classes of the finite $G\subset\SU(n)$, \cite[Corollary 2.11]{MR}.

\subsection{Definitions and overview of cylindrical contact homology}\label{subsection: definitions}
Let $M$ be a $2n+1$ dimensional smooth manifold, and let $\xi\subset TM$ be a rank $2n$ sub-bundle of $TM$. We say that $\xi$ is a \emph{contact structure} on $M$ if, for any 1-form $\lambda$ that locally describes $\xi$\footnote{We say that $\lambda$ describes $\xi$ if, for all $p$ where $\lambda$ is defined, $\text{ker}(\lambda_p)=\xi_p$. Note that such a $\lambda$ may only be locally defined. There exists a global 1-form describing $\xi$ precisely when the quotient line bundle $TM/\xi\to M$ is trivial.}, \[\lambda\wedge(d\lambda)^n\in\Omega^{2n+1}(M)\] is nowhere vanishing. In this case, we say $(M,\xi)$ is a \emph{contact manifold}. If the contact structure $\xi$ is defined by a global 1-form $\lambda$, then we say that $\lambda$ is a \emph{contact form} on $M$ for $\xi$.

A contact distribution $\xi$ is \emph{maximally non-integrable}, meaning there does not anywhere exist a hypersurface $S$ in $M$ whose tangent space agrees with $\xi$ at every point of $S$. When $M$ is 3-dimensional, the condition that $\xi$ is a contact structure is equivalent to \[[X,Y]_p\notin\xi_p,\] for all $p\in M$ and all pointwise-linear independent sections $X$ and $Y$ of $\xi$ defined locally near $p$. If $\lambda$ is a contact form on $M$ defining contact distribution $\xi$, and if $f:M\to\R$ is a smooth, nowhere vanishing function on $M$, then $f\lambda$ is also a contact form on $M$ defining the same contact structure $\xi$. 

A \emph{contactomorphism} $f$ from contact manifold $(M_1,\xi_1)$ to contact manifold $(M_2,\xi_2)$ is a diffeomorphism $f:M_1\to M_2$ such that $f_*(\xi_1)=\xi_2$. That is, a contactomorphism takes vectors in the contact distribution of the first manifold to vectors in the contact distribution of the second manifold. If $\lambda_i$ is a contact form on $M_i$ defining the contact structure $\xi_i$, then we say that $f:M_1\to M_2$ is a \emph{strict} contactomorphism if $f^*\lambda_2=\lambda_1$. Strict contactomorphisms are always contactomorphisms.

\begin{example}
Take $M=\R^{2n+1}$ with coordinates $\{(z,x_1,y_1,\cdots, x_n,y_n)\}$. Set \[\lambda_0:=dz+\sum_{k=1}^nx_kdy_k.\] Now we have that \[d\lambda_0=\sum_{k=1}^ndx_k\wedge dy_k,\,\implies \lambda_0\wedge(d\lambda_0)^n=dz\wedge dx_1\wedge dy_1\wedge\cdots\wedge dx_n\wedge dy_n\neq0,\] and we see that $\lambda_0$ is a contact form on $M$. For any $z_0\in\R$, the translation \[f_{z_0}:M\to M,\,\,\,f_{z_0}(z,x_1,\dots,y_n)=(z+z_0,x_1,\dots, y_n)\] defines a strict contactomorphism of $M$.
\end{example}
Contact manifolds are the odd-dimensional siblings of \emph{symplectic manifolds}. A symplectic form $\omega$ on a  manifold $W$ is a closed, nondegenerate 2-form. If $W$ admits a symplectic form then we say that $(W,\omega)$ is a \emph{symplectic manifold}. A symplectic manifold $W$ is necessarily even dimensional, and we write $\text{dim}(W)=2n$. Furthermore, $\omega^n$ is a volume form on $W$. A diffeomorphism $f$ from symplectic manifold $(W_1,\omega_1)$ to symplectic manifold $(W_2,\omega_2)$ is a \emph{symplectomorphism} if $f^*\omega_2=\omega_1$.

\begin{example}
Take $W=\R^{2n}$ with coordinates $(x_1,y_1,\cdots,x_n,y_n)$. Set \[\omega_0:=\sum_{k=1}^ndx_k\wedge dy_k.\] Now we have that \[\omega_0^n=dx_1\wedge dy_1\wedge\cdots\wedge dx_n\wedge dy_n\neq0.\] Thus, $\omega_0$ is a symplectic form on $W$.
\end{example}

There are substantial interplays between symplectic and contact geometry;
\begin{enumerate}
    \item Given a $2n+1$-dimensional contact manifold $(M,\xi=\text{ker}(\lambda))$, the \emph{symplectization} of $M$ is the $2n+2$-dimensional manifold $\R\times M$ with symplectic form $\omega:=d(e^r\pi_M^*\lambda)$, where $\pi_M:W\to M$ is the canonical projection to $M$, and $r$ is the $\R$-coordinate.
    \item \emph{Contact-type hypersurfaces} in symplectic manifolds have induced contact structures, inherited from the ambient symplectic form. These hypersurfaces generalize the notion of constant energy level sets in phase space. For example, $S^3$ is a contact-type hypersurface in $(\R^4,\omega_0)$, which we define and work with in Section \ref{subsection: spherical geometry and associated Reeb dynamics}.
    \item A \emph{prequantization bundle} is a contact manifold $(Y,\lambda)$ presenting as a principal $S^1$-bundle over a symplectic manifold, \[\pi:(Y,\lambda)\to(V,\omega),\] satisfying $\pi^*\omega = -d\lambda$. For more details, see (\cite[Example 3.5.11]{MS}). The Hopf fibration,  $\fP:S^3\to S^2$, detailed in Section \ref{subsection: spherical geometry and associated Reeb dynamics}, is an example of a prequantization bundle. 
    \item On the linear level, if $\lambda$ is a contact form on $M$, then for any $p\in M$, $d\lambda|_{\xi_p}$ defines a \emph{linear symplectic form} on the even dimensional $\xi_p\subset T_pM$. Consequently, we say that $(\xi,d\lambda|_{\xi})$ is an example of a \emph{symplectic vector bundle} over $M$.
\end{enumerate}

In the interest of presenting an outline of cylindrical contact homology, let $(Y,\xi)$ be a closed contact 3-manifold with defining contact form $\lambda$. Most of the following objects and constructions described also exist in arbitrary odd dimensions, but the specialization to $2n+1=3$ simplifies some key parts. 

The contact form $\lambda$ on $Y$ determines a smooth vector field, $R_{\lambda}$, called the \emph{Reeb vector field}, which uniquely satisfies $\lambda(R_{\lambda})=1$ and $d\lambda(R_{\lambda},\cdot)=0$. For $t\in\R$, let $\phi^t:Y\to Y$ denote the time $t$ Reeb flow. Explicitly, this is a 1-parameter family of diffeomorphisms of $Y$ satisfying \[\tfrac{d}{dt}\phi^t|_p=R_{\lambda}(\phi^t(p))\in T_{\phi^t(p)}Y, \,\,\,\text{for all}\,\, p\in\ Y.\] A $\emph{Reeb orbit}$ is a map  $\gamma:\R/T\Z\to Y$ with $\gamma'(t)=R_{\lambda}(\gamma(t))$, for some $T>0$. This positive quantity $T$ is called the \emph{action} (or sometimes \emph{period}) of $\gamma$, denoted $\mathcal{A}(\gamma)$, and can always be computed via an integral; \[T=\mathcal{A}(\gamma)=\int_{\gamma}\lambda.\] We consider Reeb orbits up to reparametrization, meaning that, given some fixed $s\in\R$, we do not distinguish between the Reeb orbit $t\mapsto \gamma(t)$ and the Reeb orbit $t\mapsto\gamma(t+s)$, for $t\in\R/T\Z$. Let $\mathcal{P}(\lambda)$ denote the set of Reeb orbits of $\lambda$ up to reparametrization. If $\gamma\in\mathcal{P}(\lambda)$ and $m\in\N$, then the $m$-fold iterate of $\gamma$, a map $\gamma^m:\R/mT\Z\to Y$, is the precomposition of $\gamma$ with the $m$-fold cover, $\R/mT\Z\to\R/T\Z$, and is also an element of $\mathcal{P}(\lambda)$. The orbit $\gamma$ is \emph{embedded} when $\gamma:\R/T\Z\to Y$ is injective. If $\gamma$ is the $m$-fold iterate of an embedded Reeb orbit, then $m(\gamma):=m$ is the \emph{multiplicity} of $\gamma$.

The linearized return map of $\gamma\in\mathcal{P}(\lambda)$, denoted $P_{\gamma}$, is the restriction of the time $T$ linearized Reeb flow to a linear symplectomorphism of $(\xi_{\gamma(0)},d\lambda|_{\xi_{\gamma(0)}})$,  defined only after selection of a basepoint $\gamma(0)\in\text{im}(\gamma)$. That is, \[P_{\gamma}:=d\phi^T|_{\xi_{\gamma(0)}}:\xi_{\gamma(0)}\to \xi_{\gamma(0)}.\] Two linearized return maps corresponding to different choices of basepoint differ by conjugation, so the spectrum of $P_{\gamma}$ is independent of this choice. We say $\gamma$ is \emph{nondegenerate} if $1\notin\text{Spec}(P_{\gamma})$, and that  $\lambda$ is \emph{nondegenerate} if all $\gamma\in\mathcal{P}(\lambda)$ are nondegenerate. The product of eigenvalues of $P_{\gamma}$ equals 1: if both eigenvalues are unit complex numbers, then $\gamma$ is \emph{elliptic}, and if both eigenvalues are positive (negative) real numbers, then $\gamma$ is \emph{positive (negative) hyperbolic}. An orbit $\gamma\in\mathcal{P}(\lambda)$ is \emph{bad} if it is an even iterate of a negative hyperbolic orbit, otherwise, $\gamma$ is \emph{good}. In Section \ref{subsection: cylindrical contact homology as an analogue of orbifold Morse homology} we provide a bit of discussion as to which characteristics of bad Reeb orbits give them their name. Let $\mathcal{P}_{\text{good}}(\lambda)\subset\mathcal{P}(\lambda)$ denote the set of good Reeb orbits.

 For a nondegenerate contact form $\lambda$, define the  $\Q$-vector space generated by $\mathcal{P}_{\text{good}}(\lambda)$: \[CC_*(Y,\lambda):=\Q\langle\mathcal{P}_{\text{good}}(\lambda)\rangle.\] We equip this vector space with a grading using the \emph{Conley-Zehnder index}, which we describe now. Given $\gamma\in\mathcal{P}(\lambda)$, a \emph{symplectic trivialization} $\tau$ of $(\gamma^*\xi,d\lambda|_{\xi})$ is a diffeomorphism \[\tau:\gamma^*\xi\to \R/T\Z\times\R^2,\]
 that restricts to linear symplectomorphisms $(\xi_{\gamma(t)},d\lambda|_{\xi_{\gamma(0)}})\to(\{t\}\times\R^2,\omega_0),$ where $\omega_0$ is the standard linear symplectic form on $\R^2$. Symplectic trivializations of $(\gamma^*\xi,d\lambda|_{\xi})$ always exist. Now, given $\gamma\in\mathcal{P}(\lambda)$ and symplectic trivialization $\tau$, the \emph{Conley-Zehnder index}, $\mu_{\CZ}^{\tau}(\gamma)\in\Z$, is an integer that describes the rotation of the Reeb flow along $\gamma$ according to the trivialization, and depends only on the homotopy class of $\tau$. We develop multiple constructions and perspectives of the Conley-Zehnder index in Section \ref{subsection: the cz index}.
 
A bundle automorphism $J$ of a rank $2m$ symplectic vector bundle $(E,\omega)$ over a manifold $X$ is called a \emph{complex structure} on $E$ if $J^2=-\text{Id}$. The complex structure $J$ is $\omega$-\emph{compatible} if $\omega(\cdot, J\cdot)$ defines a $J$-invariant inner product on the fibers of $E$. The collection of $\omega$-compatible complex structures on $E$ is denoted $\mathcal{J}(\omega)$, and is non-empty and contractible (\cite[Proposition 2.6.4]{MS}). Any choice of $J\in\mathcal{J}(\omega)$ endows $E$ with the structure of a rank $m$ complex vector bundle, inducing Chern classes $c_i(E)\in H^{2i}(X;\Z)$, for $i\in\N$. Contractibility of $\mathcal{J}(\omega)$ ensures that these cohomology classes are independent in choice of $J$. Because $(\xi,d\lambda|_{\xi})$ defines a rank 2 symplectic vector bundle over $Y$, this discussion tells us that the first Chern class $c_1(\xi)\in H^2(Y;\Z)$ is well-defined. We say a complex structure $J$ on $\xi$ is $\lambda$-compatible if it is $d\lambda|_{\xi}$-compatible, and we set $\mathcal{J}(\lambda):=\mathcal{J}(d\lambda|_{\xi})$. We call $\xi$ a complex line bundle even without a $J$ specified.
 
 If $\langle c_1(\xi), \pi_2(Y)\rangle=0$ and if $\mu_{\CZ}^{\tau}(\gamma)\geq3$ for all contractible  $\gamma\in\mathcal{P}(\lambda)$, with any $\tau$ extendable over a disc, we say the nondegenerate contact form $\lambda$ is \emph{dynamically convex}. The complex line bundle $\xi$ admits a global trivialization if $c_1(\xi)=0$, which is unique up to homotopy\footnote{This  is due to the fact that two global trivializations of $\xi$ differ by a smooth map $Y\to\C^*$, which must nullhomotopic if the abelianization of $\pi_1(Y)$ is completely torsion. Furthermore, for a topological space $X$ with finitely generated integral homology groups, the integer $\beta_i(X)$ denotes the rank of the free part of $H_i(X;\Z).$} if $\beta_1(Y)=0$. In this case, the integral grading $|\gamma|$ of the generator $\gamma\in\mathcal{P}(\lambda)_{\text{good}}$ is defined to be $\mu_{\CZ}^{\tau}(\gamma)-1$ for any $\tau$ induced by a global trivialization of $\xi$. The links $(S^3/G,\xi_G)$ that we study in this paper all feature $c_1(\xi_G)=0$ and $\beta_1(S^3/G)=0$, and thus $\xi_G$ admits a global trivialization (explicitly described in Section \ref{section: geometric setup}), unique up to homotopy, inducing a canonical $\Z$-grading on the chain complex. 
 
Any  $J\in\mathcal{J}(\lambda)$ can be uniquely extended to an $\R$-invariant, $\omega=d(e^r\pi_Y^*\lambda)$-compatible almost complex structure on the symplectization $\R\times Y$, mapping $\partial_r\mapsto R_{\lambda}$, which we continue to call $J$. A smooth map \[u:\R\times S^1\to \R\times Y,\,\,(s,t)\mapsto u(s,t)\] is $J$-\emph{holomorphic} if $du\circ j=J\circ du$, where $j$ is the standard complex structure on the cylinder $\R\times S^1$, satisfying $j\partial_s=\partial_t$, and $S^1=\R/\Z$. The map $u$ is said to be positively (negatively) asymptotic to the Reeb orbit $\gamma$ if $\pi_Y\circ u(s,\cdot)$ converges uniformly to a parametrization of $\gamma$ as $s\to\infty$ ($s\to-\infty$) and if $\pi_{\R}\circ u(s,t)\to\pm\infty$ as $s\to\pm\infty$.

Given $\gamma_+$ and $\gamma_-$ in $\mathcal{P}(\lambda)$, $\mathcal{M}^J(\gamma_+,\gamma_-)$ denotes the set of $J$-holomorphic cylinders positively (negatively) asymptotic to $\gamma_+$ ($\gamma_-$), modulo holomorphic reparametrization of $(\R\times S^1, j)$. Note that $\R$ acts on $\mathcal{M}^J(\gamma_+,\gamma_-)$ by translation on the target. If $m\in\N$ and $u\in\mathcal{M}^J(\gamma_+,\gamma_-)$, we can precompose $u$ with the unbranched degree $m$-holomorphic covering map of cylinders, and we denote the composition by $u^m;$ \[u^m:\R\times S^1\xrightarrow{m\,:\,1}\R\times S^1\xrightarrow{u}\R\times Y.\]
This $m$-\emph{fold cover} $u^m$ is $J$-holomorphic and is an element of $\mathcal{M}^J(\gamma_+^m,\gamma_-^m)$. 

A $J$-holomorphic cylinder $u$ is \emph{somewhere injective} if there exists a point $z$ in the domain for which $u^{-1}(u(z))=\{z\}$ and $du_z\neq0$. Every $J$-holomorphic cylinder is the $m$-fold cover of a somewhere injective $J$-holomorphic cylinder $v$, written $u=v^m$ (\cite[Theorem 3.7]{N1}). Define the \emph{multiplicity} of $u\in\mathcal{M}^J(\gamma_+,\gamma_-)$, $m(u)$, to be this integer $m$. It is true that $m(u)$ divides both $m(\gamma_{\pm})$ (a fact that can be seen by considering the number of preimages of a single point in the image of the Reeb orbits), but it need not be the case that $\text{GCD}(m(\gamma_+),m(\gamma_-))=m(u)$, as discussed in Remark \ref{remark: multiplicity of u is not always gcd}.

Take a cylinder $u\in \mathcal{M}^J(\gamma_+,\gamma_-)$, and symplectically trivialize $\gamma_{\pm}^*\xi$ via $\tau_{\pm}$. Let $\tau$ denote the trivialization over the disjoint union of Reeb orbits. The \emph{Fredholm index of} $u$ is an integer associated to the cylinder which is related to the dimension of the moduli space in which $u$ appears. Computationally, this index may be expressed as \[\text{ind}(u)=\mu_{\CZ}^{\tau_+}(\gamma_+)-\mu_{\CZ}^{\tau_-}(\gamma_-)+2c_1(u^*\xi,\tau),\] where $c_1(u^*\xi,\tau)$ is a relative first Chern number which vanishes when $\tau$ extends to a trivialization of $u^*\xi$ (see \cite[\S 3.2]{H} for a full definition). This  term should be thought of as a correction term, determining how compatible the trivializations $\tau_{\pm}$ of $\gamma_{\pm}^*\xi$ are. That is, one could make the difference $\mu_{\CZ}^{\tau_+}(\gamma_+)-\mu_{\CZ}^{\tau_-}(\gamma_-)$ as arbitrarily large or small as desired, by choosing $\tau_{\pm}$ appropriately.\footnote{This is possible by the fact that Conley-Zehnder indices arising from two different choices of trivialization of $\xi$ along a single Reeb orbit differ by twice the degree of the associated overlap map (see \cite[Exercise 3.37]{W3}).} The value of  $c_1(u^*\xi,\tau)$ accommodates for this freedom so that $\text{ind}(u)$ is independent in the choice of trivializations. Of course, when $c_1(\xi)=0$, a global trivialization $\tau$ exists. For this $\tau$, and for any cylinder $u$, the integer $c_1(u^*\xi,\tau)$ is always 0, and the Fredholm index of $u$ is simply \[\text{ind}(u)=\mu_{\CZ}(\gamma_+)-\mu_{\CZ}(\gamma_-),\] where $\mu_{\CZ}(\gamma)$ is implicitly computed with respect to the global trivialization.

For  $k\in\Z$, $\mathcal{M}_k^J(\gamma_+,\gamma_-)\subset \mathcal{M}^J(\gamma_+,\gamma_-)$ denotes those cylinders with $\text{ind}(u)=k$. If $u\in \mathcal{M}_k^J(\gamma_+,\gamma_-)$ is somewhere injective and $J$ is generic, then  $\mathcal{M}^J_{k}(\gamma_+,\gamma_-)$ is a manifold of dimension $k$ near $u$ (see \cite[\S 8]{W2}). 

\begin{remark}
If $\text{GCD}(m(\gamma_+),m(\gamma_-))=1$, then every cylinder $u\in\mathcal{M}^J(\gamma_+,\gamma_-)$ must have $m(u)=1$, and particular, is somewhere injective, implying $\mathcal{M}^J_k(\gamma_+,\gamma_-)$ is a $k$-dimensional manifold.
\end{remark}

\begin{definition}\label{definition: trivial cylinders} (Trivial cylinders) Given a Reeb orbit $\gamma:\R/T\Z\to Y$ with period $T>0$, we define  the \emph{trivial cylinder} over $\gamma$ by \[u_{\gamma}:\R\times \R/\Z\to\R\times Y,\,\,(s,t)\mapsto (T\cdot s,\gamma(T\cdot t)).\]
Note that \[\frac{\partial u_{\gamma}}{\partial s}(s,t)=T\partial_r\in T_p(\R\times Y),\,\,\,\text{and}\,\,\,\frac{\partial u_{\gamma}}{\partial t}(s,t)=TR_{\lambda}\in T_p(\R\times Y),\] where $p:=u_{\gamma}(s,t)$, and $r$ is the $\R$-coordinate on the symplectization. Importantly, this provides that (for any choice of complex structure $J\in\mathcal{J}(\lambda)$) \[J\frac{\partial u_{\gamma}}{\partial s}=JT\partial_r=TJ\partial_r=TR_{\lambda}=\frac{\partial u_{\gamma}}{\partial t},\] implying that $J\circ du_{\gamma}=du_{\gamma}\circ j$. The cylinder $u_{\gamma}$ is called the \emph{trivial cylinder} associated to $\gamma$, and is asymptotic to $\gamma$ at $\pm\infty$. By appealing to Stokes' theorem and action of Reeb orbits, one concludes that any other parametrized $J$-holomorphic cylinder asymptotic to $\gamma$ at both ends is a holomorphic reparametrization of $u_{\gamma}$ so that $\mathcal{M}^J(\gamma,\gamma)$ is a singleton set. Note that the $\R$-action on $\mathcal{M}^J(\gamma,\gamma)$ is trivial and that the Fredholm index of $u_{\gamma}$ is 0.

\end{definition}

Suppose that for every pair of good Reeb orbits $\gamma_{\pm}$ that $\mathcal{M}_1^J(\gamma_+,\gamma_-)/\R$ is a compact, zero dimensional manifold. In this desired case, the differential \[\partial:CC_*(Y,\lambda)\to CC_{*-1}(Y,\lambda)\] between two generators $\gamma_{\pm}\in\mathcal{P}_{\text{good}}(\lambda)$ may be defined as the signed and weighted count of $\mathcal{M}_1^J(\gamma_+,\gamma_-)/\R$: \begin{equation}\label{equation: CCH differential} \langle\partial \gamma_+,\gamma_-\rangle=\sum_{u\in\mathcal{M}_1^J(\gamma_+,\gamma_-)/\R}\epsilon(u)\frac{m(\gamma_+)}{m(u)}\in\Z.\end{equation} Here, the values $\epsilon(u)\in\{\pm1\}$ are induced by a choice of coherent  orientations (see \cite{BM}). Recall that  $m(u)$ divides $m(\gamma_{\pm})$, and so this count is always an integer. In favorable circumstances, $\partial^2=0$ (for example, if $\lambda$ is dynamically convex, \cite[Theorem 1.3]{HN}),  and the resulting homology is denoted $CH_*(Y,\lambda,J)$. Under additional hypotheses, this homology is independent of contact form $\lambda$ defining $\xi$ and generic $J$ (for example, if $\lambda$ admits no contractible Reeb orbits, \cite[Corollary 1.10]{HN2}), and is denoted $CH_*(Y,\xi)$. This is the \emph{cylindrical contact homology} of $(Y,\xi)$. Upcoming work of Hutchings and Nelson will show that $CH_*(Y,\xi)=CH_*(Y,\lambda,J)$ is independent of dynamically convex $\lambda$ and generic $J$.

\subsection{Main result and connections to other work}

The link of the $A_n$ singularity is shown to be contactomorphic to the lens space $L(n+1,n)$ in \cite[Theorem 1.8]{AHNS}. More generally, 
  the links of simple singularities $(L,\xi_L)$ are shown to be contactomorphic to quotients $(S^3/G,\xi_G)$ in \cite[Theorem 5.3]{N3}, where $G$ is related to $L$ via the McKay correspondence.  Theorem \ref{theorem: main} computes the cylindrical contact homology of $(S^3/G,\xi_G)$ as a direct limit of action filtered homology groups.

\begin{theorem} \label{theorem: main}
    Let $G\subset\mbox{\em SU}(2)$ be a finite nontrivial group, and let $m=|\mbox{\em Conj}(G)|\in\N$, the number of conjugacy classes of $G$. The cylindrical contact homology of $(S^3/G, \xi_G)$ is \[CH_*(S^3/G,\xi_G):=\varinjlim_N CH_*^{L_N}(S^3/G,\lambda_N, J_N)\cong\bigoplus_{i\geq0}\Q^{m-2}[2i]\oplus\bigoplus_{i\geq0} H_*(S^2;\Q)[2i].\]
    \end{theorem}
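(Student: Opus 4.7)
The plan is to follow the Morse--Bott perturbation scheme of Nelson (cited in the excerpt) adapted to the Seifert-fibered quotients $S^3/G \to S^2/H$. The standard contact form $\lambda_0$ on $S^3$ descends to a contact form on $S^3/G$ whose Reeb vector field generates the quotient of the Hopf flow; all its orbits are closed and come in $S^1$-families parametrized by the orbifold base $S^2/H$, where $H = P(G) \subset \SO(3)$. This gives a Morse--Bott degeneration: every point of $S^2/H$ corresponds to a circle of Reeb orbits, and the iterates of the generic fiber (period $2\pi$) together with the finitely many short exceptional fibers over the orbifold points of $S^2/H$ account for all closed Reeb trajectories.

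First I would carry out the geometric setup of Section \ref{section: geometric setup}: identify the three orbifold points of $S^2/H$ with their isotropy orders $(\alpha_1, \alpha_2, \alpha_3)$ (determined by the ADE type of $G$), and note that $m = |\text{Conj}(G)| = \alpha_1+\alpha_2+\alpha_3 - 1$ via the standard count for Seifert invariants. Choose an $H$-invariant Morse function $f: S^2 \to \R$ with critical points exactly at the three orbifold points (a minimum, a maximum, and a saddle family), and perturb $\lambda$ on $S^3/G$ by $1+\epsilon f\circ\fP$ (where $\fP$ is the Hopf-type projection). Standard Morse--Bott theory then produces, for each action window $L_N = 2\pi N$, a nondegenerate contact form $\lambda_N$ whose Reeb orbits below action $L_N$ are in bijection with pairs $(p, k)$, where $p$ is a critical point of $f$ on $S^2/H$ and $k$ is an iteration multiplicity up to $N$; each such orbit splits into two nondegenerate orbits whose Conley--Zehnder indices differ by $1$ and can be expressed in terms of the Morse index of $p$ and the rotation number of the linearized Reeb flow along the fiber.

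Next I would perform the Conley--Zehnder index computation using the global trivialization of $\xi_G$ (available since $c_1(\xi_G)=0$ and $\beta_1(S^3/G)=0$). The key point is that the rotation number along the $k$-th iterate of a fiber of multiplicity $\alpha$ contributes roughly $2\lfloor k/\alpha\rfloor + $ (Morse correction), so that after perturbation every generator of $CC_*^{L_N}(S^3/G, \lambda_N, J_N)$ lands in an \emph{even} degree. Counting generators in each degree $2i$: the minimum and maximum of $f$ contribute the $H_0(S^2;\Q)$ and $H_2(S^2;\Q)$ summands (these are the generic-fiber iterates), while the remaining exceptional-fiber iterates from the three orbifold points contribute exactly $m - 2$ generators (this is where the conjugacy-class count enters, matching the numerology of the McKay correspondence). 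Because every generator has even grading, the differential \eqref{equation: CCH differential} is automatically zero for parity reasons, so $CH_*^{L_N} \cong CC_*^{L_N}$.

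The main obstacle, and what I would spend the most care on, is the direct limit. Increasing $N$ both adds new high-action generators (new blocks $[2i]$ for larger $i$) and changes the perturbation data, so one must show that the structure maps $CH_*^{L_N} \to CH_*^{L_{N+1}}$ are the obvious inclusions onto the low-degree summands. This requires a cobordism argument comparing $(\lambda_N, J_N)$ with $(\lambda_{N+1}, J_{N+1})$ combined with the dynamical convexity of $\lambda_N$ (which will force the count to stabilize; see the upcoming Hutchings--Nelson invariance result referenced in the excerpt). Granted this, taking $\varinjlim_N$ yields the claimed $\bigoplus_{i\geq 0} \Q^{m-2}[2i] \oplus \bigoplus_{i\geq 0} H_*(S^2;\Q)[2i]$. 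The technical details of bubbling/compactness for the Morse--Bott cylinders, and verifying that no contractible Reeb orbit violates dynamical convexity after perturbation, are the remaining analytic hurdles, handled in Sections \ref{section: computation of filtered contact homology} and \ref{section: direct limits of filtered homology}.
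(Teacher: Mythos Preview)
Your overall strategy---perturb the degenerate contact form by an $H$-invariant Morse function pulled back from the base, compute Conley--Zehnder indices to see even grading so the filtered differential vanishes, then take a direct limit via cobordism maps---is exactly the paper's approach. However, there are two concrete errors and one substantive gap.

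First, the claim that ``each such orbit splits into two nondegenerate orbits whose Conley--Zehnder indices differ by $1$'' is incorrect. The Morse--Bott family for a given action level is parametrized by the $2$-dimensional base $S^2/H$, not by a circle; after perturbation each critical point $p$ of $f_H$ yields exactly \emph{one} embedded nondegenerate Reeb orbit $\gamma_p$ (Lemma~\ref{lemma: orbits}), with $\mu_{\CZ}(\gamma_p^k)$ determined by $k$ and $\text{ind}_f(p)$ via Lemma~\ref{lemma: ActionThresholdLink}. There is no doubling. Relatedly, all surviving orbits are exceptional fibers over orbifold points, not ``generic-fiber iterates''; the $H_*(S^2;\Q)$ and $\Q^{m-2}$ summands both arise from iterates of these same exceptional fibers, organized by multiplicity.

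Second, and more seriously, you omit the role of \emph{bad Reeb orbits}. Over the index-$1$ (saddle) critical point, the embedded Reeb orbit is \emph{negative hyperbolic}---this is verified directly from the linearized return map in Lemmas~\ref{lemma: CZdihedral} and~\ref{lemma: CZpolyhedral}---so its even iterates are bad and must be discarded from $CC_*$. These even iterates are precisely the generators that would land in odd degree; without discarding them your parity argument for $\partial^{L_N}=0$ does not go through. This is not a technicality one can defer: it is the mechanism by which the odd-degree part of the complex empties out.

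Third, the setup with three orbifold points and the identity $m=\alpha_1+\alpha_2+\alpha_3-1$ only covers the dihedral and polyhedral cases. For cyclic $G\cong\Z_n$ the base $S^2/H$ has only two cone points and the height function has no saddle, hence no hyperbolic orbits or bad-orbit issues arise there; the paper treats this case separately in Section~\ref{subsection: cyclic}.
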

     The directed system of action filtered cylindrical contact homology groups \[CH_*^{L_N}(S^3/G,\lambda_N,J_N),\,\,\,\text{for}\,\,N\in\N,\] is described in Section \ref{subsection: structure of proof of main theorem}. Upcoming work of Hutchings and Nelson will show that the direct limit of this system is an invariant of $(S^3/G,\xi_G)$, in the sense that it is isomorphic to $CH_*(S^3/G,\lambda,J)$  where $\lambda$ is any dynamically convex contact form on $S^3/G$ with kernel $\xi_G$, and $J\in\mathcal{J}(\lambda)$ is generic. 
     
     The brackets in Theorem \ref{theorem: main} describe the degree of the grading. For example, $\Q^8[5]\oplus H_*(S^2;\Q)[3]$ is a ten dimensional space with nine dimensions in degree 5, and one dimension in degree 3. By the classification of finite subgroups $G$ of $\SU(2)$ (see Table \ref{table: diagrams of graphs}), the following enumerates the possible values of $m=|\text{Cong}(G)|$: 
\begin{enumerate}
        \itemsep-.35em
       \item If $G$ is cyclic of order $n$, then $m=n$.
       \item If $G$ is binary dihedral, written  $G\cong\D^*_{2n}$ for some $n$, then $m=n+3$.
       \item If $G$ is binary tetrahedral, octahedral, or icosahedral, written $G\cong\T^*, \Oc^*$, or $\I^*$, then  $m=7, 8$, or $9$, respectively.
\end{enumerate}

\begin{remark}
Theorem \ref{theorem: main} can alternatively be expressed as
\[CH_*(S^3/G,\xi_{G})\cong\begin{cases} \Q^{m-1} & *=0,\\ \Q^{m} & *\geq2\,\, \mbox{and even} \\ 0 &\mbox{else.}\end{cases}\]
In this form, we can compare the cylindrical contact homology to the positive $S^1$-equivariant symplectic cohomology of the crepant resolutions $Y$ of the singularities $\C^n/G$ found by McLean and Ritter. Their work shows that these groups with $\Q$-coefficients are free $\Q[u]$-modules of rank equal to $m=|\text{Conj}(G)|$, where $G\subset\SU(n)$ and $u$ has degree 2 \cite[Corollary 2.11]{MR}. By \cite{BO}, it is expected that these Floer theories are isomorphic.
\end{remark}

The cylindrical contact homology in Theorem \ref{theorem: main} contains information of the manifold $S^3/G$ as a Seifert fiber space, whose $S^1$-action agrees with the Reeb flow of a contact form defining $\xi_G$. Viewing the manifold $S^3/G$ as a Seifert fiber space, which is an $S^1$-bundle over an orbifold surface homeomorphic to $S^2$ (see Diagram \ref{diagram: commuting square} and Figure \ref{figure: fundamental domain}), the copies of $H_*(S^2;\Q)$ appearing in Theorem \ref{theorem: main} may be understood  as the \emph{orbifold Morse homology}  of this orbifold base $S^2/H$ (see Section \ref{subsection: morse and orbifold morse theory} for the construction of this chain complex). Each orbifold point $p$ with isotropy order $n_p\in\N$ corresponds to an exceptional fiber, $\gamma_p$, in $S^3/G$, which may be realized as an embedded Reeb orbit. The generators of the $\Q^{m-2}[0]$ term are the iterates \[\gamma_p^k,\,\,\, \text{for $p$ an orbifold point, and }\,\, k=1,2,\dots, n_p-1,\] so that the dimension $m-2=\sum_p(n_p-1)$ of this summand can be regarded as a kind of total isotropy of the base. Note that all but finitely many terms $n_p-1$ are nonzero, for $p$ ranging in $S^2/H$. The subsequent terms $\Q^{m-2}[2i]$ are generated by a similar set of $m-2$ iterates of orbits $\gamma_p$.

\begin{remark}
Recent work of Haney and Mark  computes the cylindrical contact homology in \cite{HM} of a family of Brieskorn manifolds $\Sigma(p,q,r)$, for $p$, $q$, $r$ relatively prime positive integers satisfying $\frac{1}{p}+\frac{1}{q}+\frac{1}{r}<1$, using methods from \cite{N2}. Their work uses a family of \emph{hypertight} contact forms, whose Reeb orbits are non-contractible. These manifolds are also Seifert fiber spaces, whose cylindrical contact homology features summands arising from copies of the homology of the orbit space, as well as summands from the total isotropy of the orbifold. 
\end{remark}
\subsection{Structure of proof of main theorem}\label{subsection: structure of proof of main theorem}

We now outline the proof of Theorem \ref{theorem: main}.  Section \ref{section: geometric setup}  explains the process of perturbing a degenerate contact form $\lambda_G$ on $S^3/G$ using an orbifold Morse function. Given a finite, nontrivial subgroup $G\subset\SU(2)$, $H$ denotes the image of $G$ under the double cover of Lie groups $P:\SU(2)\to\SO(3)$. By Lemma \ref{lemma: commutes}, the quotient by the $S^1$-action on the Seifert fiber space $S^3/G$ may be identified with a map $\fp:S^3/G\to S^2/H$. This $\fp$ fits into a commuting square of topological spaces (Diagram \eqref{diagram: commuting square}) involving the Hopf fibration $\fP:S^3\to S^2$.

An $H$-invariant Morse-Smale function on $(S^2, \omega_{\FS}(\cdot,j\cdot))$ (constructed in Section \ref{subsection: constructing morse functions}) descends to an \emph{orbifold Morse function}, $f_H$, on $S^2/H$, in the language of \cite{CH}. Here, $\omega_{\FS}$ is the Fubini-Study form on $S^2\cong\C P^1$, and $j$ is the standard integrable complex structure. By Lemma \ref{lemma: reeb1}, the Reeb vector field of the $\varepsilon$-perturbed contact form $\lambda_{G,\varepsilon}:=(1+\varepsilon\fp^*f_H)\lambda_G$ on $S^3/G$ is the descent of the vector field
\[\frac{R_{\lambda}}{1+\varepsilon\fP^*f}-\varepsilon\frac{\widetilde{X_f}}{(1+\varepsilon\fP^*f)^2}\]
 to $S^3/G$. Here, $\widetilde{X_f}\in \xi\subset TS^3$ is a $\fP$-horizontal lift  of the Hamiltonian vector field $X_f$ of $f$ on $S^2$, computed with respect to $\omega_{\FS}$, and with the convention that $\iota_{X_f}\omega_{\FS}=-df$. Thus, the $\widetilde{X_f}$ term vanishes along exceptional fibers $\gamma_p$ of $S^3/G$ projecting to orbifold critical points $p\in S^2/H$ of $f_H$, implying that these parametrized circles and their iterates $\gamma_p^k$ are Reeb orbits of $\lambda_{G,\varepsilon}$. Lemma \ref{lemma: ActionThresholdLink} computes $\mu_{\CZ}(\gamma_p^k)$ in terms of $k$ and the Morse index of $f_H$ at $p$, with respect to a global trivialization $\tau_G$ of $\xi_G$ (see \eqref{equation: global trivialization}).

Given a contact 3-manifold $(Y,\lambda)$ and $L>0$, we let $\mathcal{P}^L(\lambda)\subset\mathcal{P}(\lambda)$ denote the set of orbits $\gamma$ with $\mathcal{A}(\gamma)=\int_{\gamma}\lambda<L$. A contact form $\lambda$ is $L$-\emph{nondegenerate} when all $\gamma\in\mathcal{P}^L(\lambda)$ are nondegenerate. If $\langle c_1(\xi),\pi_2(Y)\rangle=0$ and $\mu_{\CZ}^{\tau}(\gamma)\geq 3$, for all contractible $\gamma\in\mathcal{P}^L(\lambda)$, with any symplectic trivialization $\tau$ extendable over a disc, we say that the $L$-nondegenerate contact form $\lambda$ is $L$-\emph{dynamically convex}. By Lemma \ref{lemma: ActionThresholdLink}, given $L>0$, all  $\gamma\in\mathcal{P}^L(\lambda_{G,\varepsilon})$ are nondegenerate and project to critical points of $f_H$ under $\fp$, for $\varepsilon>0$ small. This lemma allows for the computation in Section \ref{section: computation of filtered contact homology} of the  \emph{action filtered} cylindrical contact homology, which we describe now. Recall from Section \ref{subsection: definitions}, for $\gamma_{\pm}\in\mathcal{P}_{\text{good}}(\lambda)$ with $\mu_{\CZ}(\gamma_+)-\mu_{\CZ}(\gamma_-)=1$, the differential $\partial$ of cylindrical contact homology is given by \begin{equation}\label{equation: boundary of CCH} \langle\partial \gamma_+,\gamma_-\rangle=\sum_{u\in\mathcal{M}_1^J(\gamma_+,\gamma_-)/\R}\epsilon(u)\frac{m(\gamma_+)}{m(u)}.\end{equation} If $\mathcal{A}(\gamma_+)<\mathcal{A}(\gamma_-)$, $\mathcal{M}^J(\gamma_+,\gamma_-)$ is empty (because action decreases along holomorphic cylinders in a symplectization, by Stokes' Theorem), ensuring that this sum is zero. Thus, after fixing $L>0$, $\partial$ restricts to a differential, $\partial^L$, on the subcomplex generated by  $\gamma\in\mathcal{P}_{\text{good}}^L(\lambda)$, denoted $CC_*^L(Y,\lambda)$,  whose homology is denoted $CH_*^L(Y,\lambda,J)$.

In Section \ref{section: computation of filtered contact homology}, we use Lemma \ref{lemma: ActionThresholdLink} to produce a  sequence $(L_N, \lambda_N, J_N)_{N=1}^{\infty}$, where $L_N\to\infty$ in $\R$, $\lambda_N$ is an $L_N$-dynamically convex contact form for $\xi_G$, and $J_N\in\mathcal{J}(\lambda_N)$ is generic. By Lemmas \ref{lemma: CZdihedral} and \ref{lemma: CZpolyhedral}, every orbit $\gamma\in\mathcal{P}^{L_N}_{\text{good}}(\lambda_N)$ is of even degree, and so $\partial^{L_N}=0$, providing
\begin{equation}\label{equation: Identify}
    CH_*^{L_N}(S^3/G,\lambda_N,J_N)\cong\Q\langle\,\mathcal{P}_{\text{good}}^{L_N}(\lambda_N)\,\rangle\cong\bigoplus_{i=0}^{2N-1}\Q^{m-2}[2i]\oplus\bigoplus_{i=0}^{2N-2} H_*(S^2;\Q)[2i].
\end{equation}

Finally, we prove Theorem \ref{theorem: cobordisms induce inclusions} in Section \ref{section: direct limits of filtered homology}, which states  that a completed symplectic  cobordism $(X,\lambda,J)$  from $(\lambda_N,J_N)$ to $(\lambda_M,J_M)$, for $N\leq M$, induces a  homomorphism,  \[\Psi:CH_*^{L_N}(S^3/G,\lambda_N,J_N)\to  CH_*^{L_M}(S^3/G,\lambda_M,J_M)\] which takes the form of the standard inclusion when making the identification \eqref{equation: Identify}. The proof of Theorem \ref{theorem: cobordisms induce inclusions} comes in two steps. First, the moduli spaces of index 0 cylinders in cobordisms $\mathcal{M}_0^J(\gamma_+,\gamma_-)$ are finite by Proposition \ref{proposition: buildings in cobordisms} and Corollary \ref{corollary: moduli spaces are finitie}, implying that the map $\Psi$ is well defined. Second, the identification of $\Psi$ with a standard inclusion is made precise in the following manner. Given $\gamma_+\in\mathcal{P}^{L_N}_{\text{good}}(\lambda_N)$, there is a unique $\gamma_-\in\mathcal{P}^{L_M}_{\text{good}}(\lambda_M)$ that (i) projects to the same critical point of $f_H$ as $\gamma_+$ under $\fp$, and (ii) satisfies $m(\gamma_+)=m(\gamma_-)$. When (i) and (ii) hold, we write $\gamma_+\sim\gamma_-$. We argue in Section \ref{section: direct limits of filtered homology} that $\Psi$ takes the form $\Psi([\gamma_+])=[\gamma_-]$, when $\gamma_+\sim\gamma_-$.

Theorem \ref{theorem: cobordisms induce inclusions} now implies that the system of filtered contact homology groups is identified with a sequence of inclusions of vector spaces, providing isomorphic direct limits: \[\varinjlim_N CH_*^{L_N}(S^3/G,\lambda_N, J_N)\cong\bigoplus_{i\geq0}\Q^{m-2}[2i]\oplus\bigoplus_{i\geq0}H_*(S^2;\Q).\]

\section{Background: orbifold Morse homology and Conley-Zehnder indices}\label{section: background: orbifold Morse homology and Conley-Zehnder indices}

In Section \ref{section: introduction}, we alluded to orbifold Morse theory and the Conley-Zehnder index of Reeb orbits. In this section, we provide background  for these notions and explain how they are related.

\subsection{Morse and orbifold Morse theory}\label{subsection: morse and orbifold morse theory}
We first review the ingredients and construction of the Morse chain complex for smooth manifolds, and then compare with its orbifold cousin. Let $(M,g)$ be a closed Riemannian manifold of dimension $n$, and take a smooth function $f:M\to \R$. The triple $(M,g,f)$ determines a vector field, the \emph{gradient} of $f$, \[\text{grad}(f)\in\Gamma(TM),\,\,\,g(\text{grad}(f),v)=df(v), \,\,\text{for}\,\,v\in TM.\]
Let $\text{Crit}(f):=\{p\in M\,|\, df_p=0\}$ denote the \emph{critical point set} of $f$. To each $p\in\text{Crit}(f)$, we associate a linear endomorphism, $H_p(f):T_pM\to T_pM$, the \emph{Hessian} of $f$ at $p$, whose eigenvalues provide information about the behavior of $f$ near $p$. We define the Hessian now.

The Riemannian metric provides the data of a \emph{connection} on the tangent bundle $TM$, written $\nabla$. Given any point $p\in M$ and a smooth vector field $V$, defined locally near $p$, the connection defines a linear map $T_pM\to T_p M$, written $v\mapsto \nabla_v V$, which satisfies a Leibnitz rule. The Hessian at the critical point $p\in\text{Crit}(f)$ is then simply defined in terms of this connection: \[H_p(f):T_pM\to T_pM \,:=\, v\mapsto \nabla_v \,\text{grad}(f).\]
The values of $H_p(f)$ do not depend on the Riemannian metric, because $p$ is a critical point of $f$. Many quantities in Morse theory do not depend on the (generic) choice of Riemannian metric $g$; the values of $H_p(f)$ are a first example of this independence.

We say that the critical point $p$ is \emph{nondegenerate} if $0$ is not an eigenvalue of $H_p(f)$. We say that $f$ is a \emph{Morse function} on $M$ if all of its critical points are nondegenerate. Note that the Morse condition is independent of $g$, that generic smooth functions on $M$ are Morse, and that a Morse function has a finite critical point set. All of these facts can be seen by the helpful interpretation of $\text{Crit}(f)$ as the intersection of the graph of $df$ with the zero section in $T^*M$; this intersection is transverse if and only if $f$ is Morse.

As an endomorphism of an inner product space, $H_p(f)$ is self-adjoint, and thus, its spectrum consists of real eigenvalues. Let \[\text{ind}_p(f)\in \{0,1,\dots, n=\text{dim}(M)\}\] denote the number of negative eigenvalues of $H_p(f)$ (counted with multiplicity). This is the \emph{Morse index} of $p$. If $f$ is Morse and $\text{ind}_f(p)=n$, then $p$ is a local maximum of $f$; if $f$ is Morse and $\text{ind}_f(p)=0$ then $p$ is a local minimmum of $f$. 

For the rest of this section, unless otherwise stated, we assume that $f$ is Morse. We now associate to each $p\in\text{Crit}(f)$ the stable and unstable manifolds of $p$, whose dimension depends on this integer $\text{ind}_f(p)$. For every $t\in\R$, we have the diffeomorphism of $M$, $\phi^t:M\to M$, given by the time $t$ flow of the smooth vector field $-\text{grad}(f)$. Note that this $\phi^t$ is defined for all $t\in\R$, because $M$ is closed. Given $p\in\text{Crit}(f)$, define the subsets \[W^{\pm}(p):=\{x\in M\,|\,\text{lim}_{t\to\pm\infty}\phi^t(x)=p\}.\]  We refer to $W^{-}(p)$ as the \emph{unstable submanifold} of $p$, and to $W^{+}(p)$ as the \emph{stable submanifold} of $p$. Indeed, these sets are manifolds; it is a classical fact of Morse theory that $W^{-}(p)$ is a smoothly embedded  $\text{ind}_f(p)$ disc in $M$, and $W^{+}(p)$ is a smoothly embedded $n-\text{ind}_f(p)$ disc.

Fix distinct critical points $p$, $q\in\text{Crit}(f)$. If $W^{-}(p)$ and $W^{+}(q)$ transversely intersect in $M$, then their intersection is an $\text{ind}_f(p)-\text{ind}_f(q)$ dimensional submanifold, admitting a free $\R$-action by flowing along $-\text{grad}(f)$. The set \[\mathcal{M}(p,q):=\big(W^{-}(p) \cap W^{+}(q)\big)/\R\]
inherits the topology of an $\text{ind}_f(p)-\text{ind}_f(q)-1$ dimensional manifold, whose points are identified with unparametrized negative gradient flow lines of $f$. This space $\mathcal{M}(p,q)$ is referred to as the \emph{moduli space of flow lines} from $p$ to $q$. 

We say that the pair $(f,g)$ is \emph{Smale} if $W^{-}(p)$ and $W^{+}(q)$ are transverse in $M$, for all choices $p$, $q\in\text{Crit}(f)$. Given a Morse function $f$ on $M$, a generic choice of Riemannian metric $g$ provides a Smale pair $(f,g)$. Assume that $(f,g)$ is a Morse-Smale pair.

An orientation of each disc $W^{-}(p)\subset M$, for $p$ ranging in  $\text{Crit}(f)$, induces an orientation on all $\mathcal{M}(p,q)$ as smooth manifolds (even if $M$ is not orientable). For more details, see \cite[Exercise 1.11]{S}. In particular, in the 0-dimensional case (that is, for $\text{ind}_f(p)-\text{ind}_f(q)=1$), we can associate to each $x\in\mathcal{M}(p,q)$ a sign, $\epsilon(x)\in\{\pm1\}$.

Additionally, the manifold $\mathcal{M}(p,r)$ admits a compactification by broken flow lines, providing $\overline{\mathcal{M}(p,r)}$, a manifold with corners. In the 1-dimensional case (that is, for $\text{ind}_f(p)-\text{ind}_f(r)=2$), $\overline{\mathcal{M}(p,r)}$ is an oriented 1-dimensional manifold with boundary, whose boundary consists of broken flow lines of the form $(x_+,x_-)$, for $x_+\in\mathcal{M}(p,q)$ and $x_-\in\mathcal{M}(q,r)$, where $q\in\text{Crit}(f)$. We can write this as 
\begin{equation}\label{equation: oritented boundary}
    \partial\mathcal{M}(p,r):=\overline{\mathcal{M}(p,r)}\setminus\mathcal{M}(p,r)=\bigcup_{q\in\text{Crit}(f)}\mathcal{M}(p,q)\times \mathcal{M}(q,r).
\end{equation}
This is an equality of \emph{oriented} 0-dimensional manifolds, in the sense that $(x_+,x_-)\in\partial\mathcal{M}(p,r)$ is a \emph{positive} end of $\overline{\mathcal{M}(p,r)}$ if and only if $\epsilon(x_+)\epsilon(x_-)=+1$.

We associate to the triple $(M,f,g)$ a chain complex, under the assumptions that $f$ is Morse and that $(f,g)$ is Smale. Select an orientation on each $W^{-}(p)$, for $p\in\text{Crit}(f)$. For $k\in\Z$,  define $CM_k$ to be the free abelian group generated by the set $\text{Crit}_k(f):=\{p\in\text{Crit}(f)\, |\,\text{ind}_f(p)=k\}$. For each $k\in\Z$, we define $\partial:CM_k\to CM_{k-1}$ to be the \emph{signed} count of negative gradient trajectories between critical points of index difference 1. Specifically, for $p\in\text{Crit}_k(f)$ and $q\in\text{Crit}_{k-1}(f)$, \[\langle\partial p,q\rangle:=\sum_{x\in\mathcal{M}(p,q)}\epsilon(x).\]
We explain why $\partial\circ\partial=0$. Select generators $p\in\text{Crit}_k(f)$ and $r\in\text{Crit}_{k-2}(f)$, and consider the value: \[\langle \partial^2 p, r\rangle=\sum_{q\in\text{Crit}(f)}\sum_{x_+\in\mathcal{M}(p,q)}\sum_{x_-\in\mathcal{M}(q,r)}\epsilon(x_+)\epsilon(x_-).\]
We argue that the above integer is always 0, for any choice of $p$ and $r$. Note that by equation \eqref{equation: oritented boundary}, this sum is precisely equal to the signed count of $\partial\mathcal{M}(p,r)$. It is known that the signed count of the oriented boundary of any compact, oriented 1-manifold with boundary is zero, implying that the above sum is zero. This demonstrates that $\big(CM_*,\partial\big)$ defines a chain complex over $\Z$. Denote the corresponding homology groups by $H^{\text{Morse}}_*(f,g)$.

The Morse homology is isomorphic to the classical singular homology of $M$ with integer coefficients; \[H^{\text{Morse}}_*(f,g)\cong H_*(M;\Z).\] This is because the pair $(f,g)$ determines a cellular decomposition of $M$; the set $\text{Crit}_k(f)$ is in correspondence with the $k$-cells. To each $p\in\text{Crit}_k(f)$, the associated $k$-cell is the embedded disc $W^{-}(p)$. In fact, at the level of chain complexes, $\big(CM_*,\partial\big)$ is isomorphic to this cellular chain complex associated to the pair $(f,g)$. For this reason, $H^{\text{Morse}}_*(f,g)$ does not depend on the pair $(f,g)$, and is a topological invariant of $M$. This invariance is used to prove the \emph{Morse inequalities}, that the number of critical points of a Morse function $f$ on a closed manifold $M$ is bounded below by the sum of the Betti numbers of $M$: \[|\text{Crit}(f)|\geq\sum_{i=0}^{\text{dim}(M)}\beta_i(M).\]

\textbf{Orbifold Morse homology.} We now extend the discussion of Morse data and the associated chain complex to \emph{orbifolds}. The reader should consult \cite{CH} for complete details regarding the orbifold-Morse-Smale-Witten complex. For simplicity, we take our orbifolds to be of the form $M/H$, where $(M,g)$ is a closed Riemannian manifold and $H$ is a finite group acting on $M$ by isometries. Let $f:M\to\R$ be a Morse function on $M$ which is invariant under the $H$-action. The quotient space $M/H$ inherits the structure of an \emph{orbifold}, and $f$ descends to an \emph{orbifold Morse function}, $f_H:M/H\to\R$. For motivating examples, the reader should consider $M=S^2$ with the round metric, and $H\subset\SO(3)$ is a finite group of dihedral or polyhedral symmetries of $S^2$. See Figures \ref{figure: dihedral}, \ref{figure: tetrahedral}, \ref{figure: octahedral}, and \ref{figure: icosahedral} for depictions of possible $H$-invariant Morse functions on $S^2$ for these cases.

As before, for each $p\in\text{Crit}(f)$, select an orientation of the embedded disc $W^{-}(p)$. Consider that the action of the stabilizer (or, \emph{isotropy}) subgroup of $p$, \[H_p:=\{h\in H\,|\, h\cdot p=p\}\subset H,\] on $M$ restricts to an action on $W^{-}(p)$ by diffeomorphisms. We say that the critical point $p$ is \emph{orientable} if this action is by \emph{orientation preserving} diffeomorphisms. Let $\text{Crit}^+(f)\subset\text{Crit}(f)$ denote the set of orientable critical points.

Note that the $H$-action on $M$ permutes $\text{Crit}(f)$, and the action restricts to a permutation of $\text{Crit}^+(f)$. Furthermore, the index of a critical point is preserved by the action. Let $\text{Crit}(f_H)$, $\text{Crit}^+(f_H)$, and $\text{Crit}^+_k(f_H)$ denote the quotients $\text{Crit}(f)/H$, $\text{Crit}^+(f)/H$, and $\text{Crit}^+_k(f)/H$, respectively. As in the smooth case, we define the $k^{\text{th}}$-orbifold Morse chain group, denoted $CM_k^{\text{orb}}$, to be the free abelian group generated by $\text{Crit}_k^+(f_H)$. The differential will be defined by a signed \emph{and weighted} count of negative gradient trajectories in $M/H$. The homology of this chain complex is, as in the smooth case, isomorphic to the singular homology of $M/H$ (\cite[Theorem 2.9]{CH}).

\begin{remark} \label{remark: nonorientable1}
(Discarding non-orientable critical points to recover singular homology) Consider that every index 1 critical point of $f:S^2\to\R$ depicted in Figures \ref{figure: dihedral},  \ref{figure: tetrahedral}, \ref{figure: octahedral}, and \ref{figure: icosahedral} is non-orientable. This is because the unstable submanifolds associated to each of these critical points is an open interval, and the action of the stabilizer of each such critical point is a 180-degree rotation of $S^2$ about an axis through the critical point. Thus, this action reverses the orientation of the embedded open intervals. If we were to include these index 1 critical points in the chain complex, then $CM^{\text{orb}}_*$ would have rank three, with \[CM^{\text{orb}}_0\cong CM^{\text{orb}}_1\cong CM^{\text{orb}}_2\cong\Z\]
(in Section \ref{subsection: visualizing holomorphic cylinders: an example} we fill in these details for $H=\T$, where $\text{Crit}(f_{\T})=\{\mathfrak{v},\mathfrak{e},\mathfrak{f}\}$). There is no differential on this (purported) chain complex with homology isomorphic to $H_*(S^2/H;\Z)\cong H_*(S^2;\Z)$. This last isomorphism holds because $S^2/H$ is a topological $S^2$ in these examples.\footnote{See Section \ref{subsection: visualizing holomorphic cylinders: an example} and Figure \ref{figure: fundamental domain} for an explanation of why $S^2/H$ is a topological $S^2$ with three orbifold points.} Indeed, the correct chain complex, obtained by discarding the non-orientable index 1 critical points, has rank two: \[CM_0^{\text{orb}}\cong CM_2^{\text{orb}}\cong\Z,\,\,\, CM_1^{\text{orb}}=0\]  and has completely vanishing differential, producing homology groups isomorphic to $H_*(S^2/H;\Z)\cong H_*(S^2;\Z)$.
\end{remark}

\begin{remark}\label{remark: nonorientable2}
(Discarding non-orientable critical points to orient the gradient trajectories) Let $p$ and $q$ be orientable critical points in $M$ with Morse index difference equal to 1. Let $x:\R\to M/H$ be a negative gradient trajectory of $f_H$ from $[p]$ to $[q]$. Because $p$ and $q$ are orientable, the value of $\epsilon(\widetilde{x})\in\{\pm1\}$ is independent of any choice of lift of $x$ to a negative gradient trajectory $\widetilde{x}:\R\to M$ of $f$ in $M$. We define $\epsilon(x)$ to be this value. Conversely, if one of the points $p$ or $q$ is non-orientable, then there exist two lifts of $x$ with opposite signs, making the choice $\epsilon(x)$ dependent on choice of lift.
\end{remark}

We now define $\partial^{\text{orb}}:CM_k^{\text{orb}}\to CM_{k-1}^{\text{orb}}$. Select $p\in\text{Crit}_k^+(f_H)$ and $q\in\text{Crit}_{k-1}^+(f_H)$ and let $\mathcal{M}(p,q)$ denote the set of negative gradient trajectories (modulo the $\R$ action by translations) in $M/H$ of $f_H$ from $p$ to $q$. In favorable circumstances, $\mathcal{M}(p,q)$ is a finite set of signed points. For a flow line $x\in\mathcal{M}(p,q)$, let $|H_x|$ denote the order of the isotropy subgroup $H_a\subset H$ for any choice of $a\in\text{im}(x)\subset M/H$. This quantity is independent in choice of $a$ and divides both group orders $|H_p|$ and $|H_q|$. We define the $\langle \partial^{\text{orb}} p, q\rangle$ coefficient to be the following \emph{signed and weighted} count of $\mathcal{M}(p,q)$: \[\langle \partial^{\text{orb}} p, q\rangle:=\sum_{x\in\mathcal{M}(p,q)}\epsilon(x)\frac{|H_p|}{|H_x|}\in\Z.\]  This differential should be directly compared to the differential of cylindrical contact homology, equation \eqref{equation: CCH differential}. The similarities between these differentials will be discussed more in Sections \ref{subsection: cylindrical contact homology as an analogue of orbifold Morse homology} and \ref{subsection: visualizing holomorphic cylinders: an example}.

\subsection{Asymptotic operators and spectral flows} \label{subsection: asymptotic operators and spectral flows}

In Morse theory and its orbifold version, the number of negative eigenvalues of the Hessian determines the integral grading of the chain complex. The analogue of the Hessian in cylindrical contact homology is the \emph{asymptotic operator} associated to a Reeb orbit. This asymptotic operator is a bounded  isomorphism between Hilbert spaces that is similar to the Hessian in that it is self adjoint, and its spectrum is a discrete subset of $\R$.

Let $(M,\lambda)$ be a closed contact manifold of dimension $2n+1$, and let $\gamma$ be a Reeb orbit. In this section for simplicity, we write $S^1=\R/\Z$ and understand $\gamma$ as a map $S^1\to\R$, so that $\tfrac{d}{dt}\gamma(t)=T
\cdot R(\gamma(t))$, where $T$ is the action of $\gamma$, and we use $t$ to denote values in $S^1$. The Reeb orbit provides the following symplectic pullback bundle over $S^1$: \[(\gamma^*\xi,d\lambda|_{\xi})\to S^1,\] where $\xi=\text{ker}(\lambda)$. Consider that the linearization of the Reeb flow along $\gamma$ induces an $\R$-action on $\gamma^*\xi$ that provides a parallel transport of the fibers by linear symplectomorphisms. This family of parallel transports defines a symplectic connection $\nabla$ on $(\gamma^*\xi,d\lambda|_{\xi})$. 

Suppose we have a complex structure $J$ on $\xi$ that is compatible with the symplectic form $d\lambda|_{\xi}$. Letting $\Gamma(\gamma^*\xi)$ denote the space of smooth sections of the bundle $\gamma^*\xi\to S^1$, we can define the \emph{asymptotic operator} associated to $
\gamma$, \[A_{\gamma}:\Gamma(\gamma^*\xi)\to\Gamma(\gamma^*\xi),\,\,\,\eta\mapsto-J\nabla_t\eta.\]
The motivation for the definition of the asymptotic operator is that it serves as the linearization of the positive gradient of a contact action functional on the loop space (i.e. the operator is a Hessian), a framework that we outline now.

Consider the loop space $\mathcal{L}(M)=C^{\infty}(S^1,M)$. We have a \emph{contact action functional}: \[\mathcal{A}_{\lambda}:\mathcal{L}(M)\to\R,\,\,\,\mathcal{A}_{\lambda}(\gamma)=\int_{\gamma}\lambda.\]  We want to take a sort of gradient of this functional. For $\gamma\in\mathcal{L}(M)$, $T_{\gamma}\mathcal{L}(M)=\Gamma(\gamma^*TM)$. Let $E\subset T\mathcal{L}(M)$ denote the sub-bundle whose fiber $E_{\gamma}$ over $\gamma\in\mathcal{L}(M)$ is $\Gamma(\gamma^*\xi)\subset \Gamma(\gamma^*TM)$. Define a section $\sigma$ of $E\to\mathcal{L}(M)$ by \[\sigma:\mathcal{L}(M)\to E,\,\,\,\sigma(\gamma)=-J\pi_{\xi}\dot{\gamma},\]
where $\pi_{\xi}:TM\to\xi$ is the projection along the Reeb direction. Importantly, if $\gamma$ is a Reeb orbit of $\lambda$, then for all $t\in S^1$, $\pi_{\xi}(\dot{\gamma}(t))=0$, and we conclude that $\sigma(\gamma)=0$, for all  $\gamma\in\mathcal{P}(\lambda)$. This section $\sigma$ is a positive gradient of $\mathcal{A}_{\lambda}$ with respect to an $L^2$-inner product (defined below). Finally, $A_{\gamma}$ is obtained by linearizing $\sigma$ at the Reeb orbit $\gamma$. For a more thorough treatment of this narrative, see \cite[\S 3.3]{W3}

Our next goal is to extend the domain and range of the asymptotic operator $A_{\gamma}$ to Banach spaces and apply Fredholm theory. Recall that an inner product $\langle \cdot\, ,\cdot \rangle_p$ on $\xi_p$ is induced by $d\lambda|_{\xi_p}$ and $J_p$, for $p\in M$. We introduce an $L^2$-inner product on $\Gamma(\gamma^*\xi)$: \[\eta_1, \eta_2\in\Gamma(\gamma^*\xi),\,\,\,\,\,\,\langle\eta_1,\eta_2\rangle:=\int_{t\in S^1}\langle\eta_1(t),\eta_2(t)\rangle_{\gamma(t)}\,dt.\]
The operator $A_{\gamma}$ is symmetric with respect to this inner product. That is, $\langle A_{\gamma}\eta_1,\eta_2\rangle=\langle \eta_1, A_{\gamma}\eta_2\rangle$. This can be shown by integrating by parts and observing that $J_p$ is a $\langle\cdot,\cdot\rangle_p$-isometry and that $\nabla$ is a symplectic connection. 

Let $L^2(\gamma^*\xi)$ denote the completion of $\Gamma(\gamma^*\xi)$ with respect to this $L^2$-inner product, whose corresponding norm is denoted $|\cdot|_{L^2}$. Let $W^{1,2}(\gamma^*\xi)$ denote the completion of $\Gamma(\gamma^*\xi)$ with respect to the norm \[|\eta|_{W^{1,2}}:=|\nabla_t\eta|_{L^2}+|\eta|_{L^2}.\] A \emph{unitary trivialization} of the Hermitian bundle $(\gamma^*\xi,d\lambda|_{\xi},J)$ (defined below) provides an identification $\Gamma(\gamma^*\xi)\cong C^{\infty}(S^1,\R^{2n})$, and also isometrically identifies $L^2(\gamma^*\xi)$ and $W^{1,2}(\gamma^*\xi)$ with the classical Banach spaces $L^2(S^1,\R^{2n})$ and $W^{1,2}(S^1,\R^{2n})$, respectively. In this way we regard $W^{1,2}(\gamma^*\xi)$ as a dense subspace of $L^2(\gamma^*\xi)$. 
    
The asymptotic operator extends to $A_{\gamma}:W^{1,2}(\gamma^*\xi)\to L^2(\gamma^*\xi)$. We  understand $A_{\gamma}$ to be an \emph{unbounded operator} on $L^2(\gamma^*\xi)$, whose dense domain of definition is $W^{1,2}(\gamma^*\xi)\subset L^2(\gamma^*\xi)$. The operator $A_{\gamma}$ is  self-adjoint. Thus, its spectrum is discrete and consists entirely of real eigenvalues, with each eigenvalue appearing with finite  multiplicity.

Alternatively, we can understand the asymptotic operator as a \emph{bounded operator} between Banach spaces, \[A_{\gamma}:\big(W^{1,2}(\gamma^*\xi),|\cdot|_{W^{1,2}}\big)\to\big(L^{2}(\gamma^*\xi),|\cdot|_{L^2}\big).\]
We will see that the bounded operator $A_{\gamma}$ is an example of a \emph{Fredholm operator}, meaning its kernel and cokernel are finite dimensional. The \emph{index} of a Fredholm operator $T$ is defined to be \[\text{ind}(T):=\text{dim}(\text{ker}\,T)-\text{dim}(\text{coker}\,T).\] 

\begin{proposition}
The bounded operator  $A_{\gamma}$ is Fredholm of index 0.
\end{proposition}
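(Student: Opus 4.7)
The plan is to reduce the problem to a concrete model operator on $L^2(S^1,\R^{2n})$ via a unitary trivialization, identify $A_\gamma$ as a compact perturbation of a simple Fredholm operator of index $0$, and conclude by stability of the Fredholm index.

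First, I would choose a unitary trivialization $\Phi\colon \gamma^*\xi \xrightarrow{\sim} S^1\times \R^{2n}$ of the Hermitian bundle $(\gamma^*\xi, d\lambda|_\xi, J)$, i.e.\ one that intertwines $J$ with the standard complex structure $J_0$ and $d\lambda|_\xi$ with $\omega_0$. Under $\Phi$, the symplectic connection $\nabla$ pulls back to the trivial connection plus a connection one-form, so that a direct calculation expresses the asymptotic operator as
\begin{equation*}
A_\gamma \;=\; -J_0\,\partial_t \;-\; S(t),
\end{equation*}
where $S\colon S^1 \to \operatorname{End}(\R^{2n})$ is a smooth loop of symmetric matrices (symmetry of $S(t)$ corresponds to the symmetry of $A_\gamma$ with respect to $\langle\cdot,\cdot\rangle$, already noted in the text). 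Moreover $\Phi$ induces an isometric identification of $L^2(\gamma^*\xi)$ with $L^2(S^1,\R^{2n})$ and of $W^{1,2}(\gamma^*\xi)$ with $W^{1,2}(S^1,\R^{2n})$, so it suffices to prove the statement for this concrete model operator.

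Next, I would analyze the unperturbed operator $D := -J_0\,\partial_t \colon W^{1,2}(S^1,\R^{2n}) \to L^2(S^1,\R^{2n})$ by Fourier series. Expanding $\eta(t) = \sum_{k\in\Z} \eta_k\, e^{2\pi i k t}$ (using the complex structure $J_0$ to identify $\R^{2n}$ with $\C^n$), one sees at once that
\begin{equation*}
\ker D \;=\; \{\text{constant sections}\} \;\cong\; \R^{2n},
\qquad \operatorname{im} D \;=\; \Bigl\{\zeta \in L^2 \;\Big|\; \textstyle\int_{S^1}\zeta\,dt = 0\Bigr\},
\end{equation*}
so that $\operatorname{coker} D \cong \R^{2n}$ as well. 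Thus $D$ is Fredholm with $\operatorname{ind}(D) = 2n - 2n = 0$.

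It then remains to show that the zeroth-order term $M_S\colon W^{1,2}(S^1,\R^{2n}) \to L^2(S^1,\R^{2n})$, $\eta \mapsto S(\cdot)\eta(\cdot)$, is compact; once this is established, $A_\gamma = D - M_S$ is Fredholm of the same index $0$ by the standard stability theorem for Fredholm operators under compact perturbation. Compactness of $M_S$ follows from the Rellich–Kondrachov embedding: $W^{1,2}(S^1,\R^{2n}) \hookrightarrow C^0(S^1,\R^{2n})$ is compact (in dimension one, $W^{1,2}$ embeds into a Hölder space), and multiplication by the smooth bounded matrix-valued loop $S(t)$ is continuous $C^0 \to L^2$, so the composition is compact.

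The only step with any subtlety is the compactness of $M_S$, and it is routine given the one-dimensional Sobolev embedding; otherwise the argument is just Fourier analysis plus the stability of the Fredholm index, and the identification of index $0$ is ensured by self-adjointness of the model kernel (constant loops on both sides). Everything else is bookkeeping under the trivialization $\Phi$.
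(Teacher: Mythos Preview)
Your proposal is correct and follows essentially the same strategy as the paper: trivialize to obtain $A_\gamma = -J_0\partial_t - S$, verify that $-J_0\partial_t$ is Fredholm of index $0$ by computing its $2n$-dimensional kernel and cokernel (the paper does this component-by-component rather than via Fourier series, but the content is identical), and then invoke compactness of the zeroth-order multiplication term together with stability of the Fredholm index under compact perturbation. The only cosmetic difference is that you route compactness through the embedding $W^{1,2}\hookrightarrow C^0$, whereas the paper uses the compact inclusion $W^{1,2}\hookrightarrow L^2$ directly.
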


\begin{proof}
Take any unitary trivialization $\tau$ of $\gamma^*\xi$: \[\tau:\gamma^*\xi\to S^1\times\R^{2n}.\] Explicitly, this is a diffeomorphism, preserving the fibration over $S^1$ that restricts to linear isomorphisms $\xi_{\gamma(t)}\xrightarrow{\sim}\{t\}\times\R^{2n}$ which take $d\lambda|_{\xi}$ and $J$ to the standard symplectic form and complex structure on $\{t\}\times\R^{2n}$, for $t\in S^1$. For some  smooth, symmetric matrix-valued function on $S^1$, depending on the choice of $\tau$, denoted $t\in S^1\mapsto S(t)$, the asymptotic operator takes the form \[A_{\tau}:W^{1,2}(S^1,\R^{2n})\to L^2(S^1,\R^{2n}),\,\,\,\eta\mapsto -J_0\dot{\eta}-S\eta.\]
Here, $J_0$ is the $2n\times2n$ matrix defining the standard complex structure on $\R^{2n}$. 

We first argue that the map \[W^{1,2}(S^1,\R^{2n})\to L^2(S^1,\R^{2n}),\,\,\,\eta\mapsto\dot{\eta}\] is Fredholm of index 0 by showing that both the kernel and cokernel are $2n$-dimensional. To show this, it is enough to demonstrate that the operator \[T:W^{1,2}(S^1,\R)\to L^2(S^1,\R),\,\,\,T(f)=f'\] has 1-dimensional kernel and cokernel. The kernel contains the constant functions, and conversely,  if the weak derivative of $f$ is zero almost everywhere, then $f$ has a representative that is constant on $S^1$ in $L^2(S^1,\R)$. This shows $\text{dim}(\text{ker}(T))=1$. To study the cokernel, note that $f\in L^2(S^1,\R)$ is in the image of $T$ if and only if \[\int_{S^1} f(t)\,dt=0.\] This means that $\text{im}(T)$ precisely equals those elements $f\in L^2(S^1,\R)$ whose first Fourier coefficient vanishes, i.e. $a_0=\tfrac{1}{2}\int_{S^1}f(t),\,dt=0$. The orthogonal complement of this space is precisely the constant functions, and so the cokernel of $T$ is also 1-dimensional.

Next, we use this to argue that the asymptotic operator is Fredholm of index 0 as well. The map \[F:W^{1,2}(S^1,\R^{2n})\to L^2(S^1,\R^{2n}),\,\,\,\eta\mapsto -J_0\dot{\eta}
\] is also Fredholm of index 0, because fiber-wise post-composition by the constant $-J_0$ is a Banach space isometry of $L^2(S^1,\R^{2n})$. Finally, note that the inclusion of $W^{1,2}(S^1,\R^{2n})$ into $L^2(S^1,\R^{2n})$ is a \emph{compact operator}, thus so is the map \[K:W^{1,2}(S^1,\R^{2n})\to L^2(S^1,\R^{2n}),\,\,\,\eta\mapsto -S\eta.\] In total, the asymptotic operator takes the form $A_{\tau}=F+K$, where $F$ is Fredholm of index 0, and $K$ is a compact operator. Because the Fredholm property (and its index) are stable under compact perturbations, the asymptotic operator is a Fredholm operator of index 0.
\end{proof}

\begin{remark}
The asymptotic operator $A_{\gamma}$ has trivial kernel (and thus, is an isomorphism) if and only if $\gamma$ is a nondegenerate  Reeb orbit. To see why, recall that a Reeb orbit is degenerate whenever the linearized return map has 1 as an eigenvalue. If there is a nonzero 1-eigenvector $v$ in $\xi_{\gamma(0)}$ of the linearized return map $d\phi^T$, then one can produce a smooth section $\eta$ of the pullback bundle $\gamma^*\xi$ by pushing forward $v$  using the linearized Reeb flow. Importantly, this $\eta$ is indeed periodic because we have that $\eta(1)=d\phi^T(v)=v=\eta(0)$. The resulting section $\eta$ is parallel with respect to $\nabla$ (by definition), and so $A_{\gamma}(\eta)=-J\nabla_t\eta=0$. Conversely, given a nonzero section $\eta$ in the kernel of $A_{\gamma}$, $\eta(0)$ is a 1-eigenvector of $d\phi^T:\xi_{\gamma(0)}\to\xi_{\gamma(0)}$.
\end{remark}

Following \cite[\S 3.2]{W3}, we define the \emph{spectral flow} between two asymptotic operators, which is the first of two ways that we will describe the Conley-Zehnder indices of Reeb orbits in Section \ref{subsection: the cz index}. Let $\mathcal{F}$ denote the collection of Fredholm operators \[A:W^{1,2}(S^1,\R^{2n})\to L^{2}(S^1,\R^{2n}).\] This is an open subset of the space of bounded operators from $W^{1,2}(S^1,\R^{2n})$ to $L^2(S^1,
\R^{2n})$, and so is itself a Banach manifold. Furthermore, the function \[\text{ind}:\mathcal{F}\to\Z, \,\, A\mapsto\text{ind}(A)\] is locally constant, and so the collection of Fredholm index 0 operators, $\mathcal{F}^0\subset\mathcal{F}$, is an open subset and is itself a Banach manifold. Let $\mathcal{F}^{0,1}\subset\mathcal{F}^0$ denote those Fredholm index 0 operators whose kernel is 1-dimensional. By \cite[Proposition 3.5]{W3}, $\mathcal{F}^{0,1}$ is a smooth Banach submanifold of $\mathcal{F}^0$ of codimension 1. Select two invertible elements, $A_{+}$ and $A_{-}$, of $\mathcal{F}^0$ in the same connected component. Then their \emph{relative spectral flow}, \[\mu^{\text{spec}}_2(A_+,A_-)\in\Z/2\Z\] is defined to be the mod-2 count of the number of intersections between a generic path in $\mathcal{F}^0$ from $A_+$ to $A_-$ and $\mathcal{F}^{0,1}$ (a quantity independent in generic choice of path).

We can upgrade the relative spectral flow to an integer by restricting our studies to symmetric operators. An operator $A:W^{1,2}(S^1,\R^{2n})\to L^2(S^1,\R^{2n})$ is \emph{symmetric} if $\langle A x,y\rangle=\langle x,Ay\rangle$ for all $x$ and $y$ in the domain. Let $\mathcal{F}_{\text{sym}}$, $\mathcal{F}_{\text{sym}}^0$, and $\mathcal{F}_{\text{sym}}^{0,1}$ denote the intersections of $\mathcal{F}$, $\mathcal{F}^0$, and $\mathcal{F}^{0,1}$ with the closed subspace of symmetric operators.  Now $\mathcal{F}_{\text{sym}}^0$ is a Banach manifold and $\mathcal{F}_{\text{sym}}^{0,1}$ is a \emph{co-oriented}\footnote{The \emph{co-oriented} condition means that the rank 1 real line bundle $T\mathcal{F}_{\text{sym}}^0/T\mathcal{F}_{\text{sym}}^{0,1}$ over $\mathcal{F}_{\text{sym}}^{0,1}$ is an \emph{oriented} line bundle. Given a differentiable path $(-\epsilon,\epsilon)\to \mathcal{F}_{\text{sym}}^0$ that transversely intersects $\mathcal{F}_{\text{sym}}^{0,1}$ at 0, we can use the given orientation to determine if the intersection is a \emph{positive} or \emph{negative} crossing. In Remark \ref{remark: co-oritentation of hypersurface of operators}, we provide a method to determine the sign of a transverse intersection.} codimension-1 submanifold. 

Now, given two symmetric, Fredholm index 0 operators, $A_+$ and $A_-$, with trivial kernel in the same connected component of $\mathcal{F}^0_{\text{sym}}$, define \[\mu^{\text{spec}}(A_+,A_-)\in\Z\] to be the signed count of a generic path in $\mathcal{F}^0_{\text{sym}}$ from $A_+$ to $A_-$ with the co-oriented $\mathcal{F}^{0,1}_{\text{sym}}$. See Remark \ref{remark: co-oritentation of hypersurface of operators} at the end of this section for algebraic descriptions of the co-orientation and formulaic means of computing $\mu^{\text{spec}}(A_+,A_-)$.

Recall that unitarily trivialized asymptotic operators associated to Reeb orbits are examples of symmetric Fredholm index 0 operators. A path of these operators joining $A_+$ to $A_-$ is described by a 1-parameter family of loops of $2n\times 2n$ real symmetric matrices, \[\{S_s\}_{s\in[-1,1]},\,\,\,S_s:S^1\to\text{Sym}(2n),\,\,\,t\mapsto S_s(t),\]
where the corresponding family of asymptotic operators is \[\{A_s\}_{s\in[-1,1]}\subset\mathcal{F}^0_{\text{sym}},\,\,\,A_s(\eta)=-J_0\dot{\eta}-S_s\eta,\] and $A_{\pm1}=A_{\mp}$. The spectrum of each $A_s$ is real, discrete, and consists entirely of eigenvalues, with each eigenvalue appearing with finite  multiplicity. There exists a $\Z$-family of continuous functions enumerating the spectra of $\{A_s\}$: \[\{e_j:[-1,1]\to\R\}_{j\in\Z},\,\,\,\text{spec}(A_s)=\{e_j(s)\}_{j\in\Z}, \,\,\,\text{for}\,\,s\in[-1,1]\] with multiplicity, in the sense that for any $e\in\R$ \[\text{dim}(\text{ker}(A_s-e\cdot\iota))=\big|\{j\in\Z:e_j(s)=e\}\big|,\] where $\iota:W^{1,2}(S^1,\R^{2n})\to L^2(S^1,\R^{2n})$ is the standard inclusion. Assuming that $A_{\pm}$ have trivial kernels, then the spectral flow from $A_+$ to $A_-$ is then the \emph{net} number of eigenvalues (with multiplicity) that have changed from negative to positive: \[\mu^{\text{spec}}(A_+,A_-)=\big|\{j\in\Z:e_j(-1)<0<e_j(1)\}\big|-\big|\{j\in\Z:e_j(-1)>0>e_j(1)\}\big|.\] See \cite[Theorem 3.3]{W3}. 

\begin{remark}\label{remark: co-oritentation of hypersurface of operators} (Co-orientation of the hypersurface $\mathcal{F}_{\text{sym}}^{0,1}$) On one hand, the spectral flow is given as a path's signed count of intersections with a co-oriented hypersurface. On the other, the spectral flow equals the net change in eigenvalues that have changed from negative to positive. This equality implies that a differentiable path $A:(-\epsilon,\epsilon)\to\mathcal{F}_{\text{sym}}^0$ \emph{positively} crosses  $\mathcal{F}_{\text{sym}}^{0,1}$ at $0$ if a corresponding 1-parameter family of eigenvalues changes sign from negative to positive at 0. That is, when there exists differentiable families $e:(-\epsilon,\epsilon)\to\R$, and $\eta:(-\epsilon,\epsilon)\to W^{1,2}(S^1,\R^{2n})$ nonzero, with $A_s\eta_s=e(s)\eta_s$, the transverse intersection of $A$ with $\mathcal{F}_{\text{sym}}^{1,0}$ is positive if and only if $\dot{e}(0)>0$.
\end{remark}

We use Remark \ref{remark: co-oritentation of hypersurface of operators} to algebraically refashion our geometric description of  $\mu^{\text{spec}}(A_+,A_-)$. Fix invertible asymptotic operators $A_{\pm}$. Let $A:=\{A_s\}_{s\in[-1,1]}$ be a generic differentiable path of asymptotic operators from $A_+$ to $A_-$ (meaning $A_{\pm1}=A_{\mp}$) in $\mathcal{F}_{\text{sym}}^0$, so that  $A_s=-J_0\partial_t-S_s$, where $S_s:S^1\to\text{Sym}(2n)$ is a loop  of symmetric matrices, for $s\in[-1,1]$. The path transversely intersects the co-oriented hypersurface $\mathcal{F}^{0,1}_{\text{sym}}$. A value of $s\in(-1,1)$ for which $A_s\in\mathcal{F}_{\text{sym}}^{0,1}$ is called a \emph{crossing} for A. Let $s_0\in(-1,1)$ be a crossing for $A$, and define a quadratic form \[\Gamma(A,s_0):\text{ker}(A_{s_0})\to\R,\,\,\,\eta\mapsto\langle\dot{A}_{s_0}\eta,\eta\rangle_{L^2},\] where $\langle\cdot\,,\cdot\rangle_{L^2}$ is the standard $L^2$-inner product. Because $A$ is generic, the eigenvalue of $\Gamma(A,s_0)$ is nonzero. The sign of the single eigenvalue of $\Gamma(A,s_0)$, denoted $\text{Sign}(\Gamma(A,s_0))\in\{\pm1\}$, is the signature of the quadratic form.
\begin{claim}
The path $A$ intersects $\mathcal{F}_{\text{sym}}^{0,1}$ positively at $s_0$ if and only if $\text{Sign}(\Gamma(A,s_0))=+1$.
\end{claim}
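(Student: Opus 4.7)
The plan is to reduce the claim to a direct application of first-order perturbation theory for eigenvalues, combined with the co-orientation criterion already recorded in Remark 2.X (the remark immediately preceding the claim). By that remark, $A$ crosses $\mathcal{F}_{\text{sym}}^{0,1}$ positively at $s_0$ if and only if there exist smooth families $e:(s_0-\varepsilon,s_0+\varepsilon)\to\R$ and $\eta:(s_0-\varepsilon,s_0+\varepsilon)\to W^{1,2}(S^1,\R^{2n})\setminus\{0\}$ with $A_s\eta_s=e(s)\eta_s$, $e(s_0)=0$, and $\dot e(s_0)>0$. So the entire content of the claim is to show that the sign of $\dot e(s_0)$ coincides with the sign of the single eigenvalue of the quadratic form $\Gamma(A,s_0)$ on the one-dimensional kernel $\ker A_{s_0}$.

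First I would invoke standard analytic perturbation theory for one-parameter families of self-adjoint Fredholm operators to produce the smooth eigenvalue branch $e(s)$ and smooth eigenvector branch $\eta_s$ near $s_0$; since $s_0$ is a transverse crossing and $\dim\ker A_{s_0}=1$, the eigenvalue zero of $A_{s_0}$ is simple and hence persists smoothly under the perturbation $s\mapsto A_s=-J_0\partial_t-S_s$. We may further normalize so that $\eta_{s_0}$ is any chosen nonzero element of $\ker A_{s_0}$.

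Next, differentiating the eigenvalue equation $A_s\eta_s=e(s)\eta_s$ at $s=s_0$ and pairing with $\eta_{s_0}$ in the $L^2$-inner product, I obtain
\begin{equation*}
\langle \dot A_{s_0}\eta_{s_0},\eta_{s_0}\rangle_{L^2}+\langle A_{s_0}\dot\eta_{s_0},\eta_{s_0}\rangle_{L^2}=\dot e(s_0)\,\|\eta_{s_0}\|_{L^2}^2+e(s_0)\,\langle \dot\eta_{s_0},\eta_{s_0}\rangle_{L^2}.
\end{equation*}
Using $e(s_0)=0$ together with symmetry of $A_{s_0}$ and $A_{s_0}\eta_{s_0}=0$, the two undesired terms vanish, leaving the identity
\begin{equation*}
\Gamma(A,s_0)(\eta_{s_0})=\langle \dot A_{s_0}\eta_{s_0},\eta_{s_0}\rangle_{L^2}=\dot e(s_0)\,\|\eta_{s_0}\|_{L^2}^2.
\end{equation*}
Since $\|\eta_{s_0}\|_{L^2}^2>0$, the sign of the single eigenvalue of $\Gamma(A,s_0)$ equals the sign of $\dot e(s_0)$. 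Combining with the co-orientation criterion from the preceding remark yields the claim in both directions.

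The potential obstacle is the analytic input producing smooth branches $e(s),\eta_s$. This is not really a Morse- or contact-theoretic issue but rather a classical fact about self-adjoint holomorphic (or $C^1$) families: a simple eigenvalue of $A_{s_0}$ depends smoothly on $s$ in a neighborhood, and so does a corresponding eigenvector, after choosing a smooth normalization. Transversality of $A$ to $\mathcal{F}_{\text{sym}}^{0,1}$ guarantees exactly that $\ker A_{s_0}$ is simple, so this input applies without further ado, and the genericity hypothesis on $A$ ensures $\Gamma(A,s_0)$ has nonzero eigenvalue (equivalently, $\dot e(s_0)\neq 0$), so the sign in question is well-defined.
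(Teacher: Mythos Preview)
Your proof is correct and follows essentially the same approach as the paper: differentiate the eigenvalue equation $A_s\eta_s=e(s)\eta_s$ at $s_0$, pair with $\eta_{s_0}$, and use symmetry together with $A_{s_0}\eta_{s_0}=0$ to identify $\Gamma(A,s_0)(\eta_{s_0})$ with $\dot e(s_0)\|\eta_{s_0}\|_{L^2}^2$. The only cosmetic difference is that the paper normalizes $|\eta_s|_{L^2}=1$ (and cites \cite[Theorem 3.3]{W3} for the smooth eigenvalue/eigenvector branches), whereas you carry the positive factor $\|\eta_{s_0}\|_{L^2}^2$ explicitly.
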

\begin{proof}\
 By arguments used in \cite[Theorem 3.3]{W3}, we  have a differentiable path of nonzero eigenvectors of $A_s$, \[\eta:(s_0-\epsilon,s_0+\epsilon)\to W^{1,2}(S^1,\R^{2n}),\,\,\,s\mapsto\eta_s,\] and a path of corresponding eigenvalues \[e:(s_0-\epsilon,s_0+\epsilon)\to\R,\,\,\,s\mapsto e(s),\] such that for all $s$, \[A_s\eta_s=e(s)\eta_s.\]
 We have that $e(s_0)=0$. For simplicity, we can assume that the eigenvectors are of unit length, i.e. $|\eta_s|_{L^2}=1$ for all $s$. Now, take a derivative in the $s$ variable of the above equation, evaluate at $s_0$, and take an inner product with $\eta_{s_0}$ to obtain \[\Gamma(A,s_0)(\eta_{s_0})=\langle\dot{A}_{s_0}\eta_{s_0},\eta_{s_0}\rangle_{L^2}=\dot{e}(s_0),\]
 where we have used that $\langle A_{s_0}\dot{\eta}_{s_0},\eta_{s_0}\rangle=\langle \dot{\eta}_{s_0},A_{s_0}\eta_{s_0}\rangle=0$ by symmetry and that $\eta_{s_0}\in\text{ker}(A_{s_0})$. Ultimately, this verifies that the sign of the quadratic form $\Gamma(A,s_0)$ equals that of $\dot{e}(s_0)$, which is the sign of the intersection of the path $A$ with $\mathcal{F}_{\text{sym}}^{0,1}$ at $s_0$.
\end{proof}
\begin{corollary}\label{corollary: spectral flow in terms of crossings}
The spectral flow satisfies \[\mu^{spec}(A_+,A_-)=\sum_{s\in(-1,1)}\mbox{\em Sign}(\Gamma(A,s)),\]
where $A_{\pm}$ are invertible elements of $\mathcal{F}_{\text{sym}}^{0}$, $A=\{A_s\}_{s\in[-1,1]}$ is a generic\footnote{See \cite[Appendix C]{W3} for a thorough description of the topology of the space of asymptotic operators and of genericity in this context.}, differentiable family of asymptotic operators from $A_+$ to $A_-$ (meaning $A_{\pm1}=A_{\mp}$) of the form $A_s=-J_0\partial_t-S_s$.
\end{corollary}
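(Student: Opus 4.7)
The plan is to combine the geometric definition of the spectral flow with the Claim established just above the corollary, which already translates the co-orientation condition at a crossing into the sign of the quadratic form $\Gamma(A,s_0)$. Recall that by definition, when $A_\pm$ are invertible elements of $\mathcal{F}_{\text{sym}}^0$ lying in the same connected component, $\mu^{\text{spec}}(A_+,A_-)$ is the signed count of transverse intersections of a generic path from $A_+$ to $A_-$ with the co-oriented codimension-$1$ submanifold $\mathcal{F}_{\text{sym}}^{0,1}$. So the job is simply to identify the local sign at each intersection with $\text{Sign}(\Gamma(A,s))$ and to observe that the intersection set is finite.

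First I would unpack the genericity hypothesis on $A=\{A_s\}_{s\in[-1,1]}$. Generic differentiable paths in $\mathcal{F}_{\text{sym}}^0$ are transverse to $\mathcal{F}_{\text{sym}}^{0,1}$, meet only its top stratum (so $\dim\ker A_s = 1$ at each crossing), and avoid higher-codimension strata where $\dim\ker A_s\geq 2$. Since the path is defined on the compact interval $[-1,1]$ and the crossings are isolated by transversality, the crossing set $\{s\in(-1,1) : A_s \in \mathcal{F}_{\text{sym}}^{0,1}\}$ is finite, so the sum in the statement is a finite signed sum and is well defined.

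Next, at each crossing $s_0$, the Claim proved just above gives
\[
\text{(local intersection sign of }A\text{ with }\mathcal{F}_{\text{sym}}^{0,1}\text{ at }s_0\text{)} \;=\; \text{Sign}(\Gamma(A,s_0)),
\]
because both equal the sign of $\dot{e}(s_0)$, where $e(s)$ is the differentiable family of eigenvalues passing through $0$ at $s_0$ produced by the eigenvalue perturbation argument. The Claim was stated under exactly the hypothesis we are working with (the path is in $\mathcal{F}_{\text{sym}}^0$ of the form $A_s=-J_0\partial_t-S_s$, with generic transverse intersections), so no extra assumption is needed.

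Summing over all crossings, the geometric signed count defining $\mu^{\text{spec}}(A_+,A_-)$ equals $\sum_{s\in(-1,1)}\text{Sign}(\Gamma(A,s))$, which is the desired formula. I do not anticipate any serious obstacle here: the corollary is essentially a bookkeeping consequence of the preceding Claim, with the only subtlety being the genericity/finiteness verification. If I wanted to be extra careful, I would also remark that the formula is independent of the choice of generic path, which follows from the usual cobordism argument: any two generic paths are generically homotopic, and the homotopy meets $\mathcal{F}_{\text{sym}}^{0,1}$ in an oriented 1-manifold whose signed boundary count witnesses the equality of the two totals.
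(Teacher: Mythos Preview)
Your proposal is correct and matches the paper's approach exactly: the paper states the corollary immediately after the Claim without giving a separate proof, treating it as the evident consequence of summing the Claim's identification of local intersection signs with $\text{Sign}(\Gamma(A,s_0))$ over the finitely many crossings of a generic path. Your explicit unpacking of the genericity and finiteness is a welcome elaboration of what the paper leaves implicit.
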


\subsection{The Conley-Zehnder index} \label{subsection: the cz index}

In this section, we outline various ways to define and work with Conley-Zehnder indices. The Conley-Zehnder index is an integer that we assign to a nondegenerate Reeb orbit $\gamma$ with respect to a choice of trivialization $\tau$ of $\gamma^*\xi$, denoted $\mu_{\CZ}^{\tau}(\gamma)\in\Z$. This integer mirrors the role of the Morse index associated to a critical point. However, as a standalone number, the Conley-Zehnder index has less geometric meaning than the Morse index; rather, the \emph{difference} of Conley-Zehnder indices is generally the more valuable quantity to consider.

Let $\gamma$ be a nondegenerate Reeb orbit in contact manifold $(M,\lambda)$ of dimension $2n+1$. Let $J$ be a complex structure on $\xi=\text{ker}(\lambda)$ that is compatible with $d\lambda|_{\xi}$. Let \[\tau: \gamma^*\xi\to S^1\times \R^{2n}\] be a unitary trivialization of $(\gamma^*\xi,d\lambda|_{\xi},J)$.  Recall from Section \ref{subsection: asymptotic operators and spectral flows} that $\gamma$ and $\tau$ define a nondegenerate asymptotic operator, denoted $A_{\tau}$ \[A_{\tau}:W^{1,2}(S^1,\R^{2n})\to L^2(S^1,\R^{2n}),\,\,\,A_{\tau}(\eta)=-J_0\dot{\eta}-S\eta,\] where $S$ is some smooth, $S^1$-family of symmetric matrices. Define the \emph{Conley-Zehnder index} associated to $\gamma$ with respect to $\tau$: \begin{equation}\label{equation: cz reeb orbit definition using spectral flow}
    \mu_{\CZ}^{\tau}(\gamma):=\mu^{\text{spec}}(A_{\tau},A_0).
\end{equation} Here, $A_0:W^{1,2}(S^1,\R^{2n})\to L^2(S^1,\R^{2n})$ is the reference operator \[A_0(\eta)=-J_0\dot{\eta}-S_0\eta, \]
where $S_0$ is the constant, symmetric, diagonal matrix  $\text{Id}_n\oplus-\text{Id}_n$, and $J_0$ is the $2n\times2n$ block matrix \[J_0=\begin{pmatrix} 0 & -\text{Id}_n \\ \text{Id}_n & 0\end{pmatrix}.\] Because the spectral flow is defined in terms of an intersection of paths with a co-oriented hypersurface, we immediately have the following difference formula by concatenation \[\mu_{\CZ}^{\tau_+}(\gamma_+)-\mu_{\CZ}^{\tau_-}(\gamma_-)=\mu^{\text{spec}}(A_{+},A_{-}),\]
where $\gamma_{\pm}$ are nondegenerate Reeb orbits, $\tau_{\pm}$ are unitary trivializations of $\gamma_{\pm}^*\xi$, and $A_{\pm}:W^{1,2}(S^1,\R^{2n})\to L^2(S^1,\R^{2n})$ are the asymptotic operators induced by $\tau_{\pm}$.

There is a second definition of the Conley-Zehnder index, as an integer assigned to a path of symplectic matrices. The following remark is necessary in bridging the two perspectives.
\begin{remark}\label{remark: symplectic symmetric}
Let $\Phi:[0,1]\to\Sp(2n)$ be a differentiable, 1-parameter family of symplectic matrices. Then  $S:=-J_0\dot{\Phi}\Phi^{-1}$ is a 1-parameter family of \emph{symmetric} matrices. Conversely, given a differentiable 1-parameter family of symmetric matrices $S:[0,1]\to\text{Sym}(2n)$, then the solution $\Phi$ of the initial value problem \[\dot{\Phi}=J_0S\Phi,\,\,\, \Phi(0)=\text{Id}\] is a 1-parameter family of \emph{symplectic} matrices.
\end{remark}

We now define the \emph{Conley-Zehnder index} associated to an arc of symplectic matrices $\Phi$, denoted $\mu_{\CZ}(\Phi)\in\Z$. We'll then use $\mu_{\CZ}(\Phi)$ to provide an alternative definition of $\mu_{\CZ}^{\tau}(\gamma)$, equivalent to  \eqref{equation: cz reeb orbit definition using spectral flow}. Let $\Phi:[0,1]\to\Sp(2n)$ be a differentiable path of symplectic matrices such that $\Phi(0)=\text{Id}$ and $\text{ker}(\Phi(1)-\text{Id})=0$. Let $S$ denote the corresponding path of symmetric matrices, from Remark \ref{remark: symplectic symmetric}. We say that $t\in(0,1)$ is a \emph{crossing} for $\Phi$ if $\text{ker}(\Phi(t)-\text{Id})$ is nontrivial. For each crossing $t$, define a quadratic form $\Gamma(\Phi,t):\text{ker}(\Phi(t)-\text{Id})\to\R$ by \[\Gamma(\Phi,t)(v)=\omega_0(v,\dot{\Phi}(t)v)=\langle v, S(t)v\rangle,\] where $\omega_0$ and $\langle\cdot,\cdot\rangle$ are the standard symplectic form and inner product on $\R^{2n}$. The crossing $t$ is \emph{regular} if this form is nondegenerate, i.e., 0 is not an eigenvalue. Note that regular crossings are isolated in $(0,1)$ and that a generic path $\Phi$ has only regular crossings.

Now, we define the Conley-Zehnder index of a path $\Phi$ with only regular crossings to be \begin{equation}\label{equation: def cz crossing}
    \mu_{\CZ}(\Phi):=\frac{1}{2}\text{Sign}(S(0))+\sum_{t\in(0,1)}\text{Sign}\big(\Gamma(\Phi,t)\big),
\end{equation}
where $\text{Sign}(\cdot)$ is the number of positive eigenvalues minus the number of negative eigenvalues, with multiplicity, of either a quadratic form or a symmetric matrix. The Conley-Zehnder index is invariant under perturbations relative to the endpoints. Thus, after a  small perturbation of any $\Phi$, we can assume that all crossings are regular.

Geometrically, the Conley-Zehnder index of a path of matrices is  a signed count of its intersection with the set $X=\{A\in\Sp(2n):\text{det}(A-\text{Id})=0\}$. There is a stratification of $X$ by $\text{dim}(\text{ker}(A-\text{Id}))$. Generically, a path in $\Sp(2n)$ will transversely intersect $X$ at points in the largest stratum, the hypersurface $X^{1}=\{A\in X:\text{dim}(\text{ker}(A-\text{Id}))=1\}$. 

Let $\gamma:S^1\to M$ be a nondegenerate Reeb orbit with action $T$, and let $\tau$ be a unitary trivialization of $(\gamma^*\xi,d\lambda|_{\xi}, J)$. For $t\in[0,1]$, let $\Phi(t)\in\Sp(2n)$ represent the time $T\cdot t$-linearized Reeb flow from $\xi_{\gamma(0)}$ to $\xi_{\gamma(t)}$ with respect to the trivialization $\tau$ (we must re-normalize by $T$ because of our convention in this section of renormalization of the period of $\gamma$ by $T$). Note that $\text{ker}(\Phi(1)-\text{Id})=0$, because $\gamma$ is nondegenerate. Now, define the Conley-Zehnder index of $\gamma$ with respect to $\tau$: \begin{equation}\label{equation: def of cz of reeb wrt sp matrices}
    \mu_{\CZ}^{\tau}(\gamma):=\mu_{\CZ}(\Phi).
\end{equation}

\begin{proposition}\label{proposition: equivalence of cz definitions} The definitions \eqref{equation: cz reeb orbit definition using spectral flow} and \eqref{equation: def of cz of reeb wrt sp matrices} of the Conley-Zehnder index of a Reeb orbit with a unitary trivialization are equivalent.
\end{proposition}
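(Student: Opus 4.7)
The plan is to show the two integers agree by constructing a particular differentiable path of asymptotic operators realizing the spectral flow from $A_0$ to $A_\tau$, and matching its transverse crossings of $\mathcal{F}^{0,1}_{\text{sym}}$ one-to-one with the symplectic crossings of $\Phi$ appearing in the definition of $\mu_{\CZ}(\Phi)$.

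First I would appeal to homotopy invariance (with endpoints fixed) on both sides to reduce to the generic situation in which $\Phi$ has only regular crossings $t_1, \dots, t_N \in (0,1)$, and the associated path of asymptotic operators has only regular intersections with $\mathcal{F}^{0,1}_{\text{sym}}$. The key analytic observation driving the proof is a kernel correspondence: for any smooth loop $S_s(t)$ of symmetric matrices, if $\Phi_s$ denotes the fundamental solution of $\dot{\Phi}_s = J_0 S_s \Phi_s$ with $\Phi_s(0)=\mathrm{Id}$, then $\eta \in \ker(A_s)$ precisely when $\eta(t) = \Phi_s(t)\eta(0)$ and $\eta(0) \in \ker(\Phi_s(1) - \mathrm{Id})$. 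Evaluation at $t=0$ therefore provides a canonical linear isomorphism $\ker(A_s) \cong \ker(\Phi_s(1) - \mathrm{Id})$, converting the question about eigenvalues of $A_s$ passing through zero into a question about when $\Phi_s(1)$ acquires $1$ as an eigenvalue.

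Next I would construct the connecting operator path explicitly in two pieces. The principal piece uses the reparameterization family $S_s(t) = s\,S(st)$ for $s \in (0,1]$, for which a direct calculation shows the fundamental solution is $\Phi_s(t) = \Phi(st)$. Via the kernel correspondence, $\ker(A_s) \cong \ker(\Phi(s) - \mathrm{Id})$, so the crossings of this piece with $\mathcal{F}^{0,1}_{\text{sym}}$ occur exactly at the $t_i$'s. A short initial piece interpolates from $A_0 = -J_0\partial_t - S_0$ (with $S_0 = \mathrm{Id}_n \oplus (-\mathrm{Id}_n)$) to the small-$s$ end of the principal piece; using the explicit fundamental solution $\exp(tJ_0 S_0)$, one verifies this piece contributes exactly $\tfrac{1}{2}\text{Sign}(S(0))$ to the spectral flow, accounting for the endpoint correction term in \eqref{equation: def cz crossing}.

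The main obstacle is matching signs at each interior crossing. At $s = t_i$, Corollary \ref{corollary: spectral flow in terms of crossings} contributes $\text{Sign}(\Gamma(A, t_i))$ with $\Gamma(A, t_i)(\eta) = \langle \dot A_{t_i}\eta, \eta\rangle_{L^2}$ defined on the infinite-dimensional space $\ker(A_{t_i})$, whereas the symplectic crossing form $\Gamma(\Phi, t_i)(v) = \omega_0(v, \dot\Phi(t_i)v)$ lives on the finite-dimensional $\ker(\Phi(t_i) - \mathrm{Id})$. Substituting $S_s(t) = sS(st)$ into $\dot A_{t_i}$, using the kernel identification $\eta(t) = \Phi(t_i t)\eta(0)$, and invoking the identity $\omega_0(v, J_0 M v) = \langle v, M v\rangle$ for symmetric $M$, I would rewrite the $L^2$-integral as a positive multiple of $\omega_0(\eta(0), \dot\Phi(t_i)\eta(0))$, forcing the two signatures to coincide. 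Carrying out this computation so that the reparameterization factor $s$ cancels and only the endpoint contribution survives is the delicate technical step; once it is in place, summing over $i$ and adding the $\tfrac{1}{2}\text{Sign}(S(0))$ contribution from the initial piece reproduces exactly \eqref{equation: def cz crossing}.
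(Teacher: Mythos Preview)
Your strategy---building an explicit one-parameter family of asymptotic operators whose crossing values line up with the crossing times of $\Phi$---is genuinely different from the paper's. The paper instead proves a \emph{difference formula} $\mu_{\CZ}(\Phi_+)-\mu_{\CZ}(\Phi_-)=\mu^{\text{spec}}(A_+,A_-)$ using an arbitrary generic path of honest loops $S_s:S^1\to\text{Sym}(2n)$, compares the crossing forms of the two sides via the appendix Lemma~\ref{lemma: crossing forms are related by a negative sign}, and then anchors the constant by evaluating both definitions at the reference $A_0$. This indirect route sidesteps exactly the issues that bite your construction.

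The first gap is periodicity: your family $S_s(t)=sS(st)$ satisfies $S_s(0)=sS(0)$ but $S_s(1)=sS(s)$, so it is not $1$-periodic in $t$ for $0<s<1$ unless $S$ is constant. Thus $A_s=-J_0\partial_t-S_s$ is not an asymptotic operator of the type for which the spectral flow was set up. One can probably extend the theory to allow discontinuous $S_s$, since the Fredholm and symmetry properties survive, but this requires justification you do not provide.

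The second gap is the sign. Your claim that $\Gamma(A,t_i)(\eta)$ is a \emph{positive} multiple of $\omega_0(\eta(0),\dot\Phi(t_i)\eta(0))$ is false: Lemma~\ref{lemma: crossing forms are related by a negative sign} shows, for any path of this shape, that $\Gamma(A,s)(\eta)=-\Gamma(\Psi,s)(\eta(0))$ with $\Psi(s)=\Phi_s(1)$, and in your construction $\Psi=\Phi$. You also orient your path from $A_0$ to $A_\tau$, whereas the definition $\mu^\tau_{\CZ}(\gamma)=\mu^{\text{spec}}(A_\tau,A_0)$ uses the reverse; the two sign errors cancel, but neither is acknowledged. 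Finally, the ``short initial piece'' that you assert contributes exactly $\tfrac{1}{2}\text{Sign}(S(0))$ is not argued at all; this half-integer endpoint correction is precisely what the paper's difference-plus-basepoint method avoids having to match directly.
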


\begin{proof}
 We expand on the arguments used in \cite[\S 2.5]{S} regarding spectral flows in Hamiltonian Floer theory, which concerns negative gradient trajectories of the action functional. Note the differences in sign conventions come from the fact that symplectic field theory concerns \emph{positive} gradient trajectories. See \cite[\S 3.3]{W3} for more details on regarding these sign conventions. 
 
 Take nondegenerate Reeb orbits $\gamma_{\pm}$ and unitary trivializations $\tau_{\pm}$ of $\gamma_{\pm}^*\xi$. Let $A_{\pm}=-J_0\partial_t-S_{\pm}$ be the corresponding asymptotic operators as in definition \eqref{equation: cz reeb orbit definition using spectral flow}, let $\Phi_{\pm}$ be the corresponding paths of symplectic matrices as appearing in definition \eqref{equation: def of cz of reeb wrt sp matrices}. The paths $\Phi_{\pm}$ are related to the loops $S_{\pm}$ by Remark \ref{remark: symplectic symmetric}. We first argue that \begin{equation}\label{equation: off by shift}
     \mu_{\CZ}(\Phi_+)-\mu_{\CZ}(\Phi_-)=\mu^{\text{spec}}(A_+,A_-).
 \end{equation} This will demonstrate that the definitions differ by a constant integer. Then we will show that the definitions agree on a choice of a specific value, completing the proof.
 
 Let $S:[-1,1]\times S^1\to \text{Sym}(2n)$ be a smooth, generic path of loops of symmetric matrices, $(s,t)\mapsto S_s(t)$, where $S_s:S^1\to\text{Sym}(2n)$, from $S_+$ to $S_-$. This means $S_{\pm1}=S_{\mp}$. The path of loops $\{S_s\}_{s\in[-1,1]}$ defines a path of asymptotic operators $A=\{A_s\}_{s\in[-1,1]}$ from $A_+$ to $A_-$, given by $A_s=-J_0\partial_t-S_s$, so that $A_{\pm1}=A_{\mp}$. The path of loops $\{S_s\}_{s\in[-1,1]}$ additionally defines a 1-parameter family of paths of symplectic matrices, $\{\Phi_s\}_{s\in[-1,1]}$, where $\Phi_s:[0,1]\to\Sp(2n)$, for $s\in[-1,1]$, with $\Phi_s(0)=\text{Id}$, as described in Remark \ref{remark: symplectic symmetric}. We similarly have that $\Phi_{\pm1}=\Phi_{\mp}$. Define $\Phi:[-1,1]\times[0,1]\to\Sp(2n)$, $\Phi(s,t)=\Phi_s(t)$. Finally, let $\Psi:[-1,1]\to\Sp(2n)$ be given by $\Psi(s):=\Phi(s,1)=\Phi_s(1)$. Figure \ref{figure: crossings} illustrates this data.
 
 \begin{figure}[h]
    \centering
    \captionsetup{justification=centering}
    \includegraphics[width=1.0\textwidth]{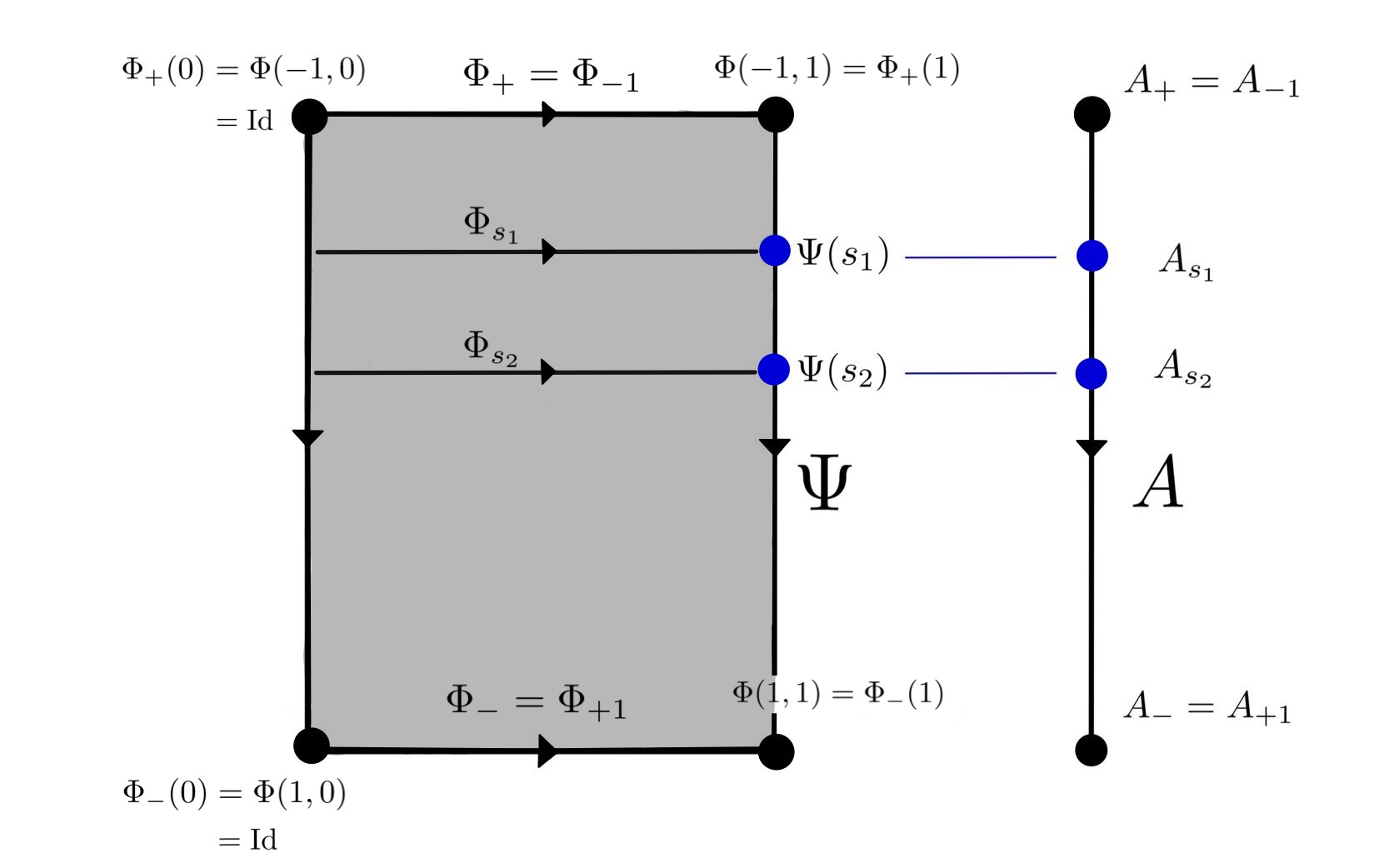}
    \caption{A rectangle of symplectic matrices on the left, and a corresponding path of asymptotic operators on the right. Both figures depict two theoretical  $s$ values in $(-1,1)$  that are crossings for both $\Psi$ and $A$ in blue.}
    \label{figure: crossings}
\end{figure}
 
 See that the concatenation $\Phi_+\cdot\Psi$ is a path of symplectic matrices from  $\text{Id}$ to $\Phi_-(1)$ that is homotopic to $\Phi_-$, rel endpoints. Thus, we have that $\mu_{\CZ}(\Phi_-)=\mu_{\CZ}(\Phi_+\cdot\Psi)$. By definition \eqref{equation: def cz crossing}, \begin{align}
     \mu_{\CZ}(\Phi_+\cdot\Psi)&=\frac{1}{2}\text{Sign}(S_+(0))+\sum_{t\in(0,1)}\text{Sign}(\Gamma(\Phi_+,t))+\sum_{s\in(-1,1)}\text{Sign}(\Gamma(\Psi,s))\nonumber\\
     &=\mu_{\CZ}(\Phi_+)+\sum_{s\in(-1,1)}\text{Sign}(\Gamma(\Psi,s)). \label{equation: relating cz of matrix}
 \end{align} Let $Z$ denote the sum of signatures  appearing on the right in equation \eqref{equation: relating cz of matrix}, so that \[\mu_{\CZ}(\Phi_+)-\mu_{\CZ}(\Phi_-)=\mu_{\CZ}(\Phi_+)-\mu_{\CZ}(\Phi_+\cdot\Psi)=-Z.\]
 We show that $Z=-\mu^{\text{spec}}(A_+,A_-)$, implying equation \eqref{equation: off by shift}. 
 
 For each $s\in(-1,1)$, we construct an isomorphism $F_s:\text{ker}(A_s)\to\text{ker}(\Psi(s)-\text{Id})$. It's worth noting that these vector spaces are generically trivial. First note that an eigenvector of any asymptotic operator has a smooth representative in $W^{1,2}(S^1,\R^{2n})$, thus $\text{ker}(A_s)$ consists entirely of smooth functions. An element $\eta\in\text{ker}(A_s)$ is necessarily of the form $t\mapsto \Phi_s(t)\eta(0)$. This is because both $\eta(t)$ and $\Phi_s(t)\eta(0)$ are solutions to the initial value problem \[\dot{Y}(t)=J_0S_s(t)Y(t),\,\,\,Y(0)=\eta(0),\] thus $\eta(t)=\Phi_s(t)\eta(0)$. 
 
 Importantly, because $\eta$ is 1-periodic, this implies that \[\eta(0)=\eta(1)=\Phi_s(1)\eta(0)=\Psi(s)\eta(0),\] demonstrating that $\eta(0)\in\text{ker}(\Psi(s)-\text{Id})$. Define \[F_s:\text{ker}(A_s)\to\text{ker}(\Psi(s)-\text{Id}),\,\,\,\eta\mapsto\eta(0).\] This linear map $F_s$ is invertible, with inverse given by $v\mapsto\eta_v$, where $\eta_v(t)=\Phi_s(t)v$. In total, the spaces $\text{ker}(A_s)$ and $\text{ker}(\Psi(s)-\text{Id})$ are isomorphic for all $s$. Because $\{S_s\}_{s\in[-1,1]}$ was chosen generically, $\text{ker}(A_s)$ is nonzero only for finitely many $s$, whose dimension is 1 at such values $s$. Furthermore, we have that $F_s$ negates the quadratic forms on $\text{ker}(A_s)$ and $\text{ker}(\Psi(s)-\text{Id})$, in the sense that for any $e\in\R$, \[\Gamma(\Psi,s)(F_s(\eta))=-\Gamma(A,s)(\eta),\]
For $\eta\in\text{ker}(A_s)$. Lemma \ref{lemma: crossing forms are related by a negative sign} proves this fact (for $\text{dim}(\text{ker}(A_s))$ arbitrary) in Appendix \ref{appendix: signature of crossing forms}. This equality allows us to conclude \[Z=\sum_{s\in(-1,1)}\text{Sign}(\Gamma(\Psi,s))=-\sum_{s\in(-1,1)}\text{Sign}(\Gamma(A,s))=-\mu^{\text{spec}}(A_+,A_-),\]
 where we have used the formula derived in Corollary \ref{corollary: spectral flow in terms of crossings} in the last equality. This proves equation \ref{equation: off by shift}. 
 
 It remains to show that Definitions \eqref{equation: cz reeb orbit definition using spectral flow} and \eqref{equation: def of cz of reeb wrt sp matrices} agree at a single value. Consider the asymptotic operator $A_0=-J_0\partial_t-S_0$, where $S_0$ is the constant loop of symmetric matrices $\text{Id}_n\oplus-\text{Id}_n$. The Conley-Zehnder index with respect to definition \ref{equation: cz reeb orbit definition using spectral flow} is immediately given by $\mu^{\text{spec}}(A_0,A_0)=0$. Let $\Phi_0:[0,1]\to\Sp(2n)$ be the corresponding path of symplectic matrices, according to Remark \ref{remark: symplectic symmetric}. We argue that $\mu_{\CZ}(\Phi_0)=0$, proving that the the definitions agree at a specified value, and thus, at all values.
 
 An explicit expression for the solution $\Phi_0$ of the initial value problem $\dot{\Phi}_0=J_0S_0\Phi_0$, $\Phi_0(0)=\text{Id}_{2n}$ is given in block form by \[\Phi_0(t)=\begin{pmatrix}\cosh{(t)}\text{Id}_n & \sinh{(t)}\text{Id}_n &\\ \sinh{(t)}\text{Id}_n  & \cosh{(t)}\text{Id}_n\end{pmatrix},\] where we use the block form of $J_0$; \[J_0=\begin{pmatrix} 0 & -\text{Id}_n \\ \text{Id}_n & 0\end{pmatrix}.\]
 Using formulas for the determinant of a matrix in block form, we see that \[\text{det}(\Phi_0(t)-\text{Id}_{2n})=2^n(1-\cosh{(t)})^n.\] This is 0 only at $t=0$, and so the path $\Phi_0$ has no crossings in $(0,1]$. Finally, by definition \eqref{equation: def cz crossing}, we have \[\mu_{\CZ}(\Phi_0)=\frac{1}{2}\text{Sign}(S_0(0))+0=\frac{1}{2}(n-n)=0,\] completing the proof.
\end{proof}
There are advantages to understanding both perspectives of  $\mu_{\CZ}^{\tau}(\gamma)$. The spectral flow  definition is useful in seeing that the difference of Conley-Zehnder indices is a net change in eigenvalues of the corresponding asymptotic operators. This highlights  the idea that the asymptotic operator and Conley-Zehnder index are analogous to the Hessian and Morse index in the finite dimensional setting. The crossing form definition is computationally useful, and we make heavy use of it in Section \ref{section: computation of filtered contact homology}.

\begin{remark}\label{remark: properties of cz}
(Properties of the Conley-Zehnder index of a path of symplectic matrices) Following \cite[\S 2.4]{S}, let $\text{SP}(n)$ denote the collection of paths $\Phi:[0,1]\to\Sp(2n)$ such that $\Phi(0)=\text{Id}$,  and $\text{det}(\Phi(1)-\text{Id})\neq0$. Then $\mu_{\CZ}:\text{SP}(n)\to\Z$ satisfies the following properties.
\begin{enumerate}
    \item \textbf{Naturality:} For any path $N:[0,1]\to\Sp(2n)$, $\mu_{\CZ}(N\Phi N^{-1})=\mu_{\CZ}(\Phi)$.
    \item \textbf{Homotopy:} The values of $\mu_{\CZ}$ are constant on the components of $\text{SP}(n)$.
    \item \textbf{Zero:} If $\Phi(t)$ has no eigenvalue on the unit circle, for $t>0$, then $\mu_{\CZ}(\Phi)=0$.
    \item \textbf{Product:} For $n_1, n_2\in\N$ with $n=n_1+n_2$, using the standard identification $\Sp(n)\cong\Sp(n_1)\oplus\Sp(n_2)$, we have $\mu_{\CZ}(\Phi_1\oplus \Phi_2)=\mu_{\CZ}(\Phi_1)+\mu_{\CZ}(\Phi_2)$.
    \item \textbf{Loop:} If $L:[0,1]\to\Sp(2n)$ is a loop with $L(0)=L(1)$ then $\mu_{\CZ}(L\Phi)=2\mu(L)+\mu_{\CZ}(\Phi)$, where $\mu(L)$ is the Maslov index of $L$ (see below).
    \item \textbf{Signature:} For $S\in\text{Sym}(2n)$ with $\|S\|\footnote{Here, $\|S\|$ denotes the norm of $S$ as an operator on $\R^{2n}$, or equivalently, $\text{max}\{|\lambda|:\lambda\in\text{Spec}(S)\}$.}<2\pi$,  $\mu_{\CZ}(\text{exp}(J_0St))=\tfrac{1}{2}\text{Sign}(S)$.
    \item \textbf{Determinant:} $(-1)^{n-\mu_{\CZ}(\Phi)}=\text{Sign}(\text{det}(\Phi(1)-\text{Id}))$.
    \item \textbf{Inverse:} $\mu_{\CZ}(\Phi^{T})=\mu_{\CZ}(\Phi^{-1})=-\mu_{\CZ}(\Phi)$.
\end{enumerate}
For more details, the reader should consult \cite[\S 3]{SZ}. The Conley-Zehnder index as a family of functions $\{\mu_{\CZ}:\text{SP}(n)\to\Z\}_{n\in\N}$ is uniquely characterized by these properties. The loop property uses the definition of the \emph{Maslov index} of a loop of symplectic matrices, $\mu(L)$, for $L:S^1\to\Sp(2n)$, defined as follows. There is a deformation retraction $r:\Sp(2n)\to \U(n):=\Sp(2n)\cap \text{O}(2n)$ using polar decompositions (see \cite[\S 2.2]{MS}). There exists an identification of the group $\U(n)$ as we have defined it with the collection of unitary matrices in $\text{GL}(n,\C)$ and in this way, we have the determinant map $\text{det}:\U(n)\to S^1\subset\C^*$. The Maslov index of $L$, denoted $\mu(L)\in\Z$, is the degree of the map $\text{det}\circ r\circ L:S^1\to S^1$. For details and applications to Chern classes, see \cite[\S 2]{MS}.
\end{remark}

\begin{remark}\label{remark: rotation numbers}
(Conley-Zehnder indices and rotation numbers) In the 3-dimensional case, we may compute Conley-Zehnder indices of trivialized Reeb orbits by using \emph{rotation numbers}, which we define now. Let $\gamma$ be a nondegenerate Reeb orbit of action $T>0$, and let $\tau$ be a symplectic trivialization of $\gamma^*\xi$. Let $\{\Phi_t\}_{t\in[0,T]}\subset\Sp(2)$ denote the associated 1-parameter family of symplectic matrices defining the Reeb flow along $\gamma$ with respect to $\tau$. We define a \emph{rotation number}, $\theta\in\R$, depending on $\gamma$ and $\tau$, satisfying $\mu_{\CZ}^{\tau}(\gamma)=\lfloor\theta\rfloor+\lceil\theta\rceil\in\Z$. \begin{itemize}
    \item When $\gamma$ is elliptic, then (after perhaps modifying $\tau$ by a homotopy) $\Phi_t$ takes the form $\Phi_t=\text{exp}(2\pi\theta_tJ_0)\in\Sp(2)$, where $t\mapsto\theta_t$ is a continuous function $[0,T]\to\R$ with $\theta_0=0$, $\theta_T\notin\Z$. Define $\theta:=\theta_T$.
    \item When $\gamma$ is hyperbolic, there exists some nonzero eigenvector $v\in\R^2$ of $\Phi_T$. Now, the path $\{\Phi_t(v)\}_{t\in[0,T]}\subset\R^2\setminus\{(0,0)\}$ rotates through an angle of $2\pi k$ from $t=0$ to $t=T$. Define $\theta:=k$. When $\gamma$ is negative hyperbolic, $v$ is an eigenvector whose eigenvalue is negative, so $\Phi_T(v)$ and $v$ point in opposite directions, thus this $2\pi k$ must be an odd multiple of $\pi$ (i.e. $k$ is a half-integer). When $\gamma$ is positive hyperbolic, the eigenvalue is positive, so $\Phi_T(v)$ and $v$ point in the same direction, so $2\pi k$ is an even multiple of $\pi$  (i.e. $k$ is an integer).
\end{itemize}
\end{remark}
In Sections \ref{subsection: cyclic}, \ref{subsection: dihedral}, and \ref{subsection: polyhedral}, we compute the rotation numbers for all of the elliptic orbits (with respect to a global trivialization of $\xi_G$) in our action filtered chain complexes, and use the fact that $\mu_{\CZ}^{\tau}(\gamma)=\lfloor\theta\rfloor+\lceil\theta\rceil$ to compute their Conley-Zehnder indices. 
\begin{remark}\label{remark: parity of Conley-Zehnder indices}
(Parity of Conley-Zehnder indices) In the 3-dimensional case, the rotation numbers provide information regarding the parity of $\mu_{\CZ}^{\tau}(\gamma)$:
\begin{itemize}
    \item If $\gamma$ is elliptic, then (for any trivialization $\tau$), $\theta\in\R\setminus\Z$, and so the quantity \[\mu_{\CZ}^{\tau}(\gamma)=\lfloor\theta\rfloor+\lceil\theta\rceil=2\lfloor\theta\rfloor+1=2\lceil\theta\rceil-1\] is always odd.
    \item If $\gamma$ is negative hyperbolic, then (for any trivialization $\tau$), $\theta$ is a half-integer of the form $\theta=k=k'+\tfrac{1}{2}$, for $k'\in\Z$, and so the quantity \[\mu_{\CZ}^{\tau}(\gamma)=\lfloor\theta\rfloor+\lceil\theta\rceil=2\theta=2k'+1\] is always odd.
    \item If $\gamma$ is positive hyperbolic, then (for any trivialization $\tau$), $\theta$ is an integer $k\in\Z$, and so the quantity \[\mu_{\CZ}^{\tau}(\gamma)=\lfloor\theta\rfloor+\lceil\theta\rceil=2\theta=2k\] is always even.
\end{itemize}
\end{remark}
\begin{remark}\label{remark: CZ indices of iterates}
(Conley-Zehnder indices of iterates) In the 3-dimensional case, we can use rotation numbers to compare the Conley-Zehnder indices of iterates of Reeb orbits. For a nondegenerate Reeb orbit $\gamma$, a symplectic trivialization $\tau$ of $\gamma^*\xi$, and a positive integer $k$ such that $\gamma^k$ is nondegenerate, there exists a unique pullback trivialization of $(\gamma^k)^*\xi$, denoted $k\tau$. Let $\theta$ and $\theta^k$ denote the rotation numbers of $\gamma$ and $\gamma^k$ with respect to $\tau$ and $k\tau$, respectively. Then it is true that $\theta^k=k\theta\in\R$. Notice that, if $\gamma$ is hyperbolic, then we have \[\mu_{\CZ}^{k\tau}(\gamma^k)=2\theta^k=2k\theta=k\mu_{\CZ}^{\tau}(\gamma).\] That is, the Conley-Zehnder index of hyperbolic orbits grows linearly with respect  to iteration (when using appropriate trivializations). In particular, when there exists a global trivialization $\tau$ of $\xi$, \[\mu_{\CZ}(\gamma^k)=k\cdot \mu_{\CZ}(\gamma),\] for all hyperbolic Reeb orbits, and with $\mu_{\CZ}$ computed implicitly with respect to the global trivialization $\tau$. Due to the non-linearity of the floor and ceiling functions, the Conley-Zehnder indices of elliptic orbits do not necessarily grow linearly with the iterate.
\end{remark}
In Sections \ref{subsection: dihedral} and \ref{subsection: polyhedral}, we use this observation to compute the Conley-Zehnder indices of the hyperbolic Reeb orbits appearing in the action filtered chain complexes.

\subsection{Cylindrical contact homology as an analogue of orbifold Morse homology} \label{subsection: cylindrical contact homology as an analogue of orbifold Morse homology}
 Let $X$ be a manifold (or perhaps an orbifold) used to study Morse (or orbifold Morse) theory, and let $(M,\lambda)$ denote a contact manifold. In a loose sense, cylindrical contact homology is a kind of Morse homology performed on $\mathcal{L}(M)$ with respect to the contact action functional, $\mathcal{A}_{\lambda}(\gamma)=\int_{\gamma}\lambda$. This analogy is highlighted in Table \ref{table: Morse SFT comparison}.

\begin{table}[h!]
\centering
 \begin{tabular}{||m{6.3cm} | m{6.9cm} ||} 
 \hline
\textbf{Morse Theory}  & \textbf{Cylindrical Contact Homology}  \\ [0.5ex] 
 \hline\hline
 \footnotesize{$p\mapsto -\text{grad}(f)_p\in T_pX$, the negative gradient of $f$ at $p$, is a section of $TX\to X$ which vanishes precisely at \text{Crit}(f)} & \footnotesize{$\gamma\mapsto-J\pi_{\xi}\dot{\gamma}\in\Gamma(\gamma^*\xi)$ is a section of  $E\to\mathcal{L}(M)$. This section \emph{resembles} a positive gradient for $\mathcal{A}_{\lambda}$, and vanishes at Reeb orbits}\\
 \hline
 \footnotesize{At each $p\in\text{Crit}(f)$, we have the self adjoint Hessian $H_p:T_pX\to T_pX$, obtained by linearizing $-\text{grad}(f)$ at $p$} & \footnotesize{At each Reeb orbit $\gamma$ we have the self adjoint asymptotic operator $A_{\gamma}:\Gamma(\gamma^*\xi)\to\Gamma(\gamma^*\xi)$, obtained by linearizing $-J\pi_{\xi}\dot{\gamma}$ at $\gamma$}\\
 \hline
 \footnotesize{$x\in\mathcal{M}(p_+,p_-)$ is a Morse flow line} & \footnotesize{$u\in\mathcal{M}(\gamma_+,\gamma_-)$ is a $J$-holomorphic cylinder}  \\ 
 \hline
 \footnotesize{$p$ nondegenerate $\iff$ $H_p$  nondegenerate;} \footnotesize{$\text{ind}(x)$ is related to the spectra of $H_{p_{\pm}}$ }& \footnotesize{ $\gamma$ nondegenerate $\iff$ $A_{\gamma}$ nondegenerate; \footnotesize{$\text{ind}(u)$ is related to the spectra of $A_{\gamma_{\pm}}$}} \\
 \hline
\end{tabular}
\caption{Analogies between the two theories}
\label{table: Morse SFT comparison}
\end{table}
Regarding the chain complexes and  differentials, there are two immediate similarities demonstrating that \emph{orbifold} Morse homology is the correct analogue of cylindrical contact homology:

\textbf{(1) The differentials are structurally identical.} Indeed, let us compare, for pairs $(p,q)$ and $(\gamma_+,\gamma_-)$ of orientable orbifold critical points and good Reeb orbits of grading difference equal to 1, 
\[\langle \partial^\text{orb} p, q\rangle=\sum_{x\in\mathcal{M}(p, q)}\epsilon(x)\dfrac{|H_p|}{|H_x|},\,\,\,\,\,\,\,\, \langle \partial \gamma_+, \gamma_-\rangle=\sum_{u\in\mathcal{M}_1^J(\gamma_+,\gamma_-)/\R}\epsilon(u)\dfrac{m(\gamma_+)}{m(u)}.\]
Recall that $|H_q|$ is the order of isotropy associated to the orbifold point $q$, $|H_x|$ is the order of isotropy associated to any point in the image of the flow line $x$, and $m$ denotes the multiplicity of either a Reeb orbit or a cylinder. Both differentials are counts of integers because $|H_x|$ divides $|H_p|$ and $m(u)$ divides $m(\gamma_+)$. The differentials use the same \emph{weighted} counts of 0-dimensional moduli spaces because of the similarities in the compactifications of the 1-dimensional  moduli spaces.  A single broken gradient path or holomorphic building may serve as the limit of \emph{multiple} ends of a 1-dimensional moduli space in either setting. For a thorough treatment of why these signed counts generally produce a differential that squares to zero, see \cite[Theroem 5.1]{CH} (in the orbifold case) and \cite[\S 4.3]{HN} (in the contact case).

For example, a 1-dimensional moduli space of flow lines/cylinders, diffeomorphic to an open interval $(0,1)$, could compactify in two geometrically distinct ways. Such an open interval's compactification by broken objects may be either a closed interval $[0,1]$, or  be a topological $S^1$. In the latter case, the single added point in the compactification serves as an endpoint, or limit, of \emph{two} ends of the open moduli space. The differential as a weighted count is designed to accommodate  this multiplicity. Cylinders and orbifold Morse flow lines appearing in our contact manifolds $S^3/G$ and orbifolds $S^2/H$ will exhibit this phenomenon of an open interval compactifying to a topological $S^1$, this is spelled out explicitly in Section \ref{subsection: visualizing holomorphic cylinders: an example} and illustrated in Figure \ref{figure: calzone}.

\begin{remark}
Compare these differentials to those in classical Morse theory and Hamiltonian Floer homology, which take the simpler forms $\sum_x\epsilon(x)$ and $\sum_u\epsilon(u)$, where $x$ ranges over Morse flow lines, and $u$ ranges over Hamiltonian trajectories in appropriate 0-dimensional moduli spaces. In both of these theories, the ends of the 1-dimensional moduli spaces are in \emph{bijective correspondence} with the broken trajectories, and so the differentials are only \emph{signed}, not weighted, counts.
\end{remark}

\textbf{(2) Bad Reeb orbits are analogous to non-orientable critical points.} Recall from Remark \ref{remark: nonorientable2}, that  in orbifold Morse theory the values $\epsilon(x)$, for $x\in\mathcal{M}(p,q)$, are only well defined when the flow line interpolates between \emph{orientable} critical points. The same phenomenon occurs in cylindrical contact homology. Loosely speaking, in  order to assign a coherent orientation to a holomorphic cylinder $u$, one needs to orient the asymptotic operators associated to the Reeb orbits $\gamma_{\pm}$ at both ends by means of a choice of base-point in the images of $\gamma_{\pm}$. The cyclic deck group of automorphisms of each Reeb orbit (isomorphic to $\Z_{m(\gamma_{\pm})}$) acts on the 2-set of orientations of the asymptotic operator. If the action is non-trivial (i.e., if the deck group \emph{reverses} the orientations), then the Reeb orbit is \emph{bad}, and there is not a well defined $\epsilon(u)$, independent of intermediate choices made.

In Section \ref{subsection: visualizing holomorphic cylinders: an example}, we explain that the Seifert projections $\fp:S^3/G\to S^2/H$ geometrically realize this analogy between non-orientable objects. Specifically, the bad Reeb orbits in $S^3/G$ are precisely the ones that project to non-orientable orbifold Morse critical points in $S^2/H$.

Finally, not only do non-orientable objects complicate the assignment of orientation to the 0-dimensional moduli spaces, their inclusion in the chain complex of both theories jeopardizes $\partial^2=0$, providing significant reasons to exclude them as generators. In Remark \ref{remark: when d squared is not zero} of Section \ref{subsection: visualizing holomorphic cylinders: an example}, we will investigate the $H=\T$ case, and will see (as we did in Remark \ref{remark: nonorientable2}) what happens if we were to include the non-orientable objects as generators of the respective chain complexes. In particular, we will produce examples of (orientable) generators $x$ and $y$ with $|x|-|y|=2$ with $\langle\partial x,y\rangle\neq 0$.

\section{Geometric setup and dynamics}\label{section: geometric setup}
We now begin our more detailed studies of the contact manifolds $S^3/G$. In this section we review a process of perturbing degenerate contact forms on $S^3$ and $S^3/G$ using a Morse function to achieve nondegeneracy up to an action threshold, following \cite[\S 1.5]{N2}. This process will aid our main goal of the section, to precisely identify the Reeb orbits of  $S^3/G$, and to compute their Conley-Zehnder indices (Lemma \ref{lemma: ActionThresholdLink}).

\subsection{Spherical geometry and associated Reeb dynamics}\label{subsection: spherical geometry and associated Reeb dynamics}
The round contact form on $S^3$, denoted $\lambda$, is defined as the restriction of the 1-form $\iota_{U}\omega_0\in\Omega^1(\C^2)$ to $S^3$, where $\omega_0$ is the standard symplectic form on $\C^2$ and $U$ is the radial vector field in $\C^2$. In complex coordinates, we have:
\[\omega_0=\frac{i}{2}\sum_{k=1}^2dz_k\wedge d\overline{z_k},\,\,\,U=\frac{1}{2}\sum_{k=1}^2z_k\partial_{z_k}+\overline{z_k}\partial_{\overline{z_k}},\,\,\,\lambda=\frac{i}{4}\sum_{k=1}^2z_k\wedge d\overline{z_k}-\overline{z_k}dz_k.\]  After interpreting $\C^2\cong\R^4$, we can rewrite $
\omega_0$, $U$, and $\lambda$ using real notation: \[\omega_0=\sum_{k=1}^2dx_k\wedge dy_k,\,\,\,\,\,\,\,\, U=\frac{1}{2}\sum_{k=1}^2x_k\partial_{x_k}+y_k\partial_{y_k},\,\,\,\,\,\,\,\,\lambda=\frac{1}{2}\sum_{k=1}^2x_kdy_k-y_kdx_k.\] The contact manifold $(S^3,\lambda)$ is a classical example of a contact-type hypersurface in the symplectic manifold $(\R^4,\omega_0)$; this means that $S^3$ is transverse to the \emph{Liouville vector field}, $U$, and inherits contact structure from the contraction of $\omega_0$ with $U$.

The diffeomorphism $S^3\subset\C^2\to \SU(2)$ provides $S^3$ with Lie group structure,
\begin{equation} \label{equation: S^3 Lie}
    (\alpha,\beta)\in S^3\mapsto\begin{pmatrix}\alpha &-\overline{\beta}\,\,\\ \beta & \overline{\alpha}\end{pmatrix}\in\SU(2),
\end{equation}
and we see that $e=(1,0)\in S^3$ is the identity element. The $\SU(2)$-action on $\C^2$ preserves  $\omega_0$ and $U$, and so the $\SU(2)$-action restricted to $S^3$ preserves $\lambda$. That is, $\SU(2)$ acts on $S^3$ by \emph{strict} contactomorphisms.

There is a natural Lie algebra isomorphism between the tangent space of the identity element of a Lie group and its collection of left-invariant vector fields.  The contact plane $\xi_e=\text{ker}(\lambda_e)$ at the identity element $e=(1,0)\in S^3$ is spanned by the tangent vectors $\partial_{x_2}|_e=\langle 0,0,1,0\rangle$ and $\partial_{y_2}|_e=\langle 0,0,0,1\rangle$, where we are viewing \[\xi_e\subset T_eS^3\subset T_e\C^2\cong T_e\R^4=\text{Span}_{\R}\big(\partial_{x_1}|_e,\,\partial_{y_1}|_e,\,\partial_{x_2}|_e,\,\partial_{y_2}|_e\big).\] Let $V_1$ and $V_2$ be the  left-invariant vector fields corresponding to $\partial_{x_2}|_e$ and  $\partial_{y_2}|_e$, respectively. Because $S^3$ acts on itself by \emph{contactomorphisms},  $V_1$ and $V_2$ are sections of $\xi$ and provide a global unitary trivialization of $(\xi, d\lambda|_{\xi},J_{\C^2})$, denoted $\tau$:
\begin{equation}\label{equation: global trivialization}
    \tau:S^3\times\R^2\to\xi, \,\,\, (p,\eta_1,\eta_2)\mapsto \eta_1V_1(p)+\eta_2V_2(p)\in\xi_p.
\end{equation}
Here, $J_{\C^2}$ is the standard integrable complex structure on $\C^2$. Note that $J_{\C^2}
(V_1)=V_2$ everywhere. Given a Reeb orbit $\gamma$ of any contact form on $S^3$, let  $\mu_{\CZ}(\gamma)\in \Z$ denote the Conley-Zehnder index of $\gamma$ with respect to this $\tau$, i.e. $\mu_{\CZ}(\gamma):=\mu_{\CZ}^{\tau}(\gamma)$. If $(\alpha,\beta)\in S^3$, write $\alpha=a+ib$ and $\beta=c+id$. Then, with respect to the ordered basis $(\partial_{x_1}, \partial_{y_1}, \partial_{x_2}, \partial_{y_2})$ of $T_{(\alpha,\beta)}\R^4$, we have the following expressions
\begin{equation}\label{equation: vector field coordinates}
    V_1(\alpha,\beta)=\langle -c,d, a, -b\rangle,\,\,\,\,
    V_2(\alpha, \beta)=\langle -d,-c,b,a\rangle.
\end{equation}

Consider the double cover of Lie groups, $P:\SU(2)\to\SO(3)$: \vspace{.2cm}\begin{equation}\label{equation: P in coordinates}
    \begin{pmatrix}\alpha & -\overline{\beta}\,\, \\ \beta & \overline{\alpha}\end{pmatrix}\in\SU(2)\xmapsto{P}\begin{pmatrix}|\alpha|^2-|\beta|^2 & 2\text{Im}(\alpha\beta) & 2\text{Re}(\alpha\beta) \\ -2\text{Im}(\overline{\alpha}\beta) & \text{Re}(\alpha^2+\beta^2) & -\text{Im}(\alpha^2+\beta^2)\\ -2\text{Re}(\overline{\alpha}\beta) & \text{Im}(\alpha^2-\beta^2) & \text{Re}(\alpha^2-\beta^2)\end{pmatrix}\in\SO(3).
\end{equation}
The kernel of $P$ has order 2 and is generated by $-\text{Id}\in\SU(2)$, the only element of $\SU(2)$ of order 2. A diffeomorphism $\C P^1\to S^2\subset\R^3$ is given in homogeneous coordinates ($|\alpha|^2+|\beta|^2=1$) by \begin{equation}\label{equation: sphere cp1 diffeomorphism}
    (\alpha:\beta)\in \C P^1\mapsto(|\alpha|^2-|\beta|^2,-2\text{Im}(\bar{\alpha}\beta),-2\text{Re}(\bar{\alpha}\beta))\in S^2.
\end{equation}
We have an $\SO(3)$-action on $\C P^1$, pulled back from the $\SO(3)$-action on $S^2$ by \eqref{equation: sphere cp1 diffeomorphism}. Lemma \ref{lemma: commutes} illustrates how the action of $\SU(2)$ on $S^3$ is related to the action of $\SO(3)$ on $\C P^1\cong S^2$ via $P:\SU(2)\to\SO(3).$

\begin{lemma} \label{lemma: commutes}
For a point $z$ in $S^3$, let $[z]\in\C P^1$ denote the corresponding point under the quotient of the $S^1$-action on $S^3$. Then for all $z\in S^3$, and all matrices $A\in\mbox{\em SU}(2)$, we have \[[A\cdot z]=P(A)\cdot[z]\in\C P^1\cong S^2.\]
\end{lemma}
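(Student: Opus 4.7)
The plan is to first unpack the $\SU(2)$-action on $S^3$, show it descends to $\C P^1$, and then verify the identity by direct computation in $\R^3$ coordinates. Making the action explicit: under the identification \eqref{equation: S^3 Lie}, the $\SU(2)$-action on $S^3$ is left multiplication in $\SU(2)$. Writing $A = \begin{pmatrix} a & -\bar{b} \\ b & \bar{a} \end{pmatrix}$ and $z = (\alpha,\beta) \in S^3$, direct matrix multiplication gives $A \cdot z = (a\alpha - \bar{b}\beta,\, b\alpha + \bar{a}\beta)$.

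Next, I would check that the $\SU(2)$-action descends to $\C P^1$. The Hopf $S^1$-action $e^{i\theta} \cdot (\alpha,\beta) = (e^{i\theta}\alpha, e^{i\theta}\beta)$ corresponds under \eqref{equation: S^3 Lie} to \emph{right} multiplication on $\SU(2)$ by $\mathrm{diag}(e^{i\theta}, e^{-i\theta})$. Since left and right multiplication in any Lie group commute, the $\SU(2)$-action passes to the quotient $S^3/S^1 \cong \C P^1$, under which $[A \cdot z]$ has homogeneous coordinates $(a\alpha - \bar{b}\beta : b\alpha + \bar{a}\beta)$, so both sides of the claimed equality are well defined.

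Then I would verify the identity by computing both sides in $\R^3$ via \eqref{equation: sphere cp1 diffeomorphism}. Setting $\alpha' := a\alpha - \bar{b}\beta$ and $\beta' := b\alpha + \bar{a}\beta$, direct expansion yields
\[
|\alpha'|^2 - |\beta'|^2 = (|a|^2 - |b|^2)(|\alpha|^2 - |\beta|^2) - 4\Re(ab\alpha\bar{\beta}),
\]
which matches the first component of $P(A) \cdot [z]$ read off from \eqref{equation: P in coordinates} after rewriting $\Re(ab\alpha\bar{\beta}) = \Re(ab)\Re(\bar{\alpha}\beta) + \Im(ab)\Im(\bar{\alpha}\beta)$. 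The other two coordinates require analogous expansions of $-2\Im(\bar{\alpha'}\beta')$ and $-2\Re(\bar{\alpha'}\beta')$ and comparison against the bottom two rows of $P(A)$ applied to $[z]$.

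The main obstacle is purely bookkeeping in the three coordinate identities --- tracking products of four complex numbers and their conjugates. A more structural shortcut uses the classical identification of $S^2$ with the trace-zero Hermitian $2 \times 2$ matrices via $[z] \leftrightarrow M_z := 2zz^* - I$: since $M_{A \cdot z} = A M_z A^*$, the induced $\SU(2)$-action on $S^2$ is conjugation, which is well known to coincide with the linear $\SO(3)$-action of $P(A)$. Modulo matching the sign and permutation conventions built into \eqref{equation: sphere cp1 diffeomorphism} and \eqref{equation: P in coordinates}, this reduces the lemma to the standard double-cover formula for $\SU(2) \to \SO(3)$, avoiding the component-wise algebra altogether.
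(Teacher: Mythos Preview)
Your proposal is correct, but the paper takes a much shorter route. Rather than expanding all three coordinates of $[A\cdot z]$ for an arbitrary $z$, the paper first checks the identity at the single point $z=e=(1,0)$: since $[e]$ corresponds to $(1,0,0)\in S^2$ under \eqref{equation: sphere cp1 diffeomorphism}, the product $P(A)\cdot[e]$ is simply the first column of the matrix in \eqref{equation: P in coordinates}, which one reads off as $(|\alpha|^2-|\beta|^2,\,-2\Im(\bar\alpha\beta),\,-2\Re(\bar\alpha\beta))=[(\alpha,\beta)]=[A\cdot e]$. The general case then follows by writing an arbitrary $z$ as $B\cdot e$ for some $B\in\SU(2)$ and invoking the fact that $P$ is a group homomorphism, so that $[A\cdot z]=[(AB)\cdot e]=P(AB)\cdot[e]=P(A)P(B)\cdot[e]=P(A)\cdot[z]$.

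The comparison: the paper's argument exploits the transitivity of the $\SU(2)$-action on $S^3$ together with the multiplicativity of $P$, reducing a three-component identity in four complex variables to a single-point check that is literally reading a column off a matrix. Your direct expansion is self-contained and does not rely on $P$ being a homomorphism, but it is far more laborious. Your Hermitian-matrix shortcut via $M_z=2zz^*-I$ is the conceptually cleanest of the three and is the standard textbook derivation of the double cover, though as you note it requires separately matching the sign and coordinate conventions baked into \eqref{equation: sphere cp1 diffeomorphism} and \eqref{equation: P in coordinates}---which is roughly the same amount of work as the paper's single-point verification.
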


\begin{proof}
First, note that the result holds for the case $z=e=(1,0)\in S^3$. This is because $[e]\in\C P^1$ corresponds to $(1,0,0)\in S^2$ under \eqref{equation: sphere cp1 diffeomorphism}, and so for any $A$, $P(A)\cdot[e]$ is the first column of the $3\times 3$ matrix $P(A)$ appearing in \eqref{equation: P in coordinates}. That is, \begin{equation}\label{equation: sphere points}
    P(A)\cdot[e]=(|\alpha|^2-|\beta|^2,-2\text{Im}(\bar{\alpha}\beta),-2\text{Re}(\bar{\alpha}\beta)),
\end{equation} (where $(\alpha,\beta)\in S^3$ is the unique element corresponding to $A\in \SU(2)$, using \eqref{equation: S^3 Lie}). By \eqref{equation: sphere cp1 diffeomorphism}, the point \eqref{equation: sphere points} equals $[(\alpha,\beta)]=[(\alpha,\beta)\cdot (1,0)]=[A\cdot e]$, and so the result holds when $z=e$.

For the general case, note that any $z\in S^3$ equals $B\cdot e$ for some $B\in\SU(2)$, and use the fact that $P$ is a group homomorphism.
\end{proof}

The Reeb flow of $\lambda$ is given by the $S^1\subset\C^*$ (Hopf) action, $z\mapsto e^{it}\cdot z$. Thus, all $\gamma\in\mathcal{P}(\lambda)$ have period $2k\pi$, with linearized return maps equal to $\text{Id}:\xi_{\gamma(0)}\to\xi_{\gamma(0)}$, and are degenerate.

\begin{notation}\label{notation: morse data}
Following the general recipe of perturbing the degenerate contact form on a prequantization bundle, outlined in \cite[\S 1.5]{N2}, we establish the following notation:
\begin{itemize}
     \itemsep-.35em
    \item $\fP:S^3\to S^2$  denotes the Hopf fibration,
    \item $f$ is a  Morse-Smale function on $(S^2,\omega_{\FS}(\cdot, j\cdot))$, $\text{Crit}(f)$ is its set of critical points,
    \item for $\varepsilon>0$;  $f_{\varepsilon}:=1+\varepsilon f:S^2\to \R$, $F_{\varepsilon}:=f_{\varepsilon}\circ\fP:S^3\to \R$, and $\lambda_{\varepsilon}:=F_{\varepsilon}\lambda\in\Omega^1(S^3)$,
    \item $\widetilde{X_f}\in\xi$ is the horizontal lift of $X_f\in TS^2$ using the fiberwise linear symplectomorphism $d\fP|_{\xi}:(\xi,d\lambda|_{\xi})\to (TS^2, \omega_{\text{FS}})$, where $X_f$ denotes the Hamiltonian vector field for $f$ on $S^2$ with respect to $\omega_{\text{FS}}$.
\end{itemize}
\end{notation}
\begin{remark}\label{remark: ham}
Our convention is that for a smooth, real valued function $f$ on symplectic manifold $(M,\omega)$, the Hamiltonian vector field $X_f$ uniquely satisfies $\iota_{X_f}\omega=-df$. 
\end{remark}

Because $F_{\varepsilon}$ is positive for small $\varepsilon$ and smooth on $S^3$, $\lambda_{\varepsilon}$ is another contact form on $S^3$ defining the same contact distribution $\xi$ as $\lambda$. We refer to $\lambda_{\varepsilon}$ as the \emph{perturbed contact form} on $S^3$. Although $\lambda$ and $\lambda_{\varepsilon}$ share the same kernel, their Reeb dynamics differ.

\begin{lemma} \label{lemma: reeb1}
The following relationship between vector fields on $S^3$ holds:
\[R_{\lambda_{\varepsilon}}=\frac{R_{\lambda}}{F_{\varepsilon}}-\varepsilon\frac{\widetilde{X_f}}{F_{\varepsilon}^2}.\]
\end{lemma}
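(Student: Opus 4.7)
The plan is to verify the two defining properties of the Reeb vector field directly: if we let
\[
V := \frac{R_{\lambda}}{F_{\varepsilon}} - \varepsilon\frac{\widetilde{X_f}}{F_{\varepsilon}^2},
\]
then it suffices to show $\lambda_{\varepsilon}(V) = 1$ and $\iota_V d\lambda_{\varepsilon} = 0$. Since $\widetilde{X_f} \in \xi = \ker\lambda$, a one-line computation gives $\lambda(V) = 1/F_{\varepsilon}$, so $\lambda_{\varepsilon}(V) = F_{\varepsilon}\lambda(V) = 1$. That handles the first property.

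For the second, I would first expand $d\lambda_{\varepsilon} = dF_{\varepsilon}\wedge\lambda + F_{\varepsilon}\, d\lambda$, giving
\[
\iota_V d\lambda_{\varepsilon} = dF_{\varepsilon}(V)\,\lambda - \lambda(V)\,dF_{\varepsilon} + F_{\varepsilon}\,\iota_V d\lambda
= dF_{\varepsilon}(V)\,\lambda - \tfrac{1}{F_{\varepsilon}} dF_{\varepsilon} + F_{\varepsilon}\,\iota_V d\lambda.
\]
The key identity, and the main technical step, is to prove that as a $1$-form on $S^3$,
\[
\iota_{\widetilde{X_f}} d\lambda = -d(\fP^* f).
\]
I would check this by evaluating on the splitting $TS^3 = \langle R_{\lambda}\rangle \oplus \xi$. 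On $R_{\lambda}$ both sides vanish: the left because $d\lambda(R_{\lambda}, \cdot) = 0$ by definition of $R_{\lambda}$, and the right because $\fP$ is constant along Hopf fibers, so $\fP^* f$ is $R_\lambda$-invariant. On a horizontal vector $W \in \xi$, we use that $d\fP|_{\xi}\colon (\xi, d\lambda|_{\xi}) \to (TS^2, \omega_{\FS})$ is fiberwise symplectic together with Remark \ref{remark: ham}:
\[
d\lambda(\widetilde{X_f}, W) = \omega_{\FS}(X_f, d\fP(W)) = -df(d\fP(W)) = -d(\fP^* f)(W).
\]

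Once this identity is in hand the rest is mechanical. Since $F_{\varepsilon} = 1 + \varepsilon\, \fP^* f$, we have $dF_{\varepsilon} = \varepsilon\, d(\fP^* f) = -\varepsilon\, \iota_{\widetilde{X_f}} d\lambda$, and so
\[
F_{\varepsilon}\,\iota_V d\lambda = F_{\varepsilon}\left(\tfrac{1}{F_{\varepsilon}}\iota_{R_{\lambda}} d\lambda - \tfrac{\varepsilon}{F_{\varepsilon}^2}\iota_{\widetilde{X_f}} d\lambda\right) = -\tfrac{\varepsilon}{F_{\varepsilon}}\iota_{\widetilde{X_f}} d\lambda = \tfrac{1}{F_{\varepsilon}} dF_{\varepsilon}.
\]
Finally, $dF_{\varepsilon}(V)$ vanishes: $dF_{\varepsilon}(R_{\lambda}) = 0$ by $R_{\lambda}$-invariance of $\fP^* f$, and $dF_{\varepsilon}(\widetilde{X_f}) = \varepsilon\, df(X_f) = -\varepsilon\, \omega_{\FS}(X_f, X_f) = 0$. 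Plugging everything back in, the three contributions to $\iota_V d\lambda_{\varepsilon}$ become $0 - \tfrac{1}{F_{\varepsilon}} dF_{\varepsilon} + \tfrac{1}{F_{\varepsilon}} dF_{\varepsilon} = 0$, as required. The only step that takes any real thought is the horizontal-lift identity $\iota_{\widetilde{X_f}} d\lambda = -d(\fP^* f)$; everything else is bookkeeping driven by $\widetilde{X_f} \in \xi$ and $R_{\lambda}$-invariance of pullbacks from the base.
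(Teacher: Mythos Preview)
Your proof is correct and complete. The paper's own proof is simply a citation to \cite[Prop.~4.10]{N2} together with a remark about the sign convention for Hamiltonian vector fields, so you have supplied a self-contained direct verification that the paper omits. The argument you give---checking $\lambda_\varepsilon(V)=1$ and $\iota_V d\lambda_\varepsilon=0$ via the key identity $\iota_{\widetilde{X_f}}d\lambda = -d(\fP^*f)$ on the splitting $TS^3=\langle R_\lambda\rangle\oplus\xi$---is the standard computation for perturbed contact forms on prequantization bundles and is almost certainly what lies behind the cited result; there is no meaningful difference in method, only in whether the details are spelled out.
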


\begin{proof}
This is \cite[Prop. 4.10]{N2}. Note that the sign discrepancy is a result of our convention regarding  Hamiltonian vector fields, see Remark \ref{remark: ham}.
\end{proof}

We now explore how the relationship between vector fields from Lemma \ref{lemma: reeb1} provides a relationship between Reeb and Hamiltonian flows. 
\begin{notation}
(Reeb and Hamiltonian flows). For any $t\in\R$,
\begin{itemize}
    \itemsep-.35em
    \item $\phi_0^t:S^3\to S^3$ denotes the time $t$ flow of the unperturbed Reeb vector field $R_{\lambda}$,
    \item $\phi^t:S^3\to S^3$ denotes the time $t$ flow of the perturbed Reeb vector field $R_{\lambda_{\varepsilon}}$,
    \item $\varphi^t:S^2\to S^2$ denotes the time $t$ flow of the vector field $V:=-\frac{\varepsilon X_{f}}{f_{\varepsilon}^2}.$
\end{itemize}
\end{notation}
Note that $\phi^t_0:S^3\to S^3$ is nothing more than the map $z\mapsto e^{it}\cdot z$.
\begin{lemma}\label{lemma: flow}
For all $t$ values, we have $\fP\circ\phi^t=\varphi^t\circ\fP$ as smooth maps $S^3\to S^2$.
\end{lemma}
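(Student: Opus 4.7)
The plan is to reduce the claim to the standard fact that a smooth map intertwining two vector fields also intertwines their flows. Concretely, I will show that $\fP$ is a semi-conjugacy from $R_{\lambda_\varepsilon}$ to $V$, meaning $d\fP \circ R_{\lambda_\varepsilon} = V \circ \fP$ as maps $S^3 \to TS^2$, and then conclude by the uniqueness of integral curves.

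For the first step I apply $d\fP$ to the formula from Lemma \ref{lemma: reeb1}, obtaining at any $p \in S^3$
\[
d\fP\bigl(R_{\lambda_\varepsilon}|_p\bigr) \;=\; \frac{d\fP(R_\lambda|_p)}{F_\varepsilon(p)} \;-\; \varepsilon\,\frac{d\fP(\widetilde{X_f}|_p)}{F_\varepsilon(p)^2}.
\]
Two observations then collapse the right-hand side. First, the Reeb vector field $R_\lambda$ generates the Hopf $S^1$-action on $S^3$, whose orbits are exactly the fibers of $\fP$, so $d\fP(R_\lambda) \equiv 0$. Second, $\widetilde{X_f}$ is by definition the $\fP$-horizontal lift of $X_f$ via $d\fP|_\xi$, so $d\fP(\widetilde{X_f}|_p) = X_f(\fP(p))$. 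Finally, $F_\varepsilon = f_\varepsilon \circ \fP$ by construction, so $F_\varepsilon(p) = f_\varepsilon(\fP(p))$. Combining these gives
\[
d\fP\bigl(R_{\lambda_\varepsilon}|_p\bigr) \;=\; -\varepsilon\,\frac{X_f(\fP(p))}{f_\varepsilon(\fP(p))^2} \;=\; V(\fP(p)),
\]
which is precisely the semi-conjugacy relation.

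For the second step, fix $q \in S^3$ and set $\gamma(t) := \fP(\phi^t(q)) \in S^2$. Then $\gamma(0) = \fP(q)$, and differentiating and applying the chain rule,
\[
\gamma'(t) \;=\; d\fP\bigl(R_{\lambda_\varepsilon}(\phi^t(q))\bigr) \;=\; V(\fP(\phi^t(q))) \;=\; V(\gamma(t)).
\]
Thus $\gamma$ and $t \mapsto \varphi^t(\fP(q))$ are both integral curves of $V$ on $S^2$ with the same initial value, so by uniqueness of solutions to ODEs they coincide for all $t$ in their common domain of definition (which is all of $\R$, since $S^3$ and $S^2$ are closed). This yields $\fP \circ \phi^t = \varphi^t \circ \fP$, as required.

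I do not anticipate any real obstacle; the argument is essentially an unwinding of definitions plus one ODE uniqueness statement. The only care needed is to keep straight that $R_\lambda$ is \emph{vertical} for $\fP$ (so it drops out entirely under $d\fP$) while $\widetilde{X_f}$ is \emph{horizontal} (so it projects onto $X_f$), which is exactly why the $\frac{R_\lambda}{F_\varepsilon}$ term in Lemma \ref{lemma: reeb1} contributes nothing to the base dynamics and the perturbation term contributes the full Hamiltonian flow rescaled by $f_\varepsilon^{-2}$.
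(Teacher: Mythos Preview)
Your proof is correct and follows essentially the same approach as the paper: both establish that $d\fP$ carries $R_{\lambda_\varepsilon}$ to $V$ (the paper states this in one line invoking Lemma \ref{lemma: reeb1}, while you unpack the vertical/horizontal decomposition explicitly), and then conclude by uniqueness of integral curves. Your version is simply a more detailed rendering of the same argument.
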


\begin{proof}
 Pick $z\in S^3$ and let $\widetilde{\gamma}:\R\to S^3$ denote the unique integral curve for $R_{\lambda_{\varepsilon}}$ which passes through $z$ at time $t=0$, i.e., $\widetilde{\gamma}(t)=\phi^t(z)$.  By Lemma \ref{lemma: reeb1}, $d\fP$ carries the derivative of $\widetilde{\gamma}$ precisely to the vector $V\in TS^2$. Thus, $\fP\circ\widetilde{\gamma}:\R\to S^2$ is the unique integral curve, $\gamma$, of $V$ passing through $p:=\fP(z)$ at time $t=0$, i.e., $\gamma(t)=\varphi^t(p)$. Combining these facts provides \[\fP(\widetilde{\gamma}(t))=\gamma(t)\,\,\implies\,\,\fP(\phi^t(z))=\varphi^t(p)\,\,\implies\fP(\phi^t(z))=\varphi^t(\fP(z)).\]
\end{proof}

Lemma \ref{lemma: orbits} describes the orbits $\gamma\in\mathcal{P}(\lambda_{\varepsilon})$ projecting to critical points of $f$ under $\fP$.
\begin{lemma}\label{lemma: orbits}
Let $p\in\mbox{\em Crit}(f)$ and take $z\in\fP^{-1}(p)$. Then the map \[\gamma_p:[0,2\pi f_{\varepsilon}(p)]\to S^3,\,\,\,\,t\mapsto e^{\frac{it}{f_{\varepsilon}(p)}}\cdot z\]
descends to a closed, embedded Reeb orbit $\gamma_p:\R/2\pi f_{\varepsilon}(p)\Z\to S^3$ of $\lambda_{\varepsilon}$, passing through point $z$ in $S^3$, whose image under $\fP$ is $\{p\}\subset S^2$, where $\cdot$ denotes the $S^1\subset\C^*$ action on $S^3$.
\end{lemma}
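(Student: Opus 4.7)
The plan is to verify by direct computation that the prescribed parametrization solves the Reeb ODE for $\lambda_\varepsilon$, and then to read off the period and embeddedness from the fact that the Hopf $S^1$-action on $S^3$ is free. The backbone of the argument is the explicit formula from Lemma \ref{lemma: reeb1} together with two simplifications that occur along the Hopf fiber $\fP^{-1}(p)$.

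First I would note that $\fP^{-1}(p)$ is precisely a Hopf circle, i.e.\ a single orbit of the $S^1\subset\C^*$ action $w\mapsto e^{it}\cdot w$, which is the unperturbed Reeb flow of $\lambda$. Along this fiber the function $F_\varepsilon=f_\varepsilon\circ\fP$ is constant, equal to $f_\varepsilon(p)$. Moreover, since $p$ is a critical point of $f$, the Hamiltonian vector field $X_f$ vanishes at $p$, and since $\widetilde{X_f}\in\xi$ is its horizontal lift through the fiberwise linear isomorphism $d\fP|_\xi$, it follows that $\widetilde{X_f}$ vanishes identically on $\fP^{-1}(p)$. Substituting these two facts into the identity
\[R_{\lambda_\varepsilon}=\frac{R_\lambda}{F_\varepsilon}-\varepsilon\,\frac{\widetilde{X_f}}{F_\varepsilon^{2}}\]
of Lemma \ref{lemma: reeb1} yields $R_{\lambda_\varepsilon}|_{\fP^{-1}(p)}=\frac{1}{f_\varepsilon(p)}R_\lambda|_{\fP^{-1}(p)}$. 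Thus on the Hopf fiber over $p$, the perturbed Reeb vector field is just the unperturbed one rescaled by the positive constant $1/f_\varepsilon(p)$; in particular it is tangent to the fiber.

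Next I would verify the ODE directly. Differentiating $\gamma_p(t)=e^{it/f_\varepsilon(p)}\cdot z$ in $t$, and using that $R_\lambda$ is the infinitesimal generator of the Hopf action, gives $\gamma_p'(t)=\frac{1}{f_\varepsilon(p)}R_\lambda(\gamma_p(t))$, which by the previous step equals $R_{\lambda_\varepsilon}(\gamma_p(t))$. Hence $\gamma_p$ is an integral curve of $R_{\lambda_\varepsilon}$ through $z$, lying entirely in $\fP^{-1}(p)$, and its image projects to the single point $p$ under $\fP$.

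Finally, since the Hopf action on $S^3$ is free, $e^{is}\cdot z=z$ if and only if $s\in 2\pi\Z$. Substituting $s=t/f_\varepsilon(p)$ shows that $\gamma_p(t)=z$ exactly when $t\in 2\pi f_\varepsilon(p)\Z$, so $\gamma_p$ descends to an injective, hence embedded, closed Reeb orbit $\R/2\pi f_\varepsilon(p)\Z\to S^3$ of period $2\pi f_\varepsilon(p)$. There is no substantive analytic difficulty here; the only care required is the book-keeping of the constants, namely that reparametrizing a $2\pi$-periodic flow by the constant speed $1/f_\varepsilon(p)$ multiplies the period by $f_\varepsilon(p)$.
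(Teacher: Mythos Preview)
Your proof is correct and follows essentially the same approach as the paper: both use Lemma~\ref{lemma: reeb1}, the vanishing of $\widetilde{X_f}$ along $\fP^{-1}(p)$ (since $X_f(p)=0$), and the chain rule to identify $\dot\gamma_p$ with $R_{\lambda_\varepsilon}$. Your argument is slightly more detailed in spelling out the embeddedness and period via freeness of the Hopf action, which the paper only implicitly invokes.
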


\begin{proof}
The map $\R/2\pi\Z\to S^3$, $t\mapsto e^{it}\cdot z$ is a closed, embedded integral curve for the degenerate Reeb field $R_{\lambda}$, and so by the chain rule we have that $\dot{\gamma_p}(t)=R_{\lambda}(\gamma_p(t))/f_{\varepsilon}(p)$. Note that $\fP(\gamma(t))=\fP(e^{\frac{it}{f_{\varepsilon}(p)}}\cdot z)=\fP(z)=p$ and, because $\widetilde{X_f}(\gamma(t))$ is a lift of $X_f(p)=0$, we have $\widetilde{X_f}(\gamma(t))=0$. By the description of $R_{\lambda_{\varepsilon}}$ in Lemma \ref{lemma: reeb1}, we have $\dot{\gamma_p}(t)=R_{\lambda_{\varepsilon}}(\gamma_p(t))$.
\end{proof}

 Next we set notation to be used in describing the local models for the linearized Reeb flow along the orbits $\gamma_p$ from Lemma \ref{lemma: orbits}. For $s\in\R$, $\mathcal{R}(s)$ denotes the $2\times2$ rotation matrix: \[\mathcal{R}(s):=\begin{pmatrix}\cos{(s)} & -\sin{(s)} \\ \sin{(s)} & \cos{(s)}\end{pmatrix}\in\SO(2).\]
Note that $J_0=\mathcal{R}(\pi/2)\in\SO(2)$. For $p\in\text{Crit}(f)\subset S^2$, pick coordinates $\psi:\R^2\to S^2$, so that $\psi(0,0)=p$. Then we let $H(f,\psi)$ denote the Hessian of $f$ in these coordinates at $p$ as a $2\times2$ matrix of second partial derivatives.

\begin{notation}\label{notation: stereographic}
The term \emph{stereographic coordinates at $p\in S^2$} describes a smooth $\psi:\R^2\to S^2$ with $\psi(0,0)=p$, that has a factorization $\psi=\psi_1\circ\psi_0$, where $\psi_0:\R^2\to S^2$ is the map \[(x,y)\mapsto\frac{1}{1+x^2+y^2}(2x,2y,1-x^2-y^2),\] taking $(0,0)$ to $(0,0,1)$, and $\psi_1:S^2\to S^2$ is given by the action of some element of $\SO(3)$ taking $(0,0,1)$ to $p$. If $\psi$ and $\psi'$ are both stereographic coordinates at $p$, then they differ by a precomposition with some $\mathcal{R}(s)$ in $\SO(2)$. Note that $\psi^*\omega_{\text{FS}}=\frac{dx\wedge dy}{(1+x^2+y^2)^2}$ (\cite[Ex. 4.3.4]{MS}). 
\end{notation}

Lemma \ref{lemma: rotate} describes the linearized Reeb flow of the unperturbed $\lambda$ with respect to $\tau$.

\begin{lemma} \label{lemma: rotate}
For any $z\in S^3$, the linearization $d\phi_0^t|_{\xi_z}:\xi_z\to\xi_{e^{it}\cdot z}$ is represented by $\mathcal{R}(2t)$, with respect to ordered bases $(V_1(z),V_2(z))$ of $\xi_z$ and $(V_1(e^{it}\cdot z), V_2(e^{it}\cdot z))$ of $\xi_{e^{it}\cdot z}$.
\end{lemma}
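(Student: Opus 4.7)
The plan is to exploit the Lie group structure $S^3\cong\SU(2)$. Under the identification~\eqref{equation: S^3 Lie}, a one-line matrix multiplication shows that the Hopf action $z\mapsto e^{it}z$ on $S^3\subset\C^2$ is precisely right multiplication in $\SU(2)$ by the one-parameter subgroup
\[g_t := \begin{pmatrix} e^{it} & 0\\ 0 & e^{-it}\end{pmatrix}\in\SU(2),\]
so that $\phi_0^t=R_{g_t}$. Since $V_1,V_2$ are the left-invariant vector fields on $\SU(2)$ extending $\partial_{x_2}|_e,\partial_{y_2}|_e\in\xi_e$, computing their pushforward under $R_{g_t}$ reduces to understanding how right translation acts on left-invariant vector fields.

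For this I would invoke the standard identity
\[(R_g)_*V^X = V^{\mathrm{Ad}(g^{-1})X},\qquad X\in\mathfrak{su}(2),\]
which follows from the factorization $R_g\circ L_z = L_{zg}\circ C_{g^{-1}}$, where $C_h(k)=hkh^{-1}$. Applied at $z$, this gives
\[d\phi_0^t\bigl(V_i(z)\bigr)\;=\;V^{\mathrm{Ad}(g_t^{-1})X_i}(e^{it}z),\]
where $X_i:=V_i(e)\in\mathfrak{su}(2)$. The problem is thereby reduced to computing $\mathrm{Ad}(g_t^{-1})$ on the two-dimensional subspace $\mathrm{span}_{\R}(X_1,X_2)\subset\mathfrak{su}(2)$ and reading off its matrix in the basis $(X_1,X_2)$.

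Under~\eqref{equation: S^3 Lie}, differentiating at the identity shows that $X_1$ and $X_2$ correspond to the matrices $\bigl(\begin{smallmatrix}0&-1\\1&0\end{smallmatrix}\bigr)$ and $\bigl(\begin{smallmatrix}0&i\\i&0\end{smallmatrix}\bigr)$, respectively. A direct conjugation yields
\[g_t^{-1}X_1g_t=\cos(2t)X_1+\sin(2t)X_2,\qquad g_t^{-1}X_2g_t=-\sin(2t)X_1+\cos(2t)X_2,\]
from which the claimed matrix $\mathcal{R}(2t)$ falls out immediately.

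The only subtlety is the factor of $2$ in the angle: it reflects the fact that $g_t$ rotates at unit speed in $\SU(2)$, while conjugation on the off-diagonal slice of $\mathfrak{su}(2)$ where $X_1,X_2$ live rotates at double speed---a shadow of the double cover $P:\SU(2)\to\SO(3)$. As an independent sanity check one can bypass the Lie-theoretic language: $\phi_0^t$ is the restriction to $S^3$ of the linear map on $\R^4\cong\C^2$ whose Jacobian is block-diagonal with two copies of $\mathcal{R}(t)$, and applying this Jacobian to the explicit component formulas~\eqref{equation: vector field coordinates} for $V_1,V_2$ and solving for the coefficients in the basis at $e^{it}z$ recovers the same expression after a short computation.
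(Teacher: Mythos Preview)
Your proof is correct and takes a genuinely more conceptual route than the paper. The paper also exploits left-invariance of $V_1,V_2$, but only to reduce to the single basepoint $z=e=(1,0)$; from there it proceeds by a bare-hands coordinate computation, writing out $V_1(e^{it},0)$ and $V_2(e^{it},0)$ via~\eqref{equation: vector field coordinates}, applying the linear map $d\phi_0^t$ explicitly in $\R^4$, and checking the claimed linear combinations with angle-sum identities (the second identity being deduced from the first via $J_{\C^2}$). Your argument instead recognizes $\phi_0^t$ as right translation $R_{g_t}$ and invokes the standard identity $(R_g)_*V^X=V^{\mathrm{Ad}(g^{-1})X}$, which handles all basepoints at once and reduces the lemma to a single $2\times 2$ conjugation in $\mathfrak{su}(2)$. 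What you gain is a cleaner explanation of the factor of $2$ in the angle (as the adjoint action on the off-diagonal slice, i.e.\ the double cover $P$) and no need for coordinate expressions of $V_i$ away from $e$; what the paper's approach buys is that it stays entirely within the ambient $\R^4$ coordinates already set up, avoiding any appeal to the Lie-theoretic identity. Your closing sanity-check paragraph is essentially the paper's own computation.
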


\begin{proof}
Because (i) the  $V_i$ are $\SU(2)$-invariant, and (ii) the $\SU(2)$-action commutes with $\phi_0^t$, we may reduce to the case $z=e=(1,0)\in S^3$. That is, we must show
for all $t$ values that
\begin{align}
    (d\phi_0^t)_{(1,0)}V_1(1,0)&=\cos{(2t)}V_1(e^{it},0)+\sin{(2t)}V_2(e^{it},0) \label{equation: eq1}\\
    (d\phi_0^t)_{(1,0)}V_2(1,0)&=-\sin{(2t)}V_1(e^{it},0)+\cos{(2t)}V_2(e^{it},0). \label{equation: eq2}
\end{align}
Note that \eqref{equation: eq2}  follows from  \eqref{equation: eq1} by applying the endomorphism $J_{\C^2}$ to both sides of \eqref{equation: eq1}, and noting both that $d\phi_0^t$ commutes with $J_{\C^2}$, and that $J_{\C^2}(V_1)=V_2$. We now prove \eqref{equation: eq1}.

The coordinate descriptions \eqref{equation: vector field coordinates} tell us that $V_1(e^{it},0)$ and $V_2(e^{it},0)$ can be respectively written as $\langle 0,0,\cos{t},-\sin{t}\rangle$ and $\langle 0,0,\sin{t},\cos{t}\rangle$. Angle sum formulas now imply \[\cos{(2t)}V_1(e^{it},0)+\sin{(2t)}V_2(e^{it},0)=\langle 0, 0, \cos{t},\sin{t}\rangle.\]
The vector on the right is precisely $(d\phi_0^t)_{(1,0)}V_1(1,0)$, and so we have proven \eqref{equation: eq1}.
\end{proof}

Proposition \ref{proposition: flow} and Corollary \ref{corollary: CZ sphere} conclude our discussion of dynamics on $S^3$.

\begin{proposition}\label{proposition: flow}
Fix a critical point $p$ of $f$ in $S^2$ and stereographic coordinates $\psi:\R^2\to S^2$ at $p$, and suppose $\gamma\in\mathcal{P}(\lambda_{\varepsilon})$ projects to $p$ under $\fP$. Let $\Phi_t\in\mbox{\em Sp}(2)$ denote $d\phi^t|_{\xi_{\gamma(0)}}:\xi_{\gamma(0)}\to\xi_{\gamma(t)}$ with respect to the trivialization $\tau$. Then $\Phi_t$ is a a conjugate of the matrix  \[\mathcal{R}\bigg(\frac{2t}{f_{\varepsilon}(p)}\bigg)\cdot\exp\bigg(\frac{-t\varepsilon}{f_{\varepsilon}(p)^2} J_0\cdot H(f,\psi)\bigg)\] by some  element of $\mbox{\em SO}(2)$, which is independent of $t$.
\end{proposition}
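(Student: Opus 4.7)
The plan is to factor $\Phi_t$ by pushing forward to the base $S^2$ via Lemma \ref{lemma: flow}. Since $X_f(p)=0$, the horizontal lift $\widetilde{X_f}$ vanishes along the fiber $\fP^{-1}(p)$, so by Lemma \ref{lemma: orbits} the orbit satisfies $\gamma(t)=e^{it/f_\varepsilon(p)}\gamma(0)$ and remains in that fiber; in particular $\varphi^t(p)=p$, so differentiating $\fP\circ\phi^t = \varphi^t\circ\fP$ yields
\[
d\phi^t|_{\xi_{\gamma(0)}} \;=\; \bigl(d\fP|_{\xi_{\gamma(t)}}\bigr)^{-1}\circ d\varphi^t|_p\circ d\fP|_{\xi_{\gamma(0)}}.
\]
I would then read off each factor as a $2\times 2$ matrix using the trivialization $\tau=(V_1,V_2)$ on $\xi$ and the stereographic basis $(\partial_x,\partial_y)|_p$ on $T_pS^2$.

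Because $p$ is a fixed point of $V=-\varepsilon X_f/f_\varepsilon^2$, the linearization of $\varphi^t$ at $p$ is $\exp(t\,dV|_p)$. A product-rule computation using $X_f(p)=0$ and the identity $\psi^*\omega_{\FS}|_0=dx\wedge dy$ (Notation \ref{notation: stereographic}) gives $dX_f|_p=J_0\cdot H(f,\psi)$, hence
\[
[d\varphi^t|_p]\;=\;\exp\!\Bigl(-\tfrac{t\varepsilon}{f_\varepsilon(p)^2}\,J_0\cdot H(f,\psi)\Bigr).
\]
For the basis translation between fibers, since $R_\lambda$ is $\fP$-vertical we have $\fP\circ\phi_0^s=\fP$; combining this with Lemma \ref{lemma: rotate} and the fact that $\gamma(t)=\phi_0^{t/f_\varepsilon(p)}(\gamma(0))$ shows $[d\fP|_{\xi_{\gamma(t)}}]=M\cdot\mathcal{R}\!\bigl(-\tfrac{2t}{f_\varepsilon(p)}\bigr)$, where $M:=[d\fP|_{\xi_{\gamma(0)}}]$. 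Assembling,
\[
\Phi_t \;=\; \mathcal{R}\!\Bigl(\tfrac{2t}{f_\varepsilon(p)}\Bigr)\cdot M^{-1}\exp\!\Bigl(-\tfrac{t\varepsilon}{f_\varepsilon(p)^2}\,J_0\cdot H(f,\psi)\Bigr)\cdot M.
\]

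To finish I would observe that $M\in\mathrm{SO}(2)$. Indeed, $d\fP|_\xi$ is a unitary isomorphism of Hermitian lines: it is $\C$-linear (the Hopf map is holomorphic), it is symplectic by construction of $\widetilde{X_f}$ from $d\fP|_\xi$, and both $(V_1,V_2)$ and $(\partial_x,\partial_y)|_p$ are orthonormal positively oriented bases for the respective Hermitian structures. Hence $M=\mathcal{R}(s_0)$ for some angle $s_0$ depending only on $\psi$. Since $\mathrm{SO}(2)$ is abelian, $M$ commutes with the rotation factor $\mathcal{R}(2t/f_\varepsilon(p))$, and we obtain
\[
\Phi_t \;=\; M^{-1}\!\left[\mathcal{R}\!\Bigl(\tfrac{2t}{f_\varepsilon(p)}\Bigr)\exp\!\Bigl(-\tfrac{t\varepsilon}{f_\varepsilon(p)^2}\,J_0\cdot H(f,\psi)\Bigr)\right]\!M,
\]
which is the claimed form with time-independent conjugating element $M\in\mathrm{SO}(2)$. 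The main subtlety I anticipate is precisely this last step: ensuring that the $\mathrm{SO}(2)$ ambiguity arising from the choice of stereographic coordinates collapses into an outer conjugation rather than modifying $H(f,\psi)$ inside the exponent. This works only because $M$ and $J_0$ both lie in $\mathrm{SO}(2)$ and hence commute; were $M$ a general symplectic matrix, the conjugate of $J_0\cdot H(f,\psi)$ would not in general preserve the $J_0 \cdot (\text{symmetric})$ factorization.
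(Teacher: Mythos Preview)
Your argument is correct and follows essentially the same route as the paper's proof: factor $d\phi^t$ through the base via Lemma \ref{lemma: flow}, identify the fiber-rotation factor using Lemma \ref{lemma: rotate} and $\gamma(t)=\phi_0^{t/f_\varepsilon(p)}(\gamma(0))$, and compute the base factor by linearizing the Hamiltonian flow at the critical point in stereographic coordinates. The only organizational difference is that the paper works with the pushed-forward basis $(v_1,v_2)=(d\fP\,V_1,d\fP\,V_2)$ on $T_pS^2$ (making one factor the identity and absorbing the change of basis $D$ into the middle term), whereas you use $(\psi_*\partial_x,\psi_*\partial_y)$ from the start; consequently the paper only needs $D=r\,\mathcal{R}(s)$ (the scalar $r$ cancels in conjugation), while you invoke the stronger fact $M\in\mathrm{SO}(2)$, which is justified by the symplectomorphism assertion in Notation \ref{notation: morse data}.
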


\begin{proof}
Let $z:=\gamma(0)$. We linearize the identity $\fP\circ\phi^t=\varphi^t\circ\fP$ (Lemma \ref{lemma: flow}), restrict to $\xi_z$, and rearrange to recover the equality $d\phi^t|_{\xi_z}=a\circ b\circ c:\xi_z\to\xi_{\phi^t(z)}$, where \[a=(d\fP|_{\xi_{\phi^t(z)}})^{-1}:T_pS^2\to\xi_{\phi^t(z)},\,\, b=d\varphi^t_p:T_pS^2\to T_pS^2,\,\,\text{and}\,\, c=d\fP|_{\xi_z}:\xi_z\to T_pS^2.\]

Let $v_i:=d\fP (V_i(z))\in T_pS^2$, and see that the vector pairs $(v_1,v_2)$ and $(V_1,V_2)$ provide an oriented basis of each of the three vector spaces appearing in the above composition of linear maps. Let $A$, $B$, and $C$ denote the matrix representations of $a$, $b$, and $c$ with respect to these ordered bases. We have $\Phi_t=A\cdot B\cdot C$. Note that $C=\text{Id}$. We compute $A$ and $B$:

To compute $A$, recall that $\phi^t_0:S^3\to S^3$  denotes the time $t$ flow of the unperturbed Reeb field (alternatively, the Hopf action). Linearize the equality $\fP\circ\phi^t_0=\fP$, then use $\phi_0^t(z)=\phi^{t/f_{\varepsilon}(p)}(z)$ (Lemma \ref{lemma: orbits}) and  Lemma \ref{lemma: rotate} to find that
\begin{equation}\label{equation A_t}A=\mathcal{R}(2t/f_{\varepsilon}(p)).\end{equation}

To compute $B$, note that $\varphi^t$ is the Hamiltonian flow of the function $1/f_{\varepsilon}$ with respect to $\omega_{\FS}$. Is is advantageous to study this flow using our stereographic coordinates: recall that $\psi$ defines a symplectomorphism $(\R^2,\frac{dx\wedge dy}{(1+x^2+y^2)^2})\to(S^2\setminus\{p'\},\omega_{\FS})$, where $p'\in S^2$ is antipodal to $p$ in $S^2$ (see Notation \ref{notation: stereographic}). Because symplectomorphisms preserve Hamiltonian data, we see that $\psi^{-1}\circ\varphi^t\circ\psi:\R^2\to\R^2$ is  given near the origin as the time $t$ flow of the Hamiltonian vector field for $\psi^*(1/f_{\varepsilon})$ with respect to $\frac{dx\wedge dy}{(1+x^2+y^2)^2}$. That is, \[\psi^{-1}\circ\varphi^t\circ\psi\,\,\,\, \text{is the time $t$ flow of} \,\,\,\,\frac{\varepsilon (1+x^2+y^2)^2f_y}{f_{\varepsilon}^2}\partial_x-\frac{\varepsilon (1+x^2+y^2)^2f_x}{f_{\varepsilon}^2}\partial_y.\]
Recall that if $P\partial_x+Q\partial_y$ is a smooth vector field on $\R^2$, vanishing at $(0,0)$, then the linearization of its time $t$ flow evaluated at the origin is represented by the $2\times2$ matrix $\exp(t X)$, with respect to the standard ordered basis $(\partial_x,\partial_y)$ of $T_{(0,0)}\R^2$, where \[X=\begin{pmatrix}P_x & P_y \\ Q_x & Q_y \end{pmatrix}.\]
Here, the partial derivatives of $P$ and $Q$ are implicitly assumed to be evaluated at the origin. In this spirit, set $P=\frac{\varepsilon (1+x^2+y^2)^2f_y}{f_{\varepsilon}^2}$ and $Q=\frac{-\varepsilon (1+x^2+y^2)^2f_x}{f_{\varepsilon}^2}$, and we compute \[X=\begin{pmatrix}P_x & P_y \\ Q_x & Q_y \end{pmatrix}=\frac{\varepsilon}{f_{\epsilon}(p)^2}\begin{pmatrix}f_{yx}(0,0) & f_{yy}(0,0) \\ -f_{xx}(0,0) & -f_{xy}(0,0)\end{pmatrix}=\frac{-\varepsilon}{f_{\varepsilon}(p)^2}J_0 H(f,\psi).\]
This implies that $B=D^{-1}\exp\big(\frac{-t\varepsilon}{f_{\varepsilon}(p)^2}J_0 H(f,\psi)\big)D$, where $D$ is a change of basis matrix relating $(v_1,v_2)$ and the pushforward of $(\partial_x,\partial_y)$ by $\psi$ in $T_pS^2$. Because $\psi$ is holomorphic, $D$ must equal $r\cdot\mathcal{R}(s)$ for some $r>0$ and some $s\in\R$. This provides that \begin{equation}\label{equation: B_t}B=\mathcal{R}(-s)\exp\bigg(\frac{-t\varepsilon}{f_{\varepsilon}(p)^2}J_0 H(f,\psi)\bigg)\mathcal{R}(s).\end{equation}

Finally, we combine \eqref{equation A_t} and \eqref{equation: B_t} to conclude
\begin{align*}
    \Phi_t=A\cdot B&=\mathcal{R}\bigg(\frac{2t}{f_{\varepsilon}(p)}\bigg)\cdot \mathcal{R}(-s)\exp\bigg(\frac{-t\varepsilon}{f_{\varepsilon}(p)^2}J_0 H(f,\psi)\bigg) \mathcal{R}(s) \\
    &=\mathcal{R}(-s)\mathcal{R}\bigg(\frac{2t}{f_{\varepsilon}(p)}\bigg)\exp\bigg(\frac{-t\varepsilon}{f_{\varepsilon}(p)^2}J_0 H(f,\psi)\bigg)\mathcal{R}(s).
\end{align*}
\end{proof}

\begin{corollary}\label{corollary: CZ sphere}
Fix $L>0$. Then there exists some $\varepsilon_0>0$ such that for $\varepsilon\in(0,\varepsilon_0]$, all Reeb orbits $\gamma\in\mathcal{P}^L(\lambda_{\varepsilon})$ are nondegenerate, take the form $\gamma_p^k$, a $k$-fold cover of an embedded Reeb orbit $\gamma_p$ as in in Lemma \ref{lemma: orbits}, and $\mu_{\CZ}(\gamma)=4k+\mbox{\em ind}_{f}(p)-1$
for $p\in\mbox{\em Crit}(f)$.
\end{corollary}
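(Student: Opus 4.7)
The plan is to first argue that for $\varepsilon$ small every $\gamma\in\mathcal{P}^L(\lambda_{\varepsilon})$ is an iterate of some $\gamma_p$, and then to compute $\mu_{\CZ}$ from Proposition~\ref{proposition: flow}.

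\textbf{Step 1 (identifying orbits).} Let $\gamma\in\mathcal{P}^L(\lambda_{\varepsilon})$ have period $T\le L$. By Lemma~\ref{lemma: flow}, $\fP\circ\gamma$ is a $T$-periodic orbit of $\varphi^t$ on $S^2$. If $\fP\circ\gamma$ is constant at $p$, then $V(p)=0$ forces $X_f(p)=0$, so $p\in\text{Crit}(f)$ and $\gamma\subset\fP^{-1}(p)$ is necessarily an iterate $\gamma_p^k$ by Lemma~\ref{lemma: orbits}. To rule out the nonconstant case, note that along any $X_f$-orbit the function $f$ (hence $f_{\varepsilon}$) is constant, so $V=-\varepsilon X_f/f_{\varepsilon}^2$ is a constant scalar multiple of $X_f$ along that orbit; thus a nonconstant closed $V$-orbit has period equal to $(f_{\varepsilon}^2/\varepsilon)$ times the corresponding $X_f$-period. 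Since $f$ is Morse on the compact $S^2$, the periods of nonconstant closed $X_f$-orbits admit a uniform positive lower bound $T_0$ (finite limits $2\pi/\sqrt{\det H(f,p)}$ at elliptic critical points, logarithmic divergence toward saddles, and boundedness on the regular locus by continuity of $dA/dc$). Choosing $\varepsilon_0<T_0/(2L)$ forces every nonconstant $V$-orbit to have period exceeding $L$, so $\gamma=\gamma_p^k$. The action identity $2\pi kf_{\varepsilon}(p)=T\le L$ then yields a uniform, $\varepsilon$-independent bound on $k$.

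\textbf{Step 2 (Conley--Zehnder index).} By the naturality property of $\mu_{\CZ}$ (invariance under conjugation by a constant path of symplectic matrices), Proposition~\ref{proposition: flow} lets me replace $\Phi_t$ with
\[
\Psi_t=\mathcal{R}\!\left(\tfrac{2t}{f_{\varepsilon}(p)}\right)\cdot\exp\!\left(-\tfrac{t\varepsilon}{f_{\varepsilon}(p)^2}J_0 H\right),\qquad t\in[0,T],
\]
where $T=2\pi k f_{\varepsilon}(p)$ and $H:=H(f,\psi)$. Factor $\Psi_t=L_t\cdot U_t$ with $L_t=\mathcal{R}(2t/f_{\varepsilon}(p))$ and $U_t=\exp(-t\varepsilon J_0 H/f_{\varepsilon}(p)^2)$. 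Since $\mathcal{R}(4\pi k)=\text{Id}$, the path $L$ is a based loop in $\Sp(2)$, and the endpoint $\Psi_T=U_T=\exp(-2\pi k\varepsilon J_0 H/f_{\varepsilon}(p))$ has no $1$-eigenvalue because $J_0 H$ is nonsingular (Morse hypothesis on $p$), verifying nondegeneracy of $\gamma_p^k$. Identifying $\SO(2)\hookrightarrow\U(1)$ via $\mathcal{R}(s)\mapsto e^{is}$, the Maslov index of $L$ is the winding of $t\mapsto e^{2it/f_{\varepsilon}(p)}$ over $[0,T]$, namely $2k$. Reparametrizing $U$ by $\tau=t/T$ puts it in the form $\exp(J_0 S'\tau)$, $\tau\in[0,1]$, with $S'=-2\pi k\varepsilon H/f_{\varepsilon}(p)$. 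Shrinking $\varepsilon_0$ so that $\|S'\|<2\pi$ (possible since $k$ is uniformly bounded), the signature property gives
\[
\mu_{\CZ}(U)=\tfrac{1}{2}\,\text{Sign}(S')=-\tfrac{1}{2}\,\text{Sign}(H)=\text{ind}_f(p)-1,
\]
using $\text{Sign}(H)=2-2\,\text{ind}_f(p)$ for a $2\times 2$ symmetric matrix of Morse index $\text{ind}_f(p)$. The loop property then yields $\mu_{\CZ}(\gamma_p^k)=2\mu(L)+\mu_{\CZ}(U)=4k+\text{ind}_f(p)-1$.

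\textbf{Main obstacle.} The key technical point is Step 1, namely establishing the uniform lower bound $T_0>0$ on periods of nonconstant closed orbits of $X_f$ on $S^2$; once this quantitative period estimate is in place, the index computation reduces to a mechanical bookkeeping with the naturality, loop, and signature properties from Remark~\ref{remark: properties of cz}.
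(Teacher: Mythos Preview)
Your proof is correct. Step~2 (the Conley--Zehnder computation via naturality, loop, and signature properties applied to the factorization from Proposition~\ref{proposition: flow}) is essentially identical to the paper's argument, down to the observation that $\mu(L)=2k$ and $\tfrac{1}{2}\text{Sign}(-H)=\text{ind}_f(p)-1$.

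The difference lies in Step~1. The paper does not argue directly; it simply invokes \cite[Lemma~4.11]{N2} to conclude that, for $\varepsilon$ small, every $\gamma\in\mathcal{P}^L(\lambda_{\varepsilon})$ is nondegenerate and projects to a critical point of $f$. You instead give a self-contained argument: Lemma~\ref{lemma: flow} forces $\fP\circ\gamma$ to be a $V$-orbit, and a uniform positive lower bound $T_0$ on the periods of nonconstant closed $X_f$-orbits (from the Morse hypothesis and compactness of $S^2$) rescales under $V=-\varepsilon X_f/f_\varepsilon^2$ to a period of order $T_0/\varepsilon$, which exceeds $L$ for $\varepsilon$ small. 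This buys independence from the external reference and also makes the nondegeneracy transparent (the return map is $\exp$ of a small nonsingular matrix). The cost is that the period bound $T_0>0$ is only sketched; the claim is standard (finite limit $2\pi/\sqrt{\det H}$ at centers, divergence at saddles, continuity in between), but a reader might want the $dA/dc$ argument spelled out on each annular component of the regular locus. The paper's approach is shorter but outsources exactly this point.
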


\begin{proof}
That there exists an $\varepsilon_0>0$ ensuring $\gamma$ is nondegenerate and projects to a critical point $p$ of $f$ whenever $\gamma\in\mathcal{P}^L(\lambda_{\varepsilon})$, for $\varepsilon\in(0,\varepsilon_0]$, is proven in \cite[Lemma 4.11]{N2}. To compute $\mu_{\CZ}(\gamma)$, we apply the naturality, loop, and signature properties of the Conley-Zehnder index (see Remark \ref{remark: properties of cz}) to our path $\Phi=\{\Phi_t\}\subset\Sp(2)$ from Proposition \ref{proposition: flow}. This family of matrices has a factorization, up to $\SO(2)$-conjugation,  $\Phi_t=L_t\Psi_t$, where $L$ is the loop of symplectic matrices $\R/2\pi kf_{\varepsilon}(p)\Z\to\Sp(2)$,  $t\mapsto\mathcal{R}(2t/f_{\varepsilon}(p))$, and $\Psi$ is the path of matrix exponentials $t\mapsto \exp(\frac{-t\varepsilon}{f_{\varepsilon}(p)^2}J_0 H(f,\psi))$, where $\psi$ denotes a choice of stereographic coordinates at $p$. In total,
\begin{align}
    \mu_{\CZ}(\gamma_p^k)&=\mu_{\CZ}(\Phi)\nonumber\\
    &=\mu_{\CZ}(L\Psi)\nonumber\\
    &=2\mu(L)+\mu_{\CZ}(\Psi)\label{equation: cz computation 1}\\
    &=2\cdot 2k+\frac{1}{2}\text{Sign}(-H(f,\psi))\label{equation: cz computation 2}\\
    &=4k+\text{ind}_f(p)-1.\label{equation: cz computation 3}
\end{align}
In line \eqref{equation: cz computation 1} we use the Loop property from Remark \ref{remark: properties of cz}, in line \eqref{equation: cz computation 2} we use the Signature property, and in line \eqref{equation: cz computation 3} we use that $\text{Sign}(H(f,\psi))=2-2\text{ind}_f(p)$. Additionally, $\mu$ in line \eqref{equation: cz computation 1} denotes the Maslov index of a loop of symplectic matrices, so that $\mu(L)=2k$ (see Remark \ref{remark: properties of cz} and \cite[\S 2]{MS}).
\end{proof}

\subsection{Geometry of $S^3/G$ and associated Reeb dynamics} \label{subsection: sfs geometry}
The process of perturbing a degenerate contact form on prequantization bundles, outlined in Section \ref{subsection: spherical geometry and associated Reeb dynamics}, is often used to compute Floer theories, for example, their embedded contact homology \cite{NW}. Although the quotients $S^3/G$ are not prequantization bundles, they do admit an $S^1$-action (with fixed points), and are examples of \emph{Seifert fiber spaces}.

Let $G\subset\SU(2)$ be a finite nontrivial group. This $G$ acts on $S^3$ without fixed points, thus $S^3/G$ inherits smooth structure. Because the quotient $\pi_G:S^3\to S^3/G$ is a universal cover, we see that $\pi_1(S^3/G)\cong G$ is completely torsion, and  $\beta_1(S^3/G)=0$. Because the $G$-action preserves $\lambda\in\Omega^1(S^3)$, we have a descent of $\lambda$ to a contact form on $S^3/G$, denoted $\lambda_G\in\Omega^1(S^3/G)$, with $\xi_G:=\text{ker}(\lambda_G)$. Because the actions of $S^1$ and $G$ on $S^3$ commute, we have an $S^1$-action on $S^3/G$, describing the Reeb flow of $\lambda_G$. Hence, $\lambda_G$ is degenerate.

Let $H\subset\SO(3)$ denote $P(G)$, the image of $G$ under $P:\SU(2)\to \SO(3)$. The $H$-action on $S^2$ has fixed points, and so the quotient $S^2/H$ inherits \emph{orbifold} structure. Lemma \ref{lemma: commutes} provides a unique map $\fp:S^3/G\to S^2/H$, making the following diagram commute

\begin{equation}\label{diagram: commuting square}\begin{tikzcd}
S^3 \arrow{r}{\pi_G} \arrow{d}{\fP}
& S^3/G \arrow{d}{\fp} \\
S^2 \arrow{r}{\pi_H}
& S^2/H
\end{tikzcd}
\end{equation}
where $\pi_G$ is a finite universal cover, $\pi_H$ is an orbifold cover, $\fP$ is a projection of a prequantization bundle, and $\fp$ is identified with the Seifert fibration.

\begin{remark}
(Global trivialization of $\xi_G$) Recall the $\SU(2)$-invariant vector fields $V_i$ spanning $\xi$ on $S^3$ \eqref{equation: global trivialization}. Because these $V_i$ are $G$-invariant, they descend to smooth sections of $\xi_G$, providing a global symplectic trivialization, $\tau_G$, of $\xi_G$, hence $c_1(\xi_G)=0$. Given a Reeb orbit $\gamma$ of some contact form on $S^3/G$,  $\mu_{\CZ}(\gamma)$ denotes the Conley-Zehnder index of $\gamma$ with respect to this global trivialization. That is, $\mu_{\CZ}(\gamma):=\mu_{\CZ}^{\tau_G}(\gamma)$.
\end{remark}

We now additionally assume that the Morse function $f:S^2\to\R$ from Section \ref{subsection: spherical geometry and associated Reeb dynamics} is $H$-invariant. This invariance provides a descent of $f$ to an orbifold Morse function, $f_H:S^2/H\to\R$, in the language of \cite{CH}. Furthermore, the $H$-invariance of $f$ provides that the smooth $F=f\circ\fP$ is $G$-invariant, and descends to a smooth function, $F_G:S^3/G\to\R$. We define, analogously to Notation \ref{notation: morse data}, \[f_{H,\varepsilon}:=1+\varepsilon f_H,\hspace{1.5cm} F_{G,\varepsilon}:=1+\varepsilon F_G,\hspace{1.5cm}\lambda_{G,\varepsilon}:=F_{G,\varepsilon}\lambda_G.\] For sufficiently small $\varepsilon$, $\lambda_{G,\varepsilon}$ is a contact form on $S^3/G$ with kernel $\xi_G$. The condition $\pi_G^*\lambda_{G,\varepsilon}=\lambda_{\varepsilon}$ implies that $\gamma:[0,T]\to S^3$ is an integral curve of $R_{\lambda_{\varepsilon}}$ if and only if $\pi_G\circ\gamma:[0,T]\to S^3/G$ is an integral curve of $R_{\lambda_{G,\varepsilon}}$.

\begin{remark}\label{remark: same local models} 
(Local models and Conley-Zehnder indices on $S^3$ and $S^3/G$ agree) Suppose $\gamma:[0,T]\to S^3/G$ is an integral curve of $R_{\lambda_{G,\varepsilon}}$, and let $\widetilde{\gamma}:[0,T]\to S^3$ be a $\pi_G$-lift of $\gamma$ so that $\widetilde{\gamma}$ is an integral curve of   $R_{\lambda_{\varepsilon}}$. For $t\in[0,T]$, let $\Phi_t\in\Sp(2)$ denote the time $t$ linearized Reeb flow of $\lambda_{\varepsilon}$ along $\widetilde{\gamma}$ with respect to $\tau$, and let $\Phi^G_t\in\Sp(2)$ denote that of $\lambda_{G,\varepsilon}$ along $\gamma$ with respect to $\tau_G$. Then $\Phi_t=\Phi^G_t$, because the local contactomorphism $\pi_G$ preserves the trivializing vector fields in addition to the contact forms. Thus, if $\gamma$ is a Reeb orbit in $S^3/G$ and $\widetilde{\gamma}$ is a lift to $S^3$, we have that $\mu_{\CZ}(\gamma)=\mu_{\CZ}(\Phi^G)=\mu_{\CZ}(\Phi)=\mu_{\CZ}(\widetilde{\gamma})$, and Corollary \ref{corollary: CZ sphere} computes the right hand side in many cases.
\end{remark}

For any $p\in S^2$, let $\mathcal{O}(p):=\{h\cdot p|\,h\in H\}$ denote the  \emph{orbit} of $p$, and $H_p\subset H$ denotes the \emph{isotropy subgroup} of $p$, the set $\{h\in H|\, h\cdot p=p\}$. Recall that $|\mathcal{O}(p)||H_p|=|H|$ for any $p$. A point $p\in S^2$ is a \emph{fixed point} if $|H_p|>1$. The set of fixed points of $H$ is $\text{Fix}(H)$. The point  $q\in S^2/H$ is an \emph{orbifold point} if $q=\pi_H(p)$ for some $p\in\text{Fix}(H)$. We now additionally assume that $f$ satisfies $\text{Crit}(f)=\text{Fix}(H)$, (this will be the case in  Section \ref{section: computation of filtered contact homology}). The Reeb orbit $\gamma_p\in\mathcal{P}(\lambda_{\varepsilon})$ from Lemma \ref{lemma: orbits} projects to $p\in\text{Crit}(f)$ under $\fP$, and thus $\pi_G\circ\gamma_p\in\mathcal{P}(\lambda_{G,\varepsilon})$ projects to the orbifold point $\pi_H(p)$ under $\fp$. Lemma \ref{lemma: covering multiplicity} computes $m(\pi_G\circ\gamma_p)$.

\begin{lemma}\label{lemma: covering multiplicity}
Let $\gamma_p\in\mathcal{P}(\lambda_{\varepsilon})$ be the embedded Reeb orbit in $S^3$ from Lemma \ref{lemma: orbits}. Then the multiplicity of $\pi_G\circ\gamma_p\in\mathcal{P}(\lambda_{G,\varepsilon})$  is $2|H_p|$ if $|G|$ is even, and is $|H_p|$ if $|G|$ is odd.
\end{lemma}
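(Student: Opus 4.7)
The plan is to reduce the multiplicity computation to a group-theoretic count and then evaluate it via the double cover $P:\SU(2)\to\SO(3)$. Because $\gamma_p:\R/2\pi f_\varepsilon(p)\Z\to S^3$ is an injective parametrization of the Hopf fiber $\fP^{-1}(p)$, and $\pi_G^*\lambda_{G,\varepsilon}=\lambda_\varepsilon$, the composition $\pi_G\circ\gamma_p$ is a Reeb orbit of $\lambda_{G,\varepsilon}$ whose multiplicity equals the cardinality of $(\pi_G\circ\gamma_p)^{-1}(\pi_G(z))\subset\R/2\pi f_\varepsilon(p)\Z$. First I would argue that this preimage is in bijection with
\[S_p:=\{g\in G:g\cdot z\in\fP^{-1}(p)\}.\]
The key point is that the $S^1$-action on $S^3\subset\C^2$ is by the central scalar $e^{it}I\in U(2)$ and so commutes with every $g\in G\subset\SU(2)$; thus $g\cdot\gamma_p(t_1)=\gamma_p(t_2)$ reduces to $g\cdot z=e^{i(t_2-t_1)/f_\varepsilon(p)}\cdot z$. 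Since $G$ acts freely on $S^3$, distinct $g\in G$ produce distinct points $g\cdot z\in S^3$, so each $g\in S_p$ contributes exactly one preimage point, and every preimage point arises this way.

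Next I would identify $S_p$ with $G\cap P^{-1}(H_p)$ by invoking Lemma \ref{lemma: commutes}: the condition $\fP(g\cdot z)=p$ is equivalent to $P(g)\cdot[z]=[z]$ in $\C P^1\cong S^2$, i.e., $P(g)\in H_p$. Since $P|_G:G\to H$ is a surjective group homomorphism with kernel $\{\pm I\}\cap G$, it is an $|\{\pm I\}\cap G|$-to-$1$ covering onto $H$, giving
\[|S_p|=|H_p|\cdot|\{\pm I\}\cap G|.\]

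The final ingredient is the classical fact that $-I$ is the unique involution in $\SU(2)$: any $A\in\SU(2)$ with $A^2=I$ has eigenvalues $\pm 1$ whose product equals $\det A=1$, forcing $A=\pm I$. By Cauchy's theorem, $G$ contains an element of order $2$ precisely when $|G|$ is even, so $-I\in G$ if and only if $|G|$ is even. Therefore $|\{\pm I\}\cap G|=2$ in the even case and $1$ in the odd case, which together with the identity above yields the claimed multiplicities $2|H_p|$ and $|H_p|$, respectively.

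I do not expect a serious technical obstacle; the main conceptual step is the first one, namely carefully handling the interplay between the $G$-action and the $S^1$-action to convert the multiplicity into $|S_p|$. Everything after that is a direct application of the commutative square \eqref{diagram: commuting square} together with elementary facts about $\SU(2)$.
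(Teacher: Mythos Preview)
Your proof is correct and takes a somewhat different route from the paper. The paper argues via covering spaces: it considers the full preimage $\pi_G^{-1}(C)=\bigsqcup_{i=1}^d C_i$, where $d=|\mathcal{O}(p)|$ and $C=\fp^{-1}(\pi_H(p))$, observes that $\pi_G$ restricted to this disjoint union is a $|G|$-fold cover of $C$, and uses transitivity of the $G$-action on the circles $C_i$ to conclude that each restriction $C_i\to C$ has the same degree $n$, so that $dn=|G|=rd|H_p|$ forces $n=r|H_p|$. You instead work on a single Hopf fiber and convert the multiplicity directly into the cardinality of $S_p=G\cap P^{-1}(H_p)$ via the free $G$-action and Lemma~\ref{lemma: commutes}, then evaluate $|S_p|=|\ker(P|_G)|\cdot|H_p|$. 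Your approach is shorter and more purely group-theoretic; the paper's approach is more topological and makes the role of the Seifert fibration over the entire $H$-orbit visible. Both ultimately reduce the even/odd dichotomy to whether $-I\in G$, and your invocation of Cauchy's theorem together with the uniqueness of the involution in $\SU(2)$ handles this cleanly.
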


\begin{proof}
Recall that $|G|$ is even if and only if $|G|=2|H|$, and that $|G|$ is odd if and only if $|G|=|H|$ (the only element of $\SU(2)$ of order 2 is $-\text{Id}$, the generator of $\text{ker}(P)$). By the classification of finite subgroups of $\SU(2)$, if $|G|$ is odd, then $G$ is cyclic. 

Let $q:=\pi_H(p)\in S^2/H$, let $d:=|\mathcal{O}(p)|$ so that $d|H_p|=|H|$ and $|G|=rd|H_p|$, where $r=2$ when $|G|$ is even and $r=1$ when $|G|$ is odd. Label the points of $\mathcal{O}(p)$ by $p_1=p, p_2, \dots, p_d$. Now $\fP^{-1}(\mathcal{O}(p))$ is a disjoint union of $d$ Hopf fibers, $C_i$, where $C_i=\fP^{-1}(p_i)$. Let $C$  denote the embedded circle $\fp^{-1}(q)\subset S^3/G$. By commutativity of   \eqref{diagram: commuting square}, we have that $\pi_G^{-1}(C)=\sqcup_{i}C_i=\fP^{-1}(\mathcal{O}(p))$. We have the following commutative diagram of circles and points:
\[\begin{tikzcd}
\sqcup_{i=1}^dC_i=\pi_G^{-1}(C) \arrow{r}{\pi_G} \arrow{d}{\fP}
& C=\fp^{-1}(q) \arrow{d}{\fp} \\
\{p_1,\dots, p_d\}=\mathcal{O}(p) \arrow{r}{\pi_H}
& \{q\}
\end{tikzcd}\]
We must have that $\pi_G:\sqcup_{i=1}^dC_i\to C$ is a $|G|=rd|H_p|$-fold cover from the disjoint union of $d$ Hopf fibers to one embedded circle. The restriction of $\pi_G$ to any one of these circles $C_i$ provides a smooth covering map, $C_i\to C$; let $n_i$ denote the degree of this cover.  Because $G$ acts transitively on these circles, all of the degrees $n_i$ are equal to some $n$. Thus, $\sum_{i}^dn_i=dn=|G|=rd|H_p|$ which implies that $n=r|H_p|$ is the covering multiplicity of $\pi_G\circ\gamma_p$.
\end{proof}

We conclude this section with an analogue of Corollary $\ref{corollary: CZ sphere}$ for the Reeb orbits of $\lambda_{G,\varepsilon}$.

\begin{lemma}\label{lemma: ActionThresholdLink}
Fix $L>0$. Then there exists some $\varepsilon_0>0$ such that, for  $\varepsilon\in(0,\varepsilon_0]$, all Reeb orbits  $\gamma\in\mathcal{P}^L(\lambda_{G,\varepsilon})$ are nondegenerate, project to an orbifold critical point of $f_{H}$ under $\fp:S^3/G\to S^2/H$, and $\mu_{\CZ}(\gamma)=4k+\mbox{\em ind}_f(p)-1$ whenever $\gamma$ is contractible with a lift to some orbit $\gamma_p^k$ in $S^3$ as in Lemma \ref{lemma: orbits}, where $p\in\mbox{\em Crit}(f)$.
\end{lemma}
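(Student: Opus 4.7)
The plan is to reduce to Corollary \ref{corollary: CZ sphere} via the universal cover $\pi_G:S^3\to S^3/G$, exploiting the relation $\pi_G^*\lambda_{G,\varepsilon}=\lambda_\varepsilon$ and the fact that the trivialization $\tau_G$ of $\xi_G$ descends from the $G$-invariant trivialization $\tau$ of $\xi$ (Remark \ref{remark: same local models}). First I would choose $\varepsilon_0>0$ so that the conclusion of Corollary \ref{corollary: CZ sphere} holds at the enlarged action threshold $|G|\cdot L$ in place of $L$.

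Next, given any $\gamma\in\mathcal{P}^L(\lambda_{G,\varepsilon})$ of action $T<L$, lift $\gamma$ to an integral curve $\widetilde{\gamma}:[0,T]\to S^3$ of $R_{\lambda_\varepsilon}$ starting at some $z\in S^3$. Because $G$ acts on $S^3$ by strict contactomorphisms, $\widetilde{\gamma}(T)=g\cdot z$ for some $g\in G$. Letting $k$ be the order of $g$, concatenating $k$ copies of $\widetilde{\gamma}$ yields a closed Reeb orbit $\widetilde{\gamma}^k\in\mathcal{P}^{|G|L}(\lambda_\varepsilon)$ of action $kT\leq|G|\cdot T<|G|L$. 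By our choice of $\varepsilon_0$ and Corollary \ref{corollary: CZ sphere}, the orbit $\widetilde{\gamma}^k$ is nondegenerate and projects under $\fP$ to a critical point $p\in\mathrm{Crit}(f)$. Since the $G$-action on $\xi$ is the identity in the trivialization $\tau$ (the $V_i$ are $G$-invariant), the linearized return map of $\gamma$ in $\tau_G$ is precisely the matrix $\Phi_\gamma\in\Sp(2)$ produced by the $S^3$-linearization along $\widetilde{\gamma}$, and its $k$-th iterate $\Phi_\gamma^k$ is the return map of the closed lift $\widetilde{\gamma}^k$. The inclusion $\ker(\Phi_\gamma-\mathrm{Id})\subseteq\ker(\Phi_\gamma^k-\mathrm{Id})=0$ then shows $\gamma$ is nondegenerate. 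Commutativity of Diagram \eqref{diagram: commuting square} yields $\fp(\mathrm{im}(\gamma))=\pi_H(\fP(\mathrm{im}(\widetilde{\gamma}^k)))=\{\pi_H(p)\}$, so $\gamma$ projects to the orbifold critical point $\pi_H(p)$ of $f_H$.

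Finally, suppose $\gamma$ is contractible. Then any lift $\widetilde{\gamma}$ to the universal cover $S^3$ is already a closed loop, hence a genuine Reeb orbit of $\lambda_\varepsilon$ of action $T<L$. Applying Corollary \ref{corollary: CZ sphere} directly, $\widetilde{\gamma}=\gamma_p^k$ for some $p\in\mathrm{Crit}(f)$ and some $k\in\N$, with $\mu_{\CZ}(\widetilde{\gamma})=4k+\mathrm{ind}_f(p)-1$. By Remark \ref{remark: same local models}, the Conley-Zehnder indices computed with respect to $\tau$ and $\tau_G$ agree, so $\mu_{\CZ}(\gamma)=4k+\mathrm{ind}_f(p)-1$.

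The only delicate point is the identification of the return map of $\gamma$ with $\Phi_\gamma$ rather than with a composition $g_*\circ\Phi_\gamma$ involving the deck transformation. This subtlety is resolved precisely by the $G$-invariance of the vector fields $V_1,V_2$ used to construct $\tau_G$: in these trivialization coordinates the action of $g\in G$ on the fibers $\xi_z\to\xi_{gz}$ is the identity, so no extra factor appears and the iteration argument for nondegeneracy is valid. Everything else reduces to bookkeeping between the diagram \eqref{diagram: commuting square} and the $S^3$ result of Corollary \ref{corollary: CZ sphere}.
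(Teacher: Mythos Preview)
Your proposal is correct and follows essentially the same approach as the paper: apply Corollary \ref{corollary: CZ sphere} at the enlarged threshold $|G|\cdot L$, iterate $\gamma$ by the order of its deck transformation to obtain a closed lift in $S^3$, and deduce nondegeneracy and the projection statement from the lift; for contractible $\gamma$ the lift is already closed and Remark \ref{remark: same local models} identifies the Conley--Zehnder indices. Your final paragraph makes explicit the reason no extra $g_*$ factor appears in the return map (the $G$-invariance of $V_1,V_2$), which the paper encapsulates in Remark \ref{remark: same local models}; the only cosmetic point is that ``concatenating $k$ copies of $\widetilde{\gamma}$'' really means running the $R_{\lambda_\varepsilon}$-integral curve from $z$ for time $kT$, which closes because the Reeb flow is $G$-equivariant and $g^k=\mathrm{Id}$.
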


\begin{proof}
Let $L':=|G|L$ and take the corresponding $\varepsilon_0$ as appearing in Corollary \ref{corollary: CZ sphere}, applied to $L'$. Now, for $\varepsilon\in(0,\varepsilon_0]$, elements of $\mathcal{P}^{L'}(\lambda_{\varepsilon})$ are nondegenerate and project to critical points of $f$. Let $\gamma\in\mathcal{P}^L(\lambda_{G,\varepsilon})$ and let $n\in\N$ denote the order of $[\gamma]$ in $\pi_1(S^3/G) \cong G$. Now we see that $\gamma^n$ is contractible and lifts to an orbit $\widetilde{\gamma}\in\mathcal{P}^{L'}(\lambda_{\varepsilon})$, which must be  nondegenerate and must project to some critical point $p$ of $f$ under $\fP$. 

If the orbit $\gamma$ is degenerate, then $\gamma^n$ is degenerate and, by the discussion in Remark \ref{remark: same local models}, $\widetilde{\gamma}$ would be degenerate. Commutativity of  \eqref{diagram: commuting square} implies that $\gamma$ projects to the orbifold critical point $\pi_H(p)$ of $f_H$ under $\fp$. Finally, if $n=1$ then again by Remark \ref{remark: same local models}, the local model $\{\Phi^G_t\}$ of the Reeb flow along $\gamma$ matches that of $\widetilde{\gamma}$, $\{\Phi_t\}$, and thus $\mu_{\CZ}(\gamma)=\mu_{\CZ}(\widetilde{\gamma})$. The latter index is computed in Corollary \ref{corollary: CZ sphere}.

\end{proof}

\subsection{Cylinders over orbifold Morse trajectories}\label{subsection: cylinders over orbifold Morse trajectories}

In Section \ref{section: computation of filtered contact homology} we will compute the action filtered cylindrical contact homology groups using the above computations and models using specific choices of Morse functions $f$. We will see that the grading of any generator of the filtered chain complex is even\footnote{Recall that the degree of a generator $\gamma$ is given by $|\gamma|=\mu_{\CZ}(\gamma)-1$.}, implying that the differential \emph{vanishes} for our perturbing functions $f_H$. 

However, this does not mean that the moduli spaces of holomorphic cylinders are  empty. In this section, we spell out a correspondence between moduli spaces of holomorphic cylinders and the moduli spaces of \emph{orbifold} Morse trajectories in the base; the latter is often nonempty. This is the orbifold version of the correspondence between cylinders and flow lines in the context of prequantization bundles, established in \cite[\S 5]{N2}. The reader should consider this a bonus section, as nothing presented here is necessary in proving Theorem \ref{theorem: main}, but is helpful for visualization purposes.

As discussed in Section \ref{subsection: cylindrical contact homology as an analogue of orbifold Morse homology}, the Seifert projection $\fp:S^3/G\to S^2/H$ highlights many of the interplays between orbifold Morse theory and cylindrical contact homology. In particular, the projection geometrically relates holomorphic cylinders in $\R\times S^3/G$ to the orbifold Morse trajectories in $S^2/H$. This necessitates a discussion about the complex structure on $\xi_{G}$ that we will use.

\begin{remark}\label{remark: the specific almost complex structure}(The canonical complex structure $J$ on $\xi_{G}$) Recall that the $G$-action on $S^3$ preserves the standard complex structure $J_{\C^2}$ on $\xi\subset TS^3$. Thus, $J_{\C^2}$ descends to a complex structure on $\xi_G$, which we will denote simply by $J$ in this section. Note that for any (small) $\varepsilon$, and for any $f$ on $S^2$, that $J_{\C^2}$ is $\lambda_{\varepsilon}$-compatible, and so $J$ is $\lambda_{G,\varepsilon}$-compatible as well.  This $J$ might not be one of the generic $J_N$ used to compute the filtered homology groups in the later Sections \ref{subsection: cyclic}, \ref{subsection: dihedral}, or \ref{subsection: polyhedral}; this is fine. A generic choice of $J_N$ is necessary in ensuring tranversality of cylinders appearing in symplectic cobordisms used to define the chain maps later in Section \ref{section: direct limits of filtered homology}, but has no impact on the results of this discussion. We use this $J$ for the rest of this section due to its compatibility with $J_{\C^2}$ on $S^3$ and with $j$ on $S^2$.
\end{remark}

\begin{remark}\label{remark: cylinders over flow lines}($J_{\C^2}$-holomorphic cylinders in $\R\times S^3$ over Morse flow lines in $S^2$) 

Fix critical points $p$ and $q$ of a Morse-Smale function $f$ on $(S^2,\omega_{\FS}(\cdot,j\cdot))$. Fix $\varepsilon>0$ sufficiently small. By \cite[Propositions 5.4, 5.5]{N2}, we have a bijective correspondence between $\mathcal{M}(p,q)$ and $\mathcal{M}^{J_{\C^2}}(\gamma_p^k,\gamma_q^k)/\R$, for any $k\in\N$, where $\gamma_p$ and $\gamma_q$ are the embedded Reeb orbits in $S^3$ projecting to $p$ and $q$ under $\fP$. Given a Morse trajectory $x\in\mathcal{M}(p,q)$, the components of the corresponding cylinder $u_x:\R\times S^1\to\R\times S^3$ are explicitly written down in \cite[\S 5]{N2} in terms of a parametrization  of $x$, the Hopf action on $S^3$, the Morse function $f$, and the horizontal lift of its gradient to $\xi$. The resultant $u_x\in\mathcal{M}^{J_{\C^2}}(\gamma_p,\gamma_q)/\R$\footnote{We are abusing notation by conflating the parametrized cylindrical map $u_x$ with the equivalence class $[u_x]\in\mathcal{M}^{J_{\C^2}}(\gamma_p,\gamma_q)/\R$; we will continue to abuse notation in this way.} is $J_{\C^2}$-holomorphic. Furthermore, the Fredholm index of $u_x$ agrees with that of $x$. The image of the composition  \[\R\times S^1\xrightarrow{u_x}\R\times S^3\xrightarrow{\pi_{S^3}}S^3\xrightarrow{\fP}S^2\] equals the image of $x$ in $S^2$. We call $u_x$ the \emph{cylinder over} $x$. \end{remark}

The following procedure uses Remark \ref{remark: cylinders over flow lines} and Diagram \ref{diagram: commuting square} to establish a similar correspondence between moduli spaces of \emph{orbifold} flow lines of $S^2/H$ and moduli spaces of $J$-holomorphic cylinders in $\R\times S^3/G$, where $J$ is taken to be the $J_{\C^2}$-descended complex structure on $\xi_{G}$ from Remark \ref{remark: the specific almost complex structure}.
\begin{enumerate}
    \item Take $x\in\mathcal{M}(p,q)$, for orbifold Morse critical points $p, q\in S^2/H$ of $f_H$.
    \item Take a $\pi_H$-lift, $\widetilde{x}:\R\to S^2$ of $x$, from $\widetilde{p}$ to $\widetilde{q}$ in $S^2$, for some preimages $\widetilde{p}$ and $\widetilde{q}$ of $p$ and $q$. We have $\widetilde{x}\in\mathcal{M}(\widetilde{p},\widetilde{q})$.
    \item Let $u_{\widetilde{x}}$ be the $J_{\C^2}$-holomorphic cylinder in $\R\times S^3$ over $\widetilde{x}$ (see Remark \ref{remark: cylinders over flow lines}). We now have $u_{\widetilde{x}}\in\mathcal{M}^{J_{\C^2}}(\gamma_{\widetilde{p}},\gamma_{\widetilde{q}})/\R$.
    \item Let $u_x:\R\times S^1\to\R\times S^3/G$ denote the composition \[\R\times S^1\xrightarrow{u_{\widetilde{x}}}\R\times S^3\xrightarrow{\text{Id}\times\pi_G}\R\times S^3/G.\] Because $J$ is the $\pi_G$-descent of $J_{\C^2}$, we have that \[\text{Id}\times\pi_G:(\R\times S^3,J_{\C^2})\to(\R\times S^3/G,J)\] is a holomorphic map. This implies that $u_x$ is $J$-holomorphic; \begin{equation}\label{equation: induced cylinder}
        u_x\in\mathcal{M}^J(\pi_G\circ\gamma_{\widetilde{p}},\pi_G\circ\gamma_{\widetilde{q}})/\R.
    \end{equation}
\end{enumerate}
Note that $\pi_G\circ\gamma_{\widetilde{p}}$ and $\pi_G\circ\gamma_{\widetilde{q}}$ are contractible Reeb orbits of $\lambda_{G,\varepsilon}$ projecting to $p$ and $q$, respectively. Thus, if $\gamma_p$ and $\gamma_q$ are the embedded (potentially non-contractible) Reeb orbits of $\lambda_{G,\varepsilon}$ in $S^3/G$ over $p$ and $q$, then we have that \[\pi_G\circ\gamma_{\widetilde{p}}=\gamma_p^{m_p},\,\,\,\text{and}\,\,\,\pi_G\circ\gamma_{\widetilde{q}}=\gamma_q^{m_q},\]
where $m_p, m_q\in\N$ are the orders of $[\gamma_p]$ and $[\gamma_q]$ in $\pi_1(S^3/G)$. In particular, we can simplify equation \eqref{equation: induced cylinder}: \[u_x\in\mathcal{M}^J(\gamma_p^{m_p},\gamma_q^{m_q})/\R.\] This allows us to establish an orbifold version of the correspondence in \cite[\S 5]{N2}:
\begin{align}
    \mathcal{M}(p,q)&\cong\mathcal{M}^J(\gamma_p^{m_p},\gamma_q^{m_q})/\R \label{equation: induced cylinder 2}\\
    x&\sim u_x\nonumber.
\end{align}

\section{Computation of filtered cylindrical contact homology}\label{section: computation of filtered contact homology}

From Section \ref{section: introduction}, a finite subgroup of $\SU(2)$ is either cyclic, conjugate to the binary dihedral group $\D^*_{2n}$, or is one of the binary polyhedral groups $\T^*$, $\Oc^*$, or $\I^*$. If  subgroups $G_1$ and $G_2$ satisfy $A^{-1}G_2A=G_1$, for $A\in\SU(2)$, then the map $S^3\to S^3$, $p\mapsto A\cdot p$, descends to a strict contactomorphism   $(S^3/G_1,\lambda_{G_1}) \to (S^3/G_2,\lambda_{G_2})$, preserving Reeb dynamics. Thus, we are free to compute the contact homology of $S^3/G$ for a particular selection $G$ in a given conjugacy class.

The action threshold used to compute the filtered homology will depend on the type of subgroup $G$. In the rest of this section, for $N\in\N$, $L_N\in\R$ denotes 
\begin{enumerate}
     \itemsep-.35em
    \item $2\pi N-\frac{\pi}{n}$, when $G$ is cyclic of order $n$,
    \item $2\pi N-\frac{\pi}{2n}$, when $G$ is conjugate to $\D_{2n}^*$,
    \item $2\pi N-\frac{\pi}{10}$, when $G$ is conjugate to $\T^*$, $\Oc^*$, or $\I^*$.
\end{enumerate}

\subsection{Cyclic subgroups} \label{subsection: cyclic}

From \cite[Theorem 1.5]{AHNS}, the positive $S^1$-equivariant symplectic homology of the link of the $A_n$ singularity, $L_{A_n}\subset\C^3$, with contact structure $\xi_0=TL_{A_n}\cap J_{\C^3}(TL_{A_n})$, satisfies:
\[SH_*^{+,S^1}(L_{A_n},\xi_0)=\begin{cases} \Q^{n} & *=1,\\ \Q^{n+1} & *\geq3\,\, \mbox{and odd} \\ 0 &\mbox{else.}\end{cases}\]
Furthermore, \cite{BO} prove that there are restricted classes of contact manifolds whose cylindrical contact homology (with a degree shift) is isomorphic to its positive $S^1$-equivariant symplectic homology, when both are defined over $\Q$ coefficients. Indeed, we note an isomorphism by inspection when we compare this symplectic homology with the cylindrical contact homology of $(L_{A_n},\xi_0)\cong(S^3/G,\xi_G)$ for $G\cong\Z_{n+1}$ from Theorem \ref{theorem: main}. Although this cylindrical contact homology is computed in \cite[Theorem 1.36]{N2}, we recompute these groups using a direct limit of filtered contact homology to present the general structure of the computations to come in the dihedral and polyhedral cases.

Let $G\cong\Z_{n}$ be a finite cyclic subgroup of $G$ of order $n$. If $|G|=n$ is even, with $n=2m$, then $P:G\to H:=P(G)$ has nontrivial two element kernel, and $H$ is cyclic of order $m$. Otherwise, $n$ is odd, $P:G\to H=P(G)$ has trivial kernel, and $H$ is cyclic of order $n$. 

By conjugating $G$ if necessary, we can assume that $H$ acts on $S^2$ by rotations around the vertical axis through $S^2$. The height function  $f:S^2\to[-1,1]$ is Morse, $H$-invariant, and provides precisely two fixed points; the north pole, $\mathfrak{n}\in S^2$ featuring $f(\mathfrak{n})=1$,  and the south pole, $\mathfrak{s}\in S^2$ where $f(\mathfrak{s})=-1$.  For small $\varepsilon$, we can expect to see iterates of two embedded Reeb orbits, denoted $\gamma_{\mathfrak{n}}$ and $\gamma_{\mathfrak{s}}$, of $\lambda_{G,\varepsilon}:=(1+\varepsilon\mathfrak{p}^*f_H)\lambda_{G}$ in $S^3/G$ as the only generators of the filtered chain groups. Both $\gamma_{\mathfrak{n}}$ and $\gamma_{\mathfrak{s}}$ are elliptic and parametrize the exceptional fibers in $S^3/G$ over the two orbifold points of $S^2/H$.

Select $N\in \N$. Lemma \ref{lemma: ActionThresholdLink} produces an $\varepsilon_N>0$ for which if $\varepsilon\in(0,\varepsilon_N]$, then all orbits in $\mathcal{P}^{L_N}(\lambda_{G,\varepsilon})$ are nondegenerate and are iterates of $\gamma_{\mathfrak{s}}$ or $\gamma_{\mathfrak{n}}$,  whose actions satisfy 
\begin{equation}\label{equation: action cyclic}
    \mathcal{A}(\gamma_{\mathfrak{s}}^k)=\frac{2\pi k(1-\varepsilon)}{n},\,\,\,\,\,\mathcal{A}(\gamma_{\mathfrak{n}}^k)=\frac{2\pi k(1+\varepsilon)}{n},
\end{equation} which tells us that the $L_N$-filtered chain complex is $\Q$-generated by the Reeb orbits $\gamma_{\mathfrak{s}}^k$ and $\gamma_{\mathfrak{n}}^k$, for $1\leq k<nN$. With respect to $\tau_G$, the rotation numbers $\theta_{\mathfrak{s}}^k$ and $\theta_{\mathfrak{n}}^k$ of $\gamma^k_{\mathfrak{s}}$ and $\gamma^k_{\mathfrak{n}}$  satisfy \[\theta^k_{\mathfrak{s}}=\frac{2k}{n}-\varepsilon\frac{k}{n(1-\varepsilon)},\,\,\,\,\,\,\theta^k_{\mathfrak{n}}=\frac{2k}{n}+\varepsilon\frac{k}{n(1+\varepsilon)}.\] See Remark \ref{remark: rotation numbers} for a definition of rotation numbers and their role in computing Conley-Zehnder indices. Thus, if  $\varepsilon_N$ is sufficiently small: \begin{equation}\label{equation: CZ indices cyclic}
    \mu_{\CZ}(\gamma_{\mathfrak{s}}^k)=2\Bigl\lceil \frac{2k}{n} \Bigr\rceil-1,\,\,\,\,\,\mu_{\CZ}(\gamma_{\mathfrak{n}}^k)=2\Bigl\lfloor \frac{2k}{n} \Bigr\rfloor+1.
\end{equation}
For $i\in\Z$, let $c_i$ denote the number of $\gamma\in\mathcal{P}^{L_N}(\lambda_{G,\varepsilon_N})$  with $|\gamma|=\mu_{\CZ}(\gamma)-1=i$ (recall that $|\gamma|$ denotes the integral grading of $\gamma$ in the chain complex). Then:
\begin{itemize}
    \itemsep-.35em
    \item $c_0=n-1$,    
    \item $c_{2i}=n$ for $0<i<2N-1$,
    \item $c_{4N-2}=n-1$,
    \item $c_i=0$ for all other $i$ values.
\end{itemize} \begin{remark} \label{remark: CZcyclic}
Although the above description of $c_i$ is independent of the parity of $n$, the contributions by iterates of $\gamma_{\mathfrak{s}}$ versus $\gamma_{\mathfrak{n}}$ \emph{do} depend on this parity. If $a_i$ denotes the number of $\gamma^k_{\mathfrak{s}}$ with $|\gamma^k_{\mathfrak{s}}|=i$, and $b_i$ denotes the number of $\gamma^k_{\mathfrak{n}}$ with $|\gamma^k_{\mathfrak{n}}|=i$ (so that $a_i+b_i=c_i$ for all $i$), the values $a_i$ and $b_i$ as a function of $i$  depend on the parity of $n$; when $|G|=n$ is even $a_i$ and $b_i$ are functions of $i$ mod 2, however if $|G|=n$ is odd, these values of $a_i$ and $b_i$ depend on the value of $i$ mod 4. In the case that $n=2m$ is even we have the following decomposition of the integers $c_i$: 
\begin{itemize}
    \itemsep-.35em
   \item $a_0=m$ and $b_0=m-1$,
   \item $a_{2i}=m=b_{2i}$ for $0<i<2N-1$,
    \item $a_{4N-2}=m-1$ and $b_{4N-2}=m$.
\end{itemize}
In the case that $n=2p+1$ is odd, we have the following more involved decomposition of $c_i$:
\begin{itemize}
     \itemsep-.35em
    \item $a_0=p$ and $b_0=p$ (providing $c_0=2p=n-1$),
    \item $a_{4i}=p$ and $b_{4i}=p+1$ for $0<i<N$ (providing $c_{4i}=2p+1=n$),
    \item $a_{4i-2}=p+1$ and $b_{4i-2}=p$ for $0<i <N$ (providing $c_{4i-2}=2p+1=n$),
    \item $a_{4N-2}=p$ and $b_{4N-2}=p$ (providing $c_{4N-2}=2p=n-1$).
\end{itemize}
The first few Conley Zehnder indices appearing in $L_{A_2}\cong L(3,2)=S^3/\Z_3$ are listed \cite[Ex. 1.3]{AHNS} and illustrate this behavior in the odd case.
\end{remark}

By \eqref{equation: CZ indices cyclic}, if $\mu_{\CZ}(\gamma^k_{\mathfrak{n}})=1$, then $k<n/2$, so the orbit $\gamma^k_{\mathfrak{n}}$ is not contractible. If $\mu_{\CZ}(\gamma^k_{\mathfrak{s}})=1$, then by \eqref{equation: CZ indices cyclic}, $k\leq n/2$ and  $\gamma^k_{\mathfrak{s}}$ is not contractible. Thus,  $\lambda_N:=\lambda_{G,\varepsilon_N}$ is $L_N$-dynamically convex and so by \cite[Theorem 1.3]{HN},\footnote{One hypothesis of \cite[Th. 1.3]{HN} requires that all contractible Reeb orbits $\gamma$ satisfying $\mu_{\CZ}(\gamma)=3$ must be embedded. This fails in our case by considering the contractible $\gamma_{\mathfrak{s}}^n$, which is not embedded yet satisfies $\mu_{\CZ}(\gamma_{\mathfrak{s}}^n)=3$. See Lemma \ref{lemma: generic j} for an explanation of why we do not need this additional hypothesis.} a generic choice $J_N\in\mathcal{J}(\lambda_N)$ provides a well defined filtered chain complex, yielding the isomorphism 
\begin{align*}
    CH_*^{L_N}(S^3/G,\lambda_N, J_N)&\cong\bigoplus_{i=0}^{2N-1}\Q^{n-2}[2i]\oplus\bigoplus_{i=0}^{2N-2} H_*(S^2;\Q)[2i]\\
    &=\begin{cases} \Q^{n-1} & *=0, \, 4N-2 \\
\Q^{n} & *=2i, 0<i<2N-1 \\ 0 &\mbox{else.}\end{cases}
\end{align*}
 This follows from investigating the good contributions to  $c_i$, which is 0 for odd $i$, implying $\partial^{L_N}=0$. This supports our Theorem \ref{theorem: main} in the cyclic case, because $G$ is abelian and so $m=|\text{Conj}(G)|=n$. Theorem \ref{theorem: cobordisms induce inclusions} of Section \ref{section: direct limits of filtered homology} will permit taking a direct limit over inclusions of these  groups.

\subsection{Binary dihedral groups $\D^*_{2n}$}\label{subsection: dihedral}

The binary dihedral group $\D^*_{2n}\subset \SU(2)$ has order $4n$ and projects to the dihedral group $\D_{2n}\subset \SO(3)$, which has order $2n$, under the cover $P:\SU(2)\to \SO(3)$. With the quantity $n$ understood, these groups will respectively be denoted $\D^*$ and $\D$. The group $\D^*$ is generated by the two matrices\[A=\begin{pmatrix} \zeta_n & 0 \\ 0 & \overline{\zeta_n}\end{pmatrix} \hspace{1cm} B=\begin{pmatrix} 0 & -1 \\ 1 & 0\end{pmatrix},\]
where $\zeta_n:=\exp(i\pi/n)$ is a primitive $2n^{\text{th}}$ root of unity. These matrices satisfy the relations $B^2=A^n=-\text{Id}$ and $BAB^{-1}=A^{-1}$. The group elements may be enumerated as follows: \[\D^*=\{A^kB^l:0\leq k< 2n, \, 0\leq l\leq 1 \}.\] 
By applying \eqref{equation: P in coordinates}, the following matrices generate $\D\subset\SO(3)$:
\[a:=P(A)=\begin{pmatrix} 1 & 0 & 0\\ 0 & \cos{(2\pi/n)} & -\sin{(2\pi/n)} \\ 0 & \sin{(2\pi/n)} & \cos{(2\pi/n)} \end{pmatrix} \hspace{1cm} b:=P(B)=\begin{pmatrix}-1 & 0 & 0 \\ 0 & 1 & 0 \\ 0 & 0 & -1\end{pmatrix}.\]
 There are three types of fixed points in $S^2$ of the $\D$-action, categorized as follows, and depicted in Figure \ref{figure: dihedral repeat}:
 
 \begin{figure}[h]
    \centering
    \includegraphics[width=0.4\textwidth]{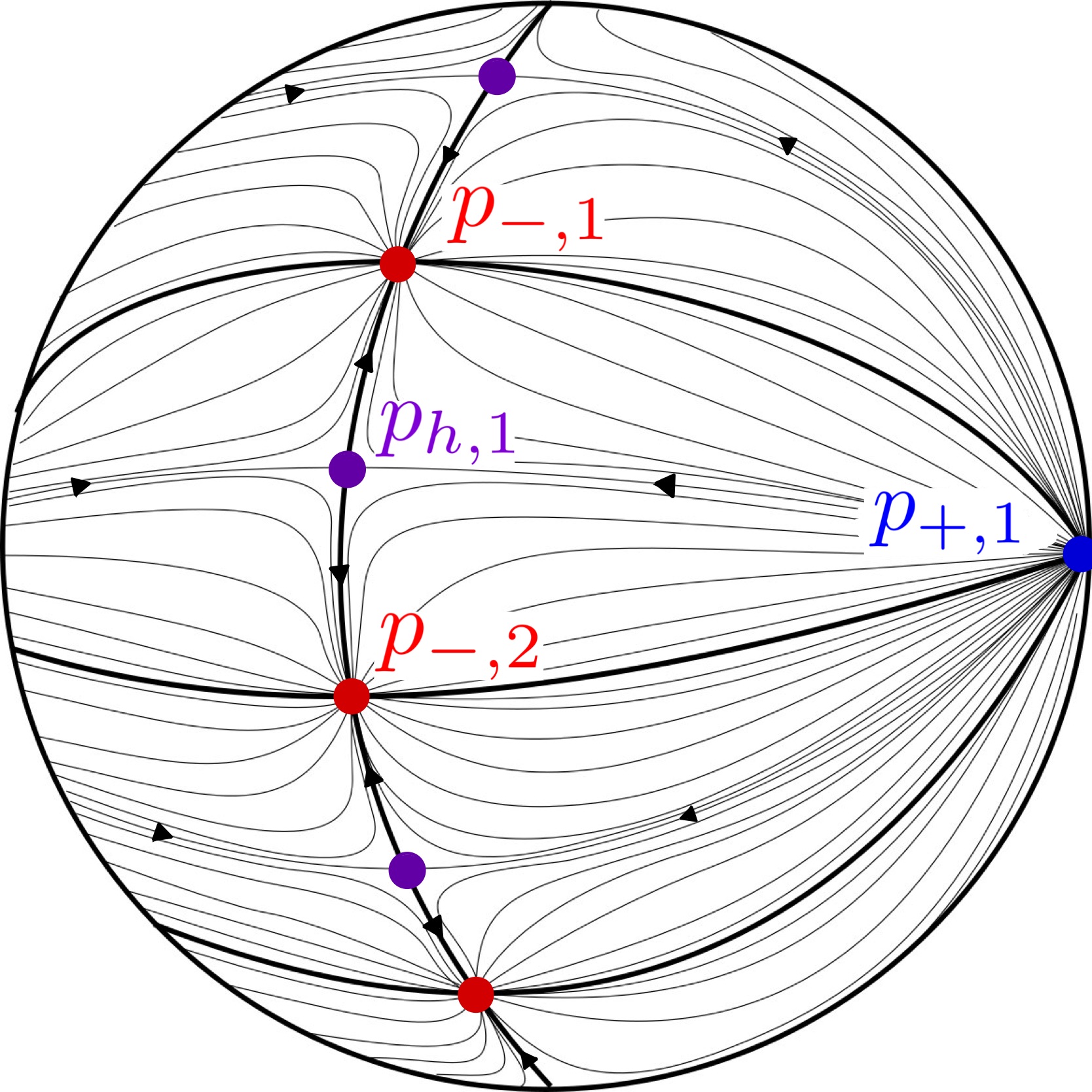}
    \caption{Fixed points of the $\D$ - action on $S^2$ and flow lines of a Morse function constructed in Section \ref{subsection: constructing morse functions}.}
    \label{figure: dihedral repeat}
\end{figure}

\textbf{Morse index 0}: $p_{-,k}:=(0,\cos{((\pi+2k\pi)/n)}, \sin{((\pi+2k\pi)/n)})\in \text{Fix}(\D)$, for $k\in\{1,\dots, n\}$. We have that $p_{-,n}$ is a fixed point of $ab$ and $p_{-,k}=a^k\cdot p_{-,n}$. Thus, $p_{-,k}$ is a fixed point of $a^k(ab)a^{-k}=a^{2k+1}b$. These $n$ points enumerate a $\D$-orbit in $S^2$, and so the isotropy subgroup of $\D$ associated to any of the $p_{-,k}$ is of order 2 and is generated by $a^{2k+1}b\in\D$. The point $p_-\in S^2/\D$ denotes the image of any $p_{-,k}$ under $\pi_{\D}$.
    
    \vspace{.25cm}

    \textbf{Morse index 1}: $p_{h,k}:=(0,\cos{(2k\pi/n)}, \sin{(2k\pi/n)})\in \text{Fix}(\D)$, for $k\in\{1,\dots, n\}$. We have that $p_{h,n}$ is a fixed point of of $b$ and  $p_{h,k}=a^k\cdot p_{h,n}$. Thus, $p_{h,k}$ is a fixed point of $a^k(b)a^{-k}=a^{2k}b$. These $n$ points enumerate a $\D$-orbit in $S^2$, and so the isotropy subgroup of $\D$ associated to any of the $p_{h,k}$ is of order 2 and is generated by  $a^{2k}b\in\D$. The point $p_h\in S^2/\D$ denotes the image of any $p_{h,k}$ under $\pi_{\D}$. 
    
        \vspace{.25cm}

    \textbf{Morse index 2}: $p_{+,1}=(1,0,0)$ and $p_{+,2}=(-1,0,0)$. These  are the fixed points of $a^k$, for $0<k<n$, and together enumerate a single two element $\D$-orbit. The isotropy subgroup associated to either of the points is cyclic of order $n$ in $\D$, generated by $a$.  The point $p_+\in S^2/\D$ denotes the image of any one of these two points under $\pi_{\D}$.
     \vspace{.25cm}

There exists a $\D$-invariant, Morse-Smale function $f$ on $(S^2,\omega_{\FS}(\cdot,j \cdot))$, with $\text{Crit}(f)=\text{Fix}(\D)$, which descends to an orbifold Morse function $f_{\D}:S^2/\D\to\R$, constructed in Section \ref{subsection: constructing morse functions}. Furthermore, there are stereographic coordinates at
\begin{enumerate}
    \itemsep-.35em
    \item the points $p_{-,k}$ in which $f$ takes the form $(x^2+y^2)/2-1$ near $(0,0)$,
    \item the points $p_{h,k}$ in which $f$  takes the form $(x^2-y^2)/2$  near $(0,0)$,
    \item the  points $p_{+,k}$ in which $f$ takes the form $1-(x^2+y^2)/2$  near $(0,0)$.
\end{enumerate}
The orbifold surface $S^2/\D$ is homeomorphic to $S^2$ and has three orbifold points (explained in Section \ref{subsection: visualizing holomorphic cylinders: an example}, see Figure \ref{figure: fundamental domain}). Lemma \ref{lemma: CZdihedral} identifies the Reeb orbits of $\lambda_{\D^*, \varepsilon}=(1+\varepsilon \fp^*f_{\D})\lambda_{\D^*}$ that appear in the filtered chain complex and computes their Conley-Zehnder indices. Recall that the grading of a generator $\gamma$ is given by $|\gamma|=\mu_{\CZ}(\gamma)-1$.

\begin{lemma}\label{lemma: CZdihedral}
Fix $N\in\N$. Then there exists an $\varepsilon_N>0$ such that for all $\varepsilon\in(0,\varepsilon_N]$, every $\gamma\in\mathcal{P}^{L_N}(\lambda_{\D^*,\varepsilon})$ is nondegenerate and projects to an orbifold critical point of $f_{\D}$ under $\fp$, where $L_N=2\pi N-\pi/2n$. If $c_i$ denotes the number of $\gamma\in \mathcal{P}^{L_N}(\lambda_{\D^*,\varepsilon})$ with $|\gamma|=i$, then 
\begin{itemize}
 \itemsep-.35em
    \item $c_i=0$ if $i<0$ or $i> 4N-2$,
    \item $c_i=n+2$ for $i=0$ and $i=4N-2$, with all $n+2$ contributions by good Reeb orbits,
    \item $c_i=n+3$ for even $i$, $0<i<4N-2$, with all $n+3$ contributions by good Reeb orbits,
    \item $c_i=1$ for odd $i$, $0<i<4N-2$ and this contribution is by a bad Reeb orbit.
\end{itemize}
\end{lemma}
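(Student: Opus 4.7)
\textbf{Proof proposal for Lemma \ref{lemma: CZdihedral}.}

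My plan is to apply Lemma \ref{lemma: ActionThresholdLink} to reduce the classification of $\mathcal{P}^{L_N}(\lambda_{\D^*,\varepsilon})$ to iterates of three distinguished embedded Reeb orbits, then to compute their Conley--Zehnder indices and identify good versus bad orbits. First, by Lemma \ref{lemma: ActionThresholdLink}, there exists $\varepsilon_N>0$ such that every $\gamma\in\mathcal{P}^{L_N}(\lambda_{\D^*,\varepsilon})$ for $\varepsilon\in(0,\varepsilon_N]$ is nondegenerate and projects under $\fp$ to one of the three orbifold critical points $p_-, p_h, p_+$ of $f_\D$. Let $\gamma_-$, $\gamma_h$, $\gamma_+$ denote the embedded Reeb orbits in $S^3/\D^*$ parametrizing the exceptional fibers $\fp^{-1}(p_{-}),\fp^{-1}(p_h),\fp^{-1}(p_{+})$. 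Since $|\D^*_{2n}|=4n$ is even, Lemma \ref{lemma: covering multiplicity} gives that the $G$-quotient of each orbit $\gamma_{p,\varepsilon}\in\mathcal{P}(\lambda_{\varepsilon})$ from Lemma \ref{lemma: orbits} is an iterate of multiplicity $2|H_p|$ of the corresponding embedded orbit in $S^3/\D^*$; this determines the actions $\mathcal{A}(\gamma_{\pm})=\pi(1\pm\varepsilon)f_\D(p_\pm)/|H_{p_\pm}|$ and likewise for $\gamma_h$, from which the bound $\mathcal{A}(\cdot)<L_N=2\pi N-\pi/(2n)$ yields the permissible multiplicities $k\leq 2nN-1$ for iterates of $\gamma_+$ and $k\leq 4N-1$ for iterates of $\gamma_-$ and $\gamma_h$.

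Next, I compute the Conley--Zehnder indices (with respect to $\tau_G$, which by Remark \ref{remark: same local models} coincides with the computation on $S^3$). Apply Proposition \ref{proposition: flow} at each critical point. At the maximum $p_+$ the Hessian is $-\mathrm{Id}$, so $\Phi_t$ reduces to a pure rotation, making $\gamma_+$ elliptic with rotation number (over the $k$-th iterate) $\theta_+^k=k/n+O(\varepsilon)$ with positive $O(\varepsilon)$ correction; by Remark \ref{remark: rotation numbers}, $\mu_{\CZ}(\gamma_+^k)=2\lfloor k/n\rfloor+1$. Similarly at the minimum $p_-$ the Hessian is $+\mathrm{Id}$, giving an elliptic orbit with $\theta_-^k=k/2+O(\varepsilon)$, negative correction, hence $\mu_{\CZ}(\gamma_-^k)=2\lceil k/2\rceil-1$. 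At the saddle $p_h$, the Hessian has signature $0$, so $\Phi_t$ factors into a rotation and a hyperbolic exponential; therefore $\gamma_h$ is a hyperbolic orbit. To decide between positive and negative hyperbolic (which controls badness), I appeal to Remark \ref{remark: CZ indices of iterates}: since $\gamma_h^4$ is contractible and lifts to the embedded orbit over $p_h$ in $S^3$, Lemma \ref{lemma: ActionThresholdLink} gives $\mu_{\CZ}(\gamma_h^4)=4\cdot 1+\mathrm{ind}_f(p_h)-1=4$, and linearity of the index under iteration for hyperbolic orbits yields $\mu_{\CZ}(\gamma_h)=1$. By Remark \ref{remark: parity of Conley-Zehnder indices} the odd parity forces $\gamma_h$ to be \emph{negative} hyperbolic, and consequently $\mu_{\CZ}(\gamma_h^k)=k$ for every admissible $k$.

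Finally, I count contributions by grading $|\gamma|=\mu_{\CZ}(\gamma)-1$. The iterates $\gamma_+^k$ for $1\le k\le 2nN-1$ distribute as $n-1$ orbits in grading $0$ (corresponding to $1\le k\le n-1$) and $n$ orbits in each even grading $2j$ for $1\le j\le 2N-1$ (corresponding to $jn\le k\le (j+1)n-1$, truncated at $k=2nN-1$). The iterates $\gamma_-^k$ for $1\le k\le 4N-1$ contribute two orbits to each even grading $2j$ for $0\le j\le 2N-2$ and one orbit to grading $4N-2$. The iterates $\gamma_h^k$ for $1\le k\le 4N-1$ contribute one orbit to each grading $k-1$; by negative hyperbolicity, the even iterates of $\gamma_h$ are bad (producing the single bad orbit in every odd grading $0<i<4N-2$), while the odd iterates are good and add one orbit to each even grading $0\le 2j\le 4N-2$. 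Summing the three contributions for even $i$ gives $c_0=(n-1)+2+1=n+2$, $c_{4N-2}=n+1+1=n+2$, and $c_{2j}=n+2+1=n+3$ for $0<j<2N-1$; for odd $i$ in $0<i<4N-2$ only the bad even iterate of $\gamma_h$ contributes, giving $c_i=1$.

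The main obstacle I foresee is the careful sign bookkeeping at the saddle: determining that $\gamma_h$ is negative (rather than positive) hyperbolic is essential since it is precisely this feature that produces the bad orbits, and the cleanest route -- deducing the parity from the contractible iterate $\gamma_h^4$ via Lemma \ref{lemma: ActionThresholdLink} and the multiplicativity from Remark \ref{remark: CZ indices of iterates} -- avoids an explicit spectral computation of $\Phi_t$. The remaining counting is bookkeeping, but one must carefully verify that the small-$\varepsilon$ perturbation of the rotation numbers never pushes $\theta_\pm^k$ across an integer (so the elliptic CZ formula applies cleanly) and that the maximum multiplicities below $L_N$ are exactly $2nN-1$ and $4N-1$; both hinge on the precise form of $L_N=2\pi N-\pi/(2n)$.
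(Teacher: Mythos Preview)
Your proposal is correct and follows essentially the same approach as the paper's proof: apply Lemma \ref{lemma: ActionThresholdLink}, identify the three embedded orbits via Lemma \ref{lemma: covering multiplicity}, compute actions to bound multiplicities, use Proposition \ref{proposition: flow} to determine the local model at each orbifold critical point, and then count by grading. Your determination that $\gamma_h$ is negative hyperbolic (via $\mu_{\CZ}(\gamma_h^4)=4$, linearity for hyperbolic orbits, and the parity criterion of Remark \ref{remark: parity of Conley-Zehnder indices}) exactly mirrors the paper's argument. One small slip: the action formula you wrote, $\mathcal{A}(\gamma_{\pm})=\pi(1\pm\varepsilon)f_\D(p_\pm)/|H_{p_\pm}|$, is garbled (at $p_-$ it would give a negative number); the correct expression is $\mathcal{A}(\gamma_\bullet)=\pi(1+\varepsilon f(p_\bullet))/|H_{p_\bullet}|$, but your subsequent multiplicity bounds and index formulas are right, so this is clearly a transcription error rather than a conceptual one.
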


\begin{proof}

Apply Lemma \ref{lemma: ActionThresholdLink} to $L_N=2\pi N-\frac{\pi}{2n}$ to obtain $\varepsilon_N$. Now, if $\varepsilon\in(0,\varepsilon_N]$, we have that every $\gamma\in\mathcal{P}^{L_N}(\lambda_{\P^*,\varepsilon})$ is nondegenerate and projects to an orbifold critical point of $f_{\D}$. We now study the actions and indices of these  orbits.

\vspace{.25cm}
    \textbf{Orbits over $p_-$}: Let $e_-$ denote the embedded Reeb orbit of $\lambda_{\D^*,\varepsilon}$ which projects to $p_-\in S^2/\D$. By Lemmas \ref{lemma: covering multiplicity} and \ref{lemma: orbits}, $e_-^4$ lifts to an embedded Reeb orbit of $\lambda_{\varepsilon}$ in $S^3$ with action $2\pi(1-\varepsilon)$, projecting to some $p_{-,j}\in S^2$. Thus, $\mathcal{A}(e_- )=\pi(1-\varepsilon)/2$, so $\mathcal{A}(e^k_-)=k\pi (1-\varepsilon)/2$. Hence, we include iterates $e_-^k$ for  $k=1,2,\dots, 4N-1$ in the $L_N$-filtered chain complex. Using Remark \ref{remark: same local models} and Proposition \ref{proposition: flow}, we see that the linearized Reeb flow of $\lambda_{\D^*,\varepsilon}$ along $e^k_-$ with respect to trivialization $\tau_{\D^*}$ is given by the family of matrices $\Phi_t=\mathcal{R}(2t-t\varepsilon)$ for $t\in[0,k\pi(1-\varepsilon)/2]$, where we have used that $f(p_{-,j})=-1$ and that we have stereographic coordinates $\psi$ at the point $p_{-,j}$ such that $H(f,\psi)=\text{Id}$. We see that $e_-^k$ is elliptic with rotation number $\theta_-^k=k/2-k\varepsilon/4(1-\varepsilon)$, thus
    \[\mu_{\CZ}(e_-^k)=2\Bigl\lceil \frac{k}{2}-\frac{k\varepsilon}{4(1-\varepsilon)} \Bigr\rceil-1=2\Bigl\lceil \frac{k}{2} \Bigr\rceil-1,\] 
    where the last step is valid by reducing $\varepsilon_{N}$ if necessary. See Remark \ref{remark: rotation numbers} for a review of rotation numbers and their role in computing Conley-Zehnder indices.
    
    \vspace{.25cm}
    
    \textbf{Orbits over} $p_h$: Let $h$ denote the embedded Reeb orbit of $\lambda_{\D^*,\varepsilon}$ which projects to $p_h\in S^2/\D$. By Lemmas \ref{lemma: covering multiplicity} and \ref{lemma: orbits}, $h^4$ lifts to an embedded Reeb orbit of $\lambda_{\varepsilon}$ in $S^3$ with action $2\pi$, projecting to some $p_{h,j}\in S^2$. Thus, $\mathcal{A}(h)=\pi/2$, so $\mathcal{A}(h^k)=k\pi/2$. Hence, we include (only the good) iterates $h^k$, for  $k=1,2,\dots, 4N-1$ in our $L_N$-filtered chain  complex.
    
    To see that $h$ is a hyperbolic Reeb orbit, we  consider its 4-fold cover $h^4$. By Remark \ref{remark: same local models} and Proposition \ref{proposition: flow}, we compute the linearized Reeb flow, noting that the lifted Reeb orbit projects to $p_{h,j}\in S^2$, where $f(p_{h,j})=0$, and that we have stereographic coordinates $\psi$ at $p_{h,j}$ such that $H(f,\psi)=\text{Diag}(1,-1)$. We evaluate the matrix at $t=2\pi$ to see that the linearized return map associated to $h^4$ is \[\exp\begin{pmatrix}0 & 2\pi\varepsilon \\ 2\pi\varepsilon&0\end{pmatrix}=\begin{pmatrix}\cosh{(2\pi\varepsilon)} &\sinh{(2\pi\varepsilon)}\\\sinh{(2\pi\varepsilon)}&\cosh{(2\pi\varepsilon)}\end{pmatrix}.\] The eigenvalues of this matrix are $\cosh{(2\pi\varepsilon)}\pm\sinh{(2\pi\varepsilon)}$. So long as $\varepsilon$ is small, these eigenvalues are real and positive, so that $h^4$ is positive hyperbolic, implying that $h$ is also hyperbolic. If $\mu_{\CZ}(h):=I\in\Z$, then $\mu_{\CZ}(h^4)=4I$, because $\mu_{\CZ}$ grows linearly  for hyperbolic Reeb orbits, by Remark \ref{remark: CZ indices of iterates}.  By Corollary \ref{corollary: CZ sphere} and Remark \ref{remark: same local models}, we know $\mu_{\CZ}(h^4)=4\cdot1+1-1=4$. Hence, $I=1$ and thus, $h$ is negative hyperbolic (by Remark \ref{remark: parity of Conley-Zehnder indices}). And so, by definition, the even iterates of $h$ are \emph{bad} Reeb orbits.
    \vspace{.25cm}
    
    \textbf{Orbits over $p_+$}: Let $e_+$ denote the embedded Reeb orbit of $\lambda_{\D^*,\varepsilon}$ which projects to $p_+\in S^2/\D$. By Lemmas \ref{lemma: covering multiplicity} and \ref{lemma: orbits}, the  $2n$-fold cover $e_-^{2n}$ lifts to some embedded Reeb orbit of $\lambda_{\varepsilon}$ in $S^3$ with action $2\pi(1+\varepsilon)$, projecting to some $p_{+,j}\in S^2$. Thus, $\mathcal{A}(e_+)=\pi(1+\varepsilon)/n$, so $\mathcal{A}(e^k_-)=k\pi(1+\varepsilon)/n$. Hence, we include the iterates $e_+^k$, for  $k=1,2,\dots, 2nN-1$ in our $L_N$-filtered complex. Using Remark \ref{remark: same local models} and Proposition \ref{proposition: flow}, we see that the linearized Reeb flow of $\lambda_{\D^*,\varepsilon}$ along $e^k_+$ with respect to trivialization $\tau_{\D^*}$ is given by the family of matrices \[\Phi_t=\mathcal{R}\bigg(\frac{2t}{1+\varepsilon}+\frac{t\varepsilon}{(1+\varepsilon)^2}\bigg)\,\,\, \text{for}\,\,\, t\in[0,k\pi(1+\varepsilon)/n],\] where we have used that $f(p_{+,j})=1$ and that we have stereographic coordinates $\psi$ at  $p_{+,j}$ such that $H(f,\psi)=-\text{Id}$. Thus, $e_+^k$ is elliptic with rotation number $\theta_+^k$  and Conley-Zehnder index given by \[\theta_+^k=\frac{k}{n}+\frac{\varepsilon k}{2n(1+\varepsilon)}, \,\,\mu_{\CZ}(e_+^k)=1+2\Bigl\lfloor \frac{k}{n}+\frac{\varepsilon k}{2n(1+\varepsilon)} \Bigr\rfloor=1+2\Bigl\lfloor \frac{k}{n} \Bigr\rfloor,\]
    where the last step is valid for sufficiently small $\varepsilon_N$. Again, see Remark \ref{remark: rotation numbers} for a definition of rotation numbers and their role in computing Conley-Zehnder indices.
\end{proof}

Lemma \ref{lemma: CZdihedral} produces the sequence $(\varepsilon_N)_{N=1}^{\infty}$, which we can assume  decreases monotonically to 0 in $\R$. Define the sequence of 1-forms $(\lambda_N)_{N=1}^{\infty}$ on $S^3/\D^*$ by $\lambda_N:=\lambda_{\D^*,\varepsilon_N}$.

\begin{summary*}(Dihedral data) We have
\begin{equation} \label{equation: CZ indices dihedral}
    \mu_{\CZ}(e_-^k)=2\Bigl\lceil \frac{k}{2} \Bigr\rceil-1,\,\,\,\mu_{\CZ}(h^k)=k,\,\,\,\mu_{\CZ}(e_+^k)=2\Bigl\lfloor \frac{k}{n} \Bigr\rfloor+1,
\end{equation}
\begin{equation}\label{equation: action dihedral}
        \mathcal{A}(e_-^k)=\frac{k\pi(1-\varepsilon)}{2},\,\,\,\mathcal{A}(h^k)=\frac{k\pi}{2},\,\,\,\mathcal{A}(e_+^k)=\frac{k\pi(1+\varepsilon)}{n}.
\end{equation}
\end{summary*}

\begin{table}[h!]
\centering
 \begin{tabular}{||c | c | c | c ||} 
 \hline
Grading & Index & Orbits & $c_i$ \\ [0.5ex] 
 \hline\hline
 0 & 1 & $e_-,e_-^2,h,e_+,e_+^2,\dots,e_+^{n-1}$ & $n+2$\\ 
 \hline
 1 & 2 & $h^2$ & 1\\
 \hline
 2 & 3 & $e_-^3,e_-^4, h^3, e_+^{n}, \dots, e_+^{2n-1}$ & $n+3$\\
  \hline
 \vdots & \vdots & \vdots & \vdots \\
  \hline
 $4N-4$ & $4N-3$ & $e_-^{4N-3},e_-^{4N-2},h^{4N-3}, e_+^{(2N-2)n},\dots e_+^{(2N-1)n-1}$ & $n+3$\\
  \hline
 $4N-3$ & $4N-2$ & $h^{4N-2}$ & 1\\
    \hline
 $4N-2$ & $4N-1$ & $e_-^{4N-1},h^{4N-1}, e_+^{(2N-1)n},\dots,e_+^{2Nn-1}$ & $n+2$\\ 
 \hline
\end{tabular}
\caption{Reeb orbits of $\mathcal{P}^{L_N}(\lambda_{\D_{2n}^*,\varepsilon_N})$ and their Conley-Zehnder indices}
\label{table: Dihedral CZ indices}
\end{table}

None of the orbits in the first two rows of Table \ref{table: Dihedral CZ indices} are contractible, thus $\lambda_N=\lambda_{\D^*,\varepsilon_N}$ is $L_N$-dynamically convex and so by \cite[Theorem 1.3]{HN},\footnote{One hypothesis of \cite[Th. 1.3]{HN} requires that all contractible Reeb orbits $\gamma$ satisfying $\mu_{\CZ}(\gamma)=3$ must be embedded. This fails in our case by considering the contractible $e_-^4$, which is not embedded yet satisfies $\mu_{\CZ}(e_-^4)=3$. See Lemma \ref{lemma: generic j} for an explanation of why we do not need this additional hypothesis.} a generic choice $J_N\in\mathcal{J}(\lambda_N)$ provides a well defined filtered chain complex, yielding the  isomorphism of $\Z$-graded vector spaces 
\begin{align*}
    CH_*^{L_N}(S^3/\D^*,\lambda_N, J_N)&\cong\bigoplus_{i=0}^{2N-1}\Q^{n+1}[2i]\oplus\bigoplus_{i=0}^{2N-2} H_*(S^2;\Q)[2i]\\
    &=\begin{cases} \Q^{n+2} & *=0, \, 4N-2 \\
\Q^{n+3} & *=2i, 0<i<2N-1 \\ 0 &\mbox{else.}\end{cases}.
\end{align*}
 This follows from investigating the good contributions to  $c_i$, which is 0 for odd $i$, implying $\partial^{L_N}=0$. This supports our Theorem \ref{theorem: main} in the dihedral case, because $|\text{Conj}(\D_{2n}^*)|=n+3$. Theorem \ref{theorem: cobordisms induce inclusions} of Section \ref{section: direct limits of filtered homology} will permit taking a direct limit over inclusions of these  groups.

\subsection{Binary polyhedral groups $\T^*$, $\Oc^*$, and $\I^*$} \label{subsection: polyhedral}

Unlike the dihedral case, we will not work with explicit matrix generators of the polyhedral groups, because computing the coordinates of the fixed points is more involved. Instead, we will take a more geometric approach. Let $\P^*\subset\SU(2)$ be some binary polyhedral group so that it is congruent to either $\T^*$, $\Oc^*$ or $\I^*$, with $|\P^*|=24$, 48, or 120, respectively. Let $\P\subset\SO(3)$ denote the image of $\P^*$ under the group homomorphism $P$. This group $\P$ is conjugate to one of $\T$, $\Oc$, or $\I$ in $\SO(3)$, and its order satisfies $|\P^*|=2|\P|$. The $\P$-action on $S^2$ is given by the symmetries of a regular polyhedron inscribed in $S^2$. The fixed point set, $\text{Fix}(\P)$, is partitioned into three $\P$-orbits. Let the number of vertices, edges, and faces of the polyhedron in question be $\mathscr{V}$, $\mathscr{E}$, and $\mathscr{F}$ respectively (see Table \ref{table:polyhedral xyz}). We describe the types of $\P$-fixed points in $S^2$ now, depicted in Figures \ref{figure: tetraheadral repeat}, \ref{figure: octahedral repeat}, and \ref{figure: icosahedral repeat}. Note that the flow lines shown between the fixed points are of Morse functions constructed in Section \ref{subsection: constructing morse functions}.
 
\begin{figure}[!htb]
    \centering
    \begin{minipage}{.45\textwidth}
        \centering
        \includegraphics[width=0.8\textwidth]{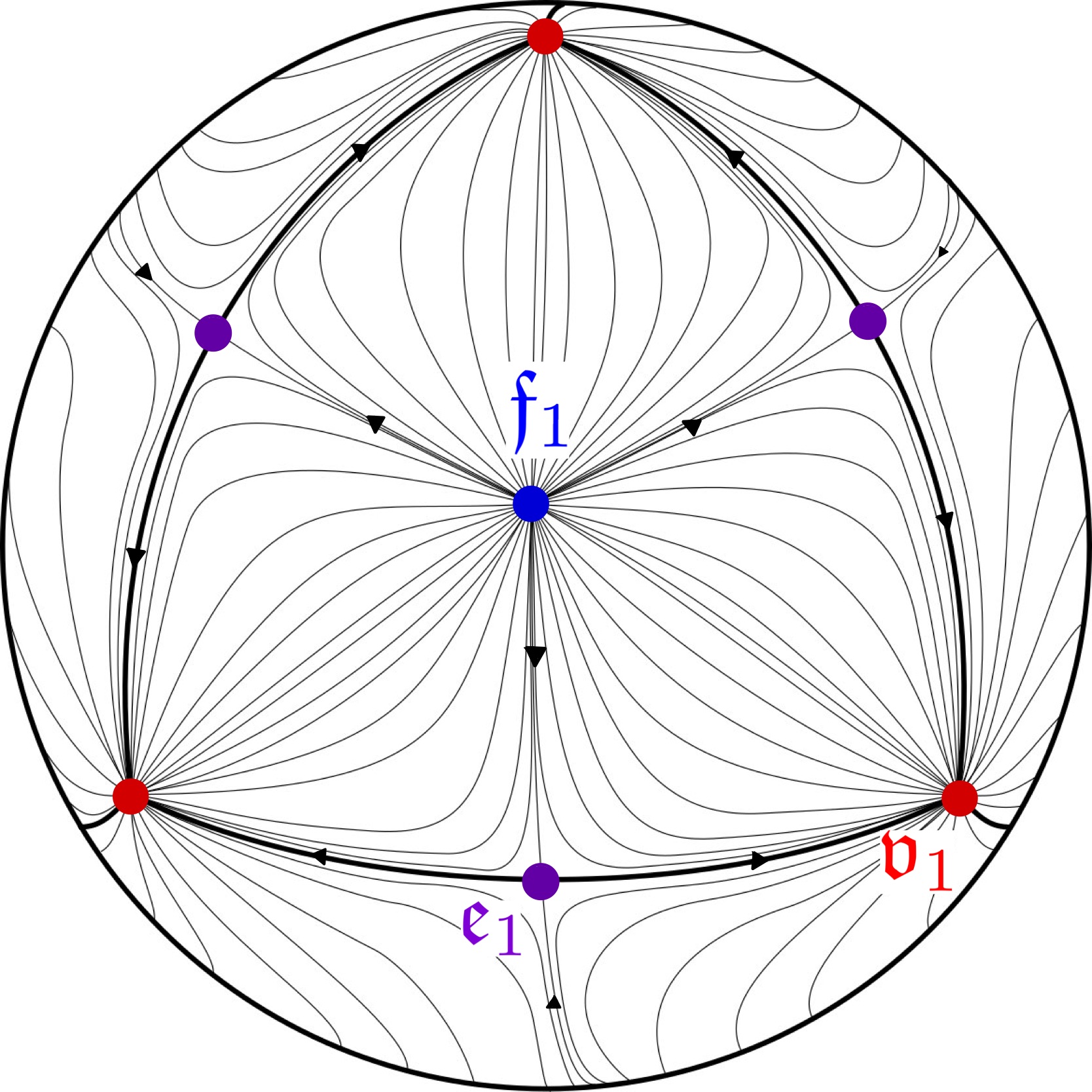}
        \caption{Tetrahedral fixed points}
        \label{figure: tetraheadral repeat}
    \end{minipage}%
    \begin{minipage}{0.45\textwidth}
        \centering
        \includegraphics[width=0.8\textwidth]{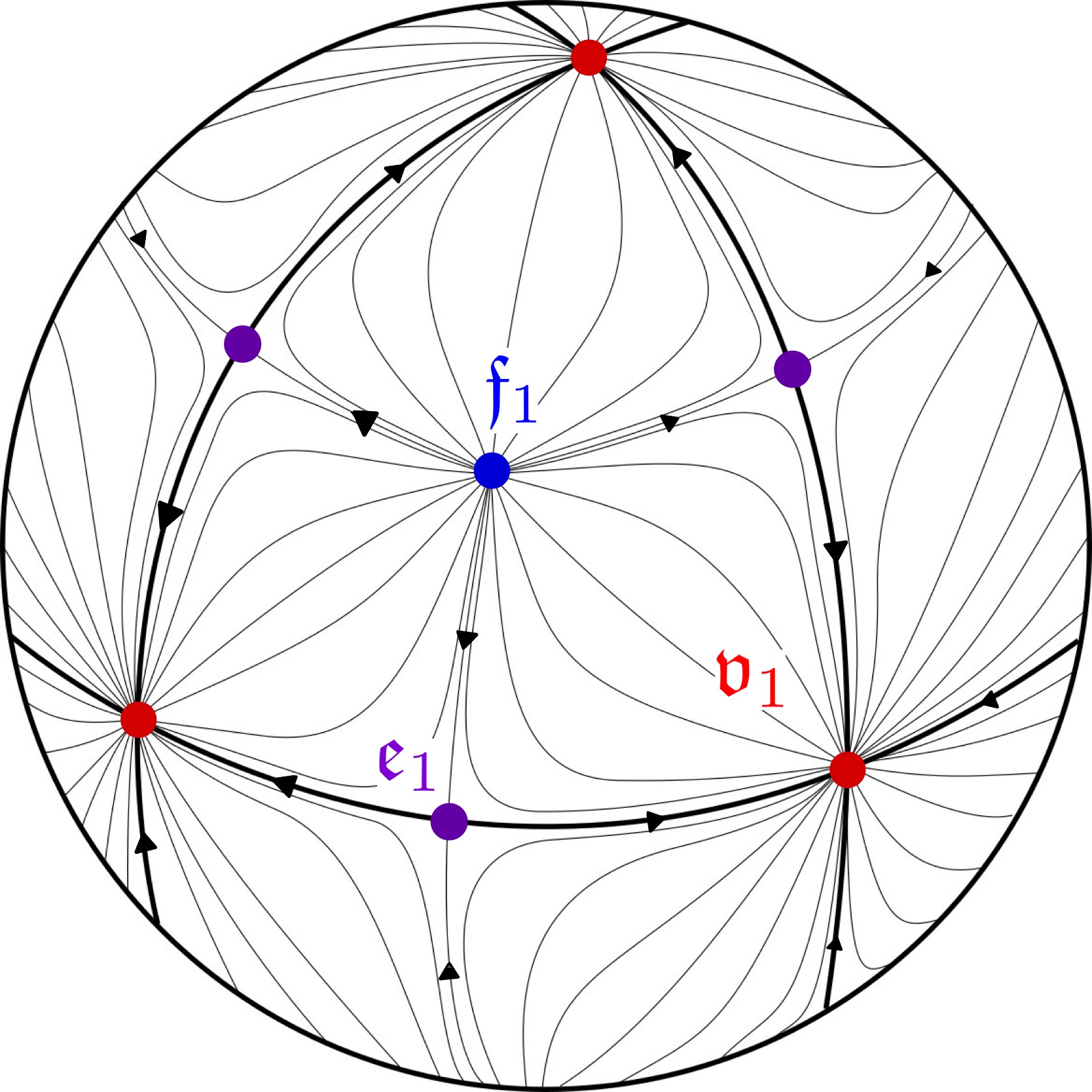}
        \caption{Octahedral fixed points}
        \label{figure: octahedral repeat}
    \end{minipage}
    \begin{minipage}{.45\textwidth}
        \centering
        \includegraphics[width=0.8\textwidth]{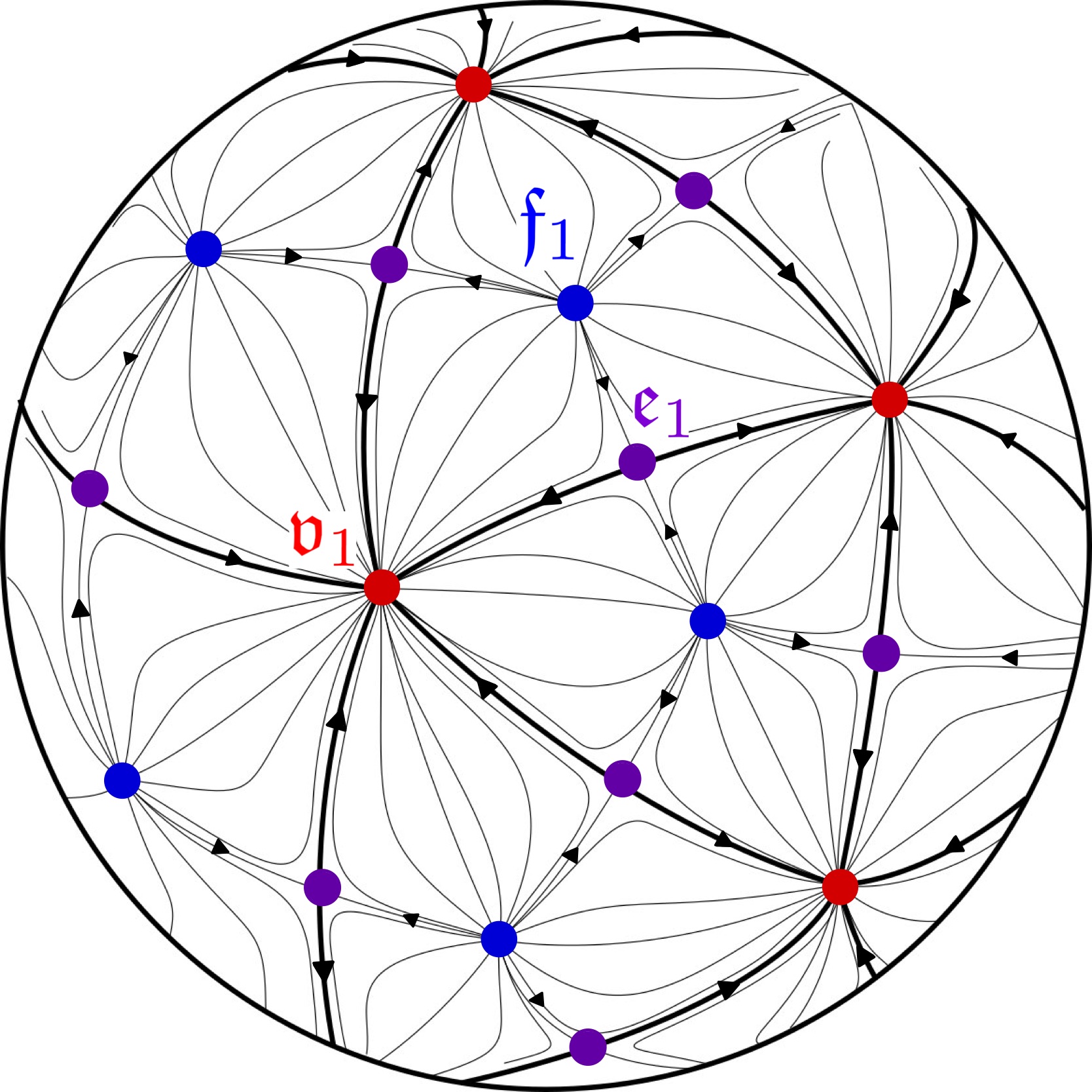}
        \caption{Icosahedral fixed points}
        \label{figure: icosahedral repeat}
    \end{minipage}%
\end{figure}

\textbf{Vertex type fixed points}: The set $\{\mathfrak{v}_1, \mathfrak{v}_2, \dots, \mathfrak{v}_\mathscr{V}\}\subset \text{Fix}(\P)$ constitutes a single $\P$-orbit, where each $\mathfrak{v}_i$ is an inscribed vertex of the polyhedron in $S^2$. Let $\mathbf{I}_{\mathscr{V}}\in\N$ denote $|\P|/\mathscr{V}$, so that the  isotropy subgroup associated to any of the $\mathfrak{v}_i$ is cyclic of order $\mathbf{I}_{\mathscr{V}}$. The quantity $\mathbf{I}_{\mathscr{V}}$ is 3, 4, or 5, for $\P=\T$, $\Oc$, or $\I$, respectively (i.e., the number of edges meeting at any vertex). Let $\mathfrak{v}\in S^2/\P$ denote the image of any of the $\mathfrak{v}_i$ under the orbifold covering map $\pi_{\P}:S^2\to S^2/\P$.

\vspace{.25cm}

\textbf{Edge type fixed points}: The set $\{\mathfrak{e}_1, \mathfrak{e}_2, \dots, \mathfrak{e}_\mathscr{E}\}\subset \text{Fix}(\P)$  constitutes a single $\P$-orbit, where each $\mathfrak{e}_i$ is the image of a midpoint of one of the edges of the polyhedron under the radial projection $\R^3\setminus\{0\}\to S^2$. Let $\mathbf{I}_{\mathscr{E}}\in\N$ denote $|\P|/\mathscr{E}$, so that the isotropy subgroup associated to any of the $\mathfrak{e}_i$ is cyclic of order $\mathbf{I}_{\mathscr{E}}$. One can see that $\mathbf{I}_{\mathscr{E}}=2$, for any choice of $\P$. Let $\mathfrak{e}\in S^2/\P$ denote the image of any of the $\mathfrak{e}_i$ under the orbifold covering map $\pi_{\P}:S^2\to S^2/\P$.

\vspace{.25cm}

\textbf{Face type fixed points}: The set $\{\mathfrak{f}_1, \mathfrak{f}_2, \dots, \mathfrak{f}_{\mathscr{F}}\}\subset \text{Fix}(\P)$ constitutes a single $\P$-orbit, where each $\mathfrak{f}_i$ is the image of a barycenter of one of the faces of the polyhedron under the radial projection $\R^3\setminus\{0\}\to S^2$. Let $\mathbf{I}_{\mathscr{F}}$ denote $|\P|/\mathscr{F}$, so that the isotropy subgroup associated to any of the $\mathfrak{f}_i$ is cyclic of order $\mathbf{I}_{\mathscr{F}}$. One can see that $\mathbf{I}_{\mathscr{F}}=3$, for any choice of $\P$. Let $\mathfrak{f}\in S^2/\P$ denote the image of any of the $\mathfrak{f}_i$ under the orbifold covering map $\pi_{\P}:S^2\to S^2/\P$.

\begin{table}[h!]
\centering
 \begin{tabular}{|| c | c | c | c | c | c | c | c | c ||} 
 \hline
 Group & Group order & $\mathscr{V}$ & $\mathscr{E}$ & $\mathscr{F}$ & $\mathbf{I}_{\mathscr{V}}$ & $\mathbf{I}_{\mathscr{E}}$ & $\mathbf{I}_{\mathscr{F}}$  & $|\text{Conj}(\P^*)|$\\ [0.5ex] 
 \hline\hline
 $\T$ & 12 & 4 & 6 & 4  & 3 & 2 & 3 & 7\\ 
 \hline
 $\Oc$ & 24 & 6 & 12 & 8 & 4 & 2 & 3 & 8\\
 \hline
 $\I$ & 60 & 12 & 30 & 20 & 5 & 2 & 3 & 9\\
 \hline
\end{tabular}
 \caption{Polyhedral quantities. Note $|\text{Cong}(\P^*)|=\mathbf{I}_{\mathscr{V}}+\mathbf{I}_{\mathscr{E}}+\mathbf{I}_{\mathscr{F}}-1.$}
 \label{table:polyhedral xyz}
\end{table}

\begin{remark}
(Dependence on choice of $\P^*$) The fixed point set of $\P$ is determined by the initial selection of $\P^*\subset\SU(2)$. More precisely, if $\P_1^*$ and $\P_2^*$ are conjugate binary polyhedral subgroups of $\SU(2)$, then $A^{-1}\P_2^*A=\P_1^*$  for some $A\in\SU(2)$. In this case, the rigid motion of $\R^3$ given by $P(A)\in\SO(3)$ takes the fixed point set of $\P_1$ to that of $\P_2$.
\end{remark}

There exists a $\P$-invariant, Morse-Smale function $f$ on $(S^2,\omega_{\FS}(\cdot,j \cdot))$, with $\text{Crit}(f)=\text{Fix}(\P)$, which descends to an orbifold Morse function $f_{\P}:S^2/\P\to\R$, constructed in Section \ref{subsection: constructing morse functions}. Furthermore, there are stereographic coordinates at
\begin{enumerate}
    \itemsep-.35em
    \item the points $\mathfrak{v}_i$ in which $f$ takes the form $(x^2+y^2)/2-1$ near $(0,0)$,
    \item the points $\mathfrak{e}_i$ in which $f$ takes the form $(x^2-y^2)/2$ near $(0,0)$,
    \item the points $\mathfrak{f}_i$ in which $f$ takes the form $1-(x^2+y^2)/2$ near $(0,0)$.
\end{enumerate}
The orbifold surface $S^2/\P$ is homeomorphic to $S^2$ and has three orbifold points (explained in Section \ref{subsection: visualizing holomorphic cylinders: an example}, see Figure \ref{figure: fundamental domain}). Lemma \ref{lemma: CZpolyhedral} identifies the Reeb orbits of $\lambda_{\P^*, \varepsilon}=(1+\varepsilon \fp^*f_{\P})\lambda_{\P^*}$ that appear in the filtered chain complex and computes their Conley-Zehnder indices. Let $m\in \N$ denote the integer $m:=\mathbf{I}_{\mathscr{V}}+\mathbf{I}_{\mathscr{E}}+\mathbf{I}_{\mathscr{F}}-1$ (equivalently, $m=|\text{Conj}(\P^*)|$, see Table \ref{table:polyhedral xyz}).

\begin{lemma} \label{lemma: CZpolyhedral}
Fix $N\in\N$. Then there exists an $\varepsilon_N>0$ such that, for all $\varepsilon\in(0,\varepsilon_N]$, every $\gamma\in\mathcal{P}^{L_N}(\lambda_{\P^*,\varepsilon})$ is nondegenerate and projects to an orbifold critical point of $f_{\P}$ under $\fp$, where $L_N:=2\pi N-\pi/10$. If $c_i$ denotes the number of $\gamma\in \mathcal{P}^{L_N}(\lambda_{\P^*,\varepsilon})$ with $|\gamma|=i$, then

\begin{enumerate}
    \itemsep-.35em
    \item $c_i=0$ if $i<0$ or $i> 4N-2$,
    \item $c_i=m-1$ for $i=0$ and $i=4N-2$ with all contributions by good Reeb orbits,
    \item $c_i=m$ for even $i$, $1<i<4N-1$ with all contributions by good Reeb orbits,
     \item $c_i=1$ for odd $i$, $0<i<4N-2$ and this contribution is by a bad Reeb orbit.
\end{enumerate}
\end{lemma}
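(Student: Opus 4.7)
The plan is to mimic the structure of the dihedral argument in Lemma \ref{lemma: CZdihedral} essentially verbatim, just with the three orbit families indexed by the three $\P$-orbits of fixed points in $S^2$. First I would apply Lemma \ref{lemma: ActionThresholdLink} to the threshold $L_N=2\pi N-\pi/10$ to extract $\varepsilon_N>0$ so that, for $\varepsilon\in(0,\varepsilon_N]$, every $\gamma\in\mathcal{P}^{L_N}(\lambda_{\P^*,\varepsilon})$ is nondegenerate and projects under $\fp$ to an orbifold critical point of $f_\P$. Since $\mathrm{Crit}(f_\P)=\{\mathfrak{v},\mathfrak{e},\mathfrak{f}\}$, each such $\gamma$ is an iterate of exactly one of three embedded Reeb orbits $e_\mathfrak{v}$, $h$, $e_\mathfrak{f}$ sitting over these three orbifold points.

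Next I would pin down actions and linearized flows orbit by orbit, using Lemma \ref{lemma: covering multiplicity} (which applies because $|\P^*|$ is even, giving covering multiplicity $2|H_p|$ for each embedded orbit) together with Proposition \ref{proposition: flow} and Remark \ref{remark: same local models}. At a vertex critical point $\mathfrak{v}_j$ the local model is $H(f,\psi)=\mathrm{Id}$ with $f(\mathfrak{v}_j)=-1$, so the $2\mathbf{I}_\mathscr{V}$-fold cover $e_\mathfrak{v}^{2\mathbf{I}_\mathscr{V}}$ lifts to an embedded orbit of action $2\pi(1-\varepsilon)$ in $S^3$; hence $\mathcal{A}(e_\mathfrak{v}^k)=k\pi(1-\varepsilon)/\mathbf{I}_\mathscr{V}$ and $e_\mathfrak{v}^k$ is elliptic with rotation number $k/\mathbf{I}_\mathscr{V}-k\varepsilon/(2\mathbf{I}_\mathscr{V}(1-\varepsilon))$, so for $\varepsilon_N$ small, $\mu_{\CZ}(e_\mathfrak{v}^k)=2\lceil k/\mathbf{I}_\mathscr{V}\rceil-1$. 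At a face point $\mathfrak{f}_j$ the local model is $H(f,\psi)=-\mathrm{Id}$ with $f(\mathfrak{f}_j)=1$, giving $\mathcal{A}(e_\mathfrak{f}^k)=k\pi(1+\varepsilon)/\mathbf{I}_\mathscr{F}=k\pi(1+\varepsilon)/3$ and $\mu_{\CZ}(e_\mathfrak{f}^k)=2\lfloor k/3\rfloor+1$. At an edge point the local model is $H(f,\psi)=\mathrm{Diag}(1,-1)$ with $f(\mathfrak{e}_j)=0$, so the same computation as in the dihedral case shows $\mathcal{A}(h^k)=k\pi/2$, that $h$ is negative hyperbolic, and that $\mu_{\CZ}(h)=1$; Remark \ref{remark: CZ indices of iterates} then gives $\mu_{\CZ}(h^k)=k$, with $h^{2j}$ bad and $h^{2j+1}$ good.

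With these formulas the enumeration is bookkeeping. The filtration $L_N=2\pi N-\pi/10$ is chosen so that for small $\varepsilon_N$, the included iterates are exactly $e_\mathfrak{v}^k$ for $1\le k\le 2\mathbf{I}_\mathscr{V} N-1$, $h^k$ for $1\le k\le 4N-1$, and $e_\mathfrak{f}^k$ for $1\le k\le 6N-1$, since $1/10<1/\mathbf{I}_\mathscr{V}$ for $\mathbf{I}_\mathscr{V}\le 5$. For grading $2i$ with $0<i<2N-1$, each family contributes $\mathbf{I}_\mathscr{V}$, $0$, and $\mathbf{I}_\mathscr{F}=3$ good orbits respectively, plus one good $h^{2i+1}$, totaling $\mathbf{I}_\mathscr{V}+\mathbf{I}_\mathscr{E}+\mathbf{I}_\mathscr{F}-1=m$. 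At gradings $0$ and $4N-2$ one vertex-family generator and one face-family generator are missing (the iterate $k=0$ is absent at the bottom, while at the top $k=2\mathbf{I}_\mathscr{V} N$ and $k=6N$ fall outside $L_N$), yielding $m-1$. At odd grading $2i-1$, $0<2i-1<4N-2$, only $h^{2i}$ has the correct index, contributing one bad Reeb orbit as claimed.

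The only even mildly subtle point is verifying that the threshold $\pi/10$ really does cut off the three families exactly at $2\mathbf{I}_\mathscr{V} N-1$, $4N-1$, $6N-1$ after shrinking $\varepsilon_N$, and that rotation numbers stay in the predicted integer intervals under the same shrinking. Both are elementary consequences of the bounds $\mathbf{I}_\mathscr{V}\in\{3,4,5\}$, $\mathbf{I}_\mathscr{F}=3$, and the gap $\pi/10$; I would take $\varepsilon_N$ small enough to simultaneously arrange all of these ceiling/floor reductions and the strict action comparisons, exactly as was done in the dihedral case.
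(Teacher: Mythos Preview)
Your proposal is correct and follows essentially the same route as the paper's proof: apply Lemma \ref{lemma: ActionThresholdLink}, compute actions and Conley--Zehnder indices for the three orbit families using Lemmas \ref{lemma: covering multiplicity}, \ref{lemma: orbits}, Proposition \ref{proposition: flow} and Remark \ref{remark: same local models}, identify $h$ (the paper's $\mathcal{E}$) as negative hyperbolic exactly as in the dihedral case, and then tabulate. The only imprecision is in your endpoint bookkeeping---at grading $0$ it is a single \emph{face}-family generator that is absent, while at grading $4N-2$ it is a single \emph{vertex}-family generator (the iterate $k=6N$ of the face orbit already has index $4N+1$, so its exclusion is irrelevant there)---but this does not affect the count $c_0=c_{4N-2}=m-1$.
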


\begin{proof}
Apply Lemma \ref{lemma: ActionThresholdLink} to $L_N=2\pi N-\frac{\pi}{10}$ to obtain $\varepsilon_N$. If $\varepsilon\in(0,\varepsilon_N]$, then every $\gamma\in\mathcal{P}^{L_N}(\lambda_{\P^*,\varepsilon})$ is nondegenerate and projects to an orbifold critical point of $f_{\P}$. We  investigate the actions and Conley-Zehnder indices of these three types of orbits. Our reasoning will largely follow that used in the proof of Lemma \ref{lemma: CZdihedral}, and so some details will be omitted.

\vspace{.25cm}

    \textbf{Orbits over $\mathfrak{v}$}: Let $\mathcal{V}$ denote the embedded Reeb orbit of $\lambda_{\P^*,\varepsilon}$ in $S^3/\P^*$ which projects to $\mathfrak{v}\in S^2/\P$. One computes that $\mathcal{A}(\mathcal{V}^k)=k\pi(1-\varepsilon)/\mathbf{I}_{\mathscr{V}}$, and so the iterates $\mathcal{V}^k$ are included for all $k<2N\mathbf{I}_{\mathscr{V}}$. The orbit $\mathcal{V}^k$ is elliptic with rotation number $\theta_{\mathcal{V}}^k$ and Conley-Zehnder index
    \[\theta_{\mathcal{V}}^k=\frac{k}{\mathbf{I}_{\mathscr{V}}}-\frac{\varepsilon k}{2\mathbf{I}_{\mathscr{V}}(1-\varepsilon)}, \,\,\,\,\,\mu_{\CZ}(\mathcal{V}^k)=2\Bigl\lceil \frac{k}{\mathbf{I}_{\mathscr{V}}} \Bigr\rceil-1.\]
    
    \vspace{.25cm}

    \textbf{Orbits over $\mathfrak{e}$}: Let $\mathcal{E}$ denote the embedded Reeb orbit of $\lambda_{\P^*,\varepsilon}$ in $S^3/\P^*$ which projects to $\mathfrak{e}\in S^2/\P$. By a similar study of the orbit $h$ of Lemma \ref{lemma: CZdihedral}, one sees that $\mathcal{A}(\mathcal{E}^k)=k\pi/2$, so the iterates $\mathcal{E}^k$ are included for all $k<4N$. Like the dihedral Reeb orbit $h$, $\mathcal{E}$ is negative hyperbolic with $\mu_{\CZ}(\mathcal{E})=1$, thus $\mu_{\CZ}(\mathcal{E}^k)=k$. The even iterates of $\mathcal{E}$ are bad Reeb orbits.
    
    \vspace{.25cm}
    
    \textbf{Orbits over $\mathfrak{f}$}: Let $\mathcal{F}$ denote the embedded Reeb orbit of $\lambda_{\P^*,\varepsilon}$ in $S^3/\P^*$ which projects to $\mathfrak{f}\in S^2/\P$. One computes that $\mathcal{A}(\mathcal{F}^k)=k\pi(1+\varepsilon)/3$, and so the iterates $\mathcal{F}^k$ are included for all $k<6N$. The orbit $\mathcal{F}^k$ is elliptic with rotation number $\theta_{\mathcal{F}}^k$ and Conley-Zehnder index
    \[\theta_{\mathcal{F}}^k=\frac{k}{3}+\frac{\varepsilon k}{6(1+\varepsilon)}, \,\,\,\,\,\mu_{\CZ}(\mathcal{F}^k)=2\Bigl\lfloor \frac{k}{3} \Bigr\rfloor+1=2\Bigl\lfloor \frac{k}{3} \Bigr\rfloor+1.\]
\end{proof}

Lemma \ref{lemma: CZpolyhedral} produces the (monotonically decreasing) sequence $(\varepsilon_N)_{N=1}^{\infty}$. Define the sequence of 1-forms $(\lambda_N)_{N=1}^{\infty}$ on $S^3/\P^*$  by $\lambda_N:=\lambda_{\P^*,\varepsilon_N}$.

\begin{summary*}
(Polyhedral data) We have
\begin{equation} \label{equation: CZ indices polyhedral}
    \mu_{\CZ}(\mathcal{V}^k)=2\Bigl\lceil \frac{k}{\mathbf{I}_{\mathscr{V}}} \Bigr\rceil-1,\,\,\,\mu_{\CZ}(\mathcal{E}^k)=k,\,\,\,\mu_{\CZ}(\mathcal{F}^k)=2\Bigl\lfloor \frac{k}{3} \Bigr\rfloor+1,
\end{equation}

\begin{equation}\label{equation: action polyhedral}
        \mathcal{A}(\mathcal{V}^k)=\frac{k\pi(1-\varepsilon)}{\mathbf{I}_{\mathscr{V}}},\,\,\,\mathcal{A}(\mathcal{E}^k)=\frac{k\pi}{2},\,\,\,\mathcal{A}(\mathcal{F}^k)=\frac{k\pi(1+\varepsilon)}{3}.
\end{equation}
\end{summary*}

\begin{table}[h!]
\centering
 \begin{tabular}{||c | c | c | c ||} 
 \hline
Grading & Index & Orbits & $c_i$ \\ [0.5ex]
 \hline\hline
 0 & 1 & $\mathcal{V}, \dots, \mathcal{V}^{\mathbf{I}_{\mathscr{V}}}, \mathcal{E}, \mathcal{F}, \mathcal{F}^2$ & $m-1$\\ 
 \hline
 1 & 2 & $\mathcal{E}^2$ & 1\\
 \hline
 2 & 3 & $\mathcal{V}^{\mathbf{I}_{\mathscr{V}}+1}, \dots, \mathcal{V}^{2\mathbf{I}_{\mathscr{V}}}, \mathcal{E}^3, \mathcal{F}^3, \mathcal{F}^4, \mathcal{F}^5$ & $m$\\
  \hline
 \vdots & \vdots & \vdots & \vdots \\
  \hline
 $4N-4$ & $4N-3$ & $\mathcal{V}^{(2N-2)\mathbf{I}_{\mathscr{V}}+1}, \dots, \mathcal{V}^{(2N-1)\mathbf{I}_{\mathscr{V}}}, \mathcal{E}^{4N-3}, \mathcal{F}^{6N-6}, \mathcal{F}^{6N-5}, \mathcal{F}^{6N-4}$ & $m$\\
  \hline
 $4N-3$ & $4N-2$ & $\mathcal{E}^{4N-2}$ & 1\\
    \hline
 $4N-2$ & $4N-1$ & $\mathcal{V}^{(2N-1)\mathbf{I}_{\mathscr{V}}+1}, \dots, \mathcal{V}^{2N\mathbf{I}_{\mathscr{V}}-1}, \mathcal{E}^{4N-1}, \mathcal{F}^{6N-3}, \mathcal{F}^{6N-2}, \mathcal{F}^{6N-1}$ & $m-1$\\ 
 \hline
\end{tabular}
\caption{Reeb orbits of $\mathcal{P}^{L_N}(\lambda_{\P^*,\varepsilon_N})$ and their Conley-Zehnder indices}
\label{table: polyhedral CZ indices}
\end{table}

None of the orbits in the first two rows of Table \ref{table: polyhedral CZ indices} are contractible, so $\lambda_N=\lambda_{\P^*,\varepsilon_N}$ is $L_N$-dynamically convex and so by \cite[Theorem 1.3]{HN},\footnote{One hypothesis of \cite[Th. 1.3]{HN} requires that all contractible Reeb orbits $\gamma$ satisfying $\mu_{\CZ}(\gamma)=3$ must be embedded. This fails in our case by considering the contractible $\mathcal{V}^{2\mathbf{I}_{\mathscr{V}}}$, which is not embedded yet satisfies $\mu_{\CZ}(\mathcal{V}^{2\mathbf{I}_{\mathscr{V}}})=3$. See Lemma \ref{lemma: generic j} for an explanation of why we do not need this additional hypothesis.} a generic choice $J_N\in\mathcal{J}(\lambda_N)$ provides a well defined filtered chain complex, yielding the isomorphism of $\Z$-graded vector spaces

\begin{align*}
    CH_*^{L_N}(S^3/\P^*,\lambda_N, J_N)&\cong\bigoplus_{i=0}^{2N-1}\Q^{m-2}[2i]\oplus\bigoplus_{i=0}^{2N-2} H_*(S^2;\Q)[2i]\\
    &=\begin{cases} \Q^{m-1} & *=0, \, 4N-2 \\
\Q^{m} & *=2i, 0<i<2N-1 \\ 0 &\mbox{else.}\end{cases}
\end{align*}
 This follows from investigating the good contributions to  $c_i$, which is 0 for odd $i$, implying $\partial^{L_N}=0$. Note that $m=|\text{Conj}(\P^*)|$, and so this supports Theorem \ref{theorem: main} (see Table \ref{table:polyhedral xyz}). Theorem \ref{theorem: cobordisms induce inclusions} of Section \ref{section: direct limits of filtered homology} will permit taking a direct limit over inclusions.

\subsection{Construction of $H$-invariant Morse-Smale functions}\label{subsection: constructing morse functions}

We  now produce the $H$-invariant, Morse-Smale functions on $S^2$ used in the above Sections \ref{subsection: dihedral} and \ref{subsection: polyhedral}, for $H=\D_{2n}$ and $H=\P$ respectively. Table \ref{table: morse points} describes three finite subsets, $X_0$, $X_1$, and $X_2$, of $S^2$ which depend on $H\subset\SO(3)$. We construct an $H$-invariant, Morse-Smale function $f$ on $(S^2, \omega_{\FS}(\cdot, j\cdot))$, whose set of critical points of index $i$ is $X_i$, so that $\text{Crit}(f)=X:=X_0\cup X_1\cup X_2$. Additionally, $X=\text{Fix}(H)$, the fixed point set of the $H$-action on $S^2$. This constructed $f$ is \emph{perfect} in the sense that it features the minimal number of required critical points, because $\text{Fix}(H)\subset\text{Crit}(f)$ must always hold. In the case that $H$ is a polyhedral group, $X_0$ is the set of vertex points, $X_1$ is the set of edge midpoints, and $X_2$ is the set of face barycenters.
\begin{table}[h!]
\centering
 \begin{tabular}{|| c | c | c | c ||} 
 \hline
 $H$ & $X_0$ & $X_1$ & $X_2$\\ [0.5ex] 
 \hline\hline
 $\D_{2n}$ & $\{p_{-,k}\,|\,1\leq k\leq n\}$ & $\{p_{h,k}\,|\,1\leq k\leq n\}$ & $\{p_{+,k}\,|\, 1\leq k\leq 2\}$\\
 \hline
 $\P$ & $\{\mathfrak{v}_k\,|\,1\leq k\leq \mathscr{V}\}$ & $\{\mathfrak{e}_k\,|\,1\leq k\leq \mathscr{E}\}$   & $\{\mathfrak{f}_k\,|\,1\leq k\leq \mathscr{F}\}$ \\
 \hline
\end{tabular}
 \caption{Fixed points of $H$, sorted by Morse index}
 \label{table: morse points}
\end{table}
\begin{lemma}\label{lemma: morse}
Let $H\subset\mbox{\em SO}(3)$ be either $\D_{2n}$ or $\P$. Then there exists an $H$-invariant, Morse function $f$ on $S^2$, with $\mbox{\em Crit}(f)=X$, such that $\mbox{\em ind}_f(p)=i$ if $p\in X_i$. Furthermore, there are stereographic coordinates defined in a small neighborhood of $p\in X$ in which $f$ takes the form  
\begin{enumerate}[(i)]
 \itemsep-.35em
    \item $q_0(x,y):=(x^2+y^2)/2-1$,\,\,\, if $p\in X_0$
    \item $q_1(x,y):=(y^2-x^2)/2$,\,\,\,\,\,\,\,\,\,\,\,\,\, if $p\in X_1$
    \item $q_2(x,y):=1-(x^2+y^2)/2$,\,\,\,\,\,if $p\in X_2$
\end{enumerate}
\end{lemma}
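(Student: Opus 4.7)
The plan is to construct $f$ in two stages: first produce an explicit $H$-invariant Morse function $f_1$ on $S^2$ with critical set exactly $X$ and the prescribed Morse indices, then modify $f_1$ in an $H$-invariant way on a small neighborhood of each orbit so that it agrees with $q_i \circ \psi_p^{-1}$ there, in suitably chosen stereographic coordinates $\psi_p$. I read ``$f$ takes the form $q_i$ near $(0, 0)$'' as matching value and Hessian at the origin in the chart, as is apparent from how Lemma \ref{lemma: morse} is invoked in Lemmas \ref{lemma: CZdihedral} and \ref{lemma: CZpolyhedral}.

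For Stage 1 in the dihedral case $H = \D_{2n}$, my candidate is $f_1(x, y, z) := x^2 + \beta\,\Re((y+iz)^n)|_{S^2}$, for small $\beta \in (0, 2/n)$. Both summands are $\D_{2n}$-invariant: the generating rotation by $2\pi/n$ about the $x$-axis multiplies $y+iz$ by $e^{2\pi i/n}$ (so $(y+iz)^n$ is fixed), while the $\pi$-rotation about the $y$-axis conjugates $y+iz$ and negates $x$. Imposing $\nabla f_1 \parallel (x, y, z)$ on $S^2$ reduces, on the equator $x = 0$, to $\sin(n\theta) = 0$, and off the equator (for $\beta < 2/n$) to the two $x$-poles, giving exactly $X$; a quadratic expansion at each type of point confirms the Morse indices $0, 1, 2$ on $X_0, X_1, X_2$. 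For the polyhedral cases $H = \P \in \{\T, \Oc, \I\}$, I would use a lowest nontrivial degree $\P$-invariant polynomial on $\R^3$ restricted to $S^2$: up to sign and shift by a multiple of $x^2+y^2+z^2$, one may take $-xyz$ for the standard tetrahedron, $-(x^4+y^4+z^4)$ for the standard octahedron, and the classical degree-$6$ icosahedral form for $\I$. Direct computation of the critical set on $S^2$ in each case yields $\text{Fix}(\P) = X$ with the correct Morse indices.

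For Stage 2, note that $H$-equivariance already rigidly constrains the Hessian: at $p \in X_0 \cup X_2$ the cyclic stabilizer $H_p$ has order $\geq 3$, and the only symmetric bilinear forms on $\R^2$ invariant under such a rotation are multiples of the inner product, forcing $H_p(f_1) = c(p)\, g_p$; at $p \in X_1$, $H_p = \Z_2$ acts as $-\text{Id}$, placing no constraint beyond symmetry. Pick $H$-invariant nested neighborhoods $V \subset U$ of $X$ (disjoint disks about each orbit) and an $H$-invariant cutoff $\chi$ with $\chi|_V = 1$ and $\text{supp}(\chi) \subset U$. Equivariantly choose stereographic coordinates $\psi_p$ at each $p \in X$, and at $p \in X_1$ use the $\SO(2)$-ambiguity in $\psi_p$ to align the principal axes of $H_p(f_1)$ with the coordinate axes. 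Define
\[
    f(q) := (1 - \chi(q))\, f_1(q) + \chi(q)\, g(q),
\]
where $g(q) := q_{i(p)}(\psi_p^{-1}(q))$ on the component of $U$ around each $p \in X$, with $i(p) = \text{ind}_{f_1}(p)$. On $V$, $f = g$ exhibits the desired local form, so the value and Hessian at each $p$ match those of $q_{i(p)}$. $H$-invariance follows from equivariance of $\psi_p$ and $\chi$ together with the observation that $q_0, q_2$ are $\SO(2)$-invariant and $q_1$ is $(-\text{Id})$-invariant, matching the $H_p$-actions. For $U$ shrunk sufficiently, no critical points appear in $U \setminus V$ (both $f_1$ and $g$ are nondegenerate at the same critical points, so a convex interpolation is transverse to the zero section of $T^*S^2$ over the interpolation annulus), and so $\text{Crit}(f) = X$.

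To conclude, a short computation using $\omega_\FS = (1+x^2+y^2)^{-2}\, dx \wedge dy$ and $j\partial_x = \partial_y$ (Notation \ref{notation: stereographic}) shows that $(\partial_x, \partial_y)$ is $g_\FS$-orthonormal at the origin of the stereographic chart; hence the matrix $H(f, \psi_p)$ equals that of $H_p(f)$ in an orthonormal basis, which by Stage 2 equals the Hessian of $q_{i(p)}$, namely $\text{Id}, \text{Diag}(-1, 1),$ and $-\text{Id}$ on $X_0, X_1, X_2$ respectively. The principal obstacle is Stage 1 in the polyhedral cases: verifying that a specific $\P$-invariant polynomial has critical set on $S^2$ exactly $\text{Fix}(\P)$ is a group-by-group calculation using classical invariant theory. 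Once Stage 1 is established, Stage 2 is a routine cutoff construction.
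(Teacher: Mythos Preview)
Your approach is genuinely different from the paper's. The paper works local-to-global: it defines $\widetilde{f}$ to be \emph{exactly} $q_i\circ\psi_p^{-1}$ on small discs $D_p$, extends $\widetilde{f}$ to all of $S^2$ as a (not necessarily $H$-invariant) Morse function with $\text{Crit}(\widetilde{f})=X$, and then \emph{averages} over $H$. Because each $q_i$ is already $H_p$-invariant, the average agrees with $\widetilde{f}$ on $D$, so the local normal form is preserved for free; the only thing to check is that averaging creates no new critical points, which follows from choosing the extension ``roughly $H$-invariant'' (gradients of the $H$-translates make pairwise angles $<\pi/2$). Your route instead obtains $H$-invariance for free via classical invariant polynomials and pushes the work into verifying $\text{Crit}(f_1)=X$ case-by-case and then cutting off. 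The paper's averaging trick is what lets it avoid the cutoff entirely.

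There is a genuine gap in your Stage~2. The assertion that ``a convex interpolation is transverse to the zero section'' does not justify the absence of critical points on the annulus, and shrinking $U$ goes the wrong way. Writing $f=(1-\chi)f_1+\chi g$ one has
\[
df=(1-\chi)\,df_1+\chi\,dg+(g-f_1)\,d\chi,
\]
and on an annulus of width $\sim\epsilon$ the last term has size $\sim|g(p)-f_1(p)|/\epsilon$ while the first two have size $\sim\epsilon$. These terms can cancel: take $p$ a local minimum, $f_1=x^2+y^2$, $g=(x^2+y^2)/2+1$, and any radial cutoff $\chi=\eta(r/\epsilon)$. Then $df$ is radial and vanishes where $2(1-\chi/2)=-(1-r^2/2)\eta'/(r\epsilon)$; since the right side runs from $0$ (at $\eta'=0$) up to order $1/\epsilon^2$ and back to $0$, it crosses the left side (which lies in $[1,2]$) for small $\epsilon$, producing a whole critical circle. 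So the cutoff as written can create critical points, and ``for $U$ shrunk sufficiently'' does not rescue this.

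A smaller error: the claim that $|H_p|\geq 3$ for $p\in X_0\cup X_2$ is false in the dihedral case, where the isotropy at each $p_{-,k}\in X_0$ is $\Z_2$ (and at $p_{+,k}\in X_2$ it is $\Z_n$, which is $\Z_2$ when $n=2$). Consequently $H_p(f_1)$ need not be a multiple of the metric at $X_0$; e.g., for your $f_1=x^2+\beta\,\Re((y+iz)^n)$ the Hessian at $p_{-,k}$ is $\text{diag}(2+\beta n,\ \beta n^2)$ in the obvious frame. This undercuts the heuristic you use to motivate why the interpolation should behave well.
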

\begin{proof}
We first produce an auxiliary Morse function $\widetilde{f}$ which might not be $H$-invariant, then $f$ is taken to be the $H$-average of $\widetilde{f}$. Fix $\delta>0$. For $p\in X$, let $D_p\subset S^2$ be the open geodesic disc centered at $p$ with radius $\delta$ with respect to the metric $\omega_{\FS}(\cdot, j\cdot)$. Define $\widetilde{f}$ on $D_p$ to be the pullback of $q_0$, $q_1$, or $q_2$, for $p\in X_0$, $X_1$, or $X_2$ respectively, by stereographic coordinates at $p$. Set $D:=\cup_{p\in X}D_p$. For $\delta$ small, $D$ is a \emph{disjoint} union and $\widetilde{f}:D\to\R$ is Morse.  Note that 
\begin{enumerate}[(a)]
    \itemsep-.35em
    \item $D$, the $\delta$-neighborhood of $X$ in $S^2$, is an $H$-invariant set, and for all $p\in X$, the $H_p$-action restricts to an action on $D_p$, where $H_p\subset H$ denotes the stabilizer subgroup,
        \item For all $p\in X$, $\widetilde{f}|_{D_p}:D_p\to\R$ is $H_p$-invariant, and
    \item $\widetilde{f}:D\to\R$ is an $H$-invariant Morse function, with $\text{Crit}(\widetilde{f})=X$.
\end{enumerate}
 The $H$-invariance and $H_p$-invariance in (a) hold,  because $H\subset\SO(3)$ acts on $S^2$ by $\omega_{\FS}(\cdot,j\cdot)$-\emph{isometries} (rotations about axes through $p\in X$), and because $X$ is an $H$-invariant set. The $H_p$-invariance of (b) holds because, in stereographic coordinates, the $H_p$-action pulled back to $\R^2$ is always generated by some linear rotation about the origin, $\mathcal{R}(\theta)$. Both $q_0$ and $q_2$ are invariant with respect to any $\mathcal{R}(\theta)$, whereas $q_1$ is invariant with respect to the action generated by $\mathcal{R}(\pi)$, which is precisely the action by $H_p$ when $p\in X_1$, so (b) holds. Finally, the $H$-invariance in (c) holds directly by the $H_p$ invariance from (b). Now, extend the domain of $\widetilde{f}$ from $D$ to all of $S^2$ so that $\widetilde{f}$ is smooth and Morse, with $\text{Crit}(\widetilde{f})=X$. Figures \ref{figure: dihedral}, \ref{figure: tetrahedral}, \ref{figure: octahedral}, and \ref{figure: icosahedral} depict possible extensions $\tilde{f}$ for each type of symmetry group $H$.

For $h\in H$, let $\phi_h:S^2\to S^2$ denote the group action, $p\mapsto h\cdot p$. Define \[f:=\frac{1}{|H|}\sum_{h\in H}\phi_h^*\widetilde{f},\] where $|H|\in\N$ is the group order of $H$. This $H$-invariant $f$ is smooth and agrees with $\widetilde{f}$ on $D$. If no critical points are created in the averaging process of $\widetilde{f}$, then we have that $\text{Crit}(f)=X$, implying that $f$ is Morse, and we are done.

We say that the extension $\widetilde{f}$ to $S^2$ from $D$ is \emph{roughly} $H$-invariant, if for any $p\in S^2\setminus X$ and $h\in H$, the angle between the nonzero gradient vectors \[\text{grad}(\widetilde{f})\,\,\, \text{and}\,\,\, \text{grad}(\phi_h^*\widetilde{f})\] in $T_pS^2$ is less than $\pi/2$. If $\widetilde{f}$ if roughly $H$-invariant, then  for $p\notin X$, $\text{grad}(f)(p)$ is an average of a collection of nonzero vectors in the same convex half space of $T_pS^2$ and must be nonzero, implying $p\notin\text{Crit}(f)$. That is, if $\widetilde{f}$ is roughly $H$-invariant, then $\text{Crit}(f)=X$, as desired. The extensions $\widetilde{f}$ in Figures \ref{figure: dihedral}, \ref{figure: tetrahedral}, \ref{figure: octahedral}, and \ref{figure: icosahedral} are all roughly $H$-invariant by inspection, and the proof is complete. 
\end{proof}

\begin{figure}[!htb]
    \centering
    \begin{minipage}{.5\textwidth}
        \centering
        \includegraphics[width=0.9\textwidth]{DMorse.jpg}
        \caption{A possible dihedral $\widetilde{f}$}
        \label{figure: dihedral}
    \end{minipage}%
    \begin{minipage}{0.5\textwidth}
        \centering
        \includegraphics[width=0.9\textwidth]{TMorse.jpg}
        \caption{A possible tetrahedral $\widetilde{f}$}
        \label{figure: tetrahedral}
    \end{minipage}
\end{figure}

\begin{figure}[!htb]
    \centering
    \begin{minipage}{.5\textwidth}
        \centering
        \includegraphics[width=0.9\textwidth]{OMorse.jpg}
        \caption{A possible octahedral $\widetilde{f}$}
        \label{figure: octahedral}
    \end{minipage}%
    \begin{minipage}{0.5\textwidth}
        \centering
        \includegraphics[width=0.9\textwidth]{IMorse.jpg}
        \caption{A possible icosahedral $\widetilde{f}$}
        \label{figure: icosahedral}
    \end{minipage}
\end{figure}
In each of Figures \ref{figure: dihedral}, \ref{figure: tetrahedral}, \ref{figure: octahedral}, and \ref{figure: icosahedral}, the blue, violet, and red critical points are of Morse index 2, 1, and 0 respectively.

\begin{lemma}\label{lemma: smale}
If $f$ is a Morse function on a 2-dimensional manifold $S$ such that $f(p_1)=f(p_2)$ for all $p_1, p_2\in\mbox{\em Crit}(f)$ with Morse index 1, then $f$ is Smale, given any metric on $S$.
\end{lemma}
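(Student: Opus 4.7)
My plan is to reduce the Smale condition $W^-(p)\pitchfork W^+(q)$ to a small number of cases indexed by the pair $(\mathrm{ind}_f(p),\mathrm{ind}_f(q))\in\{0,1,2\}^2$, and dispatch each case either trivially by dimension or using the hypothesis. Since $\dim S=2$, we have $\dim W^-(p)=\mathrm{ind}_f(p)$ and $\dim W^+(q)=2-\mathrm{ind}_f(q)$. I will handle the ``free'' cases first and isolate the one case that actually uses the equal-value hypothesis.

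First I would deal with the cases where one of the two submanifolds is open or a singleton. If $\mathrm{ind}_f(p)=2$, then $W^-(p)$ is open in $S$, so $T_xW^-(p)=T_xS$ at any intersection point and transversality is automatic; symmetrically for $\mathrm{ind}_f(q)=0$. If $\mathrm{ind}_f(p)=0$, then $W^-(p)=\{p\}$, and since $\phi^t(p)=p$ for all $t$, we have $p\in W^+(q)$ iff $p=q$; when $p=q$ transversality at $p$ holds by the Morse lemma ($T_pW^-(p)\oplus T_pW^+(p)=T_pS$), and otherwise the intersection is empty. An analogous argument covers $\mathrm{ind}_f(q)=2$. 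This leaves only the pair $\mathrm{ind}_f(p)=\mathrm{ind}_f(q)=1$, with the diagonal $p=q$ again handled by the Morse lemma.

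The crux is therefore the case of two \emph{distinct} index-$1$ critical points $p\ne q$, and this is exactly where the hypothesis enters. The plan is to argue by contradiction: suppose $x\in W^-(p)\cap W^+(q)$. Then $x\notin\{p,q\}$ (neither $p$ nor $q$ lies in the other's stable/unstable set by the fixed-point argument above), so the flow line $t\mapsto\phi^t(x)$ is non-constant and connects $p$ at $t=-\infty$ to $q$ at $t=+\infty$. Because $\phi^t$ is the flow of $-\mathrm{grad}(f)$, we have $\frac{d}{dt}f(\phi^t(x))=-|\mathrm{grad}(f)(\phi^t(x))|^2<0$ wherever the flow is not critical, forcing $f(p)>f(q)$, which contradicts $f(p)=f(q)$. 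Hence $W^-(p)\cap W^+(q)=\emptyset$ and transversality holds vacuously. I don't foresee any genuine obstacle here; the entire argument is elementary once the case analysis is set up, and it transparently uses no feature of the metric, confirming the ``given any metric on $S$'' clause.
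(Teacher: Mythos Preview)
Your proof is correct and takes essentially the same approach as the paper: both reduce to showing no gradient flow line connects two distinct index-$1$ critical points, then use strict monotonicity of $f$ along nontrivial flow lines to contradict the equal-value hypothesis. The paper simply asserts in one sentence that failure of the Smale condition in dimension $2$ is equivalent to the existence of such a flow line, whereas you spell out the case analysis justifying this reduction.
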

\begin{proof}
Given metric $g$ on $S$, $f$ fails to be Smale with respect to $g$ if and only if there are two distinct critical points of $f$ of Morse index 1 that are connected by a gradient flow line of $f$. Because all such critical points have the same $f$ value, no such flow line exists.
\end{proof}
By Lemma \ref{lemma: smale}, the Morse function $f$ provided in Lemma \ref{lemma: morse} is Smale for $\omega_{\FS}(\cdot, j\cdot)$.

\subsection{Orbifold and contact interplays: an example}\label{subsection: visualizing holomorphic cylinders: an example}
Before we compute direct limits of filtered homology groups in Section \ref{section: direct limits of filtered homology}, we  demonstrate the analogies between the contact data of $S^3/G$ and the orbifold Morse data of $S^2/H$, described in Section \ref{subsection: cylindrical contact homology as an analogue of orbifold Morse homology} through examples and illustrations. 

As previously mentioned, for any finite $H\subset\SO(3)$, the quotient $S^2/H$ is an orbifold 2-sphere. When $H$ is cyclic, the orbifold 2-sphere $S^2/H$ resembles a lemon shape, featuring two orbifold points, and is immediately homeomorphic to $S^2$. If not cyclic, $H$ is dihedral, or polyhedral. A fundamental domain for the $H$-action on $S^2$ in these two latter cases can be taken to be an isosceles, closed, geodesic triangle, denoted $\Delta_{H}\subset S^2$. These geodesic triangles $\Delta_{H}$ are identified by the shaded regions of $S^2$ in Figures \ref{figure: dihedral fundamental domain} (for $H=\D_{2n}$) and \ref{figure: tetrahedral fundamental domain} (For $H=\T$);
\begin{figure}[!htb]
    \centering
    \begin{minipage}{.5\textwidth}
        \centering
        \includegraphics[width=0.8\textwidth]{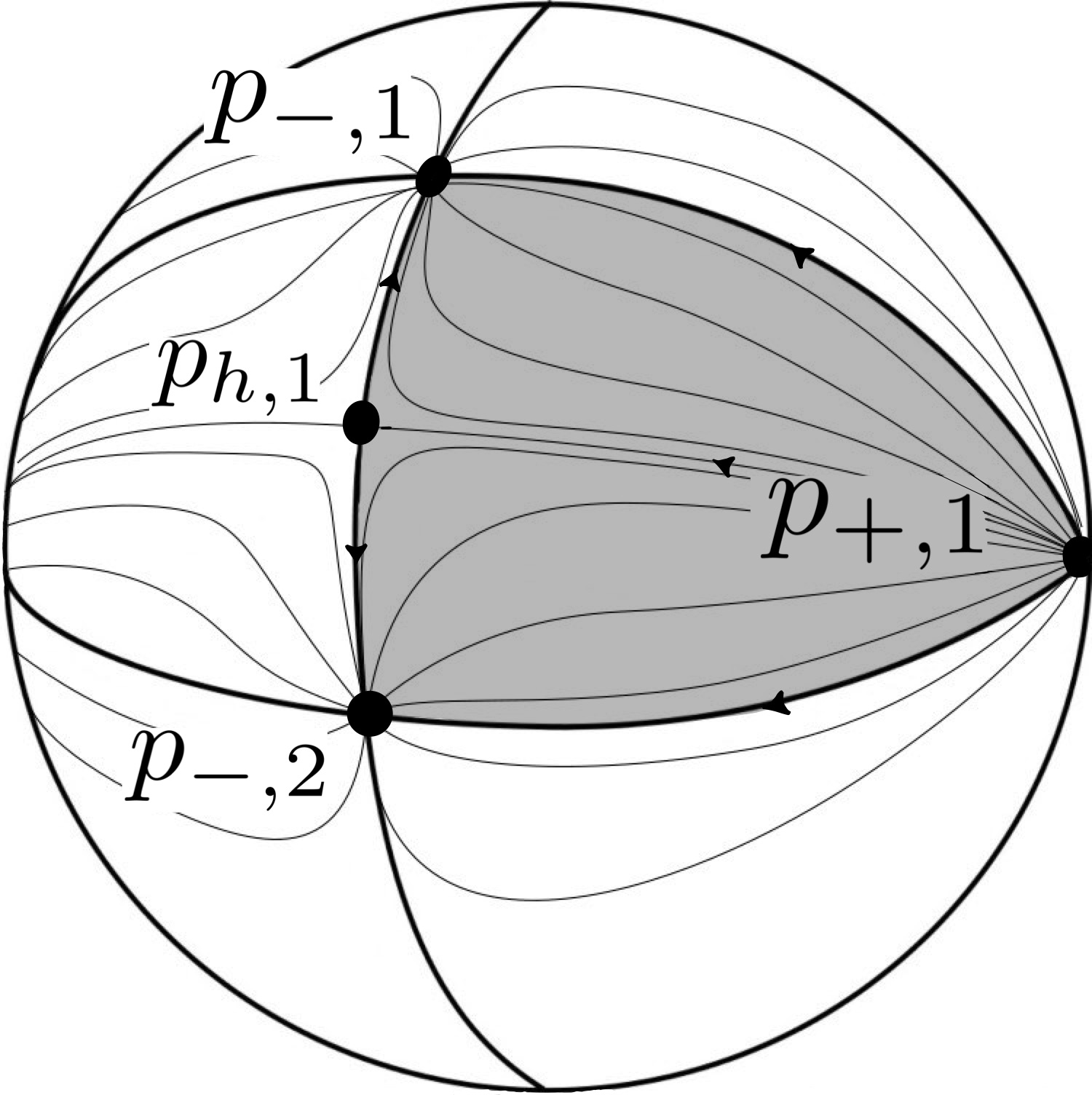}
        \caption{Fundamental domain $\Delta_{\D_{2n}}$}
        \label{figure: dihedral fundamental domain}
    \end{minipage}%
    \begin{minipage}{0.5\textwidth}
        \centering
        \includegraphics[width=0.8\textwidth]{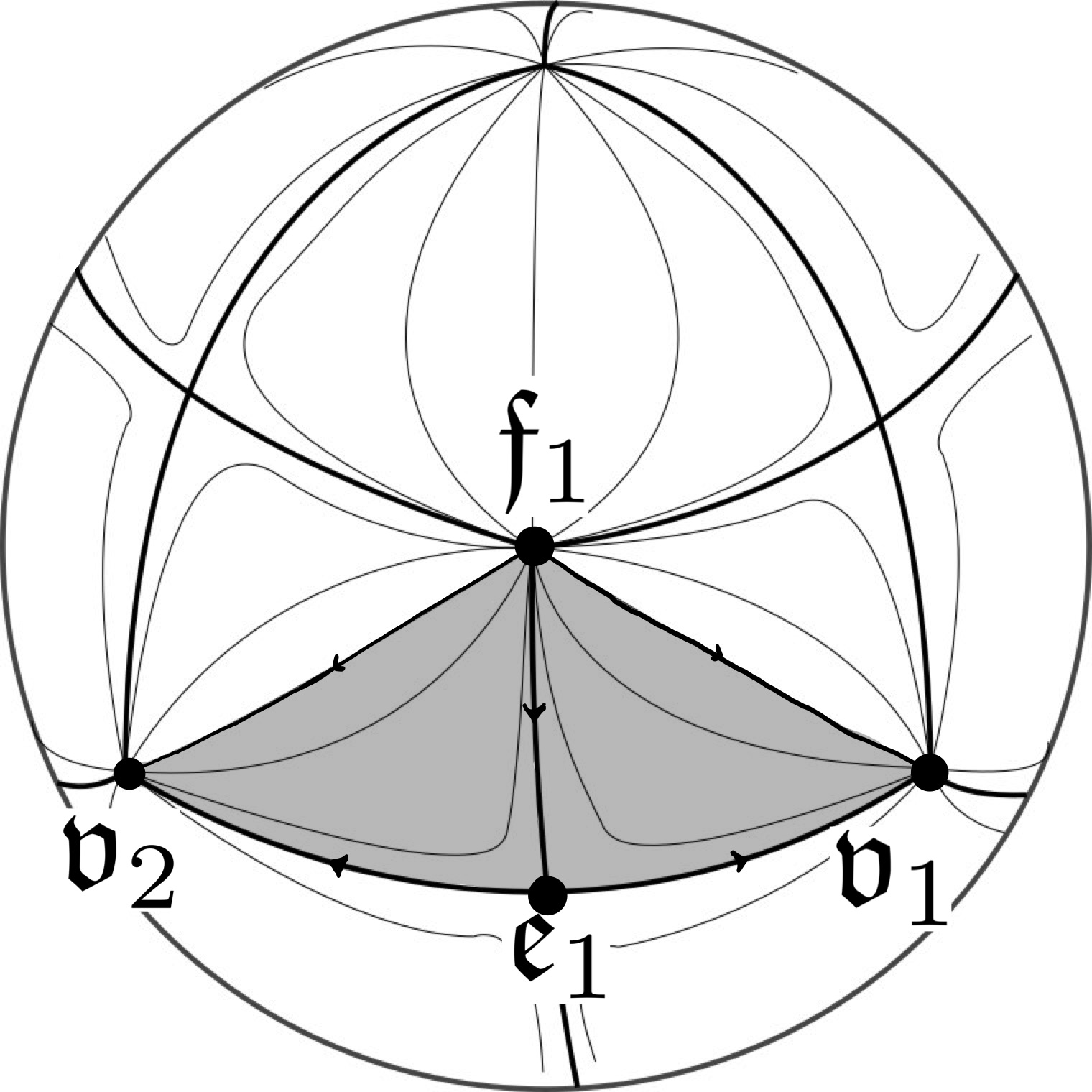}
        \caption{Fundamental domain $\Delta_{\T}$}
        \label{figure: tetrahedral fundamental domain}
    \end{minipage}
\end{figure}

\begin{itemize}
    \item In Figure \ref{figure: dihedral fundamental domain}, we can take the three vertices of $\Delta_{\D_{2n}}$ to be $p_{+,1}$, $p_{-,1}$, and $p_{-,2}$. We have that $S^2$ is tessellated by $|\D_{2n}|=2n$ copies of $\Delta_{\D_{2n}}$ and the internal angles of $\Delta_{\D_{2n}}$ are $\pi/2$, $\pi/2$, and $2\pi/n$; $\Delta_{\D_{2n}}$ is isosceles.
    \item In Figure \ref{figure: tetrahedral fundamental domain}, we can take the three vertices of $\Delta_{\T}$ to be $\mathfrak{f}_1$, $\mathfrak{v}_1$, and $\mathfrak{v}_2$. We have that $S^2$ is tessellated by $|\T|=12$ copies of $\Delta_{\T}$, and the internal angles of $\Delta_{\T}$ are $\pi/3$, $\pi/3$, and $2\pi/3$; $\Delta_{\T}$ is isosceles.
\end{itemize}

The triangular fundamental domains $\Delta_{\Oc}$ and $\Delta_{\I}$ for the $\Oc$ and $\I$ actions on $S^2$ are constructed  analogously to $\Delta_{\T}$. Ultimately, in every (non-cyclic) case, we have a closed, isosceles, geodesic triangle serving as a fundamental domain for the $H$-action on $S^2$. Applying the $H$-identifications on the boundary of $\Delta_{H}$ produces $S^2/H$, a quotient that is homeomorphic to $S^2$ with three orbifold points. Specifically, under the surjective quotient map restricted to the closed $\Delta_H$ (depicted in Figure \ref{figure: fundamental domain}), \[\pi_{H}|_{\Delta_H}:\Delta_{H}\subset S^2\to S^2/H,\]
\begin{itemize}
    \item (blue maximum) one orbifold point of $S^2/H$ has a preimage consisting of a single vertex of $\Delta_H$, this is an index 2 critical point in $S^2$,
    \item (violet saddle) one orbifold point of $S^2/H$ has a preimage consisting of a single midpoint of an edge of $\Delta_H$, this is an index 1 critical point in $S^2$,
    \item (red minimum) one orbifold point of $S^2/H$ has a preimage consisting of two vertices of $\Delta_H$, both are index 0 critical points in $S^2$. 
\end{itemize}
Figure \ref{figure: fundamental domain} depicts the attaching map for $\Delta_H$ along the boundary and these points.

\begin{figure}[h]
    \centering
    \includegraphics[width=0.8\textwidth]{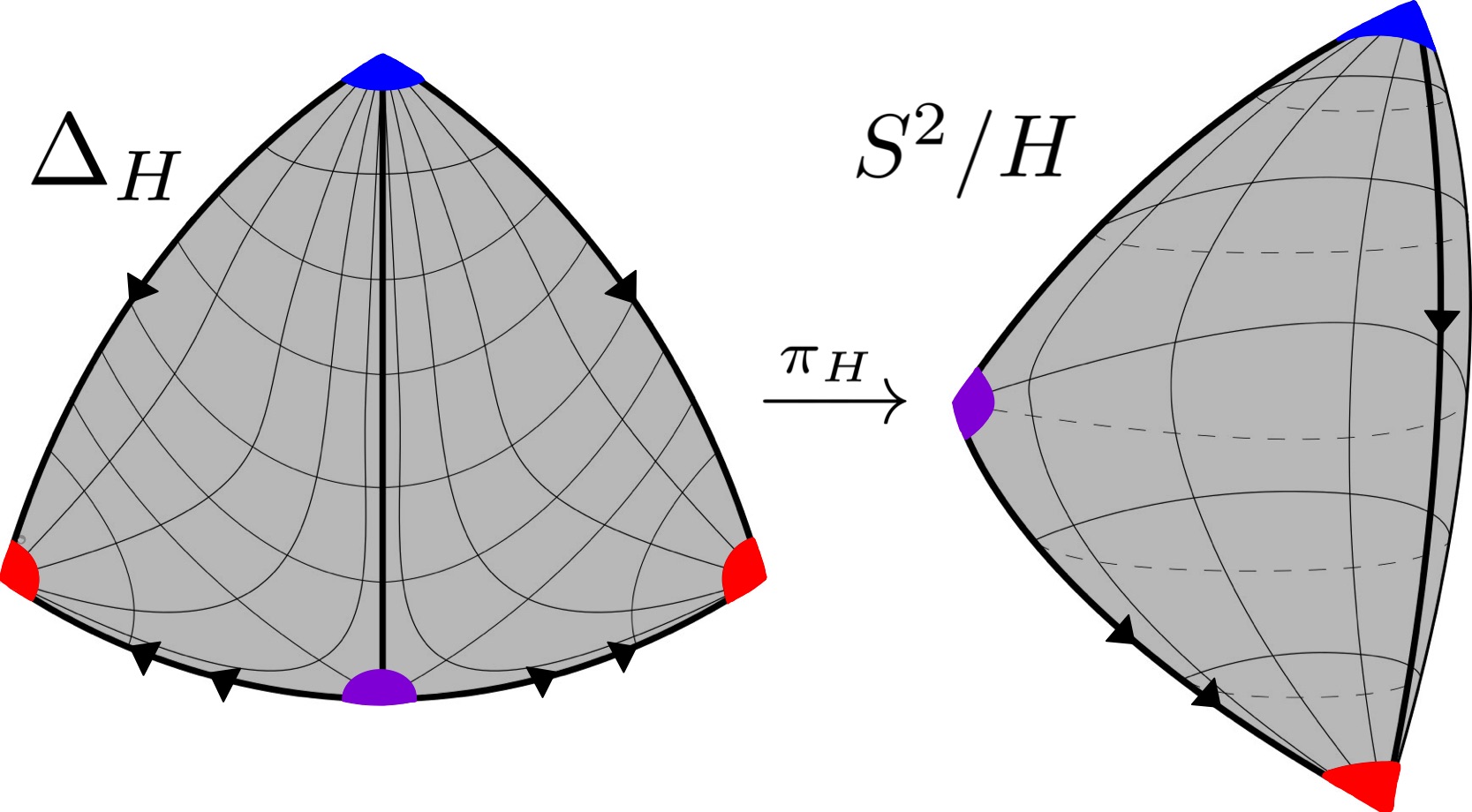}
    \caption{A triangular fundamental domain $\Delta_H$ produces $S^2/H$, a topological $S^2$ with three orbifold points (when $H$ is non-cyclic). Note that that the directional markers on the boundary of $\Delta_H$ indicating the $H$-identifications simultaneously depict orbifold Morse trajectories.}
    \label{figure: fundamental domain}
\end{figure}

We now specialize to the case $H=\T$ and study the geometry of $S^3/\T^*$ and $S^2/\T$. This choice $H=\T$ makes the examples and diagrams concrete - note that a choice of $H=\D$, $\Oc$, or $\I$ produces similar geometric scenarios. From Section \ref{subsection: polyhedral}, the three orbifold points of $S^2/\T$ are denoted $\mathfrak{v}$, $\mathfrak{e}$, and $\mathfrak{f}$, which are critical points of the orbifold Morse function $f_{\T}$ of index 0, 1, and 2, respectively. Furthermore, for $\varepsilon>0$ small, we have three embedded nondegenerate Reeb orbits, $\mathcal{V}$, $\mathcal{E}$, and $\mathcal{F}$ in $S^3/\T^*$ of $\lambda_{\T^*,\varepsilon}$, projecting to the respective orbifold critical points under $\fp:S^3/\T^*\to S^2/\T$. Figure \ref{figure: calzone} illustrates this data. 

\begin{figure}[h]
    \centering
    \includegraphics[width=0.85\textwidth]{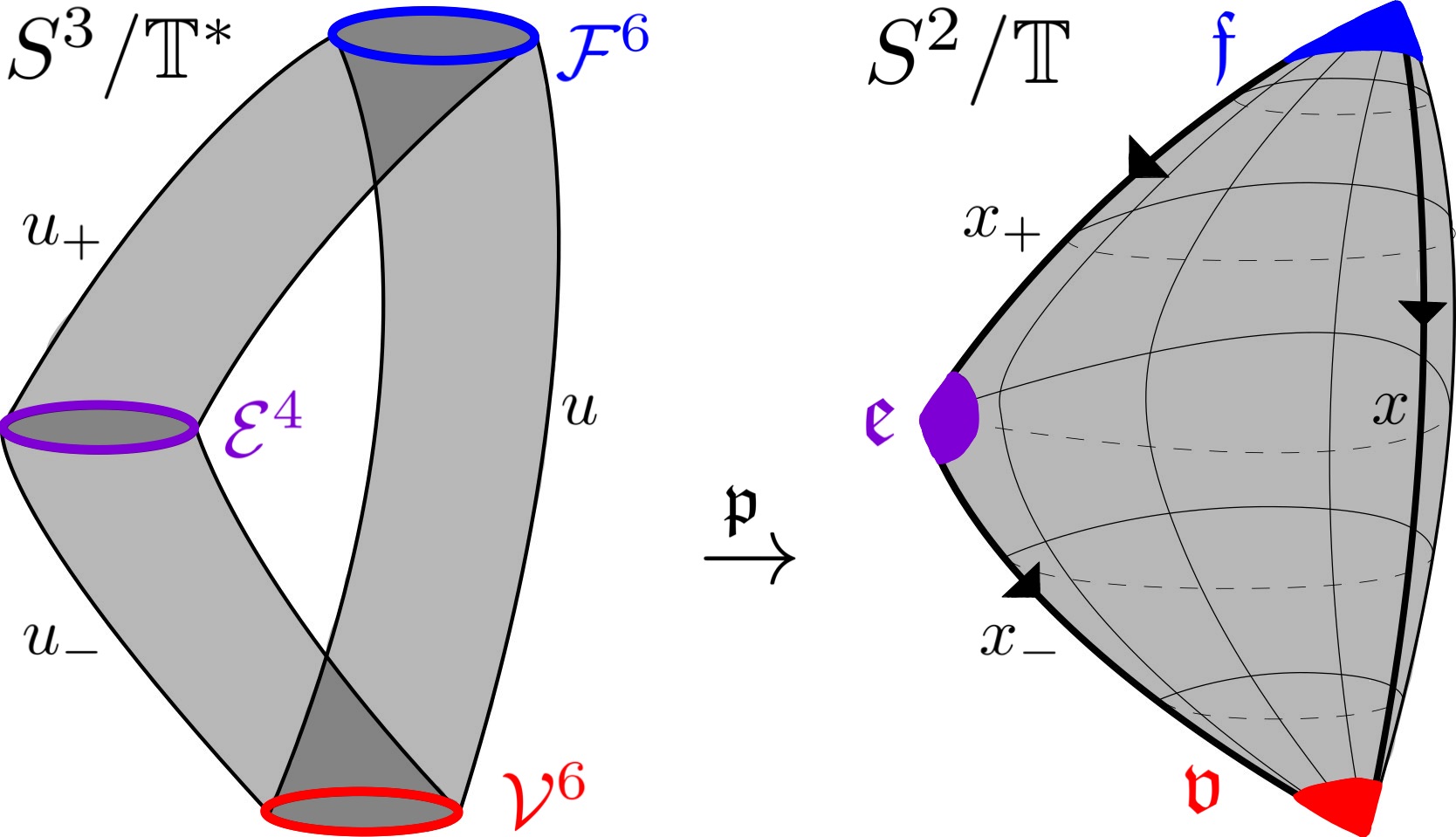}
    \caption{The Seifert projection $\fp$ takes Reeb orbits and cylinders of $S^3/\T^*$ to orbifold critical points and Morse trajectories of $S^2/\T$. The depicted cylinders in $S^3/\T^*$ should be understood as the images of infinite cylinders in the symplectization under the projection $S^3/\T^*\times\R\to S^3/\T^*$.}
    \label{figure: calzone}
\end{figure}

In Section \ref{subsection: cylindrical contact homology as an analogue of orbifold Morse homology} we argued that bad Reeb orbits in cylindrical contact homology are analogous to non-orientable critical points in orbifold Morse theory. We can see how $\fp:S^3/\T^*\to S^2/\T$ geometrically realizes this analogy. In Section \ref{subsection: polyhedral}, we saw that the even iterates $\mathcal{E}^{2k}$ are examples of bad Reeb orbits, and by Remark \ref{remark: nonorientable1}, $\mathfrak{e}$ is a non-orientable critical point of $f_{\T}$. The projection $\fp$ directly takes the bad Reeb orbits $\mathcal{E}^{2k}$ to the non-orientable critical point $\mathfrak{e}$.

 Next we consider the analogies between the moduli spaces. The orders of $\mathcal{V}$, $\mathcal{E}$, and $\mathcal{F}$ in $\pi_1(S^3/\T^*)$ are 6, 4, and 6, respectively (see Section \ref{subsubsection: binary polyhedral}). Thus, by the correspondence \eqref{equation: induced cylinder 2} in Section \ref{subsection: cylinders over orbifold Morse trajectories}, we have the following identifications between moduli spaces of orbifold Morse flow lines of $S^2/\T$ and $J$-holomorphic cylinders in $\R\times S^3/\T^*$ (where we use the  complex structure $J$ described in Remark \ref{remark: the specific almost complex structure}):
\begin{align}
    \mathcal{M}(\mathfrak{f},\mathfrak{e}) &\cong\mathcal{M}^J(\mathcal{F}^6,\mathcal{E}^4)/\R \label{equation: correspondence +}\\
    \mathcal{M}(\mathfrak{e},\mathfrak{v}) &\cong\mathcal{M}^J(\mathcal{E}^4,\mathcal{V}^6)/\R \label{equation: correspondence -}\\
    \mathcal{M}(\mathfrak{f},\mathfrak{v}) &\cong\mathcal{M}^J(\mathcal{F}^6,\mathcal{V}^6)/\R. \label{equation: correspondence 0}
\end{align}
Correspondences \eqref{equation: correspondence +} and \eqref{equation: correspondence -} are between singleton sets. Indeed, let $x_+$ be the unique orbifold Morse flow line from $\mathfrak{f}$ to $\mathfrak{e}$ in $S^2/\T$, and let $x_-$ be the unique orbifold Morse flow line from $\mathfrak{e}$ to $\mathfrak{v}$, depicted in Figure \ref{figure: calzone}. Then we have corresponding cylinders $u_+$ and $u_-$ from $\mathcal{F}^6$ to $\mathcal{E}^4$, and from $\mathcal{E}^4$ to $\mathcal{V}^6$, respectively:

\begin{align}
    \{x_+\}=\mathcal{M}(\mathfrak{f},\mathfrak{e}) &\cong\mathcal{M}^J(\mathcal{F}^6,\mathcal{E}^4)/\R=\{u_+\}\label{equation: singleton1}\\
    \{x_-\}=\mathcal{M}(\mathfrak{e},\mathfrak{v}) &\cong\mathcal{M}^J(\mathcal{E}^4,\mathcal{V}^6)/\R=\{u_-\}\label{equation: singleton2}
\end{align}
These cylinders $u_{\pm}$ are depicted in Figure \ref{figure: calzone}. Additionally, note that the indices of the corresponding objects agree:
\begin{align*}
    \text{ind}(x_+)&=\text{ind}_{f_{\T}}(\mathfrak{f})-\text{ind}_{f_{\T}}(\mathfrak{e})=2-1=1\\
    \text{ind}(u_+)&=\mu_{\CZ}(\mathcal{F}^6)-\mu_{\CZ}(\mathcal{E}^4)=5-4=1,
\end{align*}
and 
\begin{align*}
    \text{ind}(x_-)&=\text{ind}_{f_{\T}}(\mathfrak{e})-\text{ind}_{f_{\T}}(\mathfrak{v})=1-0=1\\
    \text{ind}(u_-)&=\mu_{\CZ}(\mathcal{E}^4)-\mu_{\CZ}(\mathcal{V}^6)=4-3=1.
\end{align*}

Next, consider the third correspondence of moduli spaces, \eqref{equation: correspondence 0}. From Figure \ref{figure: calzone}, we can geometrically see that $\mathcal{M}(\mathfrak{f},\mathfrak{v})$ is diffeomorphic to a 1-dimensional open interval. Indeed, for any $x\in\mathcal{M}(\mathfrak{f},\mathfrak{v})$, we have that \[\text{ind}(x)=\text{ind}_{f_{\T}}(\mathfrak{f})-\text{ind}_{f_{\T}}(\mathfrak{v})=2,\] algebraically verifying that the moduli space of orbifold flow lines must be 1-dimensional. On the other hand, take any cylinder $u\in\mathcal{M}^J(\mathcal{F}^6,\mathcal{V}^6)/\R$. Now, \[\text{ind}(u)=\mu_{\CZ}(\mathcal{F}^6)=\mu_{\CZ}(\mathcal{V}^6)=5-3=2,\] verifying that this moduli space of cylinders is $1$-dimensional, as expected by \eqref{equation: correspondence 0}. 

Note that both of the open, 1-dimensional moduli spaces in \eqref{equation: correspondence 0} admit a compactification by broken objects. We can see explicitly from Figure \ref{figure: calzone} that both ends of the 1-dimensional moduli space $\mathcal{M}(\mathfrak{f},\mathfrak{v})$ converge to the \emph{same} once-broken orbifold Morse trajectory, $(x_+,x_-)\in\mathcal{M}(\mathfrak{f},\mathfrak{e})\times\mathcal{M}(\mathfrak{e},\mathfrak{v})$. In particular, the compactification $\overline{\mathcal{M}(\mathfrak{f},\mathfrak{v})}$ is a topological $S^1$, obtained by adding the single point $(x_+,x_-)$ to an open interval, and we write \[\overline{\mathcal{M}(\mathfrak{f},\mathfrak{v})}=\mathcal{M}(\mathfrak{f},\mathfrak{v})\,\bigsqcup\, \bigg(\mathcal{M}(\mathfrak{f},\mathfrak{e})\times\mathcal{M}(\mathfrak{e},\mathfrak{v})\bigg). \]

An identical phenomenon occurs for the compactification of the cylinders. That is, both ends of the 1-dimensional interval $\mathcal{M}^J(\mathcal{F}^6,\mathcal{V}^6)/\R$ converge to the \emph{same} once broken building of cylinders $(u_+,u_-)$. The compactification $\overline{\mathcal{M}^J(\mathcal{F}^6,\mathcal{V}^6)/\R}$ is a topological $S^1$, obtained by adding a single point $(u_+,u_-)$ to an open interval, and we write \[\overline{\mathcal{M}^J(\mathcal{F}^6,\mathcal{V}^6)/\R}=\mathcal{M}^J(\mathcal{F}^6,\mathcal{V}^6)/\R\,\bigsqcup\,\bigg(\mathcal{M}^J(\mathcal{F}^6,\mathcal{E}^4)/\R\times\mathcal{M}^J(\mathcal{E}^4,\mathcal{V}^6/\R)\bigg).\]

Recall that in Section \ref{subsection: cylindrical contact homology as an analogue of orbifold Morse homology}, we argued that the differentials of cylindrical contact homology and orbifold Morse homology are structurally identical due to the similarities in the compactifications of the 1-dimensional moduli spaces. This is due to the fact that in both theories, a once broken building can serve as a limit of \emph{multiple} ends of a 1-dimensional moduli space. Our examples depict this phenomenon:
\begin{itemize}
    \item The broken building $(x_+,x_-)$ of orbifold Morse flow lines serves as the limit of \emph{both} ends of the open interval $\mathcal{M}(\mathfrak{f},\mathfrak{v})$.
    \item The broken building $(u_+,u_-)$ of $J$-holomorphic cylinders serves as the limit of \emph{both} ends of the open interval $\mathcal{M}^J(\mathcal{F}^6,\mathcal{V}^6)/\R$.
\end{itemize}

Another analogy is highlighted in this example. In both homology theories, it is possible for a sequence of flow lines/cylinders between orientable objects can break along an intermediate non-orientable object (see \cite[Example 2.10]{CH}). For example:
\begin{itemize}
    \item There is a  sequence $x_n\in\mathcal{M}(\mathfrak{f},\mathfrak{v})$ converging to the broken building $(x_+,x_-)$, which breaks at $\mathfrak{e}$. The critical points $\mathfrak{f}$ and $\mathfrak{v}$ are orientable, whereas $\mathfrak{e}$ is non-orientable.
    \item There is a  sequence $u_n\in\mathcal{M}^J(\mathcal{F}^6,\mathcal{V}^6)/\R$ converging to the broken building $(u_+,u_-)$, which breaks along the orbit $\mathcal{E}^4$. The Reeb orbits $\mathcal{F}^6$ and $\mathcal{V}^6$ are good, whereas $\mathcal{E}^4$ is bad.
\end{itemize}

Recall from Remark \ref{remark: nonorientable2}, that we cannot assign a value $\epsilon(x_{\pm})\in\{\pm1\}$ nor can we assign a value $\epsilon(u_{\pm})\in\{\pm1\}$ to the objects $x_{\pm}$ and $u_{\pm}$, because they have a non-orientable limiting object. This difficulty in general  provides a hiccup in showing that $\partial^2=0$, as we cannot write the signed count as a sum involving terms of the form $\epsilon(x_+)\epsilon(x_-)$ or $\epsilon(u_+)\epsilon(u_-)$. One gets around this by showing that a once broken building with break at a non-orientable object is utilized by an \emph{even} number of ends of the 1-dimensional moduli space, and that a cyclic group action on this set of even number of ends interchanges the orientations. Using this fact, one shows $\partial^2=0$ (see \cite[Remark 5.3]{CH} and \cite[\S 4.4]{HN}). We see this explicitly in our examples:
\begin{itemize}
    \item The once broken building $(x_+,x_-)$ is the limit of \emph{two} ends of the 1-dimensional moduli space of orbifold trajectories; one positive and one negative end.
    \item The once broken building $(u_+,u_-)$ is the limit of \emph{two} ends of the 1-dimensional moduli space of orbifold trajectories; one positive and one negative end.
\end{itemize}
\begin{remark}\label{remark: when d squared is not zero} (Including non-orientable objects complicates $\partial^2=0$) Recall in Section \ref{subsection: cylindrical contact homology as an analogue of orbifold Morse homology} we mentioned that one discards bad Reeb orbits and non-orientable orbifold critical points as generators in the hopes of achieving $\partial^2=0$ in both theories. Using our understanding of moduli spaces from  Figure \ref{figure: calzone}, we show why $\partial^2=0$ could not reasonably hold if we were to include $\mathfrak{e}$ and $\mathcal{E}^4$ in the corresponding chain complexes. Suppose that we have some coherent way of assigning $\pm1$ to the trajectories $x_{\pm}$ and cylinders $u_{\pm}$. Now, due to equations \eqref{equation: singleton1} and \eqref{equation: singleton2}, we would have in the orbifold case
\begin{align*}
    \partial^{\text{orb}}\mathfrak{f}&=\dfrac{\epsilon(x_+)|\T_{\mathfrak{f}}|}{|\T_{x_+}|}\mathfrak{e}=3\epsilon(x_+)\mathfrak{e}\\
    \partial^{\text{orb}}\mathfrak{e}&=\dfrac{\epsilon(x_-)|\T_{\mathfrak{e}}|}{|\T_{x_-}|}\mathfrak{v}=2\epsilon(x_-)\mathfrak{v}\\
    \implies &\langle\partial^{\text{orb}}\mathfrak{f},\mathfrak{v}\rangle=6\epsilon(x_+)\epsilon(x_-)\neq0,
\end{align*}
where we have used that $|\T_{\mathfrak{f}}|=3$, $|\T_{\mathfrak{e}}|=2$, and $|\T_{x_{\pm}}|=1$. Similarly, again due to equations \eqref{equation: singleton1} and \eqref{equation: singleton2}, we would have
\begin{align*}
    \partial\mathcal{F}^6&=\dfrac{\epsilon(u_+)m(\mathcal{F}^6)}{m(u_+)}\mathcal{E}^4=3\epsilon(u_+)\mathcal{E}^4\\
    \partial\mathcal{E}^4&=\dfrac{\epsilon(u_-)m(\mathcal{E}^4)}{m(u_-)}\mathcal{V}^6=2\epsilon(u_-)\mathcal{V}^6\\
    \implies &\langle\partial\mathcal{F}^6,\mathcal{V}^6\rangle=6\epsilon(u_+)\epsilon(u_-)\neq0,
\end{align*}
where we have used that $m(\mathcal{F}^6)=6$, $m(\mathcal{E}^4)=4$, and $m(u_{\pm})=2$.
\end{remark}

Recall that the multiplicity of a $J$-holomorphic cylinder $u$ divides the multiplicity of the limiting Reeb orbits $\gamma_{\pm}$. The following remark uses the example of this section to demonstrate it need not be the case that $m(u)=\text{GCD}(m(\gamma_+),m(\gamma_-))$. 
\begin{remark}\label{remark: multiplicity of u is not always gcd} 
By \eqref{equation: correspondence 0}, $\mathcal{M}^J(\mathcal{F}^6,\mathcal{V}^6)/\R$ is nonempty, and must contain some $u$. We see that $m(\mathcal{F}^6)=m(\mathcal{V}^6)=6$, so that \[\text{GCD}(m(\mathcal{F}^6),m(\mathcal{V}^6))=6.\] Suppose for contradiction's sake that that $m(u)=6$, and consider the underlying somewhere injective $J$-holomorphic cylinder $v$. It must be the case that $u=v^6$ and that $v\in\mathcal{M}^J(\mathcal{F},\mathcal{V})/\R$. The existence of such a $v$ implies that $\mathcal{F}$ and $\mathcal{V}$ represent the same free homotopy class of loops in $S^3/\T^*$. However, we will determine in Section \ref{subsubsection: binary polyhedral} that these Reeb orbits represent distinct homotopy classes (See Table \ref{table: Tetrahedral homotopy classes of Reeb orbits}). Thus, $m(u)$ is not equal to the GCD.
\end{remark}

\section{Direct limits of filtered  cylindrical contact homology}\label{section: direct limits of filtered homology}
In the previous section we obtained a description for the  filtered cylindrical contact homology of the links of the simple singularities. For any finite, nontrivial subgroup $G\subset\SU(2)$, we have a sequence $(L_N,\lambda_N,J_N)_{N=1}^{\infty}$, where $L_N\to\infty$ monotonically in $\R$, $\lambda_N$ is an $L_N$-dynamically convex contact form on $S^3/G$ with kernel $\xi_G$, and $J_N\in\mathcal{J}(\lambda_N)$ is generically chosen so that 
\begin{equation}\label{equation: identification}
    CH_*^{L_N}(S^3/G,\lambda_N, J_N)\cong\bigoplus_{i=0}^{2N-1}\Q^{m-2}[2i]\oplus\bigoplus_{i=0}^{2N-2} H_*(S^2;\Q)[2i],
\end{equation}
where $m=|\text{Cong}(G)|$. For $N\leq M$, there is a natural inclusion of the vector spaces on the right hand side of \eqref{equation: identification}. Theorem \ref{theorem: cobordisms induce inclusions} yields that symplectic cobordisms induce well defined maps on filtered homology, which can be identified with these inclusions. Thus,
\[\varinjlim_N CH_*^{L_N}(S^3/G,\lambda_N, J_N)\cong\bigoplus_{i\geq0}\Q^{m-2}[2i]\oplus\bigoplus_{i\geq0} H_*(S^2;\Q)[2i].\]

We provide an overview of cobordisms and the maps they induce now. An \emph{exact symplectic cobordism} is a compact symplectic manifold with boundary $\overline{X}$, equipped with a Liouville form $\lambda$. The exact symplectic form $d\lambda$ orients $\overline{X}$ and, given contact manifolds $(Y_{\pm},\lambda_{\pm})$, we say that $\overline{X}$ is \emph{an exact symplectic cobordism from $(Y_+,\lambda_+)$ to $(Y_-,\lambda_-)$} if both $\partial \overline{X}=Y_+\sqcup-Y_-$ as oriented manifolds and $\lambda|_{Y_{\pm}}=\lambda_{\pm}$. For a generic \emph{cobordism compatible} almost complex structure $J$ on the \emph{completed symplectic cobordism} $X$, matching $J_{\pm}\in\mathcal{J}(\lambda_{\pm})$ on the positive and negative ends, respectively, one hopes that the moduli spaces of Fredholm index zero $J$-holomorphic cylinders in $X$, denoted $\mathcal{M}_0^J(\gamma_+,\gamma_-)$, are compact 0-manifolds for $\gamma_{\pm}\in\mathcal{P}_{\text{good}}(\lambda_{\pm})$. If this holds, then the map $\Phi$, defined on generators  $\gamma_{\pm}\in\mathcal{P}_{\text{good}}(\lambda_{\pm})$ by \[\Phi:CC_*(Y_+,\lambda_+,J_+)\to CC_*(Y_-,\lambda_-,J_-),\,\,\,\,\,\,\,\langle\Phi(\gamma_+),\gamma_-\rangle:=\sum_{u\in\mathcal{M}_0^J(\gamma_+,\gamma_-)}\epsilon(u)\frac{m(\gamma_+)}{m(u)},\]
is well defined, because the sums are finite. That $\Phi$ is a chain map follows from a careful analysis of the moduli spaces of index 1 cylinders that appear in these completed cobordisms. These chain maps induce \emph{continuation homomorphisms} on the cylindrical contact homology groups. Our main result in Section \ref{section: direct limits of filtered homology} is the following:

\begin{theorem}\label{theorem: cobordisms induce inclusions}
A generic exact completed symplectic cobordism $(X,\lambda, J)$ from $(S^3/G,\lambda_N,J_N)$ to $(S^3/G,\lambda_M,J_M)$, for $N\leq M$, induces a well defined chain map between filtered chain complexes. The induced maps $\Psi_N^M$ on homology \[\Psi_N^M:CH_*^{L_N}(S^3/G,\lambda_N,J_N)\to CH_*^{L_M}(S^3/G,\lambda_M,J_M)\] may be identified with the standard inclusions \[\bigoplus_{i=0}^{2N-1}\Q^{m-2}[2i]\oplus\bigoplus_{i=0}^{2N-2} H_*(S^2;\Q)[2i]\hookrightarrow \bigoplus_{i=0}^{2M-1}\Q^{m-2}[2i]\oplus\bigoplus_{i=0}^{2M-2} H_*(S^2;\Q)[2i]\]
and form a directed system of graded $\Q$-vector spaces over $\N$.
\end{theorem}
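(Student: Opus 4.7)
The plan proceeds in three stages: constructing a suitable cobordism and verifying that the chain-level map $\Psi$ is well-defined; identifying $\Psi$ on generators with the expected inclusion; and checking compatibility under composition, yielding the directed-system structure. I will freely use Lemmas \ref{lemma: ActionThresholdLink}, \ref{lemma: CZdihedral}, \ref{lemma: CZpolyhedral}, the correspondence described in Section \ref{subsection: cylinders over orbifold Morse trajectories}, and the references to Proposition \ref{proposition: buildings in cobordisms} and Corollary \ref{corollary: moduli spaces are finitie} that the excerpt points to.

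First, I build an exact completed cobordism $(X,\lambda,J)$ from $(S^3/G,\lambda_N,J_N)$ to $(S^3/G,\lambda_M,J_M)$ by interpolating between the two conformally equivalent forms $\lambda_N=(1+\varepsilon_N\fp^*f_H)\lambda_G$ and $\lambda_M=(1+\varepsilon_M\fp^*f_H)\lambda_G$ (with $\varepsilon_M\le\varepsilon_N$) along the symplectization coordinate, choosing a monotone transition function so that $d\lambda$ is a genuine exact symplectic form on the compact piece. Choose $J$ generic among cobordism-compatible almost complex structures and agreeing with $J_{N}$, $J_M$ on the ends. The integer-graded chain groups carry only even-degree generators (Lemmas \ref{lemma: CZdihedral}, \ref{lemma: CZpolyhedral}, and the cyclic case in Section \ref{subsection: cyclic}), so cobordism cylinders of interest must satisfy $\mathrm{ind}(u)=\mu_{\CZ}(\gamma_+)-\mu_{\CZ}(\gamma_-)=0$, with both indices even. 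Combined with the action inequality $\mathcal{A}(\gamma_+)\ge\mathcal{A}(\gamma_-)$ from Stokes' theorem, every $\gamma_-$ contributing to $\langle\Psi(\gamma_+),\gamma_-\rangle$ lies in $\mathcal{P}^{L_M}_{\mathrm{good}}(\lambda_M)$, so $\Psi$ preserves the action filtration. The finiteness of $\mathcal{M}^J_0(\gamma_+,\gamma_-)$ invoked from Proposition \ref{proposition: buildings in cobordisms} and Corollary \ref{corollary: moduli spaces are finitie} is the standard SFT compactness statement: $L_N$-dynamical convexity prevents bubbling and forces any limiting building to be trivial in index, and the parity obstruction rules out broken configurations with intermediate odd-graded bad orbits as a source of non-cancellation.

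Next I identify $\Psi$ with the standard inclusion. Given $\gamma_+\in\mathcal{P}^{L_N}_{\mathrm{good}}(\lambda_N)$ projecting to the orbifold critical point $p\in S^2/H$ under $\fp$ and of multiplicity $k$, the same data $(p,k)$ singles out a unique $\gamma_-\in\mathcal{P}^{L_M}_{\mathrm{good}}(\lambda_M)$, and $\mathcal{A}(\gamma_-)<\mathcal{A}(\gamma_+)<L_N\le L_M$; this defines $\gamma_+\sim\gamma_-$. To compute $\langle\Psi(\gamma_+),\gamma_-\rangle$ I adapt the cylinder-over-Morse-trajectory correspondence of Section \ref{subsection: cylinders over orbifold Morse trajectories}: for the descent of $J_{\mathbb{C}^2}$ along $\R\times S^3/G$, a $J$-holomorphic cylinder in the cobordism projects to a finite-energy solution of a gradient-like equation on $S^2/H$ interpolating between $f_{H,\varepsilon_N}$ and $f_{H,\varepsilon_M}$, and index zero forces the projection to be constant at the critical point $p$. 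One then constructs a canonical index-zero ``vertical'' cylinder $u_{\gamma_+,\gamma_-}$ sitting over $p$ and verifies, via automatic transversality in the trivial normal bundle (both orbits are nondegenerate, and the relevant asymptotic operators share spectrum away from zero), that this is the only element of $\mathcal{M}^J_0(\gamma_+,\gamma_-)$ and that it contributes $+1$ after coherent orientation. For all other $\gamma_-'\ne\gamma_-$, either the action inequality or the requirement $\mu_{\CZ}(\gamma_+)=\mu_{\CZ}(\gamma_-')$ combined with the explicit index tables in Section \ref{section: computation of filtered contact homology} rules out cylinders. Since all differentials vanish on both ends, this descends to $\Psi_N^M[\gamma_+]=[\gamma_-]$, which is exactly the standard inclusion under \eqref{equation: identification}.

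Finally I verify that $\{\Psi_N^M\}_{N\le M}$ forms a directed system. For $N\le M\le K$ one stacks the interpolating cobordisms to obtain a cobordism from $\lambda_N$ to $\lambda_K$; a standard neck-stretching and gluing argument (e.g. \cite[\S 1.5]{BEHWZ}) produces a chain homotopy between $\Psi_N^K$ and $\Psi_M^K\circ\Psi_N^M$, and since all chain complexes have zero differential these chain maps agree already at the chain level. Identification with the standard inclusions shows equality on the nose. The main obstacle, as in all SFT cobordism arguments, is transversality and regularity for the potentially multiply-covered index-zero cylinders; this is handled by exploiting the product-like structure of the vertical cylinder over $p$ (so that its linearized $\bar\partial$ operator decomposes into a direct sum whose cokernel vanishes by dynamical convexity), and by checking that no index-zero broken buildings involving bad orbits can glue to competing contributions—a parity argument using Remark \ref{remark: parity of Conley-Zehnder indices} and the signed-count cancellation underlying the proof of $\partial^2=0$ in \cite[\S 4.3]{HN}. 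Passing to the direct limit over $N$ then gives the stated isomorphism and completes the proof of Theorem \ref{theorem: main}.
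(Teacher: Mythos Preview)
Your outline has the right global architecture, but two of the steps you sketch do not work as written, and both differ in important ways from the paper's argument.

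\textbf{Ruling out cylinders with $\gamma_+\nsim\gamma_-$ and the compactness of $\mathcal{M}_0^J$.} You say that ``the action inequality or the requirement $\mu_{\CZ}(\gamma_+)=\mu_{\CZ}(\gamma_-')$ combined with the explicit index tables in Section~\ref{section: computation of filtered contact homology}'' suffices. It does not: for each even Conley--Zehnder index there are many good Reeb orbits (roughly $m$ of them), so equality of index alone does not pin down $\gamma_-$. What actually isolates $\gamma_-$ is that a cylinder forces $[\gamma_+]=[\gamma_-]$ in $[S^1,S^3/G]$, and \emph{within a fixed free homotopy class} the map $\gamma\mapsto\mu_{\CZ}(\gamma)$ is injective. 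This is Proposition~\ref{proposition: CZ and action}(a), and its proof (Section~\ref{subsection: free homotopy classes represented by reeb orbits}) is a case-by-case analysis of how the Reeb orbits distribute among the $|\mathrm{Conj}(G)|$ conjugacy classes of $G$. Likewise, your compactness sketch (``dynamical convexity prevents bubbling; parity rules out broken configurations'') does not capture the mechanism of Proposition~\ref{proposition: buildings in cobordisms}: the building is decomposed into sub-buildings $B_+,B_0,B_-,B_i,C_j$, and the non-negativity of each index is obtained from Proposition~\ref{proposition: CZ and action}(b), which says that within a free homotopy class the action is monotone in the Conley--Zehnder index. Without this action/index monotonicity one cannot exclude negative-index pieces in the cobordism level. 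The homotopy-class analysis you omit is the technical core of the paper's proof.

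\textbf{Computing the count to be $+1$ when $\gamma_+\sim\gamma_-$.} You propose to use the cylinder-over-Morse-trajectory correspondence of Section~\ref{subsection: cylinders over orbifold Morse trajectories} to project a cobordism cylinder to a gradient-like trajectory in $S^2/H$ and deduce that it is the ``vertical'' cylinder over $p$. But that correspondence is only available for the specific complex structure $J$ descended from $J_{\C^2}$ (Remark~\ref{remark: the specific almost complex structure}), whereas the chain map $\Phi_N^M$ is defined using a \emph{generic} cobordism-compatible $J$ matching the generic $J_N,J_M$ on the ends. For generic $J$ there is no projection to $S^2/H$ intertwining holomorphic cylinders with Morse data. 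The paper instead argues by deformation: once $\mathcal{M}_0^J(\gamma_+,\gamma_-)$ is known to be compact and cut out transversally (automatic transversality, as in Corollary~\ref{corollary: moduli spaces are finitie}), its signed weighted count is invariant under variation of $(X,\lambda,J)$, and an implicit function theorem argument identifies it with the count in the symplectization $\mathcal{M}^{J_N}(\gamma_+,\gamma_+)$, which consists of a single trivial cylinder contributing $+1$. Your ``vertical cylinder plus automatic transversality'' idea is morally adjacent but is not a proof without either specializing $J$ (and then separately justifying transversality for that non-generic $J$) or running the deformation argument.

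Your treatment of the directed-system compatibility via neck-stretching, together with the observation that chain-homotopic maps between complexes with vanishing differential are equal, is fine and matches the paper's (implicit) reasoning.
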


The most involved step in proving Theorem  \ref{theorem: cobordisms induce inclusions} is to argue compactness of the 0-dimensional moduli spaces, shown in Section \ref{subsection: holomorphic buildings in cobordisms},  supported via a case-by-case study of Reeb orbits according to their free homotopy classes in Section \ref{subsection: free homotopy classes represented by reeb orbits}. We will make use of the following notation in this section. \begin{notation}\label{notation: equivalence of reeb orbits}
For $\gamma_+\in\mathcal{P}^{L_N}(\lambda_N)$ and $\gamma_-\in\mathcal{P}^{L_M}(\lambda_M)$, we write $\gamma_+\sim\gamma_-$ whenever $m(\gamma_+)=m(\gamma_-)$ and both orbits project to the same orbifold point  under $\fp$. If either condition does not hold, we write $\gamma_+\nsim\gamma_-$.  Note that $\sim$ defines an equivalence relation on the disjoint union $\bigsqcup_{N\in\N}\mathcal{P}^{L_N}(\lambda_N)$. If $\gamma$ is contractible in $S^3/G$, then we write $[\gamma]=0$ in $[S^1,S^3/G]$.
\end{notation}
The proof of Theorem \ref{theorem: cobordisms induce inclusions} is as follows. Automatic transversality will be used in Corollary \ref{corollary: moduli spaces are finitie} to prove that $\mathcal{M}_0^J(\gamma_+,\gamma_-)$ is a 0-dimensional manifold. By  Proposition \ref{proposition: buildings in cobordisms},  $\mathcal{M}_0^J(\gamma_+,\gamma_-)$ will be shown to be compact. From this, one concludes that $\mathcal{M}_0^J(\gamma_+,\gamma_-)$ is a finite set for $\gamma_+\in\mathcal{P}_{\text{good}}^{L_N}(\lambda_N)$ and $\gamma_-\in\mathcal{P}_{\text{good}}^{L_M}(\lambda_M)$.

This finiteness tells us that, for $N\leq M$, we have the well defined chain map: \[\Phi_N^M:\big(CC_*^{L_N}(S^3/G,\lambda_N,J_N),\partial^{L_N}\big)\to\big(CC_*^{L_N}(S^3/G,\lambda_M,J_M),\partial^{L_N}\big).\] If $\gamma_+\sim\gamma_-$, then by the implicit function theorem, the weighted count $\langle \Phi_N^M(\gamma_+),\gamma_-\rangle$ of $\mathcal{M}_0^J(\gamma_+,\gamma_-)$, the finite set of  cylinders in $X$,  equals that of  $\mathcal{M}^{J_N}(\gamma_+,\gamma_+)$ and that of $\mathcal{M}^{J_M}(\gamma_-,\gamma_-)$, the moduli spaces of cylinders in the symplectizations of $\lambda_N$ and $\lambda_M$, which is known to be 1, given by the  contribution of a single trivial cylinder. 

If $\gamma_+\nsim\gamma_-$, then Corollary \ref{corollary: moduli spaces are finitie} will imply that $\mathcal{M}_0^J(\gamma_+,\gamma_-)$ is empty, and so $\langle \Phi_N^M(\gamma_+),\gamma_-\rangle=0$. Ultimately we conclude that, given $\gamma_+\in\mathcal{P}_{\text{good}}^{L_N}(\lambda_N)$, our chain map $\Phi_N^M$ takes the form $\Phi_N^M(\gamma_+)=\gamma_-$, where $\gamma_-\in \mathcal{P}_{\text{good}}^{L_N}(\lambda_M)$ is the unique Reeb orbit satisfying $\gamma_+\sim\gamma_-$.

We let $\iota_N^M$ denote the chain map given by the inclusion of subcomplexes:
\[\iota_N^M:\big(CC_*^{L_N}(S^3/G,\lambda_M,J_M),\partial^{L_N}\big)\hookrightarrow\big(CC_*^{L_M}(S^3/G,\lambda_M,J_M),\partial^{L_M}\big).\]
The composition $\iota_N^M\circ\Phi_N^M$ a chain map. Let $\Psi_N^M$ denote the map on homology induced by this composition, that is, $\Psi_N^M=(\iota_N^M\circ\Phi_N^M)_*$. We see that \[\Psi_N^M:CH_*^{L_N}(S^3/G,\lambda_N,J_N)\to CH_*^{L_M}(S^3/G,\lambda_M,J_M)\]
satisfies $\Psi_N^M([\gamma_+])=[\gamma_-]$ whenever $\gamma_+\sim\gamma_-$, and thus $\Psi_N^M$ takes the form of the standard inclusions after making the identifications \eqref{equation: identification}.

\subsection{Holomorphic buildings in cobordisms}\label{subsection: holomorphic buildings in cobordisms}

A \emph{holomorphic building} $B$ is a list $(u_1,u_2,\dots,u_n)$, where each $u_i$ is a potentially disconnected holomorphic curve in a completed symplectic cobordism between two closed contact manifolds equipped with a cobordism compatible almost complex structure. The curve $u_i$ is the $i^{\text{th}}$ \emph{level of} $B$. Each $u_i$ has a set of positive (negative) ends which are positively (negatively) asymptotic to a set of  Reeb orbits. For  $i\in\{1,\dots, n-1\}$, there is a bijection between the negative ends of $u_i$ and the positive ends of $u_{i+1}$, such that paired ends are asymptotic to the same Reeb orbit. The \emph{height} of $B$ is $n$, and a \emph{positive end} (\emph{negative end}) of $B$ is a positive end (negative end) of $u_1$ (of $u_n$). The \emph{genus} of $B$ is the genus of the Riemann surface $S$ obtained by attaching ends of the domains of the $u_i$ to those of $u_{i+1}$ according to the bijections; $B$ is \emph{connected} if this $S$ is connected. The \emph{index} of $B$, $\text{ind}(B)$, is defined to be $\sum_{i=1}^n\text{ind}(u_i)$.

All buildings that we will study will have at most one level in a nontrivial symplectic cobordism, the rest in a symplectization.  Unless otherwise stated, we require that no level of $B$ is a union of trivial cylinders (see Definition \ref{definition: trivial cylinders}) in a symplectization (that is,  $B$ is \emph{without trivial levels}).

\begin{remark}\label{remark: index of buildings}
(Index of buildings) The index of a connected,  genus 0 building $B$ with one positive end asymptotic to $\alpha$, and with $k$ negative ends, whose $i^{\text{th}}$ negative end is  asymptotic to $\beta_i$, is \begin{equation}\label{equation: index of buildings}\text{ind}(B)=k-1+\mu_{\CZ}(\alpha)-\sum_{i=1}^k\mu_{\CZ}(\beta_i).\end{equation}
This fact follows from an inductive argument applied to the height of $B$.  \end{remark}

The following proposition considers the relationships between the Conley-Zehnder indices and actions of a pair of Reeb orbits representing the same free homotopy class in  $[S^1,S^3/G]$.

\begin{proposition} \label{proposition: CZ and action}
Suppose $[\gamma_+]=[\gamma_-]\in[S^1,S^3/G]$ for $\gamma_+\in\mathcal{P}^{L_N}(\lambda_N)$ and $\gamma_-\in\mathcal{P}^{L_M}(\lambda_M)$.
\begin{enumerate}[(a)]
\itemsep-.35em
    \item If $\mu_{\CZ}(\gamma_+)=\mu_{\CZ}(\gamma_-)$, then $\gamma_+\sim\gamma_-$.
    \item If $\mu_{\CZ}(\gamma_+)<\mu_{\CZ}(\gamma_-)$, then $\mathcal{A}(\gamma_+)<\mathcal{A}(\gamma_-)$.
\end{enumerate}
\end{proposition}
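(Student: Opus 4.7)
The plan is to reduce the statement to a combinatorial check carried out separately for the cyclic, binary dihedral, and binary polyhedral subgroups $G\subset\SU(2)$. Since $\pi_G:S^3\to S^3/G$ is a universal cover, $\pi_1(S^3/G)\cong G$, so $[S^1,S^3/G]$ is identified with $\mathrm{Conj}(G)$. By Lemma \ref{lemma: ActionThresholdLink}, every orbit in $\mathcal{P}^{L_N}(\lambda_N)$ is an iterate $\gamma_p^k$ of one of the finitely many embedded Reeb orbits $\gamma_p$ lying over an orbifold critical point $p$ of $f_H$, with actions and Conley--Zehnder indices recorded in equations \eqref{equation: action cyclic}, \eqref{equation: CZ indices cyclic}, \eqref{equation: CZ indices dihedral}, \eqref{equation: action dihedral}, \eqref{equation: CZ indices polyhedral}, and \eqref{equation: action polyhedral}. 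Thus parts (a) and (b) amount to comparing, inside each fixed conjugacy class $[g]\in\mathrm{Conj}(G)$, the Conley--Zehnder indices and actions of the pairs $(\gamma_p,k)$ whose iterate lies in $[g]$.

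First I would determine, for each embedded orbit $\gamma_p$ and each iterate $k$, the conjugacy class $[\gamma_p^k]$; this is the content of Section \ref{subsection: free homotopy classes represented by reeb orbits}. The essential combinatorial input is Lemma \ref{lemma: covering multiplicity}: the order of $[\gamma_p]$ in $G$ equals $2|H_p|$ if $|G|$ is even and $|H_p|$ if $|G|$ is odd, so the iterates of $\gamma_p$ that land in a fixed conjugacy class form an arithmetic progression with common difference equal to this order. Together with the conjugacy class structure of $G$ (cyclic; binary dihedral as in the upcoming Table \ref{table: Dihedral homotopy classes of Reeb orbits}; binary polyhedral as in Table \ref{table: Tetrahedral homotopy classes of Reeb orbits} and its $\Oc^*,\I^*$ analogues), this determines the full assignment of Reeb orbits to conjugacy classes.

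Next, within each conjugacy class, I would verify monotonicity. If $\gamma_+\sim\gamma_-$, then both orbits are the same iterate of the embedded orbit over the same orbifold point; the formulas then immediately yield $\mu_{\CZ}(\gamma_+)=\mu_{\CZ}(\gamma_-)$ and, up to an $O(\varepsilon_N-\varepsilon_M)$ correction from the perturbing Morse term, equal actions. Conversely, for iterates $(\gamma_p,k)$ and $(\gamma_{p'},k')$ of potentially distinct embedded orbits lying in the same conjugacy class, a direct inspection of the tables shows that the leading--order actions (rational multiples of $\pi$ with denominators drawn from $\{\mathbf{I}_{\mathscr{V}},\mathbf{I}_{\mathscr{E}},\mathbf{I}_{\mathscr{F}}\}$ in the polyhedral case and $\{2,n\}$ or $\{n\}$ in the dihedral and cyclic cases) and the Conley--Zehnder indices are comonotone: equality of Conley--Zehnder indices forces $\gamma_+\sim\gamma_-$, giving (a), while a strictly smaller Conley--Zehnder index forces a strictly smaller action once $\varepsilon_N$ and $\varepsilon_M$ are small enough, giving (b).

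The main obstacle will be the final case--by--case monotonicity verification, especially in the binary polyhedral case: one must rule out that an elliptic iterate with a large jump in $\mu_{\CZ}$ (coming from the floor or ceiling discontinuity in equation \eqref{equation: CZ indices polyhedral}) accidentally has smaller action than a hyperbolic iterate of comparable index in the same conjugacy class. This is handled by observing that the denominators of the leading--order actions are bounded by the isotropy orders $\mathbf{I}_{\mathscr{V}},\mathbf{I}_{\mathscr{E}},\mathbf{I}_{\mathscr{F}}\leq 5$, so actions attached to distinct pairs $(\gamma_p,k)$ that land in a common conjugacy class are separated by a uniform positive quantity, and shrinking $\varepsilon_N,\varepsilon_M$ below a fixed threshold (already required by Lemmas \ref{lemma: CZdihedral} and \ref{lemma: CZpolyhedral}) absorbs the Morse perturbation.
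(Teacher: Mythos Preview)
Your proposal is correct and follows essentially the same route as the paper: the proof is deferred to Section \ref{subsection: free homotopy classes represented by reeb orbits}, where Lemmas \ref{lemma: cyclic cobordisms}, \ref{lemma: dihedral cobordisms}, and \ref{lemma: polyhedral cobordisms} carry out exactly the conjugacy-class-by-conjugacy-class comparison of indices and actions you describe. One small imprecision worth noting: the iterates of a fixed embedded orbit $\gamma_p$ landing in a given conjugacy class need not form a \emph{single} arithmetic progression (e.g.\ in Table \ref{table: Dihedral homotopy classes of Reeb orbits} the class $[B]$ receives both $h^{1+4k}$ and $h^{3+4k}$), and the paper's verification proceeds by explicit inequality checks in each sub-case rather than by a uniform denominator bound, but your outline captures the argument.
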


The proof of Proposition \ref{proposition: CZ and action} is postponed to Section \ref{subsection: free homotopy classes represented by reeb orbits}, where we will show that it holds for cyclic, dihedral, and polyhedral groups $G$ (Lemmas \ref{lemma: cyclic cobordisms}, \ref{lemma: dihedral cobordisms}, and \ref{lemma: polyhedral cobordisms}). For any exact symplectic cobordism $(X,\lambda,J)$, Proposition \ref{proposition: CZ and action} (a) implies that $\mathcal{M}_0^J(\gamma_+,\gamma_-)$ is empty whenever $\gamma_+\nsim\gamma_-$, and (b) crucially implies that there do not exist cylinders of negative Fredholm index in $X$. Using Proposition \ref{proposition: CZ and action}, we now prove a compactness argument:

\begin{proposition}\label{proposition: buildings in cobordisms}
Fix $N< M$, $\gamma_+\in\mathcal{P}^{L_N}(\lambda_N)$, and $\gamma_-\in\mathcal{P}^{L_M}(\lambda_M)$. For $n_{\pm}\in\Z_{\geq0}$, consider a connected, genus zero building $B=(u_{n_+},\dots,  u_0, \dots, u_{-n_-})$, where $u_i$ is in the symplectization of $\lambda_N$ for $i>0$, $u_i$ is in the symplectization of $\lambda_M$ for $i<0$, and $u_0$ is in a generic, completed,  exact symplectic cobordism $(X,\lambda, J)$ from $(\lambda_N,J_N)$ to $(\lambda_M,J_M)$. If  $\mbox{\em ind}(B)=0$, with single positive puncture at $\gamma_+$ and single negative puncture at $\gamma_-$, then $n_+=n_-=0$ and $u_0\in\mathcal{M}_0^J(\gamma_+,\gamma_-)$.
\end{proposition}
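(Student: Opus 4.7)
My plan is to combine a topological Euler-characteristic analysis of $B$ with the action/index comparisons of Proposition \ref{proposition: CZ and action} to force $B$ to consist of a single index-zero cylinder at the cobordism level. First I would argue that every component of every level of $B$ is a cylinder. Exactness of $d\lambda$ on the cobordism and on the symplectizations forces closed holomorphic components to be constant, and action positivity of Reeb orbits rules out components whose ends are entirely negative. A component with only positive ends would necessarily asymptote to contractible orbits and, by $L_N$- or $L_M$-dynamical convexity of $\lambda_N$ and $\lambda_M$, would have Fredholm index at least $2$, which (combined with the forthcoming non-negativity of every other component's index) contradicts $\text{ind}(B)=0$. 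Hence each component has at least one positive and one negative puncture. Letting $V$ be the number of components and $E$ the number of internal matchings, the connected, genus-zero hypothesis forces the dual graph to be a tree with genus-zero vertices, so $V = E + 1$; and each internal matching contributes one positive end and one negative end, yielding $\sum_v(p_v+q_v) = 2E+2 = 2V$, which together with $p_v, q_v \ge 1$ forces $p_v = q_v = 1$ for every $v$. Thus $B$ is a vertical chain of cylindrical levels whose asymptotic orbits interpolate
\[\gamma_+ \to \alpha_{n_+} \to \cdots \to \alpha_1 \to \beta_1 \to \cdots \to \beta_{n_-} \to \gamma_-.\]

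Second, I would compute indices. Since $c_1(\xi_G) = 0$ and the global trivialization $\tau_G$ extends across the symplectizations and the cobordism, each cylindrical level satisfies $\text{ind}(u_i) = \mu_{\CZ}(\text{top of } u_i) - \mu_{\CZ}(\text{bottom of } u_i)$. The top and bottom asymptotes of each $u_i$ are freely homotopic in $S^3/G$, cobounded by the image of $u_i$, and Stokes' theorem gives weak monotonicity of action across each level (strict for non-trivial cylinders). The contrapositive of Proposition \ref{proposition: CZ and action}(b) then yields $\text{ind}(u_i) \ge 0$ for every $i$, and the additivity $\sum_i \text{ind}(u_i) = \text{ind}(B) = 0$ forces each $\text{ind}(u_i) = 0$.

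Third, I would force each non-cobordism level to be a trivial cylinder and derive a contradiction. Suppose $u_i$ is a cylinder in the symplectization of $\lambda_N$ for some $i > 0$. Then $\mu_{\CZ}(\text{top}) = \mu_{\CZ}(\text{bottom})$, and Proposition \ref{proposition: CZ and action}(a) gives top $\sim$ bottom in the sense of Notation \ref{notation: equivalence of reeb orbits}. Because each orbifold critical point of $f_H$ supports a unique embedded Reeb orbit of $\lambda_N$ and multiplicity determines an iterate, both asymptotes coincide as some $\gamma \in \mathcal{P}(\lambda_N)$; Definition \ref{definition: trivial cylinders} then identifies $u_i$ with the trivial cylinder $u_\gamma$, violating the standing no-trivial-levels convention. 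The analogous argument with $\lambda_M$ rules out $i < 0$. Hence $n_+ = n_- = 0$, $B = (u_0)$ with $u_0$ a single index-zero cylinder in $(X, \lambda, J)$ from $\gamma_+$ to $\gamma_-$, so $u_0 \in \mathcal{M}_0^J(\gamma_+, \gamma_-)$. The principal difficulty in this argument is the cap exclusion in the topological step: purely positive components must be ruled out by interleaving dynamical convexity with the global index balance rather than by pure topology, and a related subtlety is the uniqueness statement of Definition \ref{definition: trivial cylinders} that any $J$-holomorphic cylinder asymptotic to $\gamma$ at both ends in a symplectization is a reparametrization of $u_\gamma$.
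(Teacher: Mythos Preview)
Your overall strategy differs from the paper's, and the cap-exclusion step contains a genuine gap. You assert that a component with only positive ends ``would necessarily asymptote to contractible orbits,'' but a genus-zero curve with $p\geq 2$ positive punctures only shows that a \emph{product} of the asymptotic loops is nullhomotopic, not each individually. It is in fact true here that every component has exactly one positive end (since $\sum_v p_v=E+1=V$ and each $p_v\ge 1$ by action, forcing $p_v=1$), so any positive-only component is a plane and hence its asymptote is contractible---but you never supply this counting argument, so the claim is unjustified as written. More seriously, your appeal to ``the forthcoming non-negativity of every other component's index'' is circular: you prove $\text{ind}(u_i)\ge 0$ only for \emph{cylinders} via Proposition~\ref{proposition: CZ and action}(b), yet at the cap-exclusion stage you have not ruled out components with one positive and $q\ge 2$ negative ends, and for such a curve the index $q-1+\mu_{\CZ}(\alpha)-\sum_j\mu_{\CZ}(\beta_j)$ is not a priori non-negative. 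Without that, the global index balance cannot exclude the plane.

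The paper circumvents this entirely by \emph{not} excluding caps at the component level. Instead it decomposes $B$ into the ``main thread'' sub-buildings $B_+,B_0,B_-$ together with the cap sub-buildings $B_i,C_i$ that hang off the thread, and observes that the grouped sums $U=\text{ind}(B_+)+\sum_i\text{ind}(B_i)$ and $V=\text{ind}(B_0)+\sum_j\text{ind}(C_j)$ telescope via \eqref{equation: index of buildings} to $\mu_{\CZ}(\gamma_+)-\mu_{\CZ}(\alpha_0)$ and $\mu_{\CZ}(\alpha_0)-\mu_{\CZ}(\beta_0)$ respectively. Proposition~\ref{proposition: CZ and action}(b) then gives $U,V\ge 0$ directly, and \cite[Proposition 2.8]{HN} gives $W=\text{ind}(B_-)\ge 0$; summing to zero forces $U=V=W=0$, after which Proposition~\ref{proposition: CZ and action}(a) and action considerations kill the caps and collapse the thread. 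The telescoping is precisely what lets one avoid establishing component-by-component index bounds for non-cylindrical pieces, which is the missing ingredient in your route.
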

\begin{proof}
This building $B$ provides the following sub-buildings, some of which may be empty, depicted in Figure \ref{figure: building}:

\begin{itemize}
    \item $B_+$, a  building in the symplectization of $\lambda_N$, with no level consisting entirely of trivial cylinders, with one positive puncture at $\gamma_+$, $k+1$ negative punctures at  $\alpha_i\in\mathcal{P}^{L_N}(\lambda_N)$, for $i=0,\dots, k$, with $[\alpha_i]=0$ for $i>0$, and $[\gamma_+]=[\alpha_0]$.
    \item $B_0$, a height 1 building in the cobordism with one positive puncture at $\alpha_0$, $l+1$ negative punctures at $\beta_i\in\mathcal{P}^{L_N}(\lambda_M)$, for $i=0,\dots,l$, with $[\beta_i]=0$ for $i>0$, and $[\alpha_0]=[\beta_0]$.
    \item $B_i$, a  building with one positive end at $\alpha_i$, without negative ends, for $i=1,\dots, k$.
    \item $C_i$, with one positive end at $\beta_i$, without negative ends, for $i=1,2,\dots,l$.
    \item $B_-$, a  building in the symplectization of $\lambda_M$, with one positive puncture at $\beta_0$ and one negative puncture at $\gamma_-$.
\end{itemize}

\begin{figure}[h]
    \centering
    \includegraphics[width=0.5\textwidth]{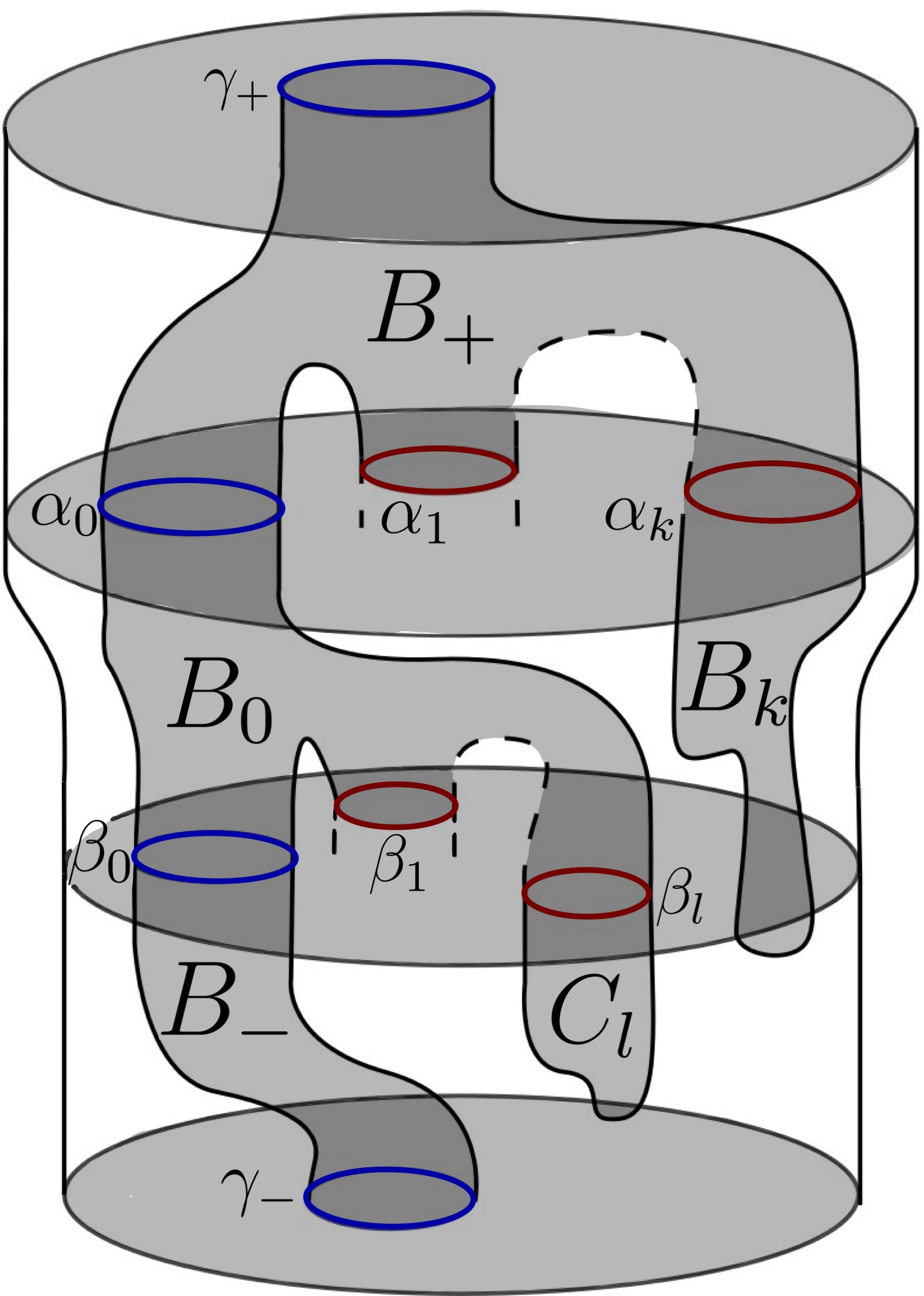}
    \caption{A limit of a sequence of cylinders in a cobordism}
    \label{figure: building}
\end{figure}

Contractible Reeb orbits are shown in red, while Reeb orbits representing the free homotopy class $[\gamma_{\pm}]$ are shown in blue. Each sub-building is connected and has genus zero. Although no level of $B$ consists entirely of trivial cylinders, some of these sub-buildings may have entirely trivial levels in a symplectization. Note that $0=\text{ind}(B)$ equals the sum of indices of the above buildings. Write $0=\text{ind}(B)=U+V+W$, where
\[U:=\text{ind}(B_+)+\sum_{i=1}^k\text{ind}(B_i),\,\,\,\,V:=\text{ind}(B_0)+\sum_{i=1}^l\text{ind}(C_i),\,\,\text{and}\,\, W:=\text{ind}(B_-).\]
We will first argue that $U$, $V$, and $W$ are non-negative integers. 

To see $U\geq0$, apply the index formula \eqref{equation: index of buildings} to each summand to compute $U=\mu_{\CZ}(\gamma_+)-\mu_{\CZ}(\alpha_0)$. If $U<0$, then  Proposition \ref{proposition: CZ and action} (b) implies $\mathcal{A}(\gamma_+)<\mathcal{A}(\alpha_0)$, which would violate the fact that action decreases along holomorphic buildings. We must have that $U\geq0$. 

To see $V\geq0$, again apply the index formula \eqref{equation: index of buildings} to find $V=\mu_{\CZ}(\alpha_0)-\mu_{\CZ}(\beta_0)$. Suppose $V<0$. Now Proposition \ref{proposition: CZ and action} (b) implies $\mathcal{A}(\alpha_0)<\mathcal{A}(\beta_0)$, contradicting the decrease of action. 

To see $W\geq 0$, consider that either $B_-$ consists entirely of trivial cylinders, or it doesn't. In the former case $W=0$. In the latter case, \cite[Proposition 2.8]{HN} implies that $W>0$. 

Because 0 is written as the sum of three non-negative integers, we conclude that $U=V=W=0$. We will combine this fact with Proposition \ref{proposition: CZ and action} to conclude that $B_{\pm}$ are empty buildings and that $B_0$ is a cylinder, finishing the proof. 

Note that $U=0$ implies $\mu_{\CZ}(\gamma_+)=\mu_{\CZ}(\alpha_0)$. Because $[\gamma_+]=[\alpha_0]$, Proposition \ref{proposition: CZ and action} (a) implies that $\gamma_+\sim\alpha_0$. This is enough to conclude $\gamma_+=\alpha_0\in\mathcal{P}(\lambda_N)$, and importantly, $\mathcal{A}(\gamma_+)=\mathcal{A}(\alpha_0)$. Noting that $\mathcal{A}(\gamma_+)\geq\sum_{i=0}^k\mathcal{A}(\alpha_i)$ (again, by decrease of action), we must have that $k=0$ and that this inequality is an equality. Thus, the buildings $B_i$ are empty for $i\neq0$, and the building $B_+$ has index 0 with only one negative end, $\alpha_0$. If $B_+$ has some nontrivial levels then \cite[Proposition 2.8]{HN} implies $0=\text{ind}(B_+)>0$. Thus, $B_+$ consists only of trivial levels. Because $B_+$ has no trivial levels, it is empty, and $n_+=0$.

Similarly, $V=0$ implies $\mu_{\CZ}(\alpha_0)=\mu_{\CZ}(\beta_0)$. Again, because $[\alpha_0]=[\beta_0]$, Proposition \ref{proposition: CZ and action} (a) implies that $\alpha_0\sim\beta_0$. Although we cannot write $\alpha_0=\beta_0$, we \emph{can} conclude that the difference $\mathcal{A}(\alpha_0)-\mathcal{A}(\beta_0)$ may be made arbitrarily small, by rescaling  $\{\varepsilon_K\}_{K=1}^{\infty}$ by some $c\in(0,1)$. Thus, the inequality $\mathcal{A}(\alpha_0)\geq\sum_{i=0}^l\mathcal{A}(\beta_i)$ forces $l=0$, implying that each $C_i$ is empty, and that $B_0$ has a single negative puncture at $\beta_0$, i.e. $B_0=(u_0)$ for $u_0\in\mathcal{M}_0^J(\alpha_0,\beta_0)$.

Finally, we consider our index 0 building $B_-$ in the symplectization of $\lambda_M$, with one positive end at $\beta_0$ and one negative end at $\gamma_-$. Again, \cite[Proposition 2.8]{HN} tells us that if $B_-$ has nontrivial levels, then $\text{ind}(B_-)>0$. Thus, all levels of $B_-$ must be trivial (this implies $\beta_0=\gamma_-$). However, because the $B_i$ and $C_j$ are empty, a trivial level of $B_-$ is a trivial level of $B$ itself, contradicting our hypothesis on $B$. Thus, $B_-$ is empty and $n_-=0$.
\end{proof}

\begin{corollary}\label{corollary: moduli spaces are finitie}
The moduli space $\mathcal{M}_0^J(\gamma_+,\gamma_-)$ is a compact, 0-dimensional manifold for generic $J$, where $\gamma_+\in\mathcal{P}^{L_N}(\lambda_N)$ and $\gamma_-\in\mathcal{P}^{L_M}(\lambda_M)$ are good, and $N<M$. Furthermore, $\mathcal{M}_0^J(\gamma_+,\gamma_-)$ is empty if $\gamma_+\nsim\gamma_-$.
\end{corollary}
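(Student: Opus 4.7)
\medskip

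\noindent\textbf{Proof proposal.} The plan is to combine three ingredients: SFT compactness, the building analysis already carried out in Proposition \ref{proposition: buildings in cobordisms}, and automatic transversality for cylinders in four-dimensional exact cobordisms. The starting point is a sequence $\{u_n\}\subset\mathcal{M}_0^J(\gamma_+,\gamma_-)$. By the SFT compactness theorem of \cite{BEHWZ}, after passing to a subsequence, $u_n$ converges to a connected, genus-zero holomorphic building $B=(u_{n_+},\dots,u_0,\dots,u_{-n_-})$ whose top $n_+$ levels sit in the symplectization of $\lambda_N$, whose middle level lies in the completed cobordism $(X,\lambda,J)$, and whose bottom $n_-$ levels lie in the symplectization of $\lambda_M$; moreover, $B$ has a single positive puncture at $\gamma_+$, a single negative puncture at $\gamma_-$, and $\mathrm{ind}(B)=0$ since the index is preserved under limits. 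Proposition \ref{proposition: buildings in cobordisms} applies verbatim to force $n_+=n_-=0$, so $B=u_0\in\mathcal{M}_0^J(\gamma_+,\gamma_-)$. This proves sequential compactness of $\mathcal{M}_0^J(\gamma_+,\gamma_-)$.

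For the manifold structure, I would invoke automatic transversality for $J$-holomorphic cylinders in a four-dimensional exact cobordism (Wendl's criterion, see \cite[\S 8]{W2} or the discussion in \cite{HN}), which applies to any cylinder $u$ with $\mathrm{ind}(u)=0$ and good asymptotic orbits at both ends, independently of whether $u$ is somewhere injective. Combined with the generic choice of $J$ handling somewhere injective curves in the standard way, this shows that $\mathcal{M}_0^J(\gamma_+,\gamma_-)$ is cut out transversely and inherits the structure of a zero-dimensional manifold. A compact zero-dimensional manifold is a finite set of points, completing the first claim.

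For the emptiness claim, suppose toward a contradiction that $\gamma_+\nsim\gamma_-$ but there exists $u\in\mathcal{M}_0^J(\gamma_+,\gamma_-)$. The cobordism $X$ is diffeomorphic to $\R\times (S^3/G)$ and in particular deformation retracts onto $S^3/G$, so the cylinder $u$ realizes a free homotopy from $\gamma_+$ to $\gamma_-$ inside $S^3/G$; that is, $[\gamma_+]=[\gamma_-]\in[S^1,S^3/G]$. Since $c_1(\xi_G)=0$ and the global trivialization $\tau_G$ extends across the trivial bundle $\xi\to X$ (because $X$ retracts to $S^3/G$), the relative first Chern number vanishes and the Fredholm index formula reduces to $\mathrm{ind}(u)=\mu_{\CZ}(\gamma_+)-\mu_{\CZ}(\gamma_-)=0$. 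Proposition \ref{proposition: CZ and action}(a) then yields $\gamma_+\sim\gamma_-$, contradicting the hypothesis. Hence $\mathcal{M}_0^J(\gamma_+,\gamma_-)=\emptyset$ when $\gamma_+\nsim\gamma_-$.

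The main obstacle I anticipate is the transversality step: the orbits of interest are frequently multiply covered (the iterates $\gamma_p^k$ typically fail to be embedded), so a naive generic choice of $J$ does not suffice to make $\mathcal{M}_0^J$ smooth at covered cylinders. Establishing that the automatic transversality hypotheses (essentially, a sufficiently negative normal Chern number relative to the Fredholm index) hold for every index-zero cylinder whose asymptotic orbits appear in our filtered complexes requires checking the indices against the list of good orbits compiled in Sections \ref{subsection: cyclic}, \ref{subsection: dihedral}, and \ref{subsection: polyhedral}; this is the technical heart of the argument and is where the dynamical convexity of $\lambda_N$ and $\lambda_M$ up to the relevant action thresholds enters.
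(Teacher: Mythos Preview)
Your proposal is correct and follows essentially the same route as the paper: SFT compactness plus Proposition~\ref{proposition: buildings in cobordisms} for compactness, automatic transversality for the manifold structure, and Proposition~\ref{proposition: CZ and action}(a) for emptiness.

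One point deserves sharpening. The transversality step is much lighter than your final paragraph suggests. Wendl's criterion (\cite[Theorem~1]{W1}) requires only the inequality $\mathrm{ind}(u) > 2b - 2 + 2g(u) + h_+(u)$, where $b$ is the number of branch points over the underlying somewhere injective curve, $g(u)$ the genus, and $h_+(u)$ the number of ends asymptotic to \emph{positive} hyperbolic orbits. For an index-$0$ cylinder one has $g(u)=0$ and $b=0$ (a cylinder covers a cylinder without branching), so the inequality reduces to $0 > -2 + h_+(u)$, i.e.\ $h_+(u)\le 1$. Since good Reeb orbits in $\mathcal{P}^{L_N}(\lambda_N)$ and $\mathcal{P}^{L_M}(\lambda_M)$ are either elliptic or negative hyperbolic (this was established in Sections~\ref{subsection: cyclic}--\ref{subsection: polyhedral}), one has $h_+(u)=0$ and the criterion is satisfied trivially. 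No case-by-case index check against the orbit tables is needed, and dynamical convexity does not enter here at all; it was used earlier to guarantee $\partial^2=0$, not to verify transversality in the cobordism.
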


\begin{proof}
To prove regularity of the cylinders $u\in\mathcal{M}_0^J(\gamma_+,\gamma_-)$, we invoke automatic transversality (\cite[Theorem 1]{W1}), providing regularity of all such $u$, because the inequality \[0=\text{ind}(u)>2b-2+2g(u)+h_+(u)=-2\]
holds.  Here, $b=0$ is the number of branched points of $u$ over its underlying somewhere injective cylinder, $g(u)=0$ is the genus of the curve, and $h_+(u)$ is the number of ends of $u$ asymptotic to positive hyperbolic Reeb orbits. Because good Reeb orbits in $\mathcal{P}^{L_N}(\lambda_N)$ and $\mathcal{P}^{L_M}(\lambda_M)$ are either elliptic or negative hyperbolic, we have that $h_+(u)=0$. To prove compactness of the moduli spaces, note that a sequence in $\mathcal{M}_0^J(\gamma_+,\gamma_-)$ has a subsequence converging in the sense of \cite{BEHWZ} to a building with the properties detailed in Proposition \ref{proposition: buildings in cobordisms}. Proposition \ref{proposition: buildings in cobordisms} proves that such an object is single cylinder in $\mathcal{M}_0^J(\gamma_+,\gamma_-)$, proving compactness of $\mathcal{M}_0^J(\gamma_+,\gamma_-)$. Finally, by Proposition \ref{proposition: CZ and action} (a), the existence of $u\in\mathcal{M}_0^J(\gamma_+,\gamma_-)$ implies that $\gamma_+\sim\gamma_-$. Thus, $\gamma_+\nsim\gamma_-$ implies that $\mathcal{M}_0^J(\gamma_+,\gamma_-)$ is empty.
\end{proof}

\subsection{Homotopy classes of Reeb orbits and proof of Proposition \ref{proposition: CZ and action}}\label{subsection: free homotopy classes represented by reeb orbits}

Proposition \ref{proposition: CZ and action} was key in arguing  compactness of $\mathcal{M}_0^J(\gamma_+,\gamma_-)$. To prove Proposition \ref{proposition: CZ and action}, we will make use of a bijection $[S^1,S^3/G]\cong\text{Conj}(G)$ to identify the free  homotopy classes represented by orbits in $S^3/G$ in terms of $G$. A loop in $S^3/G$ is a map $\gamma:[0,T]\to S^3/G$ satisfying  $\gamma(0)=\gamma(T)$. Selecting a lift $\widetilde{\gamma}:[0,T]\to S^3$ of $\gamma$ to $S^3$ determines a unique $g\in G$, for which $g\cdot\widetilde{\gamma}(0)=\widetilde{\gamma}(T)$. The map $[\gamma]\in[S^1,S^3/G]\mapsto[g]\in\text{Conj}(G)$ is well defined,  bijective, and respects iterations; that is, if $[\gamma]\cong[g]$, then  $[\gamma^m]\cong[g^m]$ for $m\in\N$.

\subsubsection{Cyclic subgroups}
We may assume that $G=\langle g\rangle\cong\Z_n$, where $g$ is the diagonal matrix $g=\text{Diag}(\epsilon, \overline{\epsilon})$, and $\epsilon:=\text{exp}{(2\pi i/n)}\in\C$. We have  $\text{Conj}(G)=\{[g^m]:0\leq m<n\}$, each class is a singleton because $G$ is abelian. We select explicit lifts $\widetilde{\gamma}_{\mathfrak{s}}$ and $\widetilde{\gamma}_{\mathfrak{n}}$ of $\gamma_{\mathfrak{s}}$ and $\gamma_{\mathfrak{n}}$ to $S^3$ given by:
\[\widetilde{\gamma}_{\mathfrak{n}}(t)=(e^{it},0),\,\,\, \text{and}\,\,\,\widetilde{\gamma}_{\mathfrak{s}}(t)=(0,e^{it}),\,\, \text{for}\,\, t\in[0,2\pi/n].\]
Because $g\cdot\widetilde{\gamma}_{\mathfrak{n}}(0)=\widetilde{\gamma}_{\mathfrak{n}}(2\pi/n)$,  $[\gamma_{\mathfrak{n}}]\cong[g]$,  and thus $[\gamma_{\mathfrak{n}}^m]\cong[g^m]$ for $m\in\N$. Similarly, because $g^{n-1}\cdot\widetilde{\gamma}_{\mathfrak{s}}(0)=\widetilde{\gamma}_{\mathfrak{s}}(2\pi/n)$, $[\gamma_{\mathfrak{s}}]\cong[g^{n-1}]=[g^{-1}]$, and thus $[\gamma_{\mathfrak{s}}^m]\cong[g^{-m}]$ for  $m\in\N$.

\begin{center}
    \begin{tabular}{||c|c||}
    \hline
    Class & Represented orbits\\
    \hline\hline
        $[g^m]$, for $0\leq m<n$ & $\gamma_{\mathfrak{n}}^{m+kn},\,\,\gamma_{\mathfrak{s}}^{n-m+kn}$ \\
        \hline
    \end{tabular}
\end{center}

\begin{lemma}\label{lemma: cyclic cobordisms}
Suppose $[\gamma_+]=[\gamma_-]\in[S^1,S^3/\Z_n]$ for $\gamma_+\in\mathcal{P}^{L_N}(\lambda_N)$ and $\gamma_-\in\mathcal{P}^{L_M}(\lambda_M)$.
\begin{enumerate}[(a)]
\itemsep-.35em
    \item If $\mu_{\CZ}(\gamma_+)=\mu_{\CZ}(\gamma_-)$, then $\gamma_+\sim\gamma_-$.
    \item If $\mu_{\CZ}(\gamma_+)<\mu_{\CZ}(\gamma_-)$, then $\mathcal{A}(\gamma_+)<\mathcal{A}(\gamma_-)$.
\end{enumerate}
\end{lemma}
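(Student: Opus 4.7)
The plan is to split into cases based on which of the two embedded Reeb orbits $\gamma_{\mathfrak{n}}, \gamma_{\mathfrak{s}}$ are iterated by $\gamma_\pm$, and then in each case use the explicit action and Conley--Zehnder formulas from Section \ref{subsection: cyclic} to directly verify (a) and (b). The homotopy table above shows that the free homotopy class $[g^m]\in \text{Conj}(\Z_n)$ is represented exactly by iterates $\gamma_{\mathfrak{n}}^{m+kn}$ and $\gamma_{\mathfrak{s}}^{n-m+kn}$ for $k\geq 0$. Thus the condition $[\gamma_+]=[\gamma_-]$ forces one of four cases: Case A, both $\gamma_\pm$ are $\gamma_{\mathfrak{n}}$-iterates with multiplicities $k_\pm$ satisfying $k_+\equiv k_-\pmod n$; Case B, both are $\gamma_{\mathfrak{s}}$-iterates with $k_+\equiv k_-\pmod n$; Cases C/D, one is an $\gamma_{\mathfrak{n}}$-iterate and the other an $\gamma_{\mathfrak{s}}$-iterate with $k_++k_-\equiv 0\pmod n$.

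In the mixed cases (C and D) I would show that the CZ values can never coincide, so the hypothesis of (a) forces us into Case A or B, while (b) in the mixed cases will be handled separately. Concretely, using \eqref{equation: CZ indices cyclic}, an $\gamma_{\mathfrak{n}}$-iterate has CZ index of the form $4k+2\lfloor 2m/n\rfloor+1$ and an $\gamma_{\mathfrak{s}}$-iterate in the same class has CZ index of the form $4k'+3-2\lfloor 2m/n\rfloor$; running through the subcases $0\leq m<n/2$, $m=n/2$, $m>n/2$ (and $m=0$ separately) shows these two sets of integers are disjoint, ruling out equality in (a). For (b) in the mixed case, the ordering of CZ indices can then be converted to an explicit ordering of the multiplicities $k_\pm$ modulo $n$, from which the strict action inequality follows for $\varepsilon_N,\varepsilon_M$ small.

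For the same-type Cases A and B, I would use the fact that both $\lceil 2k/n\rceil$ and $\lfloor 2k/n\rfloor$ are nondecreasing in $k$, and increase by exactly $2$ when $k$ changes by $n$. Combined with $k_+\equiv k_-\pmod n$, this gives: $\mu_{\CZ}(\gamma_+)=\mu_{\CZ}(\gamma_-)\Longleftrightarrow k_+=k_-$, which forces $\gamma_+\sim\gamma_-$ (same multiplicity, both project under $\fp$ to the same orbifold point of $S^2/H$, namely $\pi_H(\mathfrak{n})$ or $\pi_H(\mathfrak{s})$); this proves (a). Likewise, $\mu_{\CZ}(\gamma_+)<\mu_{\CZ}(\gamma_-)$ forces $k_-\geq k_++n$, and then the action computation \eqref{equation: action cyclic} reduces (b) to the elementary inequality $k_+(1+\varepsilon_N)<k_-(1+\varepsilon_M)$.

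The main obstacle I foresee is controlling the action comparison in (b) when $N<M$: since $\varepsilon_N\geq\varepsilon_M$ and $k_+$ can be as large as $\approx nN$, the naive estimate $k_-(1+\varepsilon_M)-k_+(1+\varepsilon_N)\geq n(1+\varepsilon_M)-k_+(\varepsilon_N-\varepsilon_M)$ requires knowing $\varepsilon_N$ is small relative to $1/N$. This is not automatically supplied by Lemma \ref{lemma: ActionThresholdLink}, so I would address it by shrinking the $\varepsilon_N$ produced in Section \ref{subsection: cyclic} (e.g.\ requiring $N\varepsilon_N<1$), which is allowed since Lemma \ref{lemma: ActionThresholdLink} gives an interval $(0,\varepsilon_N]$ of admissible perturbation sizes and we are free to replace $\varepsilon_N$ by any smaller positive number. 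With this adjustment, the strict inequalities in Cases A and B (and the analogous check in the mixed cases) go through uniformly for all $M\geq N$.
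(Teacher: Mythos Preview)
Your proposal is correct and follows essentially the same case split as the paper: separate the same-type and mixed-type pairs, rule out equal Conley--Zehnder indices in the mixed case, and then compare actions via the explicit formulas \eqref{equation: action cyclic}. Two minor remarks: the paper disposes of the mixed case in (a) with a single parity observation (setting $\mu_{\CZ}(\gamma_{\mathfrak{n}}^{m+k_1 n})=\mu_{\CZ}(\gamma_{\mathfrak{s}}^{n-m+k_2 n})$ forces $2\lfloor 2m/n\rfloor+1=2+2k_2-2k_1$, odd $=$ even) rather than your subcase analysis on $m$; conversely, your explicit identification of the need to shrink $\varepsilon_N$ so that $N\varepsilon_N$ stays bounded is a point the paper handles only implicitly (``after reducing $\varepsilon_N$ if necessary''), so you are in fact being more careful there.
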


\begin{proof}
Write $[\gamma_{\pm}]\cong[g^m]$, for some $0\leq m<n$. To prove (a), assume $\mu_{\CZ}(\gamma_+)=\mu_{\CZ}(\gamma_-)$. Recall the Conley-Zehnder index formulas \eqref{equation: CZ indices cyclic} from Section \ref{subsection: cyclic}: \begin{equation}\label{equation: CZ indices cyclic repeated}
    \mu_{\CZ}(\gamma_{\mathfrak{s}}^k)=2\Bigl\lceil \frac{2k}{n} \Bigr\rceil-1,\,\,\,\,\,\mu_{\CZ}(\gamma_{\mathfrak{n}}^k)=2\Bigl\lfloor \frac{2k}{n} \Bigr\rfloor+1.
\end{equation} We first argue that $\gamma_{\pm}$ both project to the same orbifold point. To see why,  note that an iterate of $\gamma_{\mathfrak{n}}$ representing the same homotopy class as an iterate of  $\gamma_{\mathfrak{s}}$ cannot share the same Conley-Zehnder indices, as  the equality $\mu_{\CZ}(\gamma_{\mathfrak{n}}^{m+k_1n})=\mu_{\CZ}(\gamma_{\mathfrak{s}}^{n-m+k_2n})$ implies, by \eqref{equation: CZ indices cyclic repeated}, that $2 \lfloor \frac{2m}{n}\rfloor+1=2+2k_2-2k_1$. This equality cannot hold because the left hand side is odd while the right hand side is even. So, without loss of  generality, suppose both $\gamma_{\pm}$ are iterates of $\gamma_{\mathfrak{n}}$. Then we must have that $\gamma_{\pm}=\gamma_{\mathfrak{n}}^{m+k_{\pm}n}$. Again by \eqref{equation: CZ indices cyclic repeated}, the equality $\mu_{\CZ}(\gamma_+)=\mu_{\CZ}(\gamma_-)$ implies $k_+=k_-$ and thus $\gamma_+\sim\gamma_-$.

To prove (b), we suppose $\mu_{\CZ}(\gamma_+)<\mu_{\CZ}(\gamma_-)$. Recall the action formulas \eqref{equation: action cyclic} from Section \ref{subsection: cyclic}: \begin{equation}\label{equation: action cyclic repeat}
    \mathcal{A}(\gamma_{\mathfrak{s}}^k)=\frac{2\pi k(1-\varepsilon)}{n},\,\,\,\,\,\mathcal{A}(\gamma_{\mathfrak{n}}^k)=\frac{2\pi k(1+\varepsilon)}{n}.
\end{equation} If both $\gamma_{\pm}$ project to the same point, then  $m(\gamma_+)<m(\gamma_-)$, because the Conley-Zehnder index is a non-decreasing function of the multiplicity, and thus, $\mathcal{A}(\gamma_+)<\mathcal{A}(\gamma_-)$. In the case that $\gamma_{\pm}$ project to different orbifold points there are two possibilities for the pair $(\gamma_+,\gamma_-)$:

\vspace{.25cm}

\emph{Case 1}:  $(\gamma_+,\gamma_-)=(\gamma_{\mathfrak{n}}^{m+nk_+},\gamma_{\mathfrak{s}}^{n-m+nk_-})$. By \eqref{equation: action cyclic repeat}, \[\mathcal{A}(\gamma_-)-\mathcal{A}(\gamma_+)=x+\delta,\,\,\,\, \text{where}\,\,\,\, x=2\pi\bigg(1+k_--k_+-\frac{2m}{n}\bigg)\] and $\delta$ can be made arbitrarily small, independent of $m$, $n$, and $k_{\pm}$. Thus, $0<x$ would imply $\mathcal{A}(\gamma_+)<\mathcal{A}(\gamma_-)$, after reducing $\varepsilon_N$ and $\varepsilon_M$ if necessary. By  \eqref{equation: CZ indices cyclic repeated}, the inequality $\mu_{\CZ}(\gamma_+)<\mu_{\CZ}(\gamma_-)$ yields \begin{equation}\label{equation: case 1 cyclic inequality}
    k_++\Bigl\lfloor\frac{2m}{n}\Bigr\rfloor\leq k_-.
\end{equation} 
If $2m<n$, then $\lfloor\frac{2m}{n}\rfloor=0$ and \eqref{equation: case 1 cyclic inequality} implies $k_+\leq k_-$. Now we see that \[x=2\pi\bigg(1+k_--k_+-\frac{2m}{n}\bigg)\geq2\pi\bigg(1-\frac{2m}{n}\bigg)>0,\] thus $\mathcal{A}(\gamma_+)<\mathcal{A}(\gamma_-)$. If $2m\geq n$, then $\lfloor\frac{2m}{n}\rfloor=1$ and \eqref{equation: case 1 cyclic inequality} implies $k_++1\leq k_-$. Now we see that \[x=2\pi\bigg(1+k_--k_+-\frac{2m}{n}\bigg)\geq2\pi\bigg(2-\frac{2m}{n}\bigg)>0,\] hence $\mathcal{A}(\gamma_+)<\mathcal{A}(\gamma_-)$.

\vspace{.25cm}

\emph{Case 2}: $(\gamma_+,\gamma_-)=(\gamma_{\mathfrak{s}}^{n-m+nk_+},\gamma_{\mathfrak{n}}^{m+nk_-})$. By  \eqref{equation: action cyclic repeat},  \[\mathcal{A}(\gamma_-)-\mathcal{A}(\gamma_+)=x+\delta,\,\,\,\, \text{where}\,\,\,\, x=2\pi\bigg(\frac{2m}{n}+k_--k_+-1\bigg)\] and $\delta$ is a small positive number. Thus, $0\leq x$ would imply $\mathcal{A}(\gamma_+)<\mathcal{A}(\gamma_-)$. Applying \eqref{equation: CZ indices cyclic repeated}, $\mu_{\CZ}(\gamma_+)<\mu_{\CZ}(\gamma_-)$ yields \begin{equation}\label{equation: case 2 cyclic inequality}
    k_+-\Bigl\lfloor\frac{2m}{n}\Bigr\rfloor<k_-.
\end{equation} 
If $2m<n$, then $\lfloor\frac{2m}{n}\rfloor=0$ and \eqref{equation: case 2 cyclic inequality} implies $k_++1\leq k_-$. Now we see that \[x=2\pi\bigg(\frac{2m}{n}+k_--k_+-1\bigg)\geq2\pi\bigg(\frac{2m}{n}\bigg)\geq0,\] thus $\mathcal{A}(\gamma_+)<\mathcal{A}(\gamma_-)$. If $2m\geq n$, then $\lfloor\frac{2m}{n}\rfloor=1$ and \eqref{equation: case 2 cyclic inequality} implies $k_+\leq k_-$. Now we see that \[x=2\pi\bigg(\frac{2m}{n}+k_--k_+-1\bigg)\geq2\pi\bigg(\frac{2m}{n}-1\bigg)\geq0,\] thus $\mathcal{A}(\gamma_+)<\mathcal{A}(\gamma_-)$.
\end{proof}

\subsubsection{Binary dihedral groups $\D_{2n}^*$}

The nonabelian group $\D_{2n}^*$ has $4n$ elements and has $n+3$  conjugacy classes. For  $0<m<n$,  the conjugacy class of $A^m$ is $[A^m]=\{A^m, A^{2n-m}\}$ (see Section \ref{subsection: dihedral} for notation). Because $-\text{Id}$ generates the center, we have $[-\text{Id}]=\{-\text{Id}\}$. We also have the conjugacy class $[\text{Id}]=\{\text{Id}\}$. The final two conjugacy classes are \[[B]=\{B, A^2B, \dots, A^{2n-2}B\}, \,\,\,\text{and}\,\,\,[AB]=\{AB, A^3B, \dots, A^{2n-1}B\}.\]
Note that $B^{-1}=B^3=-B=A^nB$ is conjugate to $B$ if and only if $n$ is even. Table \ref{table: lifts of dihedral reeb orbits} records lifts of our three embedded Reeb orbits, $e_-, h,$ and $e_+$, to paths $\widetilde{\gamma}$ in $S^3$, along with the group element $g\in\D^*_{2n}$ satisfying $g\cdot\widetilde{\gamma}(0)=\widetilde{\gamma}(T)$.

\begin{table}[h!]
\centering
\begingroup
\setlength{\tabcolsep}{10pt}
\renewcommand{\arraystretch}{1.4}
 \begin{tabular}{||c | c | c | c ||} 
 \hline
$\widetilde{\gamma}$ & $S^3$ expression & interval & $g\in\D^*_{2n}$ \\ [0.5ex] 
 \hline\hline
 $\widetilde{e_-}(t)$ & $t\mapsto \frac{e^{it}}{\sqrt{2}}\cdot (1,-i\epsilon)$ & $t\in[0,\frac{\pi}{2}]$ & $AB$ \\ 
 \hline
 $\widetilde{h}(t)$ & $t\mapsto \frac{e^{it}}{\sqrt{2}}\cdot(e^{i\pi/4},-e^{i\pi/4})$ &  $t\in[0,\frac{\pi}{2}]$ & $B$ \\
 \hline
 $\widetilde{e_+}(t)$ & $t\mapsto (e^{it},0)$ & $t\in[0,\frac{\pi}{n}]$ & $A$ \\
 \hline
\end{tabular}
\endgroup
 \caption{Lifts of Dihedral Reeb orbits to $S^3$}
 \label{table: lifts of dihedral reeb orbits}
\end{table}

The homotopy classes of $e_-$, $h$, and $e_+$ and their iterates are now determined by this data and are recorded in Table \ref{table: Dihedral homotopy classes of Reeb orbits}. We are now ready to prove Lemma \ref{lemma: dihedral cobordisms}, the dihedral case of Proposition \ref{proposition: CZ and action}.
\begin{table}[h!]
 \begin{tabular}{||c | c | c ||} 
 \hline
Free homotopy/conjugacy class & Represented orbits ($n$ even) &Represented orbits ($n$ odd) \\ [0.5ex] 
 \hline\hline
 $[\text{Id}]$ & $e_-^{4k},\,h^{4k},\,e_+^{2nk}$ & $e_-^{4k},\,h^{4k},\,e_+^{2nk}$ \\ 
 \hline
 $[-\text{Id}]$ &  $e_-^{2+4k},\,h^{2+4k},\,e_+^{n+2nk}$ & $e_-^{2+4k},\,h^{2+4k},\,e_+^{n+2nk}$ \\
 \hline
 $[A^m]$, for $0<m<n$ & $e_+^{m+2nk},\, e_+^{2n-m+2nk}$ & $e_+^{m+2nk},\, e_+^{2n-m+2nk}$ \\
 \hline
  $[B]$& $h^{1+4k},\,h^{3+4k}$ & $h^{1+4k},\,e_-^{3+4k}$ \\
 \hline
   $[AB]$ & $e_-^{1+4k},\,e_-^{3+4k}$ & $e_-^{1+4k},\,h^{3+4k}$ \\
 \hline
\end{tabular}
 \caption{Dihedral homotopy classes of Reeb orbits}
 \label{table: Dihedral homotopy classes of Reeb orbits}
\end{table}

\begin{lemma}\label{lemma: dihedral cobordisms}
Suppose $[\gamma_+]=[\gamma_-]\in[S^1,S^3/\D^*_{2n}]$ for $\gamma_+\in\mathcal{P}^{L_N}(\lambda_N)$ and $\gamma_-\in\mathcal{P}^{L_M}(\lambda_M)$.
\begin{enumerate}[(a)]
\itemsep-.35em
    \item If $\mu_{\CZ}(\gamma_+)=\mu_{\CZ}(\gamma_-)$, then $\gamma_+\sim\gamma_-$.
    \item If $\mu_{\CZ}(\gamma_+)<\mu_{\CZ}(\gamma_-)$, then $\mathcal{A}(\gamma_+)<\mathcal{A}(\gamma_-)$.
\end{enumerate}
\end{lemma}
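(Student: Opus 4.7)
The strategy mirrors Lemma~\ref{lemma: cyclic cobordisms}: I will argue case-by-case over the conjugacy classes of $\D_{2n}^*$, using Table~\ref{table: Dihedral homotopy classes of Reeb orbits} to enumerate the orbits lying in each class together with the Conley--Zehnder formulas \eqref{equation: CZ indices dihedral} and the action formulas \eqref{equation: action dihedral}. The new feature compared to the cyclic case is that a single conjugacy class can contain iterates of up to three distinct embedded orbits ($e_-$, $h$, and $e_+$), and that the orbit lists in $[B]$ and $[AB]$ depend on the parity of $n$, so the list of subcases is longer than in the cyclic setting.

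For part (a), I would observe that in every conjugacy class the orbits admit a natural parametrization by a pair $(k,t)$, where $k$ is an iterate parameter and $t$ labels the embedded orbit family, and then read off $\mu_{\CZ}$ directly to exhibit a clean residue pattern. For instance, in $[\mathrm{Id}]$ one computes $\mu_{\CZ}(e_-^{4k}) = 4k-1$, $\mu_{\CZ}(h^{4k}) = 4k$, and $\mu_{\CZ}(e_+^{2nk}) = 4k+1$; similarly in $[-\mathrm{Id}]$ the three families give $4k+1,\,4k+2,\,4k+3$; and in each of $[A^m]$, $[B]$, $[AB]$ (for either parity of $n$) the at most two families represented land in the distinct residues $4k+1$ and $4k+3$ modulo $4$. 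Consequently, two orbits in the same class with equal Conley--Zehnder indices must share the same family and the same $k$, which forces $\gamma_+ \sim \gamma_-$.

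For part (b), within a single family the action is monotone in the iterate and the Conley--Zehnder index is nondecreasing, so there is nothing to check. For cross-family pairs lying in the same conjugacy class, the orbits indexed by $(k,t)$ have action approximately $k\pi/2$ (for iterates of $e_-$ and $h$) or $k\pi/n$ times the appropriate integer (for iterates of $e_+$), with an $O(\varepsilon)$ correction whose sign matches the ordering of $t$: e.g.\ in $[\mathrm{Id}]$ one has $\mathcal{A}(e_-^{4k}) = 2k\pi(1-\varepsilon)$, $\mathcal{A}(h^{4k}) = 2k\pi$, and $\mathcal{A}(e_+^{2nk}) = 2k\pi(1+\varepsilon)$. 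Hence for same-$k$ comparisons the action ordering inside a class coincides with the Conley--Zehnder ordering, while for different-$k$ comparisons the action gap is at least of order $\pi/2$, which dominates any $\varepsilon$-sized correction once $\varepsilon_N$ and $\varepsilon_M$ are taken sufficiently small. This is the direct analog of the estimate applied after \eqref{equation: case 1 cyclic inequality} in the cyclic case.

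The main obstacle will be purely bookkeeping: the lists of orbits in $[B]$ and $[AB]$ differ between $n$ even and $n$ odd, producing a larger (though entirely routine) number of subcases than in Lemma~\ref{lemma: cyclic cobordisms}. A convenient organization is to dispatch same-family pairs uniformly first, then treat cross-family pairs grouped by which of the three pairings $(e_-,h)$, $(h,e_+)$, $(e_-,e_+)$ they involve, and within each pairing to tabulate the finitely many residue patterns of $k$ allowed by Table~\ref{table: Dihedral homotopy classes of Reeb orbits}.
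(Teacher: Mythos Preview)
Your proposal is correct and follows essentially the same approach as the paper: a case-by-case analysis over the conjugacy classes of $\D_{2n}^*$ using Table~\ref{table: Dihedral homotopy classes of Reeb orbits}, with the residues of $\mu_{\CZ}$ modulo $4$ distinguishing orbit families for part (a) and explicit action comparisons (leading term plus $O(\varepsilon)$ correction) handling part (b). Your organization---dispatching same-family pairs uniformly via monotonicity before tackling cross-family pairs---is slightly more streamlined than the paper's, which runs through all six cross-pairings inside $[\text{Id}]$ and $[-\text{Id}]$ explicitly, but the underlying computations are identical.
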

\begin{proof}
We first prove (a). Recall the Conley-Zehnder index formulas \eqref{equation: CZ indices dihedral} from Section \ref{subsection: dihedral}: \begin{equation} \label{equation: CZ indices dihedral repeat}
    \mu_{\CZ}(e_-^k)=2\Bigl\lceil \frac{k}{2} \Bigr\rceil-1,\,\,\,\mu_{\CZ}(h^k)=k,\,\,\,\mu_{\CZ}(e_+^k)=2\Bigl\lfloor \frac{k}{n} \Bigr\rfloor+1.
\end{equation} By Table \ref{table: Dihedral homotopy classes of Reeb orbits}, there are five possible values of the class $[\gamma_{\pm}]$:

        \vspace{.25cm}

        \emph{Case 1:} $[\gamma_{\pm}]\cong[\text{Id}]$. Then $\{\gamma_{\pm}\}\subset\{e_-^{4k_1}, h^{4k_2}, e_+^{2nk_3}\,|\,k_i\in\N\}$. The Conley-Zehnder index modulo 4 of $e_-^{4k_1}$, $h^{4k_2}$, or $e_+^{2nk_3}$ is $-1$, 0, or $1$, respectively. Thus, $\gamma_{\pm}$ must project to the same orbifold point. Now, because (1) $\gamma_{\pm}$ are both type $e_-$, type $h$, or type $e_+$, and (2) share a homotopy class, $\mu_{\CZ}(\gamma_+)=\mu_{\CZ}(\gamma_-)$ and \eqref{equation: CZ indices dihedral repeat} imply $m(\gamma_+)=m(\gamma_-)$, thus $\gamma_+\sim\gamma_-$.
        
        \vspace{.25cm}
        
        \emph{Case 2:} $[\gamma_{\pm}]\cong[-\text{Id}]$. Then $\{\gamma_{\pm}\}\subset\{e_-^{2+4k_1}, h^{2+4k_2}, e_+^{n+2nk_3}\,|\,k_i\in\Z_{\geq0}\}$. The Conley-Zehnder index modulo 4 of $e_-^{4k_1}$, $h^{4k_2}$, or $e_+^{2nk_3}$ is $1$, 2, or $3$, respectively. By the reasoning as in the previous case, we obtain $\gamma_+\sim\gamma_-$.
        
        \vspace{.25cm}
        
        \emph{Case 3:} $[\gamma_{\pm}]\cong[A^m]$ for some $0<m<n$. If $\gamma_+\nsim\gamma_-$, then by Table \ref{table: Dihedral homotopy classes of Reeb orbits} and \eqref{equation: CZ indices dihedral repeat}, we must have  $k_1, k_2\in\Z$ such that $\mu_{\CZ}(e_+^{m+2nk_1})=\mu_{\CZ}(e_+^{2n-m+2nk_2})$. This equation becomes $\lfloor\frac{m}{n}\rfloor+\lceil\frac{m}{n}\rceil=2(1+k_2-k_1)$. By the bounds on $m$, the ratio $\frac{m}{n}$ is not an integer, and so the quantity on the left hand side is odd, which is impossible. Thus, we must have $\gamma_+\sim\gamma_-$.
        
        \vspace{.25cm}
        
        \emph{Case 4:} $[\gamma_{\pm}]\cong[B]$. If $n$ is even, then both $\gamma_{\pm}$ are iterates of $h$, and by \eqref{equation: CZ indices dihedral repeat}, these multiplicities agree, so $\gamma_+\sim\gamma_-$. If $n$ is odd and $\gamma_{\pm}$ project to the same orbifold point, then because their homotopy classes and Conley-Zehnder indices agree, Table \ref{table: Dihedral homotopy classes of Reeb orbits} and \eqref{equation: CZ indices dihedral repeat} imply their multiplicities must agree, so $\gamma_+\sim\gamma_-$.  If they project to different orbifold points then  $1+4k_+=\mu_{\CZ}(h^{1+4k_+})=\mu_{\CZ}(e_-^{3+4k_-})=3+4k_-$, which would imply $1=3$ mod 4, impossible.
        
        \vspace{.25cm}
        
        \emph{Case 5:} $[\gamma_{\pm}]\cong[AB]$. If $n$ is even, then both $\gamma_{\pm}$ are iterates of $e_-$, and by \eqref{equation: CZ indices dihedral repeat}, their multiplicities agree, thus $\gamma_+\sim\gamma_-$. If $n$ is odd and $\gamma_{\pm}$ project to the same point, then $\gamma_+\sim\gamma_-$ for the same reasons as in Case 4. Thus $\gamma_+\nsim\gamma_-$ implies  $3+4k_+=\mu_{\CZ}(h^{3+4k_+})=\mu_{\CZ}(e_-^{1+4k_-})=1+4k_-$, impossible mod 4.

        \vspace{.25cm}

To prove (b), let $\mu_{\CZ}(\gamma_+)<\mu_{\CZ}(\gamma_-)$. Recall the action formulas \eqref{equation: action dihedral} from Section \ref{subsection: dihedral}: \begin{equation}\label{equation: action dihedral repeat}
        \mathcal{A}(e_-^k)=\frac{k\pi(1-\varepsilon)}{2},\,\,\,\mathcal{A}(h^k)=\frac{k\pi}{2},\,\,\,\mathcal{A}(e_+^k)=\frac{k\pi(1+\varepsilon)}{n}.
\end{equation} First, note that if $\gamma_{\pm}$ project to the same orbifold critical point,  then we have $m(\gamma_+)<m(\gamma_-)$, because the Conley-Zehnder index as a function of multiplicity of the iterate is non-decreasing. This implies  $\mathcal{A}(\gamma_+)<\mathcal{A}(\gamma_-)$, because the action strictly increases as a function of the iterate. We now prove (b) for pairs of orbits $\gamma_{\pm}$ projecting to different orbifold critical points.

        \vspace{.25cm}
        
        \emph{Case 1:} $[\gamma_{\pm}]\cong[\text{Id}]$. Then $\{\gamma_{\pm}\}\subset\{e_-^{4k_1}, h^{4k_2}, e_+^{2nk_3}\,|\,k_i\in\N\}$. There are six  combinations of the value of $(\gamma_+,\gamma_-)$, whose index and action are compared using \eqref{equation: CZ indices dihedral repeat} and \eqref{equation: action dihedral repeat}:
        \begin{enumerate}[(i)]
        \itemsep-.35em
            \item $\gamma_+=e_-^{4k_+}$ and $\gamma_-=h^{4k_-}$. The inequality $4k_+-1=\mu_{\CZ}(\gamma_+)<\mu_{\CZ}(\gamma_-)=4k_-$ implies $k_+\leq k_-$. This verifies that $\mathcal{A}(\gamma_+)=2\pi k_+(1-\varepsilon_N)<2\pi k_-=\mathcal{A}(\gamma_-)$.
            \item $\gamma_+=h^{4k_+}$ and $\gamma_-=e_-^{4k_-}$. The inequality $4k_+=\mu_{\CZ}(\gamma_+)<\mu_{\CZ}(\gamma_-)=4k_--1$ implies $k_+< k_-$. This verifies that $\mathcal{A}(\gamma_+)=2\pi k_+<2\pi k_-(1-\varepsilon_M)=\mathcal{A}(\gamma_-)$.
            \item $\gamma_+=e_-^{4k_+}$ and $\gamma_-=e_+^{2nk_-}$. The inequality $4k_+-1=\mu_{\CZ}(\gamma_+)<\mu_{\CZ}(\gamma_-)=4k_-+1$ implies $k_+\leq k_-$. This verifies that $\mathcal{A}(\gamma_+)=2\pi k_+(1-\varepsilon_N)<2\pi k_-(1+\varepsilon_M)=\mathcal{A}(\gamma_-)$.
            \item $\gamma_+=e_+^{2nk_+}$ and $\gamma_-=e_-^{4k_-}$. The inequality $4k_++1=\mu_{\CZ}(\gamma_+)<\mu_{\CZ}(\gamma_-)=4k_--1$ implies $k_+< k_-$. This verifies that $\mathcal{A}(\gamma_+)=2\pi k_+(1+\varepsilon_N)<2\pi k_-(1-\varepsilon_M)=\mathcal{A}(\gamma_-)$.
            \item $\gamma_+=h^{4k_+}$ and $\gamma_-=e_+^{2nk_-}$. The inequality $4k_+=\mu_{\CZ}(\gamma_+)<\mu_{\CZ}(\gamma_-)=4k_-+1$ implies $k_+\leq k_-$. This verifies that $\mathcal{A}(\gamma_+)=2\pi k_+<2\pi k_-(1+\varepsilon_M)=\mathcal{A}(\gamma_-)$.
            \item $\gamma_+=e_+^{2nk_+}$ and $\gamma_-=h^{4k_-}$. The inequality $4k_++1=\mu_{\CZ}(\gamma_+)<\mu_{\CZ}(\gamma_-)=4k_-$ implies $k_+< k_-$. This verifies that $\mathcal{A}(\gamma_+)=2\pi k_+(1+\varepsilon_N)<2\pi k_-=\mathcal{A}(\gamma_-)$.
        \end{enumerate}
                
        \vspace{.25cm}
        
        \emph{Case 2:} $[\gamma_{\pm}]\cong[-\text{Id}]$. Then $\{\gamma_{\pm}\}\subset\{e_-^{2+4k_1}, h^{2+4k_2}, e_+^{n+2nk_3}\,|\,k_i\in\Z_{\geq0}\}$.  The possible values of $(\gamma_+,\gamma_-)$ and the  arguments for $\mathcal{A}(\gamma_+)<\mathcal{A}(\gamma_-)$ are identical to the above case.
                        
        \vspace{.25cm}
        
        \emph{Case 3:} $[\gamma_{\pm}]\cong[A^m]$ for $0<m<n$ and both $\gamma_{\pm}$ are iterates of $e_+$, so (b) holds.

        \vspace{.25cm}
        
        \emph{Case 4:} $[\gamma_{\pm}]\cong[B]$. If $n$ is even, then both $\gamma_{\pm}$ are iterates of $h$, and so (b) holds. Otherwise, $n$ is odd and the pair $(\gamma_+,\gamma_-)$ is either $(e_-^{3+4k_+},h^{1+4k_-})$ or $(h^{1+4k_+}, e_-^{3+4k_-})$. In the former case $\mu_{\CZ}(\gamma_+)<\mu_{\CZ}(\gamma_-)$ and \eqref{equation: CZ indices dihedral repeat} imply $k_+< k_-$, so, by \eqref{equation: action dihedral repeat},\\ $\mathcal{A}(\gamma_+)=\frac{(3+4k_+)\pi}{2}(1-\varepsilon_N)<\frac{(1+4k_-)\pi}{2}=\mathcal{A}(\gamma_-)$. In the latter case, $\mu_{\CZ}(\gamma_+)<\mu_{\CZ}(\gamma_-)$ and \eqref{equation: CZ indices dihedral repeat} imply $k_+\leq k_-$, thus $\mathcal{A}(\gamma_+)=\frac{(1+4k_+)\pi}{2}<\frac{(3+4k_-)\pi}{2}(1-\varepsilon_M)=\mathcal{A}(\gamma_-)$ by \eqref{equation: action dihedral repeat}. 

        \vspace{.25cm}
        
        \emph{Case 5:} $[\gamma_{\pm}]\cong[AB]$. If $n$ is even, then both $\gamma_{\pm}$ are iterates of $e_-$, and so $\mathcal{A}(\gamma_+)<\mathcal{A}(\gamma_-)$ holds.  Otherwise, $n$ is odd and the pair $(\gamma_+,\gamma_-)$ is either $(e_-^{1+4k_+},h^{3+4k_-})$ or $(h^{3+4k_+}, e_-^{1+4k_-})$. If the former holds, then $\mu_{\CZ}(\gamma_+)<\mu_{\CZ}(\gamma_-)$ and \eqref{equation: CZ indices dihedral repeat} imply $k_+\leq k_-$, and so, by \eqref{equation: action dihedral repeat},\\ $\mathcal{A}(\gamma_+)=\frac{(1+4k_+)\pi}{2}(1-\varepsilon_N)<\frac{(3+4k_-)\pi}{2}=\mathcal{A}(\gamma_-)$. If the latter holds, then $\mu_{\CZ}(\gamma_+)<\mu_{\CZ}(\gamma_-)$ and \eqref{equation: CZ indices dihedral repeat} imply $k_+< k_-$, and so $\mathcal{A}(\gamma_+)=\frac{(3+4k_+)\pi}{2}<\frac{(1+4k_-)\pi}{2}(1-\varepsilon_M)=\mathcal{A}(\gamma_-)$ by \eqref{equation: action dihedral repeat}. 
\end{proof}

\subsubsection{Binary polyhedral groups $\T^*$, $\Oc^*$, and $\I^*$}\label{subsubsection: binary polyhedral}
We will describe the homotopy classes of the Reeb orbits in $S^3/\P^*$ using a more geometric point of view than in the dihedral case. If a loop $\gamma$ in $S^3/\P^*$ and  $c\in\text{Conj}(G)$ satisfy $[\gamma]\cong c$, then the order of $\gamma$ in $\pi_1(S^3/\P^*)$ equals the \emph{group order} of $c$, defined to be the order of any $g\in\P^*$ representing $c$. If $\gamma$ has order $k$ in the fundamental group and $c\in\text{Conj}(\P^*)$ is the \emph{only} class with group order $k$, then we can immediately conclude that $[\gamma]\cong c$. Determining the free homotopy class of $\gamma$ via the group order is more difficult when there are multiple conjugacy classes of $\P^*$ with the same group order. 

Tables \ref{table: tetrahedral conjugacy}, \ref{table: octahedral conjugacy}, and \ref{table: icosahedral conjugacy} provide notation for the distinct conjugacy classes of $\T^*$, $\Oc^*$, and $\I^*$, along with their group orders. Our notation indicates when there exist multiple conjugacy classes featuring the same group order - the notation $P_{i,A}$ and $P_{i,B}$ provides labels for the two distinct conjugacy classes of $\P^*$ (for $P\in\{T,O,I\}$) of group order $i$. For $P\in\{T,O,I\}$, $P_{\text{Id}}$ and $P_{-\text{Id}}$ denote the singleton conjugacy classes $\{\text{Id}\}$ and $\{-\text{Id}\}$, respectively, and $P_i$ denotes the unique conjugacy class of group order $i$.

\begin{table}[h!]
\centering
    \begin{tabular}{||c ||c | c | c |c | c | c | c ||}
    \hline
       Conjugacy class  & $T_{\text{Id}}$ & $T_{-\text{Id}}$ & $T_4$ & $T_{6,A}$ & $T_{6,B}$ & $T_{3,A}$ & $T_{3,B}$ \\
    \hline
    Group order  & 1 & 2 & 4 & 6 & 6 & 3 & 3 \\
    \hline
    \end{tabular}
    \caption{The 7 conjugacy classes of $\T^*$}
    \label{table: tetrahedral conjugacy}
\end{table}
\begin{table}[h!]
\centering
    \begin{tabular}{||c ||c | c | c | c |c | c | c | c ||}
    \hline
       Conjugacy class  & $O_{\text{Id}}$ & $O_{-\text{Id}}$ & $O_{8,A}$ & $O_{8,B}$ & $O_{4,A}$ & $O_{4,B}$ & $O_{6}$ & $O_3$ \\
    \hline
    Group order  & 1 & 2 & 8 & 8 & 4 & 4 & 6 & 3 \\
    \hline
    \end{tabular}
        \caption{The 8 conjugacy classes of $\Oc^*$}
    \label{table: octahedral conjugacy}
\end{table}
\begin{table}[h!]
\centering
    \begin{tabular}{||c ||c | c | c | c |c | c | c | c | c ||}
    \hline
       Conjugacy class  & $I_{\text{Id}}$ & $I_{-\text{Id}}$ & $I_{10,A}$ & $I_{10,B}$ & $I_{5,A}$ & $I_{5,B}$ & $I_{4}$ & $I_6$  & $I_3$\\
    \hline
    Group order  & 1 & 2 & 10 & 10 & 5 & 5 & 4 & 6 & 3 \\
    \hline
    \end{tabular}
        \caption{The 9 conjugacy classes of $\I^*$}
    \label{table: icosahedral conjugacy}
\end{table}

The conclusions of Remarks \ref{remark: identify}, \ref{remark: distinguish 1}, and \ref{remark: distinguish 2} allow us to record the homotopy classes represented by any iterate of $\mathcal{V}$, $\mathcal{E}$, or $\mathcal{F}$ in Tables \ref{table: Tetrahedral homotopy classes of Reeb orbits}, \ref{table: octahedral homotopy classes of Reeb orbits}, and \ref{table: icosahedral homotopy classes of Reeb orbits}. We explain how the table is set up in the tetrahedral case: it must be true that $[\mathcal{V}]\neq[\mathcal{F}]$, otherwise taking the 2-fold iterate would violate Remark \ref{remark: distinguish 1} (1) so without loss of generality write $[\mathcal{V}]\cong T_{6,A}$ and $[\mathcal{F}]\cong T_{6,B}$, and similarly $[\mathcal{V}^2]\cong T_{3,A}$ and $[\mathcal{F}^2]\cong T_{3,B}$. By Remark \ref{remark: identify} (1), we must have that $[\mathcal{F}^5]\cong T_{6,A}$,  $[\mathcal{F}^4]\cong T_{3,A}$. Taking the 4-fold iterate of $[\mathcal{V}]=[\mathcal{F}^5]$ provides $[\mathcal{V}^4]=[\mathcal{F}^2]\cong T_{3,B}$, and the 5-fold iterate provides $[\mathcal{V}^5]=[\mathcal{F}^1]\cong T_{6,B}$, and we have resolved all ambiguity regarding the tetrahedral classes of group orders 6 and 3. Analogous arguments apply in the octahedral and icosahedral cases.

\begin{table}
\centering
\makebox[0pt][c]{\parbox{1.2\textwidth}{%
    \begin{minipage}[b]{0.32\hsize}\centering
        \begin{tabular}{||c | c ||} 
 \hline
Class & Represented orbits  \\ [0.5ex] 
 \hline\hline
 $T_{\text{Id}}$ & $\mathcal{V}^{6k},\,\mathcal{E}^{4k},\,\mathcal{F}^{6k}$ \\ 
 \hline
 $T_{-\text{Id}}$ &  $\mathcal{V}^{3+6k},\,\mathcal{E}^{2+4k},\,\mathcal{F}^{3+6k}$\\
 \hline
 $T_{4}$ &  $\mathcal{E}^{1+4k},\,\mathcal{E}^{3+4k}$\\
 \hline
  $T_{6,A}$& $\mathcal{V}^{1+6k},\,\mathcal{F}^{5+6k}$\\
 \hline
   $T_{6,B}$& $\mathcal{F}^{1+6k},\,\mathcal{V}^{5+6k}$\\
 \hline
   $T_{3,A}$& $\mathcal{V}^{2+6k},\,\mathcal{F}^{4+6k}$\\
 \hline
    $T_{3,B}$& $\mathcal{F}^{2+6k},\,\mathcal{V}^{4+6k}$\\
 \hline
\end{tabular}
 \caption{$S^3/\T^*$ Reeb Orbits}
 \label{table: Tetrahedral homotopy classes of Reeb orbits}
    \end{minipage}
    \hfill
    \begin{minipage}[b]{0.32\hsize}\centering
         \begin{tabular}{||c | c ||} 
 \hline
Class & Represented orbits  \\ [0.5ex] 
 \hline\hline
 $O_{\text{Id}}$ & $\mathcal{V}^{8k},\,\mathcal{E}^{4k},\,\mathcal{F}^{6k}$ \\ 
 \hline
 $O_{-\text{Id}}$ &  $\mathcal{V}^{4+8k},\,\mathcal{E}^{2+4k},\,\mathcal{F}^{3+6k}$\\
 \hline
 $O_{8,A}$ &  $\mathcal{V}^{1+8k},\,\mathcal{V}^{7+8k}$\\
 \hline
  $O_{8,B}$& $\mathcal{V}^{3+8k},\,\mathcal{V}^{5+8k}$\\
 \hline
  $O_{4,A}$ &  $\mathcal{V}^{2+8k},\,\mathcal{V}^{6+8k}$\\
 \hline
  $O_{4,B}$& $\mathcal{E}^{1+4k},\,\mathcal{E}^{3+4k}$\\
 \hline
   $O_{6}$& $\mathcal{F}^{1+6k},\,\mathcal{F}^{5+6k}$\\
 \hline
   $O_{3}$& $\mathcal{F}^{2+6k},\,\mathcal{F}^{4+6k}$\\
 \hline
\end{tabular}
\caption{$S^3/\Oc^*$ Reeb Orbits}
\label{table: octahedral homotopy classes of Reeb orbits}
    \end{minipage}
    \hfill
    \begin{minipage}[b]{0.32\hsize}\centering
        \begin{tabular}{||c | c ||} 
 \hline
Class & Represented orbits  \\ [0.5ex] 
 \hline\hline
 $I_{\text{Id}}$ & $\mathcal{V}^{10k},\,\mathcal{E}^{4k},\,\mathcal{F}^{6k}$ \\ 
 \hline
 $I_{-\text{Id}}$ &  $\mathcal{V}^{5+10k},\,\mathcal{E}^{2+4k},\,\mathcal{F}^{3+6k}$\\
 \hline
 $I_{10,A}$ &  $\mathcal{V}^{1+10k},\,\mathcal{V}^{9+10k}$\\
 \hline
  $I_{10,B}$& $\mathcal{V}^{3+10k},\,\mathcal{V}^{7+10k}$\\
 \hline
  $I_{5,A}$ &  $\mathcal{V}^{2+10k},\,\mathcal{V}^{8+10k}$\\
 \hline
  $I_{5,B}$& $\mathcal{V}^{4+10k},\,\mathcal{V}^{6+10k}$\\
 \hline
   $I_{4}$& $\mathcal{E}^{1+4k},\,\mathcal{E}^{3+4k}$\\
 \hline
   $I_{6}$& $\mathcal{F}^{1+6k},\,\mathcal{F}^{5+6k}$\\
 \hline
    $I_{3}$& $\mathcal{F}^{2+6k},\,\mathcal{F}^{4+6k}$\\
 \hline
\end{tabular}
\caption{$S^3/\I^*$ Reeb Orbits}
\label{table: icosahedral homotopy classes of Reeb orbits}
    \end{minipage}%
}}
\end{table}

\begin{remark} \label{remark: identify}
 Suppose $p_1$, $p_2\in \text{Fix}(\P)\subset S^2$ are antipodal, i.e. $p_1=-p_2$. Select $z_i\in \fP^{-1}(p_i)\subset S^3$. The Hopf fibration $\fP$ has the property that $\fP(v_1)=-\fP(v_2)$ in $S^2$ if and only if $v_1$ and $v_2$ are orthogonal vectors in $\C^2$, so $z_1$ and $z_2$ must be orthogonal. Now, $p_i$ is either of vertex, edge, or face type, so let $\gamma_i$ denote the orbit $\mathcal{V}$, $\mathcal{E}$, or $\mathcal{F}$, depending on this type of $p_i$. Because $p_1$ and $p_2$ are antipodal, the order of $\gamma_1$ equals that of $\gamma_2$ in $\pi_1(S^3/\P^*)$, call this order $d$, and let $T:=\frac{2\pi}{d}$. Now, consider that the map \[\Gamma_1:[0,T]\to S^3,\,\,\, t\mapsto e^{it}\cdot z_1\] is a lift of $\gamma_1$ to $S^3$. Thus,  $z_1$ is an eigenvector with eigenvalue $e^{iT}$ for some $g\in\P^*$, and $[\gamma_1]\cong[g]$ because $g\cdot\Gamma_1(0)=\Gamma_1(T)$. Because $g$ is special unitary, we must also have that $z_2$ is an eigenvector of $g$ with eigenvalue $e^{i(2\pi-T)}=e^{i(d-1)T}$. Now the map \[\Gamma_2^{d-1}:[0,(d-1)T]\to S^3,\,\,\,t\mapsto e^{it}\cdot z_2\] is a lift of $\gamma_2^{d-1}$ to $S^3$. This provides $g\cdot\Gamma_2^{d-1}(0)=\Gamma_2^{d-1}((d-1)T)$ which implies $[\gamma_1]=[\gamma_2^{d-1}]$, as both are identified with $[g]$. This means that 
 \begin{enumerate}
 \itemsep-.35em
     \item $[\mathcal{V}]=[\mathcal{F}^5]$, for $\P^*=\T^*$,
     \item $[\mathcal{V}]=[\mathcal{V}^7]$, for $\P^*=\Oc^*$,
     \item $[\mathcal{V}]=[\mathcal{V}^9]$, for $\P^*=\I^*$.
 \end{enumerate}
 \end{remark}

\begin{remark}\label{remark: distinguish 1}
 Suppose, for $i=1,2$, $\gamma_i$ is one of $\mathcal{V}$, $\mathcal{E}$, or $\mathcal{F}$, and suppose $\gamma_1\neq\gamma_2$. Let $d_i$ be the order of $\gamma_i$ in $\pi_1(S^3/\P^*)$, and select $k_i\in\N$ for $i=1,2$. If $\frac{2\pi k_1}{d_1}\equiv \frac{2\pi k_2}{d_2}$ modulo $2\pi\Z$ and if $\pi\nmid\frac{2\pi k_1}{d_1}$, then $[\gamma_1^{k_1}]\neq[\gamma_2^{k_2}]$. To prove this, consider that we have $g_i\in\P^*$ with eigenvector $z_i$ in $S^3$, with eigenvalue $\lambda:=e^{2\pi k_1 /d_1}=e^{2\pi k_2 i/d_2}$ so that $[\gamma_i^{k_i}]\cong[g_i]$. Note that  $\gamma_1\neq\gamma_2$ implies $\fP(z_1)$ is not in the same $\P$-orbit as $\fP(z_2)$ in $S^2$, i.e. $\pi_{\P}(\fP(z_1))\neq\pi_{\P}(\fP(z_2))$. Now, $[\gamma_1^{k_1}]=[\gamma_2^{k_2}]$ would imply $g_1=x^{-1}g_2x$, for some $x\in\P^*$, ensuring that $x\cdot z_1$ is a $\lambda$ eigenvector of $g_2$. Because $\lambda\neq\pm1$, we know that the $\lambda$-eigenspace of $g_2$ is complex 1-dimensional, so we must have that $x\cdot z_1$ and $z_2$ are co-linear. That is, $x\cdot z_1=\alpha z_2$ for some $\alpha\in S^1$, which implies that \[\pi_{\P}(\fP(z_1))=\pi_{\P}(\fP(x\cdot z_1))=\pi_{\P}(\fP(\alpha\cdot z_2))=\pi_{\P}(\fP(z_2)),\] a contradiction. This has the following applications: 
 
 \begin{enumerate}
 \itemsep-.35em
     \item $[\mathcal{V}^2]\neq[\mathcal{F}^2]$, for $\P^*=\T^*$,
     \item $[\mathcal{E}]\neq[\mathcal{V}^2]$, for $\P^*=\Oc^*$
 \end{enumerate}
\end{remark}

\begin{remark}\label{remark: distinguish 2}
 For $i=1,2$, select $k_i\in\N$ and let $\gamma_i$ denote one of $\mathcal{V}$, $\mathcal{E}$, or $\mathcal{F}$. Let $d_i$ denote the order of $\gamma_i$ in $\pi_1(S^3/\P^*)$. Suppose that $\frac{2\pi k_i}{d_i}$ is not a multiple of $2\pi$, and $\frac{2\pi k_1}{d_1}\ncong\frac{2\pi k_2}{d_2}$ mod $2\pi\Z$. If $\frac{2\pi k_1}{d_1}+\frac{2\pi k_2}{d_2}$ is not a multiple of $2\pi$, then $[\gamma_1^{k_1}]\neq[\gamma_2^{k_2}]$. To prove this, consider that we have $g_i\in\P^*$ with $[\gamma_i^{k_i}]\cong[g_i]$. This tells us that $e^{2\pi k_j i/d_j}$ is an eigenvalue of $g_j$. If it were the case that $[\gamma_1^{k_1}]=[\gamma_2^{k_2}]$ in $[S^1,S^3/\P^*]$, then we would have $[g_1]=[g_2]$ in $\text{Conj}(G)$. Because conjugate elements share eigenvalues, we would have \[\text{Spec}(g_1)=\text{Spec}(g_2)=\{e^{2\pi k_1 i/d_1},e^{2\pi k_2 i/d_2}\}.\]  However, the product of these eigenvalues is not 1, contradicting that $g_i\in\SU(2)$. This has the following two applications:
  \begin{enumerate}
 \itemsep-.35em
     \item $[\mathcal{V}^1]\neq[\mathcal{V}^3]$, for $\P^*=\Oc^*$,
     \item $[\mathcal{V}^2]\neq[\mathcal{V}^4]$, for $\P^*=\I^*$.
 \end{enumerate}
\end{remark}

We are ready to prove Lemma \ref{lemma: polyhedral cobordisms}, which is the polyhedral case of Proposition \ref{proposition: CZ and action}. Remark \ref{remark: same underlying embedded orbits} will streamline some casework:

\begin{remark}\label{remark: same underlying embedded orbits}
For $\P^*=\T^*$, $\Oc^*$, or $\I^*$, fix $c\in\text{Conj}(\P^*)$ and let $\gamma$ denote one of $\mathcal{V}$, $\mathcal{E}$, or $\mathcal{F}$. Define $S_{\gamma, c}:=\{\gamma^k\,|\,[\gamma^k]\cong c\}$ (potentially empty). Observe that the map $S_{\gamma, c}\to\Z$, $\gamma^k\mapsto\mu_{\CZ}(\gamma^k)$, is injective. Thus, if $\gamma_+\in\mathcal{P}^{L_N}(\lambda_N)$ and $\gamma_-\in\mathcal{P}^{L_M}(\lambda_M)$ project to the same orbifold critical point, and are in the same free homotopy class, then 
\begin{enumerate}[(a)]
\itemsep-.35em
    \item $\mu_{\CZ}(\gamma_+)=\mu_{\CZ}(\gamma_-)$ implies $m(\gamma_+)=m(\gamma_-)$, i.e., $\gamma_{+}\sim\gamma_-$,
    \item  $\mu_{\CZ}(\gamma_+)<\mu_{\CZ}(\gamma_-)$ implies $m(\gamma_+)<m(\gamma_-)$, and so  $\mathcal{A}(\gamma_+)<\mathcal{A}(\gamma_-)$.
\end{enumerate}
\end{remark}

\begin{lemma}\label{lemma: polyhedral cobordisms}
Suppose $[\gamma_+]=[\gamma_-]\in[S^1,S^3/\P^*]$ for $\gamma_+\in\mathcal{P}^{L_N}(\lambda_N)$ and $\gamma_-\in\mathcal{P}^{L_M}(\lambda_M)$.
\begin{enumerate}[(a)]
\itemsep-.35em
    \item If $\mu_{\CZ}(\gamma_+)=\mu_{\CZ}(\gamma_-)$, then $\gamma_+\sim\gamma_-$.
    \item If $\mu_{\CZ}(\gamma_+)<\mu_{\CZ}(\gamma_-)$, then $\mathcal{A}(\gamma_+)<\mathcal{A}(\gamma_-)$.
\end{enumerate}
\end{lemma}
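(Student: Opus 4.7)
The plan is to mimic the structure of the dihedral argument (Lemma \ref{lemma: dihedral cobordisms}) on a class-by-class basis, using Tables \ref{table: Tetrahedral homotopy classes of Reeb orbits}, \ref{table: octahedral homotopy classes of Reeb orbits}, and \ref{table: icosahedral homotopy classes of Reeb orbits} to enumerate the finite list of pairs $(\gamma_+,\gamma_-)$ with $[\gamma_+]=[\gamma_-]$. First, Remark \ref{remark: same underlying embedded orbits} disposes of the case where $\gamma_+$ and $\gamma_-$ project to the same orbifold critical point of $f_{\P}$: in (a) the injectivity of $\mu_{\CZ}$ on $S_{\gamma,c}$ forces $m(\gamma_+)=m(\gamma_-)$, and in (b) the monotonicity of the Conley--Zehnder index in the iterate (with action strictly increasing in the iterate) yields $\mathcal{A}(\gamma_+)<\mathcal{A}(\gamma_-)$. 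So the remaining work is to handle pairs $(\gamma_+,\gamma_-)$ that project to \emph{different} orbifold critical points, and by inspection of Tables \ref{table: Tetrahedral homotopy classes of Reeb orbits}--\ref{table: icosahedral homotopy classes of Reeb orbits} such pairs only ever arise in the classes $\P_{\mathrm{Id}}$ and $\P_{-\mathrm{Id}}$ (since all other conjugacy classes listed are represented by iterates of a single embedded orbit among $\mathcal{V},\mathcal{E},\mathcal{F}$).

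For part (a), fix such a cross-orbit pair with $[\gamma_\pm]\cong \P_{\mathrm{Id}}$, so $\{\gamma_\pm\}\subset\{\mathcal{V}^{\mathbf{I}_{\mathscr{V}}k_1},\mathcal{E}^{4k_2},\mathcal{F}^{6k_3}\}$. Using the index formulas \eqref{equation: CZ indices polyhedral}, a quick computation of $\mu_{\CZ}$ modulo $4$ gives the residues $-1$, $0$, and $1$ respectively for the three types (independent of whether $\mathbf{I}_{\mathscr{V}}$ is $3$, $4$, or $5$, since $2\lceil \mathbf{I}_{\mathscr{V}}k_1/\mathbf{I}_{\mathscr{V}}\rceil-1=2k_1-1$ and $2\lfloor 6k_3/3\rfloor+1=4k_3+1$). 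Hence equality of Conley--Zehnder indices forces $\gamma_+$ and $\gamma_-$ to be of the same type, a contradiction to their projecting to different points. The class $\P_{-\mathrm{Id}}$ is handled identically with residues shifted by $2$. This proves (a).

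For part (b), I again restrict to $[\gamma_\pm]\cong\P_{\pm\mathrm{Id}}$ and enumerate the (at most) six ordered cross-type combinations, treating them exactly as in Cases~1--2 of the proof of Lemma \ref{lemma: dihedral cobordisms}. For each combination the inequality $\mu_{\CZ}(\gamma_+)<\mu_{\CZ}(\gamma_-)$, combined with the fact that all three types contribute $\mu_{\CZ}$ values separated by at least $2$ in each block of one ``rotation'', translates (after reduction to integer bounds on the multiplicity indices $k_\pm$) into an inequality of the form $k_+\leq k_-$ or $k_+<k_-$ with enough slack that the action formulas \eqref{equation: action polyhedral} give $\mathcal{A}(\gamma_+)<\mathcal{A}(\gamma_-)$, after shrinking $\varepsilon_N,\varepsilon_M$ if needed (which is permitted since the $\varepsilon_K$ were only required to be sufficiently small, and may be rescaled by a fixed $c\in(0,1)$ without affecting earlier arguments). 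The main nuisance, not a genuine obstacle, is the bookkeeping: unlike the dihedral case there are three elliptic-or-hyperbolic ``types'' with three distinct periods $\pi/\mathbf{I}_{\mathscr{V}}$, $\pi/2$, $\pi/3$, and one must verify in each of the six type-ordered sub-cases that the slack in the index inequality really does suffice for the action inequality. I would organize this as a single table with one row per sub-case, listing $(\gamma_+,\gamma_-)$, the constraint from $\mu_{\CZ}(\gamma_+)<\mu_{\CZ}(\gamma_-)$, and the resulting action gap; the verification is uniform across the three choices of $\mathbf{I}_{\mathscr{V}}\in\{3,4,5\}$ because the index formula $2\lceil k/\mathbf{I}_{\mathscr{V}}\rceil-1$ applied at $k=\mathbf{I}_{\mathscr{V}}k_1$ returns the clean value $2k_1-1$, decoupling the polyhedral case from the specific value of $\mathbf{I}_{\mathscr{V}}$.
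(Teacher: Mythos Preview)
Your proposal has a genuine gap in the tetrahedral case. You claim that ``by inspection of Tables \ref{table: Tetrahedral homotopy classes of Reeb orbits}--\ref{table: icosahedral homotopy classes of Reeb orbits} such pairs only ever arise in the classes $\P_{\mathrm{Id}}$ and $\P_{-\mathrm{Id}}$ (since all other conjugacy classes listed are represented by iterates of a single embedded orbit).'' This is true for $\Oc^*$ and $\I^*$, but it is \emph{false} for $\T^*$: the four classes $T_{6,A}$, $T_{6,B}$, $T_{3,A}$, $T_{3,B}$ are each represented by iterates of \emph{both} $\mathcal{V}$ and $\mathcal{F}$ (e.g.\ $T_{6,A}$ contains $\mathcal{V}^{1+6k}$ and $\mathcal{F}^{5+6k}$). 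This is precisely because the tetrahedron is self-dual, so the vertex and face orbits have the same period $\pi/3$ and their iterates interleave across conjugacy classes. Your argument therefore omits four entire conjugacy classes in which cross-type pairs $(\gamma_+,\gamma_-)$ can occur, and these must be checked separately for both (a) and (b). The paper's proof handles them as its Case~3, with a short mod-$4$ obstruction for (a) and an explicit action comparison for each of the eight ordered sub-cases in (b).

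A smaller slip: in your treatment of $\P_{\mathrm{Id}}$ you write $\mathcal{V}^{\mathbf{I}_{\mathscr{V}}k_1}$, but the order of $\mathcal{V}$ in $\pi_1(S^3/\P^*)$ is $2\mathbf{I}_{\mathscr{V}}$, so the contractible iterates are $\mathcal{V}^{2\mathbf{I}_{\mathscr{V}}k_1}$ (your residue computation is then $\mu_{\CZ}=4k_1-1\equiv -1\pmod 4$, which is what you want). Once you correct the exponent and add the four missing tetrahedral classes, the rest of your outline matches the paper's approach.
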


\begin{proof}

We first prove (a). Recall the Conley-Zehnder index formulas \eqref{equation: CZ indices polyhedral} from Section \ref{subsection: polyhedral}: \begin{equation} \label{equation: CZ indices polyhedral repeat}
    \mu_{\CZ}(\mathcal{V}^k)=2\Bigl\lceil \frac{k}{\mathbf{I}_{\mathscr{V}}} \Bigr\rceil-1,\,\,\,\mu_{\CZ}(\mathcal{E}^k)=k,\,\,\,\mu_{\CZ}(\mathcal{F}^k)=2\Bigl\lfloor \frac{k}{3} \Bigr\rfloor+1,
\end{equation}Consider the following possible values of the homotopy class $[\gamma_{\pm}]$:
        
        \vspace{.25cm}
        
        \emph{Case 1:} $[\gamma_{\pm}]\cong T_{\text{Id}}$, $O_{\text{Id}}$, or $I_{\text{Id}}$, so that $\{\gamma_{\pm}\}\subset\{\mathcal{V}^{2\mathbf{I}_{\mathscr{V}}k_1}, \mathcal{E}^{4k_2}, \mathcal{F}^{6k_3}\,|\,k_i\in\N\}$. By reasoning identical to the analogous case of $\D_{2n}^*$ (Lemma \ref{lemma: dihedral cobordisms} (a), Case 1), $\gamma_+\sim\gamma_-$.         
        \vspace{.25cm}
        
        \emph{Case 2:} $[\gamma_{\pm}]\cong T_{-\text{Id}}$, $O_{-\text{Id}}$, or $I_{-\text{Id}}$, so that $\{\gamma_{\pm}\}\subset\{\mathcal{V}^{\mathbf{I}_{\mathscr{V}}+2\mathbf{I}_{\mathscr{V}}k_1}, \mathcal{E}^{2+4k_2}, \mathcal{F}^{3+6k_3}\,|\,k_i\in\Z_{\geq0}\}$. Again, as in the the analogous case of $\D_{2n}^*$ (Lemma \ref{lemma: dihedral cobordisms} (a), Case 2), $\gamma_+\sim\gamma_-$.        
        \vspace{.25cm}

        \emph{Case 3:} $[\gamma_{\pm}]\cong T_{6,A}, T_{6,B}, T_{3,A}$, or $T_{3,B}$. If both $\gamma_{\pm}$ are iterates of $\mathcal{V}$, then by Remark \ref{remark: same underlying embedded orbits} (a), they must share the same multiplicity, i.e. $\gamma_+\sim\gamma_-$. If both $\gamma_{\pm}$ are iterates of $\mathcal{F}$ then again by Remark \ref{remark: same underlying embedded orbits} (a), they must share the same multiplicity, i.e. $\gamma_+\sim\gamma_-$. So we must argue, using \eqref{equation: CZ indices polyhedral repeat}, that in each of these four free homotopy classes that it is impossible that $\gamma_{\pm}$ project to different orbifold points whenever $\mu_{\CZ}(\gamma_+)=\mu_{\CZ}(\gamma_-)$.
        \begin{itemize}
        \itemsep-.35em
            \item If $[\gamma_{\pm}]\cong T_{6,A}$ and $\gamma_{\pm}$ project to different orbifold points then, up to relabeling,\\ $\gamma_+=\mathcal{V}^{1+6k_+}$ and $\gamma_-=\mathcal{F}^{5+6k_-}$. Now, $\mu_{\CZ}(\gamma_+)=4k_++1\neq 4k_-+3=\mu_{\CZ}(\gamma_-)$.
            \item If $[\gamma_{\pm}]\cong T_{6,B}$ and $\gamma_{\pm}$ project to different orbifold points, write $\gamma_+=\mathcal{V}^{5+6k_+}$\\ and $\gamma_-=\mathcal{F}^{1+6k_-}$. Now, $\mu_{\CZ}(\gamma_+)=4k_++3\neq 4k_-+1=\mu_{\CZ}(\gamma_-)$.
            \item If $[\gamma_{\pm}]\cong T_{3,A}$ and $\gamma_{\pm}$ project to different orbifold points, write $\gamma_+=\mathcal{V}^{2+6k_+}$\\ and $\gamma_-=\mathcal{F}^{4+6k_-}$. Now, $\mu_{\CZ}(\gamma_+)=4k_++1\neq 4k_-+3=\mu_{\CZ}(\gamma_-)$.
            \item If $[\gamma_{\pm}]\cong T_{3,B}$ and $\gamma_{\pm}$ project to different orbifold points, write $\gamma_+=\mathcal{V}^{4+6k_+}$\\ and $\gamma_-=\mathcal{F}^{2+6k_-}$. Now, $\mu_{\CZ}(\gamma_+)=4k_++3\neq 4k_-+1=\mu_{\CZ}(\gamma_-)$.
        \end{itemize}
                
        \vspace{.25cm}
        
        \emph{Case 4:} $[\gamma_{\pm}]$ is a homotopy class not covered in Cases 1 - 3. Because every such homotopy class is represented by Reeb orbits either of type $\mathcal{V}$, of type $\mathcal{E}$, or of type $\mathcal{F}$, then we see that $\gamma_{\pm}$ project to the same orbifold point of $S^2/\P$. By Remark \ref{remark: same underlying embedded orbits} (a), we have that $\gamma_{+}\sim\gamma_-$.
        
        \vspace{.25cm}
        
        We now prove (b). Recall the action formulas \eqref{equation: action polyhedral} from Section \ref{subsection: polyhedral}: \begin{equation}\label{equation: action polyhedral repeat}
        \mathcal{A}(\mathcal{V}^k)=\frac{k\pi(1-\varepsilon)}{\mathbf{I}_{\mathscr{V}}},\,\,\,\mathcal{A}(\mathcal{E}^k)=\frac{k\pi}{2},\,\,\,\mathcal{A}(\mathcal{F}^k)=\frac{k\pi(1+\varepsilon)}{3}.
\end{equation} Consider the following possible values of the homotopy class $[\gamma_{\pm}]$:
        \vspace{.25cm}
        
        \emph{Case 1:} $[\gamma_{\pm}]\cong T_{\text{Id}}$, $O_{\text{Id}}$, or $I_{\text{Id}}$, so that $\{\gamma_{\pm}\}\subset\{\mathcal{V}^{2\mathbf{I}_{\mathscr{V}}k_1}, \mathcal{E}^{4k_2}, \mathcal{F}^{6k_3}\,|\,k_i\in\N\}$. By reasoning identical to the analogous case of $\D_{2n}^*$ (Lemma \ref{lemma: dihedral cobordisms} (b), Case 1), $\mathcal{A}(\gamma_+)<\mathcal{A}(\gamma_-)$.         
        \vspace{.25cm}
        
        \emph{Case 2:} $[\gamma_{\pm}]\cong T_{-\text{Id}}$, $O_{-\text{Id}}$, or $I_{-\text{Id}}$, so that $\{\gamma_{\pm}\}\subset\{\mathcal{V}^{\mathbf{I}_{\mathscr{V}}+2\mathbf{I}_{\mathscr{V}}k_1}, \mathcal{E}^{2+4k_2}, \mathcal{F}^{3+6k_3}\,|\,k_i\in\Z_{\geq0}\}$. Again, as in the the analogous case of $\D_{2n}^*$ (Lemma \ref{lemma: dihedral cobordisms} (b), Case 2), $\mathcal{A}(\gamma_+)<\mathcal{A}(\gamma_-)$.   
        
        \vspace{.25cm}

        \emph{Case 3:} $[\gamma_{\pm}]\cong T_{6,A}, T_{6,B}, T_{3,A}$, or $T_{3,B}$. If both $\gamma_{\pm}$ are of type $\mathcal{V}$, then by Remark \ref{remark: same underlying embedded orbits} (b), $\mathcal{A}(\gamma_+)<\mathcal{A}(\gamma_-)$. If both $\gamma_{\pm}$ are of type $\mathcal{F}$, then again by Remark \ref{remark: same underlying embedded orbits} (b), $\mathcal{A}(\gamma_+)<\mathcal{A}(\gamma_-)$. So we must argue, using \eqref{equation: CZ indices polyhedral repeat} and \eqref{equation: action polyhedral repeat}, that for each of these four free homotopy classes that if $\gamma_+$ and $\gamma_-$ are not of the same type, and if $\mu_{\CZ}(\gamma_+)<\mu_{\CZ}(\gamma_-)$, then $\mathcal{A}(\gamma_+)<\mathcal{A}(\gamma_-)$.

        \emph{3.A} If $[\gamma_{\pm}]\cong T_{6,A}$, then the pair $(\gamma_+,\gamma_-)$ is either $(\mathcal{V}^{1+6k_+}, \mathcal{F}^{5+6k_-})$ or $(\mathcal{F}^{5+6k_+}, \mathcal{V}^{1+6k_-})$. If the former holds, then $\mu_{\CZ}(\gamma_+)<\mu_{\CZ}(\gamma_-)$ implies $k_+\leq k_-$, and so \[\mathcal{A}(\gamma_+)=\frac{(1+6k_+)\pi}{3}(1-\varepsilon_N)<\frac{(5+6k_-)\pi}{3}(1+\varepsilon_M)=\mathcal{A}(\gamma_-).\] If the latter holds, then $\mu_{\CZ}(\gamma_+)<\mu_{\CZ}(\gamma_-)$ implies $k_+<k_-$, and again the action satisfies \[\mathcal{A}(\gamma_+)=\frac{(5+6k_+)\pi}{3}(1+\varepsilon_N)<\frac{(1+6k_-)\pi}{3}(1-\varepsilon_M)=\mathcal{A}(\gamma_-).\] 
        
         \emph{3.B} If $[\gamma_{\pm}]\cong T_{6,B}$, then the pair $(\gamma_+,\gamma_-)$ is either $(\mathcal{V}^{5+6k_+}, \mathcal{F}^{1+6k_-})$ or $(\mathcal{F}^{1+6k_+}, \mathcal{V}^{5+6k_-})$. If the former holds, then  $\mu_{\CZ}(\gamma_+)<\mu_{\CZ}(\gamma_-)$ implies $k_+<k_-$, and so \[\mathcal{A}(\gamma_+)=\frac{(5+6k_+)\pi}{3}(1-\varepsilon_N)<\frac{(1+6k_-)\pi}{3}(1+\varepsilon_M)=\mathcal{A}(\gamma_-).\] If the latter holds, then $\mu_{\CZ}(\gamma_+)<\mu_{\CZ}(\gamma_-)$ implies $k_+\leq k_-$, and so \[\mathcal{A}(\gamma_+)=\frac{(1+6k_+)\pi}{3}(1+\varepsilon_N)<\frac{(5+6k_-)\pi}{3}(1-\varepsilon_M)=\mathcal{A}(\gamma_-).\] 
        
          \emph{3.C}  If $[\gamma_{\pm}]\cong T_{3,A}$, then the pair $(\gamma_+,\gamma_-)$ is either $(\mathcal{V}^{2+6k_+}, \mathcal{F}^{4+6k_-})$ or $(\mathcal{F}^{4+6k_+}, \mathcal{V}^{2+6k_-})$. If the former holds, then $\mu_{\CZ}(\gamma_+)<\mu_{\CZ}(\gamma_-)$ implies $k_+\leq k_-$, and so \[\mathcal{A}(\gamma_+)=\frac{(2+6k_+)\pi}{3}(1-\varepsilon_N)<\frac{(4+6k_-)\pi}{3}(1+\varepsilon_M)=\mathcal{A}(\gamma_-).\] If the latter holds, then $\mu_{\CZ}(\gamma_+)<\mu_{\CZ}(\gamma_-)$ implies $k_+<k_-$, and so \[\mathcal{A}(\gamma_+)=\frac{(4+6k_+)\pi}{3}(1+\varepsilon_N)<\frac{(2+6k_-)\pi}{3}(1-\varepsilon_M)=\mathcal{A}(\gamma_-).\] 
        
        \emph{3.D} If $[\gamma_{\pm}]\cong T_{3,B}$, then the pair $(\gamma_+,\gamma_-)$ is either $(\mathcal{V}^{4+6k_+}, \mathcal{F}^{2+6k_-})$ or $(\mathcal{F}^{2+6k_+}, \mathcal{V}^{4+6k_-})$. If the former holds, then $\mu_{\CZ}(\gamma_+)<\mu_{\CZ}(\gamma_-)$ implies $k_+<k_-$, and so \[\mathcal{A}(\gamma_+)=\frac{(4+6k_+)\pi}{3}(1-\varepsilon_N)<\frac{(2+6k_-)\pi}{3}(1+\varepsilon_M)=\mathcal{A}(\gamma_-).\] If the latter holds, then $\mu_{\CZ}(\gamma_+)<\mu_{\CZ}(\gamma_-)$ implies $k_+\leq k_-$, and so \[\mathcal{A}(\gamma_+)=\frac{(2+6k_+)\pi}{3}(1+\varepsilon_N)<\frac{(4+6k_-)\pi}{3}(1-\varepsilon_M)=\mathcal{A}(\gamma_-).\] 
                
        \vspace{.25cm}
        
        \emph{Case 4:} $[\gamma_{\pm}]$ is a homotopy class not covered in Cases 1 - 3. Because every such homotopy class is represented by Reeb orbits either of type $\mathcal{V}$, of type $\mathcal{E}$, or of type $\mathcal{F}$, then we see that $\gamma_{\pm}$ project to the same orbifold point of $S^2/\P$. By Remark \ref{remark: same underlying embedded orbits} (b),  $\mathcal{A}(\gamma_{+})<\mathcal{A}(\gamma_{-})$.
      
\end{proof}

\appendix
\section{Appendix}

\subsection{Signature of crossing forms} \label{appendix: signature of crossing forms}
Proposition \ref{proposition: equivalence of cz definitions} establishes the equivalence of definitions \eqref{equation: cz reeb orbit definition using spectral flow} and \eqref{equation: def of cz of reeb wrt sp matrices} of the Conley-Zehnder index, and relies on the fact that \[\Gamma(\Psi,s)(F_s(\eta))=-\Gamma(A,s)(\eta),\] which we prove now. As in the proposition (and in Figure \ref{figure: crossings}), 
\begin{itemize}
    \item $A=\{A_s\}_{s\in[-1,1]}$ is a path of asymptotic operators from $A_{+}$ to $A_-$ of the form $A_s=-J_0\partial_t-S_s$, where $S_s:S^1\to\text{Sym}(2n)$ is a smooth path of loops of symmetric matrices; $S:[-1,1]\times S^1\to \text{Sym}(2n)$ is $S(s,t):=S_s(t)$.
    \item For each $s\in[-1,1]$, $\Phi_s:[0,1]\to\Sp(2n)$ is the path of symplectic matrices corresponding to the loop $S_s$. That is, $\Phi_s$ is the solution to the initial value problem $\dot{\Phi}_s=J_0S_s\Phi_s$, $\Phi_s(0)=\text{Id}$; $\Phi:[-1,1]\times[0,1]\to\Sp(2n)$ is $(s,t)\mapsto \Phi_s(t)$.
    \item $\Psi:[-1,1]\to\Sp(2n)$ is the path of symplectic matrices from $\Phi_+(1)$ to $\Phi_-(1)$ given by $\Psi(s)=\Phi(s,1)=\Phi_s(1)$.
\end{itemize}
Recall that for any $s\in[-1,1]$, we have an isomorphism $F_s:\text{ker}(A_s)\to\text{ker}(\Psi(s)-\text{Id})$ given by $\eta\mapsto\eta(0)$. Additionally, recall from Sections \ref{subsection: asymptotic operators and spectral flows} and \ref{subsection: the cz index} the quadratic forms \[\Gamma(A,s):\text{ker}(A_s)\to\R,\,\,\, \eta\mapsto\langle\tfrac{d A_s}{ds}\eta,\eta\rangle_{L^2}\] and \[\Gamma(\Psi,s):\text{ker}(\Psi(s)-\text{Id})\to\R, \,\,\,v\mapsto\omega_0(v, \tfrac{d\Psi}{ds}v).\]

\begin{lemma}\label{lemma: crossing forms are related by a negative sign}
For $s\in[-1,1]$ and $\eta\in\mbox{\em ker}(A_s)$, 
\[\Gamma(A,s)(\eta)=-\Gamma(\Psi,s)(F_s(\eta)).\]
\end{lemma}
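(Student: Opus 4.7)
The plan is to obtain the identity by explicitly computing $\partial_s\Psi(s)$ in terms of $\Phi_s$ and $\dot S_s$ (where dot means $\partial/\partial s$), and then using the periodicity condition $\eta(0)=\eta(1)=\Psi(s)\eta(0)$ together with the symplecticity of $\Phi_s$ to convert the symplectic pairing defining $\Gamma(\Psi,s)$ into an $L^2$ pairing matching $\Gamma(A,s)$ (up to sign).

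First I would compute the derivative $\partial_s \Phi_s(t)$. Differentiating the ODE $\partial_t \Phi_s = J_0 S_s \Phi_s$ in $s$ produces a linear inhomogeneous ODE for $X(t) := \partial_s\Phi_s(t)$, namely $\partial_t X = J_0 S_s X + J_0 \dot S_s \Phi_s$, with initial value $X(0)=0$ (since $\Phi_s(0)=\mathrm{Id}$). Writing $X = \Phi_s Y$ and using $\partial_t \Phi_s = J_0 S_s \Phi_s$ yields $\partial_t Y = \Phi_s^{-1} J_0 \dot S_s \Phi_s$, so that variation of parameters gives
\[ \partial_s \Phi_s(t) = \Phi_s(t)\int_0^t \Phi_s(\tau)^{-1} J_0\, \dot S_s(\tau)\, \Phi_s(\tau)\,d\tau. \]
Setting $t=1$ gives a clean formula for $\partial_s\Psi(s)$.

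Next I would unpack $\Gamma(\Psi,s)(F_s(\eta)) = \omega_0(\eta(0),\,\partial_s\Psi(s)\eta(0))$ using this formula. Since $\eta\in\ker(A_s)$ satisfies $\eta(\tau)=\Phi_s(\tau)\eta(0)$ and is $1$-periodic, we have $\Psi(s)\eta(0)=\eta(0)$, hence also $\Psi(s)^{-1}\eta(0)=\eta(0)$. Using the symplecticity identity $\omega_0(u,\Psi(s)w)=\omega_0(\Psi(s)^{-1}u,w)$ and then again $\omega_0(u,\Phi_s(\tau)^{-1}v)=\omega_0(\Phi_s(\tau)u,v)$, one telescopes the expression into
\[ \Gamma(\Psi,s)(F_s(\eta)) = \int_0^1 \omega_0\!\bigl(\eta(\tau),\, J_0\,\dot S_s(\tau)\,\eta(\tau)\bigr)\,d\tau. \]
Now applying the standard relation $\omega_0(u,v)=\langle J_0 u, v\rangle$ on $\mathbb{R}^{2n}$, the identity $J_0^2=-\mathrm{Id}$, and the symmetry of $\dot S_s(\tau)$ collapses the integrand to $\langle \dot S_s(\tau)\eta(\tau),\eta(\tau)\rangle$. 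Since $\partial_s A_s = -\dot S_s$, this is exactly $-\Gamma(A,s)(\eta)$, finishing the proof.

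The only genuine subtlety is the sign bookkeeping: one must pick a consistent convention for $\omega_0$ versus $\langle J_0\cdot,\cdot\rangle$ and track the two applications of symplecticity carefully, but no serious technicality beyond that. Everything else reduces to the variation-of-parameters formula for $\partial_s\Phi_s$ and the periodicity of $\eta$.
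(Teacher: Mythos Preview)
Your proof is correct and is essentially the same computation as the paper's, just organized differently: the paper packages $\partial_s\Phi_s$ via an auxiliary family $\widehat S(s,t):=-J_0(\partial_s\Phi)\Phi^{-1}$ and proves the differential identity $\partial_t(\Phi^T\widehat S\,\Phi)=\Phi^T(\partial_s S)\Phi$ before applying the fundamental theorem of calculus, whereas you solve the $s$-differentiated ODE by variation of parameters to obtain $\partial_s\Phi_s(t)=\Phi_s(t)\int_0^t\Phi_s(\tau)^{-1}J_0\dot S_s(\tau)\Phi_s(\tau)\,d\tau$ directly. Both routes then invoke symplecticity of $\Phi_s$, the relation $\eta(\tau)=\Phi_s(\tau)\eta(0)$, and $\Psi(s)\eta(0)=\eta(0)$ to reduce to the same $L^2$ integral $\int_0^1\langle\dot S_s\eta,\eta\rangle$; the only cosmetic difference is that the paper starts from $\Gamma(A,s)$ and arrives at $\Gamma(\Psi,s)$, while you go the other direction.
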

\begin{proof}
We define one more family of matrices. Let $\widehat{S}:[-1,1]\times[0,1]\to\text{Sym}(2n)$ be the family of symmetric matrices given by
\[\widehat{S}(s,t)=-J_0\tfrac{\partial \Phi}{\partial s}(s,t)\Phi^{-1}(s,t),\] which satisfies the equation \[\tfrac{\partial \Phi}{\partial s}(s,t)=J_0\widehat{S}(s,t)\Phi(s,t).\] Note  that $\widehat{S}(s,0)=0$, because $\Phi(s,0)=\text{Id}$. We argue that \[\tfrac{\partial}{\partial t}\big(\Phi^T\widehat{S}\Phi\big)=\Phi^T\tfrac{\partial S}{\partial s}\Phi.\] We compute,
\begin{align}
    \tfrac{\partial}{\partial t}\big(\Phi^T\widehat{S}\Phi\big)&=\big(\tfrac{\partial \Phi}{\partial t}\big)^T\widehat{S}\Phi+\Phi^T\tfrac{\partial}{\partial t}\big(\widehat{S} \Phi\big)\nonumber\\
    &=\big(J_0S\Phi)^T\widehat{S}\Phi+\Phi^T\tfrac{\partial}{\partial t}\big(-J_0\tfrac{\partial M}{\partial s}\big)\nonumber\\
    &=-\Phi^TSJ_0\widehat{S}\Phi-\Phi^TJ_0\tfrac{\partial^2 \Phi}{\partial t \partial s}\nonumber\\
    &=-\Phi^T\big(SJ_0\widehat{S}\Phi+J_0\tfrac{\partial^2 \Phi}{\partial t \partial s}\big) \nonumber\\
    &=-\Phi^T\big(S\tfrac{\partial \Phi}{\partial s}+J_0\tfrac{\partial}{\partial s}\big(J_0S\Phi\big)\big)\nonumber\\
    &=-\Phi^T\big(S\tfrac{\partial \Phi}{\partial s}-\tfrac{\partial}{\partial s}\big(S\Phi\big)\big)\nonumber\\
    &=-\Phi^T\big(S\tfrac{\partial \Phi}{\partial s}-\tfrac{\partial S}{\partial s}\Phi-S\tfrac{\partial \Phi}{\partial s}\big)\nonumber\\
    &=\Phi^T\tfrac{\partial S}{\partial s}\Phi. \label{equation: differential matrix equality}
\end{align}
We now demonstrate that $F_s$ negates the quadratic forms:
 \begin{align*}
     \Gamma(A,s)(\eta)&=\langle \eta,\tfrac{d}{ds}(A_s\eta)\rangle_{L^2} \\
     &=\int_0^1\langle \eta(t),\tfrac{d}{ds}(A_s\eta)(t)\rangle\,dt\\
     &=-\int_0^1\langle \eta(t),\tfrac{\partial S}{\partial s}(s,t)\eta(t)\rangle\,dt\\
     &=-\int_0^1\langle \Phi_s(t)\eta_0,\tfrac{\partial S}{\partial s}(s,t)\Phi_s(t)\eta_0\rangle\,dt,\end{align*}
where we have used in the fourth line that $\eta(t)=\Phi_s(t)\eta_0$, because $\eta\in\text{ker}(A_s)$, where $\eta_0:=\eta(0)\in\text{ker}(\Psi(s)-\text{Id})$. Applying $\langle\cdot\,,\cdot\rangle=\omega_0(
 \cdot,J_0\cdot)$, we have  
     \begin{align*}\Gamma(A,s)(\eta)&=-\int_0^1\omega_0(\Phi_s(t)\eta_0,J_0\tfrac{\partial S}{\partial s}(s,t)\Phi_s(t)\eta_0)\,dt\\
          &=-\int_0^1\omega_0(\Phi_s(t)\eta_0,\Phi_s(t)J_0\Phi_s^T(t)\tfrac{\partial S}{\partial s}(s,t)\Phi_s(t)\eta_0)\,dt\\
          &=-\int_0^1\omega_0(\eta_0,J_0\Phi_s^T(t)\tfrac{\partial S}{\partial s}(s,t)\Phi_s(t)\eta_0)\,dt\\
        &=-\int_0^1\langle\eta_0,\Phi_s^T(t)\tfrac{\partial S}{\partial s}(s,t)\Phi_s(t)\eta_0\rangle\,dt,\end{align*}
        where we have rewritten $J_0$ as $\Phi_s^T(t)J_0\Phi_s(t)$ in the second line, and that $\Phi_s(t)$ preserves the values of $\omega_0$ in the third line. Next, we apply the fact that $\Phi_s^T(t)\tfrac{\partial S}{\partial s}(s,t)\Phi_s(t)=\tfrac{\partial}{\partial t}\big(\Phi_s^T(t)\widehat{S}(s,t)\Phi_s(t)\big)$ (equation \eqref{equation: differential matrix equality}) to find that
        \begin{align*}
        \Gamma(A,s)(\eta)&=-\int_0^1\langle\eta_0,\tfrac{\partial}{\partial t}\big(\Phi_s^T(t)\widehat{S}(s,t)\Phi_s(t)\big)\eta_0\rangle\,dt\\
        &=-\int_0^1\tfrac{d}{dt}\langle\eta_0,\Phi_s^T(t)\widehat{S}(s,t)\Phi_s(t)\eta_0\rangle\,dt\\
        &=-\langle \eta_0, \Phi_s^T(1)\widehat{S}(s,1)\Phi_s(1)\eta_0-\widehat{S}(s,0)\eta_0\rangle,
        \end{align*}
        where we have used the fundamental theorem of calculus. Recall that $\widehat{S}(s,0)=0$ and that $\Phi_s(1)\eta_0=\eta_0$, because $\eta_0$ is a 1-eigenvector of $\Psi(s)=\Phi_s(1)$, to write
        \begin{align*}
        \Gamma(A,s)(\eta)&=-\langle \eta_0, \Phi_s^T(1)\widehat{S}(s,1)\eta_0\rangle\\
        &=-\omega_0(\eta_0,J_0\Phi_s^T(1)\widehat{S}(s,1)\eta_0)\\
        &=-\omega_0(\Phi_s(1)\eta_0,\Phi_s(1)J_0\Phi_s^T(1)\widehat{S}(s,1)\eta_0)\\
        &=-\omega_0(\eta_0,\Phi_s(1)J_0\Phi_s^T(1)\widehat{S}(s,1)\eta_0)\\
        &=-\omega_0(\eta_0,J_0\hat{S}(s,1)\eta_0)\\
        &=-\omega_0(\eta_0,\tfrac{d\Psi}{ds}(s)\eta_0)=-\Gamma(\Psi, s)(\eta_0)=-\Gamma(\Psi, s)(F_s(\eta)).
 \end{align*}
 
\end{proof}

\subsection{When contractible Reeb orbits have index 3 and fail to be embedded}\label{appendix: CZindex3}

A main result of \cite{HN} allows us to conclude that, if $\lambda$ is an $L$-dynamically convex contact form on a closed 3-manifold $Y$, satisfying
\begin{center}
    $(*)$ \emph{every contractible Reeb orbit} $\gamma\in\mathcal{P}^L(\lambda)$ \emph{with} $\mu_{CZ}(\gamma)=3$ \emph{must be embedded},
\end{center}
then a generic $\lambda$-compatible $J$ on $\R\times Y$ produces a well defined differential $\partial^L$ on the $L$-filtered complex $CC_*^L(Y,\lambda,J)$ that squares to zero. In our cases, $\gamma_{\mathfrak{s}}^n$, $e_-^4$, and $\mathcal{V}^{2X}$ are the sole exceptions to $(*)$ in the cyclic, dihedral, and polyhedral cases respectively.

The condition $(*)$ prohibits a certain type of building appearing as a boundary component of the compactified moduli spaces $\overline{\mathcal{M}_2^J(\gamma_+, \gamma_-)/\R}$, which one expects to be a smooth, oriented 1-manifold with boundary, generically. Once it is shown that these \emph{bad buildings} do not appear, one concludes that all ends of the compactified space consist of buildings $(u_+,u_-)$ of index 1 cylinders $u_{\pm}$. By appealing to appropriately weighted counts of the boundary components of $\overline{\mathcal{M}_2^J(\gamma_+, \gamma_-)/\R}$, one finds that  $\partial^2=0$.

\begin{definition}\label{definition: building}
Let $(Y,\lambda)$ be a contact 3-manifold and take  $J\in\mathcal{J}(\lambda)$. An index 2, genus 0 building $B=(u_1,u_2)$ in $\R\times Y$ is \emph{bad} if for some embedded $\gamma\in\mathcal{P}(\lambda)$ and $d_1, d_2\in\N$:
\begin{itemize}
    \itemsep-.35em
    \item $u_1$ is a holomorphic index 0 pair of pants, a branched cover of the trivial cylinder $\R\times\gamma^{d_1+d_2}$ with positive end asymptotic to $\gamma^{d_1+d_2}$ and with two negative ends, one asymptotic to $\gamma^{d_1}$, and one to $\gamma^{d_2}$,
    \item  $u_2$ is a union of the trivial cylinder $\R\times\gamma^{d_1}$ and $v$, an index 2 holomorphic plane with one positive puncture asymptotic to $\gamma^{d_2}$.
\end{itemize}
\end{definition}

\begin{lemma} \label{lemma: generic j}
Fix $L>0$ and let $\varepsilon>0$ be sufficiently small so that elements of $\mathcal{P}^L(\lambda_{G,\varepsilon})$ project to critical points of the orbifold Morse function $f_H$ under $\fp$ and are nondegenerate, where $\lambda_{G,\varepsilon}=(1+\varepsilon\fp^*f_H) \lambda_G$, and $f_H$ is the corresponding orbifold Morse function used in Section \ref{section: computation of filtered contact homology}. Then, for $\gamma_{\pm}\in\mathcal{P}^L(\lambda_{G,\varepsilon})$ and  $J\in\mathcal{J}(\lambda_{G,\varepsilon})$, the compactified moduli space $\overline{\mathcal{M}^J_2(\gamma_+,\gamma_-)/\R}$ contains no bad buildings.
\end{lemma}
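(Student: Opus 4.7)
The plan is to rule out bad buildings via a direct Fredholm index obstruction.  Suppose for contradiction that a bad building $B=(u_1,u_2)$ exists in $\overline{\mathcal{M}^J_2(\gamma_+,\gamma_-)/\R}$, with underlying embedded orbit $\gamma\in\mathcal{P}(\lambda_{G,\varepsilon})$ and branching data $d_1,d_2\in\N$.  The index-$2$ holomorphic plane $v\subset u_2$ has a single positive puncture at $\gamma^{d_2}$, and since $c_1(\xi_G)=0$ the Fredholm index formula for a plane gives $2=\mathrm{ind}(v)=\mu_{\CZ}(\gamma^{d_2})-1$, so $\mu_{\CZ}(\gamma^{d_2})=3$; moreover $\gamma^{d_2}$ must be contractible in $S^3/G$ for $v$ to exist.

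I would first enumerate all contractible Reeb orbits in $\mathcal{P}^L(\lambda_{G,\varepsilon})$ with Conley--Zehnder index $3$.  Using the index formulas \eqref{equation: CZ indices cyclic}, \eqref{equation: CZ indices dihedral}, and \eqref{equation: CZ indices polyhedral} together with the orders of the embedded orbits in $\pi_1(S^3/G)$ recorded in Section \ref{subsection: free homotopy classes represented by reeb orbits}, a short case check shows that the only such orbits are precisely the three non-embedded orbits previously flagged in the footnotes: $\gamma_{\mathfrak{s}}^n$ in the cyclic case, $e_-^4$ in the dihedral case, and $\mathcal{V}^{2\mathbf{I}_{\mathscr{V}}}$ in the polyhedral case.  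Since $\gamma$ is required to be embedded, this forces $(\gamma,d_2)$ to be one of $(\gamma_{\mathfrak{s}},n)$, $(e_-,4)$, or $(\mathcal{V},2\mathbf{I}_{\mathscr{V}})$.

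The crux is then to compute $\mathrm{ind}(u_1)$ directly and derive a contradiction with the defining requirement $\mathrm{ind}(u_1)=0$.  A pair of pants has $\chi=-1$, so with $c_1(\xi_G)=0$ the Fredholm index formula yields
\[
\mathrm{ind}(u_1) \;=\; 1 \,+\, \mu_{\CZ}(\gamma^{d_1+d_2}) \,-\, \mu_{\CZ}(\gamma^{d_1}) \,-\, \mu_{\CZ}(\gamma^{d_2}).
\]
In each of the three cases, direct substitution into the explicit rotation formulas gives $\mu_{\CZ}(\gamma^{d_1+d_2})-\mu_{\CZ}(\gamma^{d_1})=4$, because the integer shift $k\mapsto k+d_2$ advances the ceiling function appearing in $\mu_{\CZ}(\gamma^k)$ by exactly $2$; for instance, in the cyclic case $2\lceil 2(d_1+n)/n\rceil-2\lceil 2d_1/n\rceil=4$, and analogously for $d_2=4$ in the dihedral case and $d_2=2\mathbf{I}_{\mathscr{V}}$ in the polyhedral case.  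Combined with $\mu_{\CZ}(\gamma^{d_2})=3$, this produces $\mathrm{ind}(u_1)=1+4-3=2\neq 0$, contradicting the definition of a bad building.

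The main obstacle is essentially bookkeeping: one must carefully verify in the enumeration step that the three listed orbits truly exhaust the contractible index-$3$ orbits in each filtered chain complex (ruling out, in particular, iterates of $\gamma_{\mathfrak{n}}$, $h$, $e_+$, $\mathcal{E}$, and $\mathcal{F}$), and that $\gamma$ is forced to coincide with the underlying embedded orbit.  All the relevant data is already tabulated in Sections \ref{subsection: cyclic}--\ref{subsection: polyhedral} and \ref{subsection: free homotopy classes represented by reeb orbits}, so no gluing, transversality, or compactness input beyond what has already appeared in Section \ref{section: direct limits of filtered homology} is required.
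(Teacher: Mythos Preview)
Your proposal is correct and follows essentially the same approach as the paper's proof: identify the three possible contractible index-$3$ orbits $\gamma^{d_2}$, then compute $\mathrm{ind}(u_1)$ directly using the unified ceiling formula $\mu_{\CZ}(\gamma^k)=2\lceil 2k/d_2\rceil-1$ to obtain $\mathrm{ind}(u_1)=2\neq 0$. The paper's argument is identical in structure and computation, differing only in that it packages the three cases into a single line rather than commenting on the enumeration step.
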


Note that Lemma \ref{lemma: generic j} is not necessary to show that $(\partial^L)^2=0$ for our chain complexes introduced in Section \ref{section: computation of filtered contact homology}; this follows immediately from $\partial^L=0$. However, slightly modified index computations of our proof may be used to rule out bad buildings if one were to use different orbifold Morse functions that produce non-vanishing filtered differentials.

\begin{proof}

We argue more generally that a bad building cannot exist if its positive Reeb orbit has action less than $L$.  Indeed, suppose we have bad building $B=(u_1,u_2)$, with $\gamma$, $d_1$, $d_2$, and $v$ as in Definition \ref{definition: building}, and suppose $\mathcal{A}(\gamma^{d_1+d_2})<L$. Consider that the existence of the holomorphic plane $v$ implies that $\gamma^{d_2}$ is contractible, and that $\mu_{\CZ}(\gamma^{d_2})=3$. Because $\mathcal{A}(\gamma^{d_2})<\mathcal{A}(\gamma^{d_1+d_2})<L$, we know $\gamma^{d_2}$ projects to an orbifold critical point of $f_H$ under $\fp$. By our characterization of these Reeb orbits in Sections \ref{subsection: cyclic}, \ref{subsection: dihedral}, and \ref{subsection: polyhedral} we must be in one of the following cases, depending on $G$:
\begin{enumerate}
 \itemsep-.35em
    \item $(G,\gamma,\, d_2)=(\langle g\rangle,\, \gamma_{\mathfrak{s}},\, n)$ (where $\langle g\rangle$ is cyclic of order $n$),
        \item $(G,\,\gamma,\, d_2)=(\D_{2n}^*,\, e_-,\, 4)$, 
            \item $(G,\,\gamma,\, d_2)=(\P^*, \,\mathcal{V},\, 2\mathbf{I}_{\mathscr{V}})$.
\end{enumerate}
In each case, our index computations \eqref{equation: CZ indices cyclic}, \eqref{equation: CZ indices dihedral}, and \eqref{equation: CZ indices polyhedral} provide that $\mu_{\CZ}(\gamma^k)=2\lceil \frac{2k}{d_2}\rceil-1$. Let us compute $\text{ind}(u_1)$:
\begin{align*}
    \text{ind}(u_1)&= 1+\bigg(2\biggl\lceil \frac{2(d_1+d_2)}{d_2}\biggr\rceil-1\bigg) -\bigg(2\biggl\lceil \frac{2d_1}{d_2}\biggr\rceil-1\bigg) -3\\
    &=2\bigg(\biggl\lceil \frac{2(d_1+d_2)}{d_2}\biggr\rceil - \biggl\lceil \frac{2d_1}{d_2}\biggr\rceil-1\bigg)\\
    &=2(2-1)=2\neq0.
\end{align*}
This contradiction shows that such a $B$ does not exist in the symplectization of $(S^3/G, \lambda_{G,\varepsilon})$.
\end{proof}


\end{document}